\newcommand{\red}[1]{{\color{red} #1  }}
\numberwithin{equation}{section}
\numberwithin{figure}{section}
\DeclareMathSymbol{\leqslant}{\mathalpha}{AMSa}{"36} 
\DeclareMathSymbol{\geqslant}{\mathalpha}{AMSa}{"3E} 
\DeclareMathSymbol{\eset}{\mathalpha}{AMSb}{"3F}     
\renewcommand{\leq}{\;\leqslant\;}                   
\renewcommand{\geq}{\;\geqslant\;}                   
\newcommand{\dd}{\,\text{\rm d}}             
\newcommand{\Av}{\,\text{\rm Av}\,}  
\newcommand{\VSpairs}{\bbX} 
\DeclareMathOperator*{\union}{\bigcup}       
\DeclareMathOperator*{\inter}{\bigcap}       
\newcommand{\maxtwo}[2]{\max_{\substack{#1 \\ #2}}} 
\newcommand{\mintwo}[2]{\min_{\substack{#1 \\ #2}}} 
\newcommand{\suptwo}[2]{\sup_{\substack{#1 \\ #2}}} 
\newcommand{\inftwo}[2]{\inf_{\substack{#1 \\ #2}}} 
\newcommand{\sumtwo}[2]{\sum_{\substack{#1 \\ #2}}} 
\newcommand{\sumthree}[3]{\sum_{\substack{#1 \\ #2 \\ #3}}} 
\newcommand{\uniontwo}[2]{\union_{\substack{#1 \\ #2}}} 
\newcommand{\intertwo}[2]{\inter_{\substack{#1 \\ #2}}} 
\newcommand{\inttwo}[2]{\int_{\substack{#1 \\ #2}}}     
\newcommand{\limtwo}[2]{\lim_{\substack{#1 \\ #2}}}     
\newcommand{\liminftwo}[2]{\liminf_{\substack{#1 \\ #2}}} 
\newcommand{\limsuptwo}[2]{\limsup_{\substack{#1 \\ #2}}} 
\newcommand{\prodtwo}[2]{\prod_{\substack{#1 \\ #2}}}     
\newcommand{\prodthree}[3]{\prod_{\substack{#1 \\ #2 \\ #3}}} 
\newcommand{\grad}{\nabla} 
\newcommand{\gap}{{\rm gap}} 
\newcommand{\what}{\ensuremath{\clubsuit\otimes\clubsuit}} 
\newcommand{\df}{:=}
\newcommand{\ind}{\mathbf{1}}
\newcommand\at[2]{\left.#1\right|_{#2}}
\newcommand{\E}{\mathds{E}}
\newcommand{\Prob}{\mathds{P}}
\newcommand{\e}{\varepsilon}
\newcommand{\setone}{\mathds{1}}
\newcommand{\skp}[2]{\left\langle  #1 , #2 \right\rangle}
\newcommand{\h}{\frac{1}{2}}
\newcommand{\R}{\mathbb{R}}
\newcommand{\N}{\mathbb{N}}
\newcommand{\IND}{{\bf 1}}
\newcommand{\tmix}{T_{\rm mix}}
\newcommand{\vecx}{\vec{X}}
\newcommand{\si}{\sigma} 
\newcommand{\ent}{{\rm Ent} } 
\newcommand{\var}{{\rm Var} } 
\newcommand{\wt}{\widetilde } 
\newcommand{\tc}{\, |\, } 
\newcommand{\scr}{\mathscr}  
\newcommand{\cov}{{\rm Cov} } 
\newcommand{\dert}{\frac{{\rm d}}{{\rm d} t}}  
\newcommand{\scalar}[2]{\langle #1 , #2\rangle}
\newtheorem{theorem}{Theorem}[section]
\newtheorem{proposition}[theorem]{Proposition}
\newtheorem{lemma}[theorem]{Lemma}
\newtheorem{corollary}[theorem]{Corollary}
\newtheorem{remark}[theorem]{Remark}
\newtheorem{definition}[theorem]{Definition}
\newcommand{\cA}{\ensuremath{\mathcal A}} 
\newcommand{\cB}{\ensuremath{\mathcal B}} 
\newcommand{\cC}{\ensuremath{\mathcal C}} 
\newcommand{\cD}{\ensuremath{\mathcal D}} 
\newcommand{\cE}{\ensuremath{\mathcal E}} 
\newcommand{\cF}{\ensuremath{\mathcal F}} 
\newcommand{\cG}{\ensuremath{\mathcal G}} 
\newcommand{\cH}{\ensuremath{\mathcal H}} 
\newcommand{\cI}{\ensuremath{\mathcal I}} 
\newcommand{\cJ}{\ensuremath{\mathcal J}} 
\newcommand{\cK}{\ensuremath{\mathcal K}} 
\newcommand{\cL}{\ensuremath{\mathcal L}} 
\newcommand{\cM}{\ensuremath{\mathcal M}} 
\newcommand{\cN}{\ensuremath{\mathcal N}} 
\newcommand{\cO}{\ensuremath{\mathcal O}} 
\newcommand{\cP}{\ensuremath{\mathcal P}} 
\newcommand{\cQ}{\ensuremath{\mathcal Q}} 
\newcommand{\cR}{\ensuremath{\mathcal R}} 
\newcommand{\cS}{\ensuremath{\mathcal S}} 
\newcommand{\cT}{\ensuremath{\mathcal T}} 
\newcommand{\cU}{\ensuremath{\mathcal U}} 
\newcommand{\cV}{\ensuremath{\mathcal V}} 
\newcommand{\cW}{\ensuremath{\mathcal W}} 
\newcommand{\cX}{\ensuremath{\mathcal X}} 
\newcommand{\cY}{\ensuremath{\mathcal Y}} 
\newcommand{\cZ}{\ensuremath{\mathcal Z}}
\newcommand{\calA}{\mathcal{A}}
\newcommand{\calB}{\mathcal{B}}
\newcommand{\calC}{\mathcal{C}}
\newcommand{\calD}{\mathcal{D}}
\newcommand{\calE}{\mathcal{E}}
\newcommand{\calF}{\mathcal{F}}
\newcommand{\calG}{\mathcal{G}}
\newcommand{\calH}{\mathcal{H}}
\newcommand{\calI}{\mathcal{I}}
\newcommand{\calJ}{\mathcal{J}}
\newcommand{\calK}{\mathcal{K}}
\newcommand{\calL}{\mathcal{L}}
\newcommand{\calM}{\mathcal{M}}
\newcommand{\calN}{\mathcal{N}}
\newcommand{\calO}{\mathcal{O}}
\newcommand{\calP}{\mathcal{P}}
\newcommand{\calQ}{\mathcal{Q}}
\newcommand{\calR}{\mathcal{R}}
\newcommand{\calS}{\mathcal{S}}
\newcommand{\calT}{\mathcal{T}}
\newcommand{\calU}{\mathcal{U}}
\newcommand{\calV}{\mathcal{V}}
\newcommand{\calW}{\mathcal{W}}
\newcommand{\calX}{\mathcal{X}}
\newcommand{\calY}{\mathcal{Y}}
\newcommand{\calZ}{\mathcal{Z}}
\newcommand{\fra}{\mathfrak{a}}
\newcommand{\frb}{\mathfrak{b}}
\newcommand{\frc}{\mathfrak{c}}
\newcommand{\frd}{\mathfrak{d}}
\newcommand{\fre}{\mathfrak{e}}
\newcommand{\frf}{\mathfrak{f}}
\newcommand{\frg}{\mathfrak{g}}
\newcommand{\frh}{\mathfrak{h}}
\newcommand{\fri}{\mathfrak{i}}
\newcommand{\frj}{\mathfrak{j}}
\newcommand{\frk}{\mathfrak{k}}
\newcommand{\frl}{\mathfrak{l}}
\newcommand{\frm}{\mathfrak{m}}
\newcommand{\frn}{\mathfrak{n}}
\newcommand{\fro}{\mathfrak{o}}
\newcommand{\frp}{\mathfrak{p}}
\newcommand{\frqqq}{\mathfrak{q}}
\newcommand{\frr}{\mathfrak{r}}
\newcommand{\frs}{\mathfrak{s}}
\newcommand{\frt}{\mathfrak{t}}
\newcommand{\fru}{\mathfrak{u}}
\newcommand{\frv}{\mathfrak{v}}
\newcommand{\frw}{\mathfrak{w}}
\newcommand{\frx}{\mathfrak{x}}
\newcommand{\fry}{\mathfrak{y}}
\newcommand{\frz}{\mathfrak{z}}
\newcommand{\frA}{\mathfrak{A}}
\newcommand{\frB}{\mathfrak{B}}
\newcommand{\frC}{\mathfrak{C}}
\newcommand{\frD}{\mathfrak{D}}
\newcommand{\frE}{\mathfrak{E}}
\newcommand{\frF}{\mathfrak{F}}
\newcommand{\frG}{\mathfrak{G}}
\newcommand{\frH}{\mathfrak{H}}
\newcommand{\frI}{\mathfrak{I}}
\newcommand{\frJ}{\mathfrak{J}}
\newcommand{\frK}{\mathfrak{K}}
\newcommand{\frL}{\mathfrak{L}}
\newcommand{\frM}{\mathfrak{M}}
\newcommand{\frN}{\mathfrak{N}}
\newcommand{\frO}{\mathfrak{O}}
\newcommand{\frP}{\mathfrak{P}}
\newcommand{\frQ}{\mathfrak{Q}}
\newcommand{\frR}{\mathfrak{R}}
\newcommand{\frS}{\mathfrak{S}}
\newcommand{\frT}{\mathfrak{T}}
\newcommand{\frU}{\mathfrak{U}}
\newcommand{\frV}{\mathfrak{V}}
\newcommand{\frW}{\mathfrak{W}}
\newcommand{\frX}{\mathfrak{X}}
\newcommand{\frY}{\mathfrak{Y}}
\newcommand{\frZ}{\mathfrak{Z}}
\newcommand{\bbA}{\mathbb{A}}
\newcommand{\bbB}{\mathbb{B}}
\newcommand{\bbC}{\mathbb{C}}
\newcommand{\bbD}{\mathbb{D}}
\newcommand{\bbE}{\mathbb{E}}
\newcommand{\bbF}{\mathbb{F}}
\newcommand{\bbG}{\mathbb{G}}
\newcommand{\bbH}{\mathbb{H}}
\newcommand{\bbI}{\mathbb{I}}
\newcommand{\bbJ}{\mathbb{J}}
\newcommand{\bbK}{\mathbb{K}}
\newcommand{\bbL}{\mathbb{L}}
\newcommand{\bbM}{\mathbb{M}}
\newcommand{\bbN}{\mathbb{N}}
\newcommand{\bbO}{\mathbb{O}}
\newcommand{\bbP}{\mathbb{P}}
\newcommand{\bbQ}{\mathbb{Q}}
\newcommand{\bbR}{\mathbb{R}}
\newcommand{\bbS}{\mathbb{S}}
\newcommand{\bbT}{\mathbb{T}}
\newcommand{\bbU}{\mathbb{U}}
\newcommand{\bbV}{\mathbb{V}}
\newcommand{\bbW}{\mathbb{W}}
\newcommand{\bbX}{\mathbb{X}}
\newcommand{\bbY}{\mathbb{Y}}
\newcommand{\bbZ}{\mathbb{Z}}
\newcommand{\sfa}{\mathsf a}
\newcommand{\sfb}{\mathsf b}
\newcommand{\sfc}{\mathsf c}
\newcommand{\sfd}{\mathsf d}
\newcommand{\sfe}{\mathsf e}
\newcommand{\sff}{\mathsf f}
\newcommand{\sfg}{\mathsf g}
\newcommand{\sfh}{\mathsf h}
\newcommand{\sfi}{\mathsf i}
\newcommand{\sfj}{\mathsf j}
\newcommand{\sfk}{\mathsf k}
\newcommand{\sfl}{{\mathsf l}}
\newcommand{\sfm}{{\mathsf m}}
\newcommand{\sfn}{{\mathsf n}}
\newcommand{\sfo}{{\mathsf o}}
\newcommand{\sfp}{{\mathsf p}}
\newcommand{\sfr}{{\mathsf r}}
\newcommand{\sfs}{{\mathsf s}}
\newcommand{\sft}{{\mathsf t}}
\newcommand{\sfu}{{\mathsf u}}
\newcommand{\sfv}{{\mathsf v}}
\newcommand{\sfw}{{\mathsf w}}
\newcommand{\sfx}{{\mathsf x}}
\newcommand{\sfy}{{\mathsf y}}
\newcommand{\sfz}{{\mathsf z}}
\newcommand{\sfA}{\mathsf{A}}
\newcommand{\sfB}{\mathsf{B}}
\newcommand{\sfC}{\mathsf{C}}
\newcommand{\sfD}{\mathsf{D}}
\newcommand{\sfE}{\mathsf{E}}
\newcommand{\sfF}{\mathsf{F}}
\newcommand{\sfG}{\mathsf{G}}
\newcommand{\sfH}{\mathsf{H}}
\newcommand{\sfI}{\mathsf{I}}
\newcommand{\sfJ}{\mathsf{J}}
\newcommand{\sfK}{\mathsf{K}}
\newcommand{\sfL}{\mathsf{L}}
\newcommand{\sfM}{\mathsf{M}}
\newcommand{\sfN}{\mathsf{N}}
\newcommand{\sfO}{\mathsf{O}}
\newcommand{\sfP}{\mathsf{P}}
\newcommand{\sfQ}{\mathsf{Q}}
\newcommand{\sfR}{\mathsf{R}}
\newcommand{\sfS}{\mathsf{S}}
\newcommand{\sfT}{\mathsf{T}}
\newcommand{\sfU}{\mathsf{U}}
\newcommand{\sfV}{\mathsf{V}}
\newcommand{\sfW}{\mathsf{W}}
\newcommand{\sfX}{\mathsf{X}}
\newcommand{\sfY}{\mathsf{Y}}
\newcommand{\sfZ}{\mathsf{Z}}
\newcommand{\ua}{\underline{a}}
\newcommand{\ub}{\underline{b}}
\newcommand{\uc}{\underline{c}}
\newcommand{\ud}{\underline{d}}
\newcommand{\ue}{\underline{e}}
\newcommand{\uf}{\underline{f}}
\newcommand{\ug}{\underline{g}}
\newcommand{\uh}{\underline{h}}
\newcommand{\ur}{\underline{r}}
\newcommand{\us}{\underline{s}}
\newcommand{\ut}{\underline{t}}
\newcommand{\uv}{\underline{v}}
\newcommand{\uu}{\underline{u}}
\newcommand{\ux}{\underline{x}}
\newcommand{\uy}{\underline{y}}
\newcommand{\uz}{\underline{z}}
\newcommand{\uw}{\underline{w}}
\newcommand{\ueta}{\underline{\eta}}
\newcommand{\unu}{\underline{\nu}}
\newcommand{\urho}{\underline{\rho}}
\newcommand{\ukappa}{\underline{\kappa}}
\newcommand{\uxi}{\underline{\xi}}
\newcommand{\uzeta}{\underline{\zeta}}
\newcommand{\uchi}{\underline{\chi}}
\newcommand{\udelta}{\underline{\delta}}
\newcommand{\uA}{\underline{A}}
\newcommand{\uB}{\underline{B}}
\newcommand{\uX}{\underline{X}}
\newcommand{\uY}{\underline{Y}}
\newcommand{\uZ}{\underline{Z}}
\newcommand{\uset}[2]{\underline{#1}^{(#2 )}}
\newcommand{\lb}{\left(}
\newcommand{\rb}{\right)}
\newcommand{\lbr}{\left\{}
\newcommand{\rbr}{\right\}}
\newcommand{\lbs}{\left[}
\newcommand{\rbs}{\right]}
\newcommand{\lra}{\leftrightarrow}
\newcommand{\slra}[1]{\stackrel{#1}{\longleftrightarrow}}
\newcommand{\1}{\mathbbm{1}}
\newcommand{\smo}[1]{{\mathrm o}\lb #1\rb }
\newcommand{\so}{\mathrm (1)}
\newcommand{\eqvs}{\stackrel{\sim}{=}}
\newcommand{\les}{\lesssim}            
\newcommand{\ges}{\gtrsim}             
\newcommand{\be}[1]{\begin{equation}\label{#1}}
\newcommand{\ee}{\end{equation}}
\newcommand{\ep}{\varepsilon}
\newcommand{\Hla}{H_\lambda}
\newcommand{\RWP}{\sfp}
\newcommand{\An}{\bbA_n^+}
\newcommand{\Anl}{\bbA_{n, \lambda}^{+}}
\newcommand{\Anr}{\bbA_n^{+, {\mathsf r}}}
\newcommand{\Anlr}{\bbA_{n, \lambda}^{+, {\mathsf r}}}
\newcommand{\fkg}{\stackrel{\mathsf{FKG}}{\prec}}
\newcommand{\pfs}{\mathbb{P}_{{\rm FS}}}
\newcommand{\efs}{\mathbb{E}_{{\rm FS}}}
\definecolor{darkblue}{rgb}{0,0.3,0.9}
\newcommand{\cb}{\color{darkblue}}
\newcommand{\cn}{\normalcolor}
\newcommand{\eqd}{\stackrel{d}{=}}
\newcommand{\uell}{\underline{\ell}}
\let\a=\alpha \let\b=\beta  \let\c=\chi \let\d=\delta  \let\e=\varepsilon
\let\f=\varphi \let\g=\gamma \let\h=\eta    \let\k=\kappa  \let\l=\lambda
\let\m=\mu   \let\n=\nu   \let\o=\omega    \let\p=\pi  \let\ph=\varphi
\let\r=\rho  \let\s=\sigma \let\t=\tau   \let\th=\vartheta
\let\y=\upsilon \let\x=\xi \let\z=\zeta
\let\D=\Delta \let\F=\Phi  \let\G=\Gamma  \let\L=\Lambda \let\The=\Theta
\let\O=\Omega \let\P=\Pi   \let\Ps=\Psi \let\Si=\Sigma \let\X=\Xi
\let\Y=\Upsilon
\def\({\left(}
\def\){\right)}
\def\gap{\mathop{\rm gap}\nolimits}
\begin{document}
\date{\today} 

\setcounter{tocdepth}{1}

\title[Uniqueness, mixing, and optimal tails for area tilted line ensembles]{Uniqueness, mixing, and optimal tails for Brownian line ensembles with geometric area tilt}
\author{Pietro Caputo and Shirshendu Ganguly}
\address{Pietro Caputo\\ Universit\`{a} Roma Tre}
\email{pietro.caputo@uniroma3.it}
\address{Shirshendu Ganguly\\ University of California, Berkeley}
\email{sganguly@berkeley.edu}

\thispagestyle{empty}

\begin{abstract}
We consider non-colliding Brownian lines above a hard wall,
which are subject to geometrically growing (given by a parameter $\l>1$) self-potentials of tilted area type, which we call the $\l$-\emph{tilted line ensemble} (LE). The model was introduced in \cite{CIW18, CIW19} 
as a putative scaling limit for the level lines of (2+1)-dimensional 
Solid-On-Solid random interfaces above a hard wall, which in turn is an approximate model for low-temperature 3D Ising interfaces. While the LE has infinitely many lines, the case of  the single line,  known as the Ferrari-Spohn (FS) diffusion, is one of the canonical interfaces appearing in the Kardar-Parisi-Zhang (KPZ) universality class and is well studied \cite{ferrarispohn2005}. 

The $\l$-{tilted} LE can also be viewed as a Gibbs measure. Much of the recent work on Gibbsian line ensembles has focussed on the Airy LE constructed in \cite{corwinhammond}, another central object in the KPZ class with determinantal structure.
In contrast with the Airy LE, the presence of geometrically growing area tilts renders the $\l$-{tilted} LE non-integrable. In \cite{CIW18,CIW19}, a stationary infinite volume Gibbs measure was constructed as a limit of finite LEs on finite intervals with zero boundary conditions, and initial control on its fluctuations was given in terms of first moment estimates for one-point marginals and for suitable curved maxima. We refer to this as the zero boundary $\l$-{tilted} LE. Subsequently, \cite{ DLZ} revisited the case of finitely many lines  
and established an equivalence between the free and the zero boundary LEs. 

In this article, we develop probabilistic arguments  to resolve several  questions that remained open.
We prove that the infinite volume zero boundary $\l$-{tilted} LE is mixing and hence ergodic and establish a quantitative decay of correlation. Further, we prove an optimal upper tail estimate for the top line showing that it is essentially the same as the FS diffusion, i.e., the presence of infinitely many lines doesn't significantly push the top line up. Finally, we prove uniqueness of the Gibbs measure in the sense that 
any \emph{uniformly tight}  $\l$-{tilted} LE (a notion which includes all stationary $\l$-{tilted} LE) must be the zero boundary $\l$-{tilted} LE. This immediately implies that the LE with free boundary conditions, as the number of lines and the domain size are taken to infinity in an arbitrary fashion, converges to this unique LE.
\end{abstract}

\maketitle
\tableofcontents

\section{Introduction}
The goal of this paper is to develop a unified probabilistic framework that will enable us to answer multiple open questions about area tilted line ensembles. Before diving into the precise model of interest, we start with a broad overview of what line ensembles are, why they are important, and some of the recent developments in their study.  

Line ensembles are a collection of random curves which occur rather naturally in many models of interest, encoding useful information. They could be both discrete or continuous. In certain cases they are natural in the pre-limit as well as admit canonical scaling limits. 
A classical example is Dyson Brownian motion (DBM) which describes the motion of eigenvalues of an  $n\times n$ Gaussian unitary ensemble (GUE) as their complex Gaussian entries perform independent (up to Hermitianness) Brownian motions. Remarkably, an alternate description which is significantly more probabilistic states (see \cite{oconnelyor}) that DBM is a collection of independent Brownian motions conditioned to not intersect. 
This immediately implies that they possess a simple yet powerful Markov property, namely, the conditional law of a finite subset of the curves on a compact interval given everything else is given by 
independent Brownian bridges between the given boundary endpoints, constrained to not intersect and to avoid the remainder of the boundary data.

It turns out that this model is determinantal via the Karlin-McGregor formula which opens up the door to performing asymptotic analysis. Indeed such formulas along with probabilistic methods were employed in \cite{corwinhammond} where the Airy line ensemble was constructed as a scaling limit of the DBM. The aforementioned resampling invariance property passes to the limit which provides a varied set of tools to study this canonical family of random curves whose top line is the parabolic Airy$_2$ process.  Via the Robinson–Schensted–Knuth correspondence, the various lines in the Airy LE together are expected to describe the scaling limit of the asymptotic free energy of disjoint polymers in various $1+1$ dimensional models expected to be in the Kardar-Parisi-Zhang (KPZ) universality class. However, currently, the rigorous proof of such connections exist only for a handful of models possessing exactly solvable properties (see \cite{ dauvergne2018directed, dauvergne2021scaling}).  One of the key phenomenon observed in such models is a competition between Brownian fluctuations and the non-intersection constraint leading to new fluctuation exponents characteristic of KPZ universality.

The Markovian property, in this context referred to as BG property,  also casts such LE as special cases of  infinite volume Gibbs measures which are ubiquitous in statistical mechanics and probability theory, where, though infinite, the local conditional distributions given
boundary conditions, are functions of local energy contributions.  Gibbsian line ensembles are a special class of Gibbs measures, which
have received considerable attention in the past two decades owing, in part, to their occurrence in
integrable probability. A general treatment appears in \cite{barraquand2023spatial}. 

Often in applications one needs a precise understanding of various observables including one point tail estimates, decay of correlation and so on. Such has been studied in great detail recently for the Airy LE, see e.g. \cite{corwinhammond, calvert2019brownian, corwin2014ergodicity,dauvergne2021bulk}. Besides probabilistic ideas, as already indicated, integrable features such as being determinantal which stems from an underlying exchangeabilty is usually a crucial input in the analyses. For a recent set of results, which bypasses such reliance see \cite{ganguly2022sharp}.\\

\noindent
\textbf{Low temperature level curves and entropic repulsion:} Another class of line ensembles, which is indeed the focus of this article, arises naturally by considering  the local restrictions of level curves of discrete random interfaces. 
Perhaps the most canonical such an example comes from low temperature three dimensional Ising model with a hard floor, with all positive boundary conditions except the floor where it is entirely negative, leading to an  interface between the two phases. This is an example of entropic repulsion, which for low temperature $(2+1)$-dimensional crystals above a hard wall has been
the subject of extensive study in statistical physics. While the unconstrained 
surface of the crystal would typically be rigid at height $0$, the presence of a
wall pushes the surface  upwards to increase its entropy (i.e., to allow
downward fluctuations), to a height which typically diverges
logarithmically with the side length $L$ of the box. 
A rigorous study of this phenomenon in the $(2+1)$- dimensional  Solid-On-Solid (SOS) model—a
low temperature approximation of the $3$D Ising model—dates back to Bricmont, El
Mellouki and Fr\"ohlich in 1986 \cite{bricmont1986random}. A rather refined picture was established later in \cite{caputoetal2016} where among other things it was shown that the model admits a sequence of nested level lines each
encompassing a large macroscopic fraction of the sites (see also \cite{caputo2017entropic} for counterpart study of other gradient interface models). As a result of the entropic repulsion, the energy cost of the $i$-{th} curve is linear in the area enclosed between the $i$-th and $(i+1)$-th curve with a pre-factor $\l^i$, for some constant $\l>1$, which grows geometrically with $i$.  

While such curves are analyzable to some degree using cluster expansion techniques, to understand refined properties and construct candidates for the scaling limit, in a series of two papers, \cite{CIW18,CIW19}, the authors introduced and initiated the study of a putative scaling limit, a model consisting of an unbounded number of non-intersecting
Brownian bridges, above a hard wall and subject to geometrically increasing area tilts. 
Thus the individual lines face stronger and stronger pressure towards the wall as one goes down the stack (that is as their index is increased). We will call this model the {$\l$-tilted LE}. The very recent work \cite{serio2023scaling} proves that  the $\l$-tilted LE is the scaling limit of the discrete counterpart consisting  of area tilted non intersecting random walks.

A natural way to create infinite volume limits is to take finite systems on compact domains with boundary conditions and then pass to a limit provided it exists. 
Thus important questions concern existence of such limits and their dependence on boundary conditions, and their properties such as ergodicity, decay of correlation and tail behavior. For the Airy LE many of the above questions have been thoroughly investigated, with most of the arguments relying substantially on integrable inputs. See also the recent results in \cite{
dimitrov2023uniform, ferrari2023airy2} showing convergence to the Airy LE of the $1$-tilted LE, i.e.\ when $\lambda=1$, which admits a determinantal structure as observed in \cite{ioffevelenikwachtel}. In contrast, in absence of any algebraic structure,  for the $\l$-tilted LE with $\l>1$, despite the initial results in \cite{CIW18,CIW19} and further progress made recently in \cite{DLZ}, all of the above questions and others have remained largely open.  

{ In this article we develop a unified probabilistic framework primarily relying on resampling and coupling ideas to resolve several of the above questions.} 

Before moving on to the next section  devoted to setting up the definitions leading to the statements of our main theorems, for the ease of readability we offer a quick glimpse of what the paper aims to establish. 
 This includes:
 \begin{itemize}
 \item Sharp tail estimates (addressing both upper tail as well as entropic repulsion).  
 \item Ergodicity and mixing properties of the infinite LE constructed in \cite{CIW19} as well as quantitative decay of correlation estimates.
 \item Uniqueness of $\l-$tilted LEs under natural tightness assumptions, which in particular implies that there exists a unique stationary Gibbs measure. 
 \end{itemize}

We now move on to the precise description of the model and the main theorems. 
In the forthcoming section we set up the technical apparatus, introduce the Gibbs property, as well as review previously known facts, 
to provide the necessary background 
for the main results that will be presented in Section \ref{mainresults}.

\section{Setup and known facts}
\label{gibbsdef}
We start with the formal definition of LEs. 
\subsection{Line ensembles with geometric area tilts.}
\label{sec:geopol}
For  $\ell < r$, $n\in\bbN$, and $\ux,\uy\in\bbR^n$, 
let 
${\mathbf B}^{\ux,\uy}_{\ell , r} $ be the unnormalized path measure of $n$ independent 
standard Brownian bridges $\uB=(B^1(s),\dots,B^n(s))$, $s\in[\ell , r]$, with boundary data 
$B^i(\ell)=x_i$ and $B^i(r)=y_i$, $i=1,\dots,n$.
%
%
As a convention, we write ${\mathbf B}^{\ux , \uy}_{n; \ell , r}$ also for 
the corresponding expectation, so that the total mass of ${\mathbf B}^{\ux , \uy}_{n; \ell , r}$ satisfies
\begin{equation}\label{eq:q-xy} 
{\mathbf B}^{\ux , \uy}_{n; \ell , r}\left[1\right]=q_{r-\ell} (\ux ,\uy ) := 	
 \tfrac1{{(2\pi(r-\ell ))^{n/2}}}\, e^{- \frac{\|\uy-\ux\|_2^2}{2(r-\ell )}}.
\end{equation}
For  $n\in\bbN$, define the simplex
\begin{equation}\label{eq:Aplus}
\bbA_n^+
= \{ \ux \in \bbR^n \,:\,  x_1 >\dots > x_n > 0\}\,.
\end{equation}
We consider path measures that  are  
supported on the set $\Omega^{+}_{n;  \ell, r}$ of non-intersecting $n$-tuples  $\uX$, 
\begin{equation}\label{eq:Omega-Set} 
\Omega^{+}_{n; \ell , r} = \lbr \uX~:~ \uX (t )\in \bbA_n^+\ \ \forall\, t\in (\ell , r)\rbr,
\end{equation}
and such that the $i$-th path $X^i$ is subject to a potential of area tilt type of the form 
 \begin{equation}\label{eq:areatilt} 
\exp\left(-a\l^{i-1}\int_{\ell}^rX^i(s)\dd s
\right)\,,\quad i=1,\dots,n,
\end{equation}
where $a>0$ and $\l>1$ are fixed constants. 
Thus, 
given 
 $n\ge 1$,  $a>0,\lambda >1$ and $\ux,\uy\in\bbA_n^+$, we consider the
partition  function
\begin{equation}\label{eq:PF-BC-T01} 
 Z_{n; \ell , r}^{ \ux , \uy } (a , \lambda ) 
:=  {\mathbf B}^{\ux , \uy}_{\ell , r} 
\left[
\ind_{\Omega_{n; \ell , r}^+}
e^{-a\sum_{i=1}^n \lambda^{i-1}\!\int_{\ell}^rX^i(s)\dd s )}
\right], 
\end{equation}
and the associated probability measure $\bbP^{\ux,\uy}_{n; \ell , r}\left[ \cdot  ~|a, \lambda  \right]$ defined by the expectations
\begin{equation}\label{eq:PF-BC-T02} 
\bbP^{\ux,\uy}_{n; \ell , r}
\left[ F (\uX ) ~|a, \lambda  \right]  
:= \frac{1}{Z_{n; \ell , r}^{ \ux , \uy } (a , \lambda ) } \,
{\mathbf B}^{\ux , \uy}_{\ell , r} 
\left[ F (\uX )\,
\ind_{{\Omega_{n; \ell, r }^+}}
{\rm e}^{-a\sum_1^n \lambda^{i-1}\!\int_{\ell}^rX^i(s)\dd s} 
\right] ,
\end{equation}
where $F$ is any bounded measurable function over the set of $n$-tuples of continuous functions from $[\ell,r]$ to $\bbR$. 
The measure $\bbP^{\ux,\uy}_{n; \ell , r}\left[ \cdot  ~|a, \lambda  \right]$ will be referred to as 
the $n$-LE with 
 $(a,\lambda)$-geometric area tilts with boundary conditions $(\ux,\uy)$ on the interval $[\ell,r]$.  
We remark that $\bbP^{\ux,\uy}_{n; \ell , r}\left[\, \cdot  \tc a, \lambda  \right]$ is well defined for all $\ux,\uy\in\bbA_n^+$. Indeed, if $\ux,\uy\in\bbA_n^+$
one has 
\begin{equation}\label{eq:PF-BC-T03} 
 {\mathbf B}^{\ux , \uy}_{\ell , r} 
\lb 
{\Omega_{n; \ell , r}^+}
\rb >0 , 
\end{equation}
 and therefore $Z_{n; \ell , r}^{ \ux , \uy } (a , \lambda )\in (0,\infty)$. While this does not apply to all $\ux,\uy\in\bar \bbA_n^+$, 
 the closure of the set $ \bbA_n^+\subset \bbR^n$, it is still possible to define $\bbP^{\ux,\uy}_{n; \ell , r}\left[ \cdot  ~|a, \lambda  \right]$
  in these cases by a limiting procedure, see e.g.\ \cite[Definition 2.13]{corwinhammond} for the case $a=0$. If $\lambda=1$, then $\bbP^{\ux,\uy}_{n; \ell , r}\left[ \cdot  ~|a, \lambda  \right]$ has an explicit determinantal form in the limit $[\ell,r]\to \bbR$, and is referred to as the Dyson-Ferrari-Spohn line ensemble; see \cite{ioffevelenikwachtel}.

Two cases of boundary conditions are of special interest. 
 The first one corresponds to {\em zero boundary conditions}, and is obtained by taking $\ux=\uy=\underline 0$ above: 
 \begin{equation}\label{eq:zerobp}
 \bbP^0_{n; \ell , r}\left[ \cdot  \tc a, \lambda  \right]:=\bbP^{\underline 0,\underline 0}_{n; \ell , r}\left[\cdot  \tc a, \lambda   \right].
 \end{equation}
 The second is the case of {\em free boundary conditions} defined by  
\begin{equation}\label{eq:PolMeas02} 
\bbP^f_{n; \ell , r}
\left[ F (\uX ) ~|a, \lambda  \right]  
:= \frac{1}{{\mathcal Z}^f_{n ;\ell , r} (a, \lambda )} 
\int_{\bbA_{n}^+\times \bbA_{n}^+}
{\mathbf B}^{\ux , \uy}_{\ell , r} 
\left[ F (\uX )
\,\ind_{{\Omega_{n; \ell, r }^+}}
{\rm e}^{-a\sum_1^n \lambda^{i-1}\!\int_{\ell}^rX^i(s)\dd s} 
\right]
\dd \ux \dd\uy ,
\end{equation}
where 
\begin{equation}\label{eq:freepartfct}
{\mathcal Z}^f_{n ;\ell , r} (a, \lambda )
:= 
\int_{\bbA_{n}^+\times \bbA_{n}^+}
Z_{n ;\ell , r}^{ \ux , \uy}  (a, \lambda)  
\dd\ux 
\dd 
\uy ,
\end{equation}
and $\dd\ux, \dd 
\uy$ denote Lebesgue measure on $\bbR^n$.
For a proof that $\bbP^f_{n; \ell , r}
\left[ \cdot~|a, \lambda  \right] $ is well defined, that is ${\mathcal Z}^f_{n ;\ell , r} (a, \lambda )\in(0,\infty)$, for all $a>0,\lambda>1$, see \cite[Appendix~A]{CIW18}. 

To simplify the notation, when $[\ell,r]=[-T,T]$ we will often write 
\begin{equation}\label{eq:notation} 
\mu^f_{n,T}:=\bbP^f_{n;-T,T }\left[ \cdot  \tc a, \lambda  \right]\,,\qquad 
\mu^0_{n,T}:=\bbP^0_{n;-T,T }\left[ \,\cdot  \tc a, \lambda  \right]\,,
\end{equation}
 for the above probability measures.
 In what follows $a$ and $\l$ will often be omitted from our notation. We assume the parameter $\l>1$ to be fixed once for all.
  Moreover, whenever $a$ is not explicitly mentioned, it will be tacitly assumed that $a=1$.

\subsection{The stationary measure}\label{sec:stationary}
For $n=1,$ note that $\mu^0_{1,T}$ is simply a Brownian excursion $B(s)$,  $s\in[-T,T]$, with an area tilt 
\begin{equation}\label{areatiltfactor}
\exp{\(-\int_{-T}^T B(s)\dd t\)},
\end{equation}
 and is known to converge weakly, as $T\to\infty$, to the law of a stationary process known as Ferrari-Spohn diffusion, which we denote $\{Y_{FS}(t),\,t\in\bbR\}$, such that $Y_{FS}(0)$
has density proportional to 
\begin{equation}\label{eq:airyfs} 
{\rm Ai}\(\sqrt[3]{2}\,x-\o_1\)^2\ind_{x>0},
\end{equation} 
where ${\rm Ai}(\cdot)$ is the Airy function and $-\o_1$ denotes its largest zero. This was first constructed in  \cite{ferrarispohn2005} as a scaling limit of the relative height of a Brownian bridge constrained to be above circular or parabolic barriers, and then it was shown to be the scaling limit of a large class of area tilted random walk models \cite{ioffeshlosmanvelenik}. Further, across \cite{prewet1,prewet2} this was shown to also arise as the limit of interfaces in low temperature Ising model in the critical pre-wetting regime.

However, we will focus primarily on the case of large $n$ or $n=\infty$. 
Results from \cite{CIW18,CIW19} 
ensure the tightness of the measures $\mu^f_{n,T}$ and $\mu^0_{n,T}$ as $n\to\infty$ and $T\to\infty$. 
As observed therein,  monotonicity (the precise statement appears in Lemma \ref{lem:mono} below)  implies that there is a well defined, time stationary,  infinite line ensemble $\mu^0$ describing the weak limit of $\mu^0_{n,T}$, the limit  $n,T\to\infty$ being taken in an arbitrary fashion, 
\begin{gather}\label{eq:latile} 
\mu^0= \lim_{\substack {n,T\to\infty}} \mu^0_{n,T}.
\end{gather} 
We often refer to $\mu^0$ as the {\em zero boundary condition  $\l$-tilted line ensemble}, or simply the 
zero boundary $\l$-tilted LE or sometimes, for brevity, the zero boundary LE.  In particular, for every fixed $n\in\bbN$, there exists a time stationary, $n$-LE 
\begin{gather}\label{eq:CIWa} 
\mu^0_n = \lim_{T\to\infty} \mu^0_{n,T}
\end{gather}
 describing the weak limit of $\mu^0_{n,T}$ as $T\to\infty$, and such that 
 $\mu^0_n\to\mu^0$ weakly. 
Recently, Dembo, Lubetzky and Zeitouni \cite{DLZ} showed that for every fixed $n\in\bbN,$ one has the weak convergence 
\begin{gather}\label{eq:DLZ} 
\mu^0_n = \lim_{T\to\infty} \mu^f_{n,T}\,,
\end{gather}
for the free boundary ensemble as well. It follows that, taking $T\to\infty$ first, and then $n\to\infty$, the free boundary measures $\mu^f_{n,T}$ also converge to the stationary field $\mu^0$. We refer to Theorem \ref{tightLE12} and Theorem \ref{th:DLZ} for more formal statements.  

As a consequence of our uniqueness result mentioned in the introduction, it follows that sending $n,T$ to infinity in any arbitrary fashion also yields the same result; see Corollary \ref{freeunique}. Concerning the measure $\mu^0$, we shall also address other questions that were listed as open problems in \cite{CIW18,CIW19}, namely ergodicity and decay of correlations.  

The above facts, combined with  PDE theory, also show that the measure $\mu^0_n$ can be identified with the stationary  diffusion process on $\bbA_n^+$ associated to the Sturm Liouville operator 
\begin{gather}\label{eq:SturmLiouville}
\sfL_n =\sum_{i=1}^n\(\tfrac12\partial_{x_i}^2 - \l^{i-1} x_i\)\,,\qquad\;\; \ux\in \bbA_n^+\,,
\end{gather} 
where $\bbA_n^+$ is defined in \eqref{eq:Aplus}. More precisely, $\mu^0_n$ is the law of the stationary Langevin diffusion with invariant distribution $\Phi_n^2(\ux)\dd \ux$ on $\bbA_n^+$, 
where $\Phi_n$ is the Krein-Rutman eigenfunction of $\sfL_n$  on $\bbA_n^+$ with Dirichlet boundary condition on $\partial \bbA_n^+$; see e.g.\ the discussion in \cite[Section 2.3]
{ioffevelenikwachtel} 
for the special case $\l=1$. It is thus natural to identify $\mu^0$ as the law of an infinite dimensional diffusion process, a point of view that we hope to investigate further in future work.

\subsection{Brownian-Gibbs (BG) property for area tilted measures}\label{bgsec}
As alluded to multiple times already, a crucial tool for us will be a sampling invariance property enjoyed by the ensembles defined above. This extends the notion of Brownian-Gibbs property introduced  in \cite{corwinhammond} to paths with area tilts. We refer to it as the Brownian-Gibbs property of the line ensemble.
We start by recalling the basic definitions.  A more comprehensive treatment can be found in \cite{corwinhammond,corwin2016kpz,dauvergne2021bulk}.

Let the spaces $\bbA_\infty^+$
be defined as the $n=\infty$ version of the set $\bbA_n^+$
given in \eqref{eq:Aplus}.
The sample space is   
$\Omega =\sfC\lb \bbR,\bbA_\infty^+\rb$, the set of continuous functions $f:\bbR\mapsto\bbA_\infty^+$, equipped with the topology of uniform convergence of any finite number of paths on compact subsets, and   with the corresponding Borel $\sigma$-field $\calB$. The coordinate maps 
$\uX\in \Omega \mapsto X^i (t )$ are viewed as position of $i$-th particle at time $t$.  
Following \cite{corwinhammond}, for each $n\in\bbN$, and time interval $[\ell,r]\subset\bbR$, define
the {internal} and external $\sigma$-algebras
\begin{gather}\label{eq:Bext} 
\calB_{n; \ell , r}^{\sfi} 
= \sigma\lb X^i (t ) :\, \text{{$t{\in} (\ell , r)$ {and} $i \le n$}}\rb,
\\
\calB_{n; \ell , r}^{\sfe} 
= \sigma\lb X^i (t ):\, \text{{either $t{\notin} (\ell , r)$ {or} $i >n$}}\rb. 
\end{gather}
Given $a>0,\l>1$, an interval $[\ell,r]$, an integer $n\in\bbN$ and a continuous function $h:\bbR\mapsto \bbR_+$, we write $\bbE_{n ;\ell , r}^{\ux, \uy}[\cdot\tc h]$
for the expectation of the $n$-line ensemble with floor $h$, which is defined as in \eqref{eq:PF-BC-T01}-\eqref{eq:PF-BC-T02}  with the set $\Omega_{n; \ell , r}^+$ replaced by 
\begin{gather}\label{eq:ohset} 
\Omega_{n; \ell , r}^{+,h}=\Omega_{n; \ell , r}^+\cap \{X^n(s)> h(s)\,,\;\forall s\in(\ell,r)\}.
\end{gather}
Let also $\Omega_{\ell , r}=\sfC\lb [\ell,r],\bbA_\infty^+\rb$ denote the set of
paths in the interval $[\ell,r]$.  
\begin{definition}[$\l$-tilted LE]\label{def:BG}
A probability measure $\bbP$ on $\Omega$ is said to have the 
{\em Brownian-Gibbs (BG) property} with respect to $(a , \lambda )$-geometric area tilts (or in short simply the BG property) 
if 
for any bounded measurable $F:\Omega_{\ell,r}\mapsto \bbR$, the corresponding conditional expectations $\bbE[\cdot\tc\calB^{\sfe}_{n ;\ell , r}] $ satisfy 
\begin{equation}\label{eq:BG-prop} 
\bbE\left[ \, F\tc
\calB^{\sfe}_{n ;\ell , r} \right] 
= 
\bbE_{n ;\ell , r}^{\uX^{\le n} (\ell) , \uX^{\le n} (r )} \left[\,F(\,\cdot\,,X^{>n}) 
\tc 
X^{n+1}\right], 
\end{equation}
$\bbP$-a.s for any $-\infty <\ell <r <\infty$ and $n\in \bbN$.
In \eqref{eq:BG-prop}, we use the notation $\uX^{\le n}=(X^1,\dots,X^n)$ and $\uX^{>n}=(X^{n+1},X^{n+2},\dots)$.  A probability measure $\bbP$ on $\Omega$ with the above Brownian-Gibbs property is called a $\l$-tilted line ensemble, or simply  $\l$-tilted LE.
\end{definition}

If $\bbP=\mu^0$ denotes the zero boundary $\l$-tilted LE, that is the weak limit of $\mu^0_{n,T}$ as discussed in Section \ref{sec:stationary}, then it was shown in \cite{CIW19} that  
$\bbP$ has the BG property. A standard argument allows one to show that $\l$-tilted measures also satisfy the
{\em strong BG property}, namely the property \eqref{eq:BG-prop} when the deterministic domain identified by $n$ lines and the time interval $[\ell,r]$ is replaced by the {\em stopping domain} identified by $n$  lines and the time interval $[\t_\ell,\t_r]$, where $\t_\ell$ and $\t_r$ are left and right stopping times respectively (i.e., the event $\{\t_\ell \le t\}\cap\{\t_r \ge s\}$ is in $\calB_{n; t , s}^{\sfe}$); see \cite[Lemma 2.5]{corwinhammond}.

Finally, note that the BG property only specifies the conditional law of finitely many paths on a finite domain. Thus $\underline{X}\sim \mu^0$ satisfying the BG property immediately implies that for any constant $c>0,$  the random element $\underline{X}+c=(X^{i}+c)_{i\ge 1}$ also satisfies the BG property (this uses the fact that the area increase for each curve on a given finite domain is deterministic along with the fact that the Brownian bridge density is invariant under shifting by a constant). This can be also thought of as raising the hard floor from $0$ to $c$. Generalizing further, one can raise the floor to any given $g: \bbR\to \bbR_{\ge 0},$ and in particular if, say, $g$ is bounded and smooth enough, one can in principle construct a BG measure where the floor is $g$ by monotonicity and tightness arguments.

However, postponing the investigation of such LEs to the future, \emph{throughout this article we will be only considering LEs satisfying the following property.} 

\begin{definition}[Asymptotically pinned to zero]\label{def:asympin} A probability measure $\bbP$ on $\Omega$ or the corresponding LE is said to be asymptotically pinned to zero if the following holds. For any $\e>0$ and $T\in \bbR,$ there exists $k=k(T,\e)$ such that 
\begin{equation}\label{eq:asymppin}
\bbP\left(\sup_{s\in [-T,T]}X^{k}(s)\le \e\right)\ge 1-\e.
\end{equation}
\end{definition}

Note that any finite LE $(X^{i}(\cdot))_{1\le i\le n}$ can be naturally seen to satisfy the asymptotic pinning condition by introducing an auxiliary curve $X^{n+1}(\cdot)\equiv 0.$ Further, estimates recorded later (see e.g. Remark \ref{freetozero}) immediately imply the unsurprising fact that $\mu^0$ is indeed an example of BG asymptotically pinned to zero.

\subsection{Monotonicity and scaling}\label{sub:pmeas}
Two key tools for the analysis of $\l$-tilted line ensembles are monotonicity and scaling. 
Monotonicity is expressed by the following stochastic domination properties.
For vectors of continuous functions $\underline f=(f^1,\dots,f^n)$, $\underline g=(g^1,\dots,g^n)$ on $[\ell,r]\subset \bbR$ we define the partial order 
  \begin{equation}\label{eq:partialorder} 
\underline f\prec\underline g\;\;\iff \;\;f^i(t)\le g^i(t)\,,\;\quad\forall t\in[\ell,r]\,,\quad \forall \,i\in[n].
  \end{equation}
 For every $n\in \bbN$ 
and $\ell < r$, consider the ensemble $\bbP_{n ;\ell , r}^{\ux , \uy}\left[\, \cdot \tc \underline h^- , \underline h^+, \urho\right]$ of $n$ lines on the interval $[\ell,r]$, parametrized by 
 boundary conditions 
 $\ux, \uy \in \bbA_n^+$, 
 a pair $\uh=(\underline h^- , \underline h^+)$, $\underline h^\pm=(h_{i}^\pm, i=1,\dots,n)$, where for each $i$, $h_{i}^-, h_{i}^+$ are non-negative continuous functions, called respectively the $i$-th  
 floor and the $i$-th ceiling, satisfying $h_{i}^-\prec  h_{i}^+$, and \[ h_{i+1}^-\prec h_{i}^-, \qquad h_{i+1}^+\prec h_{i}^+,\qquad 1=1,\dots,n-1,\]
and an $n$-tuple of nonnegative 
 continuous functions  $\urho = (\rho_1, \dots , \rho_n)$, 
 called the area tilts. 
The line ensemble $\bbP_{n ;\ell , r}^{\ux , \uy}\left[\, \cdot \tc \underline h^- , \underline h^+, \urho\right]$ is defined by the partition function
\begin{equation}\label{eq:PFgen} 
 Z_{n; \ell , r}^{ \ux , \uy } (\underline h^- , \underline h^+, \urho ) 
:=  {\mathbf B}^{\ux , \uy}_{\ell , r} 
\left[
\ind_{\underline h^-\prec \uX\prec   \underline h^+}
\ind_{\Omega_{n; \ell , r}^+}
{\rm e}^{-\sum_{i=1}^n  \int_{\ell}^r \rho_i(t) X^i (t)\dd t}
\right], 
\end{equation}
Stochastic domination is defined w.r.t.\ the partial order \eqref{eq:partialorder}. 
For two measures $\mu,\nu$ on paths $\uX,\uY$, we write $\mu\succ \nu$ if there exists a coupling $\Gamma$  of $(\mu,\nu)$ such that $\Gamma(\uX\succ\uY)=1$.
The following lemma is proved in \cite[Lemma 1.4]{CIW18}, which in turn is based on  \cite[Lemma 2.6]{corwinhammond}, where 
the basic monotonicity property of non-intersecting brownian line ensembles were first established. 
\begin{lemma}\label{lem:mono}
If,  $\ux\prec \uu$, $\uy\prec \uv$, $\underline h^-\prec \underline g^-$, $\underline h^+\prec \underline g^+$, and  $\urho\succ \ukappa$, 
then 
\begin{equation}\label{eq:FKG-2} 
\bbP_{n ;\ell , r}^{\ux , \uy}\left[\, \cdot \tc \underline h^- , \underline h^+, \urho\right]\prec
\bbP_{n ;\ell , r}^{\uu , \uv}\left[\, \cdot \tc \underline g^- , \underline g^+, \ukappa\right].
\end{equation} 
\end{lemma}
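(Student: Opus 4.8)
The plan is to establish the stochastic domination in \eqref{eq:FKG-2} by the standard two-step strategy: first reduce to the case where only one of the parameters differs, and then treat each one-parameter monotonicity by a Markov-chain / heat-flow coupling argument. More precisely, I would interpolate through intermediate ensembles, changing one ingredient at a time — first the left boundary $\ux\prec\uu$, then the right boundary $\uy\prec\uv$, then the floor $\underline h^-\prec\underline g^-$, then the ceiling $\underline h^+\prec\underline g^+$, and finally the area tilts $\urho\succ\ukappa$ — so that transitivity of $\prec$ reduces the claim to five separate lemmas, each monotone in a single parameter. The heart of the matter is then a coupling statement: for two Brownian line ensembles that differ only in (say) the boundary data, one can simultaneously realize the two non-intersecting ensembles on the same probability space so that the larger-parameter ensemble lies pointwise above the smaller one a.s.

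For each one-parameter step, the plan is to invoke the monotone coupling for non-intersecting Brownian line ensembles already available in the literature — precisely \cite[Lemma 2.6]{corwinhammond}, on which \cite[Lemma 1.4]{CIW18} is based — and then to incorporate the extra ingredients (area tilts $\rho_i$, floors/ceilings $\underline h^\pm$) as an FKG-type reweighting. Concretely, one writes the ensemble with tilt $\urho$, floor $\underline h^-$ and ceiling $\underline h^+$ as the baseline non-intersecting ensemble conditioned on the decreasing/increasing events $\{\underline h^-\prec\uX\}$, $\{\uX\prec\underline h^+\}$, and reweighted by the decreasing density $\exp(-\sum_i\int \rho_i(t)X^i(t)\,dt)$. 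Since $\uX\mapsto\IND_{\underline h^-\prec\uX}$, the indicator of avoiding a lower obstacle, is increasing, $\uX\mapsto\IND_{\uX\prec\underline h^+}$ is decreasing, and $\uX\mapsto e^{-\sum_i\int\rho_i X^i}$ is decreasing (and all are monotone in the parameters in the directions claimed — raising the floor makes the event harder, i.e.\ fewer configurations survive but survivors are pushed up; lowering the tilt makes the reweighting less punishing at large heights, hence pushes mass up), the FKG inequality for the monotone baseline coupling propagates the domination through each conditioning and reweighting. This is the mechanism by which $\urho\succ\ukappa$ (larger tilts, pushed down more) yields the smaller ensemble.

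The step I expect to be the main obstacle is verifying that the base monotone coupling of \cite[Lemma 2.6]{corwinhammond} is genuinely compatible with all of these reweightings simultaneously — that is, that one can run a single coupled dynamics (e.g.\ coupled reflected diffusions, or the discrete heat-flow / sequential-resampling approximation of the Brownian bridges) in which the ordering is preserved at every step even in the presence of the hard wall $\bbA_n^+$, the nested floors $h_{i+1}^-\prec h_i^-$, the nested ceilings, and the area-tilt drifts. The delicate points are: (i) ensuring the coupled reflected dynamics respect both the non-crossing constraints and the floor/ceiling constraints without the ordering being destroyed at a collision or at a reflection off an obstacle; and (ii) handling the area tilt, which is a genuinely nonlocal (integrated) functional of the path, so the "decreasing function" reweighting must be justified at the level of the path measure rather than pointwise in a Markov chain. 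A clean way around (ii) is to discretize the area integral as a finite Riemann sum $\exp(-\sum_i\sum_k \rho_i(t_k)X^i(t_k)\,\Delta t)$, for which the reweighting is a product of decreasing functions of the finitely many coordinates $X^i(t_k)$ and hence manifestly FKG-compatible, and then pass to the limit $\Delta t\to 0$ using that the partition functions in \eqref{eq:PFgen} are strictly positive and continuous in this limit by \eqref{eq:PF-BC-T03}. For (i), one appeals to the well-developed monotone-coupling machinery for reflected/constrained diffusions; given that \cite{corwinhammond, CIW18} already carry this out for the area-tilted wall case, the remaining work is bookkeeping of the nested obstacles, which adds no new conceptual difficulty beyond what the single-wall case requires.
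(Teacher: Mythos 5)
The paper itself does not prove Lemma \ref{lem:mono}; it cites \cite[Lemma 1.4]{CIW18}, which in turn rests on \cite[Lemma 2.6]{corwinhammond}. Your proposed mechanism --- discretize the Brownian bridges and the area integral, run a coupled single-site (Glauber/heat-bath) resampling Markov chain that respects the non-crossing, floor, ceiling, and tilt constraints, verify that each one-dimensional conditional update is stochastically monotone in all of its parameters, and pass to the continuum limit using positivity and continuity of the partition functions --- is exactly the approach of those cited proofs, so your plan is sound in its essentials.

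Two points deserve a flag. First, your one-parameter interpolation scheme, as ordered (boundary, boundary, floor, ceiling, tilt), can produce ill-posed intermediate ensembles: after replacing $\underline h^-$ by $\underline g^-$ while keeping the ceiling $\underline h^+$, one needs $\underline g^-\prec\underline h^+$, which does \emph{not} follow from $\underline h^-\prec\underline g^-$ and $\underline h^+\prec\underline g^+$; likewise raising $\ux$ to $\uu$ before the ceiling can push the boundary value above $\underline h^+(\ell)$. Either reorder (enlarge the ceiling first, then the boundary, then the floor, then lower the tilt) so that each intermediate partition function stays positive, or --- cleaner, and what the cited proofs actually do --- couple the two target measures directly in a single Markov chain that changes all parameters at once, which sidesteps the intermediate-ensemble problem entirely. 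Second, the ``FKG reweighting'' framing in your second paragraph is imprecise as stated: positive association of a single measure does not by itself compare two differently conditioned/reweighted measures. The correct single-site tool is Holley's two-measure comparison inequality (equivalently, monotonicity of each Glauber update in its boundary and external parameters), which is what your third paragraph in fact invokes; once you discretize, the area tilt becomes a product of single-coordinate decreasing factors $e^{-\rho_i(t_k)X^i(t_k)\Delta t}$, so the relevant one-dimensional conditional laws are truncated Gaussians with a parameter-monotone drift, and Holley's condition (or a monotone inverse-CDF coupling) is verified coordinate-by-coordinate. With those two adjustments your sketch matches the standard proof.
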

 \begin{remark}\label{rem:nonct}
 The statement in Lemma \ref{lem:mono} extends, with the same proof, to the case of non-continuous ceilings of the form $\bar h^+ = h^+\ind_{\G} + \infty \ind_{\G^c}$ where $h^+$ is a continuous function and $\G$ is a finite union of intervals contained in $[\ell,r]$, and $\G^c=[\ell,r]\setminus\G$. The same extension holds for floors of the form $\bar h^- = h^-\ind_{\G}$ where $h^-$ is a continuous function and $\G$ is as above. Moreover, the lemma generalizes easily to the case of pinned fields obtained by taking ceilings of the form $\bar h^+ = \infty \ind_{\G^c}$ where $\G=\cup_i \{s_i\}$ is a finite collection of points $s_i\in  [\ell,r]$. The latter case corresponds to independent ensembles with zero boundary conditions at the points $s_i$. 
 \end{remark}

Brownian scaling induces a useful scaling relation for $\l$-tilted line ensembles that may be summarized as follows. 
Consider the following mapping of an $n$-tuple 
$\uX$ of paths  on an interval $[-\lambda^{2/3}T , \lambda^{2/3}T]$ 
to $n$-tuple $\uY$ of paths on  $[-T , T]$: 
\begin{equation}\label{eq:BrownScale}  
\uY (\cdot  ) = \frac{1}{\lambda^{1/3}} \uX (\lambda^{2/3}\cdot ) .
\end{equation}
The next lemma says in particular that if $\uY$ is related to $\uX$ via \eqref{eq:BrownScale}, then 
$\uY$ has distribution $\bbP^0_{n, T} \left[ \cdot   ~|a\lambda , \lambda  \right]$ if and only if 
$\uX$ has distribution $\bbP^0_{n, T\lambda^{2/3}} \left[ \cdot   ~|a , \lambda  \right]$, and the same holds for the free boundary ensemble. See \cite[Lemma 1.1]{CIW18} for a proof.
\begin{lemma} 
\label{lem:scaling} 
For all $n\in\bbN,T>0$, $a>0,\lambda>1$, and $ \ux , \uy\in\bbA_n^+$,
\begin{equation}\label{eq:BS1} 
Z_{n,  \ell\lambda^{2/3},r\lambda^{2/3}}^{ \ux , \uy}( a, \lambda) = \lambda^{-\frac{n}3}
Z_{n, \ell,r}^{\lambda^{-1/3} \ux , \lambda^{-1/3}\uy} ( a\lambda , \lambda). 
\end{equation}
Moreover, for any bounded measurable function $F$ on $\Omega_{n,\ell,r}^+$,
\begin{gather}\label{eq:BS2} 
\bbP^0_{n,\ell,r}
\left[ F (\uX ) ~|a\lambda, \lambda  \right]  = \bbP^0_{n, \ell\lambda^{2/3},r\lambda^{2/3}}
\left[ F (\l^{-1/3}\uX ) ~|a, \lambda  \right],
\end{gather}
where $\bbP^0_{n, \ell,r}\left[ \cdot ~|a, \lambda  \right] $ is the zero boundary measure defined in \eqref{eq:zerobp}.  The same expression holds for the free boundary measure $\bbP^f_{n, \ell,r}\left[ \cdot ~|a, \lambda  \right]$ defined in \eqref{eq:PolMeas02}.
\end{lemma}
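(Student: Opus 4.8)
The plan is to prove the scaling relation by directly unwinding the definitions of the relevant path measures and tracking how Brownian scaling transforms each ingredient. Let me set up the map $\uX\mapsto\uY$ with $\uY(\cdot)=\lambda^{-1/3}\uX(\lambda^{2/3}\cdot)$, taking paths on $[\ell\lambda^{2/3},r\lambda^{2/3}]$ to paths on $[\ell,r]$, and keep straight the three effects it has: on the Brownian bridge measure, on the non-intersection/positivity constraint, and on the area-tilt exponential. For the first, the standard Brownian scaling identity says that if $\uB$ is a vector of independent standard Brownian bridges on $[\ell\lambda^{2/3},r\lambda^{2/3}]$ with endpoints $\ux,\uy$, then $\lambda^{-1/3}\uB(\lambda^{2/3}\cdot)$ is a vector of independent standard Brownian bridges on $[\ell,r]$ with endpoints $\lambda^{-1/3}\ux,\lambda^{-1/3}\uy$; at the level of the unnormalized measures ${\mathbf B}$ this introduces a Jacobian/normalization factor which is exactly what produces the $\lambda^{-n/3}$ in \eqref{eq:BS1}, as one can check against the explicit total-mass formula \eqref{eq:q-xy} by noting $q_{(r-\ell)\lambda^{2/3}}(\ux,\uy)=\lambda^{-n/3}q_{r-\ell}(\lambda^{-1/3}\ux,\lambda^{-1/3}\uy)$.

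Next I would observe that the event $\Omega^{+}_{n;\ell\lambda^{2/3},r\lambda^{2/3}}$ — namely $\uX(t)\in\bbA_n^+$ for all $t$ in the open interval — is invariant under the scaling map, since multiplying all coordinates by the positive constant $\lambda^{-1/3}$ preserves both the ordering $x_1>\dots>x_n$ and positivity; so $\ind_{\Omega^+}$ simply transfers. The only substantive computation is the area term: under $s=\lambda^{2/3}u$ we have $ds=\lambda^{2/3}du$ and $X^i(s)=\lambda^{1/3}Y^i(u)$, hence
\begin{equation}\label{eq:areachange}
\int_{\ell\lambda^{2/3}}^{r\lambda^{2/3}}X^i(s)\dd s=\lambda^{2/3}\cdot\lambda^{1/3}\int_{\ell}^{r}Y^i(u)\dd u=\lambda\int_{\ell}^{r}Y^i(u)\dd u,
\end{equation}
so that $a\lambda^{i-1}\int X^i=(a\lambda)\lambda^{i-1}\int Y^i$. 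In other words, the area tilt with parameter $a$ for $\uX$ becomes exactly the area tilt with parameter $a\lambda$ for $\uY$, which accounts for the parameter shift $a\rightsquigarrow a\lambda$ in both \eqref{eq:BS1} and \eqref{eq:BS2}. Assembling these three observations in the integrand of \eqref{eq:PF-BC-T01} gives \eqref{eq:BS1} after the change of variables in the bridge measure; dividing by the respective partition functions, the $\lambda^{-n/3}$ factors cancel and the area-tilt identity gives \eqref{eq:BS2} for the zero boundary measure (set $\ux=\uy=\underline0$, which is fixed by the map).

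For the free boundary case one integrates the partition-function identity \eqref{eq:BS1} against $\dd\ux\,\dd\uy$ over $\bbA_n^+\times\bbA_n^+$; the substitution $\ux\mapsto\lambda^{1/3}\ux$, $\uy\mapsto\lambda^{1/3}\uy$ has Jacobian $\lambda^{2n/3}$ and again preserves $\bbA_n^+\times\bbA_n^+$, so ${\mathcal Z}^f_{n;\ell\lambda^{2/3},r\lambda^{2/3}}(a,\lambda)=\lambda^{-n/3}\lambda^{2n/3}\,{\mathcal Z}^f_{n;\ell,r}(a\lambda,\lambda)=\lambda^{n/3}{\mathcal Z}^f_{n;\ell,r}(a\lambda,\lambda)$, and the same manipulation applied to the numerator of \eqref{eq:PolMeas02} yields the stated identity after cancellation. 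I do not expect any real obstacle here — the argument is a bookkeeping exercise, and the only place to be careful is getting the exponents right: the $1/3$ in the spatial rescaling together with the $2/3$ in the temporal rescaling is precisely calibrated so that $\text{(time)}\times\text{(space)}=\lambda^{2/3}\cdot\lambda^{1/3}=\lambda$ matches the geometric ratio of the area tilts, and so that the bridge normalization exponent $n/3$ comes out consistently; I would double-check these against \eqref{eq:q-xy} and \eqref{eq:areatilt} as the one genuine verification step. (This is essentially the content of \cite[Lemma 1.1]{CIW18}, which we may cite, but I have outlined the self-contained derivation above.)
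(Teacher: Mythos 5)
Your proof is correct and complete. The paper itself gives no proof of this lemma, simply citing \cite[Lemma 1.1]{CIW18}; what you have written out is the expected change-of-variables argument, and all three ingredients --- the normalization factor $\lambda^{-n/3}$ coming from the ratio of bridge total masses $q_{(r-\ell)\lambda^{2/3}}(\ux,\uy)=\lambda^{-n/3}q_{r-\ell}(\lambda^{-1/3}\ux,\lambda^{-1/3}\uy)$, the scale-invariance of the non-intersection event, and the area change $\int X^i\dd s = \lambda\int Y^i\dd u$ which converts the $a$-tilt into an $a\lambda$-tilt --- are handled correctly. The Jacobian bookkeeping in the free-boundary case ($\lambda^{-n/3}\cdot\lambda^{2n/3}=\lambda^{n/3}$ for ${\mathcal Z}^f$, cancelling against the same factor in the numerator) is also right, and the zero-boundary case follows by continuity since $\underline 0$ is a fixed point of the scaling. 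One could add a sentence acknowledging that $\underline 0\notin\bbA_n^+$ so the partition-function identity is being used in the limiting sense of the boundary extension discussed after \eqref{eq:PF-BC-T03}, but this is a minor presentational point rather than a gap.
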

The following remark will be useful in several later applications.
\begin{remark}\label{rem:mon}
Let $X^i_{n,T}$ denote the $i$-th line of the ensemble $\mu^f_{n,T}=\bbP^f_{n,-T,T}\left[ \cdot~|a, \lambda  \right]$.
The above lemma, combined with the monotonicity in Lemma \ref{lem:mono}, shows that $X^2_{n,T}$ is stochastically dominated by $\l^{-1/3}X^1_{n-1,\l^{2/3}T}$, even conditioned on $X^1_{n,T}$. This follows by first removing the ceiling $X^1_{n,T}$ imposed on $X^2_{n,T}$ and then by applying the scaling relation. More generally, by removing all top $k$ paths and applying $k$ times the scaling relation,  one has  that  for any $n>k\ge 1$, 
$X^{k+1}_{n,T}$ is stochastically dominated by $\l^{-k/3}X^1_{n-k,\l^{2k/3}T}$. The same statement continues to hold for $\mu^0_{n,T}.$
\end{remark}

\subsection{Confinement estimates}
Let $X^1_{n,T}$ denote the top path in the free boundary ensemble $\mu^f_{n,T}$. We use $[\cdot]_+$ to denote the positive part. 
The main confinement estimate from \cite{CIW18} 
can be rephrased as follows.
 \begin{theorem}\label{th:logtight}
There exists a constant $C>0$ such that for all $n\in\bbN$, $T>0$, 
\begin{equation}\label{eq:tight1}
\bbE\left[\max_{t\in[-T,T]}[X^1_{n,T}(t) - \psi(t)]_+
\right]\le C
\end{equation}
where $\psi(t)=C\log(1+|t|)$. In particular, for all $n\in\bbN$, and $T\ge S>0$,
\begin{equation}\label{eq:tight_max}
\bbE\left[\max_{t\in[-S,S]}X^1_{n,T}(t)
\right]\le C[1+\log(1+S)].
\end{equation}
\end{theorem}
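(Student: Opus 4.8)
The plan is to reduce \eqref{eq:tight1} to a uniform (in $n$ and $T$) exponential one-point tail for the top line and then to pass from one point to the maximum by a multiscale union bound; \eqref{eq:tight_max} will be an immediate consequence. Concretely, I would first establish that there are constants $C,c>0$, independent of $n,T$, so that for every $n\in\bbN$, $T>0$, every $t_0\in[-T,T]$ and every $v>0$,
\[
\bbP\Big(\max_{t\in[t_0-1,t_0+1]\cap[-T,T]}X^1_{n,T}(t)\geq v\Big)\leq C\,e^{-c\,v^{3/2}} .
\]
Granting this, set $M:=\max_{t\in[-T,T]}[X^1_{n,T}(t)-\psi(t)]_+$ with $\psi(t)=C_0\log(1+|t|)$ for a constant $C_0$ to be fixed later, and bound $\bbP(M\geq u)$ by splitting, using symmetry, $[0,T]$ into the dyadic blocks $I_k=[2^k,2^{k+1}]$, $k\geq0$, together with $I_{-1}=[0,1]$; on $I_k$ one has $\psi\geq c_0 k$ with $c_0=C_0\log 2$, and covering $I_k$ by $O(2^k)$ unit intervals gives $\bbP(\exists\,t\in I_k:X^1_{n,T}(t)\geq u+\psi(t))\leq C2^k e^{-c(u+c_0k)^{3/2}}$. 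Since $(a+b)^{3/2}\geq a^{3/2}+b^{3/2}$ and $\sum_k 2^k e^{-c(c_0k)^{3/2}}<\infty$, summing over $k$ yields $\bbP(M\geq u)\leq C'e^{-c'u^{3/2}}$ uniformly in $n,T$, hence $\bbE[M]=\int_0^\infty\bbP(M\geq u)\,du\leq C''$, which is \eqref{eq:tight1}. Then \eqref{eq:tight_max} follows from $\max_{t\in[-S,S]}X^1_{n,T}(t)\leq \psi(S)+M= C_0\log(1+S)+M$ after taking expectations.

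For the one-point tail I would strip the lower lines. By the (strong) Brownian--Gibbs property applied to the single top line, the conditional law of $X^1_{n,T}$ on $[-T,T]$, given the remaining lines and the boundary values $X^1_{n,T}(\pm T)$, is that of one area-tilted Brownian bridge between those values lying above the floor $X^2_{n,T}$; by monotonicity (Lemma~\ref{lem:mono}) lowering the floor to the hard wall $0$ only decreases the path. For the zero boundary ensemble $\mu^0_{n,T}$, where $X^1(\pm T)=0$, this comparison is with the genuine area-tilted Brownian excursion from $0$ to $0$ above a hard wall, i.e.\ with $\mu^0_{1,T}$; moreover this last measure is increasing in $T$ (enlarging the interval weakens the pinning: apply Lemma~\ref{lem:mono} conditionally on $X^1(\pm T)$), and it converges as $T\to\infty$ to the Ferrari--Spohn diffusion $Y_{FS}$. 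Hence $X^1_{n,T}$ under $\mu^0_{n,T}$ is stochastically dominated on any $[-S,S]$ by $Y_{FS}$, and the desired tail follows from the fact that $Y_{FS}$ is a stationary diffusion whose one-point law has density proportional to ${\rm Ai}(\sqrt[3]{2}\,x-\omega_1)^2\ind_{x>0}$ and hence $e^{-c v^{3/2}}$ tails, cf.\ \eqref{eq:airyfs}, together with a Brownian modulus-of-continuity bound to pass from a point to a unit interval. If one prefers to avoid invoking the limit $Y_{FS}$, the same tail for $\mu^0_{1,T}$ can be obtained directly: on $\{X^1(t_0)\geq v\}$ take the excursion $[\tau_\ell,\tau_r]$ of $X^1$ above level $v/2$ through $t_0$; strong Brownian--Gibbs on this stopping domain exhibits $X^1$ there as an area-tilted bridge between $v/2$ and $v/2$ of random width $w$, the probability of climbing an extra $v/2$ is $e^{-cv^2/w}$ up to polynomial corrections, while the probability that the excursion is that wide is $e^{-cvw}$ because it encloses area $\geq\tfrac v2 w$ — the latter by the standard push-down Radon--Nikodym argument — and summing over dyadic $w$ balances at $w\asymp\sqrt v$.

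The remaining, and genuinely delicate, case is the free ensemble $\mu^f_{n,T}$, whose boundary values $X^1_{n,T}(\pm T)$ are random: stripping the lower lines lowers $X^1$ in the bulk but simultaneously pushes its endpoints up, so one cannot directly dominate $\mu^f_{n,T}$ by $\mu^0_{1,T}$. What is additionally needed is a uniform exponential tail for the endpoint marginal, $\bbP_{\mu^f_{n,T}}(X^1(\pm T)\geq v)\leq Ce^{-cv^{3/2}}$; given that, one conditions on the endpoints $(\ux,\uy)$, compares the top line via Lemma~\ref{lem:mono} with a single area-tilted Brownian bridge from $x_1$ to $y_1$ above a hard wall, and checks that for such a bridge the quantity $[\,\cdot-\psi\,]_+$ has expectation at most the ``bulk'' contribution, bounded as above, plus a correction controlled by $\bbE[(x_1\vee y_1-\log(1+T))_+]$, which is $O(1)$ by the endpoint tail. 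I expect the endpoint tail itself to be the main obstacle: it is not a local statement and seems to require a partition-function comparison exploiting the geometric growth $a\lambda^{i-1}$ of the area tilts to make the free-endpoint integral converge and decay, in the spirit of \cite[Appendix~A]{CIW18}. An alternative, and probably cleaner, route that avoids singling out the endpoints is to prove the whole estimate by induction over dyadic scales, the inductive hypothesis being a scale-$S$ confinement bound for both boundary conditions simultaneously and the inductive step using the area tilt at the next scale together with Brownian--Gibbs and monotonicity; this is how I would expect to obtain the constants uniformly in $n$ and in the boundary conditions.
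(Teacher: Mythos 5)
Your overall strategy -- a uniform one-point (or unit-interval) exponential tail for the top line, followed by a dyadic union bound over scales to control the curved maximum -- is the right shape of argument, and it is essentially what underlies the cited result. Note, however, that the paper does not supply a proof of this theorem: it is imported verbatim from \cite[Theorem 3.1]{CIW18} (which gives the confinement with $\psi(t)=|t|^\alpha$) together with the remark that upgrading $|t|^\alpha$ to $\log(1+|t|)$ is a minor modification once one has the exponential tail from \cite[Section 3.3]{CIW18}. Your superadditivity observation $(a+b)^{3/2}\ge a^{3/2}+b^{3/2}$ and the dyadic covering are fine, and even a plain exponential tail $e^{-cv}$ (rather than the $e^{-cv^{3/2}}$ you aim for) would close the union bound once $C_0$ is taken large, which matches the ``minor adjustment'' remark in the text.

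The genuine gap is the one you flag yourself: the uniform-in-$(n,T,t_0)$ tail for the top line of the \emph{free} ensemble $\mu^f_{n,T}$, and in particular the endpoint marginals $X^1_{n,T}(\pm T)$. Your line-stripping and floor-lowering argument correctly dominates the zero-boundary top line by $\mu^0_{1,T}$ and hence by the Ferrari--Spohn diffusion, but for $\mu^f_{n,T}$ the BG conditioning produces an area-tilted bridge between the \emph{random} heights $X^1_{n,T}(\pm T)$, and stripping the lower lines does nothing to keep these endpoint heights from being large. The uniform endpoint tail is not a local statement; it requires a global partition-function lower bound (in the spirit of \cite[Appendix~A]{CIW18}) to control $\cZ^f_{n;-T,T}$ from below uniformly in $n,T$, against an upper bound on the weight of configurations with $x_1\ge v$ coming from the area cost $\sim v^{3/2}$ of descending from height $v$. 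You sketch two plausible routes (the partition-function comparison and a dyadic induction on scales) but carry out neither, so as written the proposal reduces the theorem to an unproven input that is in fact the main technical content of \cite{CIW18}; the reduction steps themselves are sound and would give both \eqref{eq:tight1} and \eqref{eq:tight_max} once that input is in place.
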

The statement of Theorem \ref{th:logtight} was proven in \cite{CIW18} with the function $\psi(t)$ replaced by $|t|^\a$, where $\a>0$ can be taken arbitrarily small, see \cite[Theorem 3.1]{CIW18}. However, a careful check of the steps in that proof reveals that  the upgrading
presented in \eqref{eq:tight1} requires only minor adjustments; this is indeed a consequence of the exponential tail established in \cite[Section 3.3]{CIW18}. Moreover, minor modifications of the argument leading to \eqref{eq:tight1} can be shown to prove the slightly stronger bound: for every $S,T>0$, 
\begin{equation}\label{eq:tightao1}
\bbE\left[\max_{t\in[-S,T]}[X^1_{n;-S,T}(t) - \psi(t)]_+
\right]\le C\,,
\end{equation}
where $X^1_{n;-S,T}$ is the top path in the free boundary condition ensemble $\bbP^f_{n;-S,T }\left[ \cdot  \tc a, \lambda  \right]$ on the interval $[-S,T]$. 
As a corollary, we state bounds on the height of the $k$-th path $X^k_{n,T}(t)$ in the free boundary ensemble $\mu^f_{n,T}$, for any $k\ge 1$, which exploit the uniformity along translations expressed by the estimate \eqref{eq:tightao1}. The next statement follows directly from the stochastic domination in Remark \ref{rem:mon} and \eqref{eq:tightao1}. 
As a convention, we set $X^{k+1}_{n,T}(u)=0$ if $u\notin[-T,T]$. 
 \begin{corollary}\label{cor:logtight}
There exists a constant $C>0$ such that for all integers $k\ge 0$, $n\ge k+1$, and for all $T>0$, 
\begin{equation}\label{eq:tight2}
\sup_{t\in\bbR}\,\bbE\left[X^{k+1}_{n,T}(t)\right]\le C\l^{-k/3} .
\end{equation}
Moreover, for all $T\ge S>0$,
\begin{equation}\label{eq:tight2a}
\sup_{t\in\bbR}\,
\bbE\left[\max_{u\in[-S,S]}X^{k+1}_{n,T}(t+u) \right]\le C\l^{-k/3}[1+\log(1+|S\l^{2k/3}|)].
\end{equation}
\end{corollary}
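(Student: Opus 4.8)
The plan is to reduce both displays to a single uniform estimate on the \emph{top} line of an arbitrary free boundary ensemble and then transport it to the $(k+1)$-th line via the scaling relation. The inputs are: the confinement estimate \eqref{eq:tightao1}; the time-translation covariance of the free boundary measures, which is immediate from \eqref{eq:PolMeas02} since Brownian bridges, the area tilts $\int_\ell^r X^i$, and the positivity and non-intersection constraints are all covariant under shifting the time interval; and the (monotone, simultaneous-in-time) stochastic domination with rescaling recorded in Remark \ref{rem:mon}.

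First I would establish a uniform top-line bound: for every $n$, every interval $[\ell,r]$, every $t\in(\ell,r)$, and every $b>0$,
\[
\bbE\Big[\max_{v\in[t-b,\,t+b]}X^1_{n;\ell,r}(v)\Big]\le C\big[1+\log(1+b)\big],
\]
where $X^1_{n;\ell,r}$ is the top line of $\bbP^f_{n;\ell,r}$, it is set to $0$ outside $[\ell,r]$, and $C$ is the constant of \eqref{eq:tightao1}. The argument: translate so that $t$ becomes the origin, i.e.\ work with $\bbP^f_{n;-S,T}$ where $S=t-\ell>0$ and $T=r-t>0$; restrict the maximum to $[-b,b]\cap[-S,T]$ (the path vanishes on the complement), bound $\psi(v)\le C\log(1+b)$ there, and apply \eqref{eq:tightao1}. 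Taking $b\downarrow 0$ and using $\psi(0)=0$ gives in particular $\sup_{t}\bbE[X^1_{n;\ell,r}(t)]\le C$, uniformly in $n$ and $[\ell,r]$.

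Then, for $k=0$ the two bounds are exactly this with $[\ell,r]=[-T,T]$ and $b=S$ (resp.\ $b\downarrow 0$). For $1\le k\le n-1$, Remark \ref{rem:mon} yields a coupling, simultaneous over the time variable, under which $X^{k+1}_{n,T}(t)\le\l^{-k/3}X^1_{n-k,\l^{2k/3}T}(\l^{2k/3}t)$ for all $t$, whence
\[
\bbE\Big[\max_{u\in[-S,S]}X^{k+1}_{n,T}(t+u)\Big]\le \l^{-k/3}\,\bbE\Big[\max_{v\in[\l^{2k/3}(t-S),\,\l^{2k/3}(t+S)]}X^1_{n-k,\l^{2k/3}T}(v)\Big].
\]
Applying the uniform top-line bound to $\mu^f_{n-k,\l^{2k/3}T}$ with centre $\l^{2k/3}t$ and half-width $b=\l^{2k/3}S$ gives \eqref{eq:tight2a}, and letting $S\downarrow 0$ gives \eqref{eq:tight2}. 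Values $|t|\ge T$ are harmless: there the maximum reduces to one over a sub-interval of $[-T,T]$ of length at most $2S\le 2T$ which, after recentring, is centred in $(-T,T)$ and is therefore covered by the same display.

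I do not expect a real obstacle here: the statement is a bookkeeping assembly of facts established earlier, the only substantial ingredient being \eqref{eq:tightao1}, imported (as a minor strengthening of Theorem \ref{th:logtight}) from \cite{CIW18}. The two places needing a little care are (i) applying \eqref{eq:tightao1} to the appropriately \emph{translated} free boundary ensemble rather than to a restriction of $\mu^f_{n,T}$ --- legitimate precisely by time-translation covariance --- and (ii) tracking the factor $\l^{2k/3}$ that enters the logarithm through the time rescaling in Remark \ref{rem:mon}.
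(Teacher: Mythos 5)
Your proof is correct and is essentially the argument the paper has in mind: the paper simply states that the corollary "follows directly from the stochastic domination in Remark \ref{rem:mon} and \eqref{eq:tightao1}," and your write-up supplies exactly that assembly, with the translation-covariance observation and the careful tracking of the $\l^{2k/3}$ time rescaling being the right bookkeeping to make it precise.
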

\begin{remark}\label{freetozero}
Note that by monotonicity, 
Corolloary \ref{cor:logtight} applies to the zero boundary ensemble $\mu^0_{n,T}$ as well.  Moreover, by uniformity in $n,T$, and appealing for instance to Fatou's lemma,   the same uniform estimates continue to hold for the $(k+1)$-th line of the zero boundary $\l$-tilted LE $\mu^0$.
\end{remark}

Finally, for later reference we collect below the 
the main findings from \cite{CIW19} and \cite{DLZ} concerning infinite volume measures that we already alluded to in Section \ref{sec:stationary}.

\begin{theorem}\cite[Th.\,1.3, Th.\,1.4, Th.\,1.5]{CIW19}\label{tightLE12} For any $k,$ the joint law of $\{X^i_{n,T}\}_{1\le i\le k}$ under $\mu^f_{n,T}$ or $\mu^0_{n,T}$ forms a tight sequence as $n,T\to \infty$,
and any weak limit point is a Gibbs measure in the sense of Definition \ref{def:BG}. Moreover, $\mu^0_{n,T}$  has a well defined stationary limit $\mu^0$, the zero boundary $\l$-tilted LE. 
\end{theorem}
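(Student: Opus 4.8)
The plan is to establish the three assertions separately: (i) tightness of the top-$k$ marginals of $\mu^f_{n,T}$ and $\mu^0_{n,T}$ in $\sfC(\bbR,\bbR^k)$ with the uniform-on-compacts topology; (ii) the Brownian-Gibbs property for every subsequential limit; (iii) existence of the stationary limit $\mu^0$ taken in an arbitrary order. Throughout, the two workhorses are the confinement estimates of Theorem~\ref{th:logtight} and Corollary~\ref{cor:logtight} (which, by Remark~\ref{freetozero}, apply to both boundary conditions), and the monotonicity of Lemma~\ref{lem:mono}.

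For (i), tightness in $\sfC(\bbR,\bbR^k)$ reduces, via a diagonal argument over a sequence $S_j\to\infty$, to tightness of the restrictions to each $\sfC([-S,S],\bbR^k)$, which I would obtain from the Arzel\`a--Ascoli criterion: a uniform bound on pointwise values plus a uniform control of the modulus of continuity. The pointwise bound is immediate: since $0\le X^{i}_{n,T}\le X^1_{n,T}$ for $i\le k$, estimate \eqref{eq:tight2a} and Markov's inequality show that $\max_{[-S,S]}X^1_{n,T}$ is stochastically bounded, uniformly in $n,T$. For the modulus of continuity I would use the BG property: conditioning on $\calB_{k;\ell,r}^{\sfe}$ for a short interval $[\ell,r]\subset[-S,S]$, the top $k$ lines become non-intersecting Brownian bridges with the realized ordered endpoints, a floor $X^{k+1}\ge 0$ and downward area tilts; by Lemma~\ref{lem:mono} this ensemble is stochastically squeezed between the analogous ensembles with zero floor/no tilt and with floor $X^{k+1}$/no tilt, both comparable to free Brownian bridges. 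Since the local oscillation of a Brownian bridge, hence of a conditioned Brownian excursion, on a short interval has exponential tails uniformly in the endpoints (after subtracting the straight-line part), one deduces that the local oscillation of each $X^i_{n,T}$ has exponential tails uniform in the conditioning data; summing over a partition of $[-S,S]$ and over dyadic scales yields equicontinuity. Making these comparisons genuinely uniform requires some care but is routine, following \cite{corwinhammond,CIW18}.

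For (ii), let $\mu$ be a subsequential limit of $\mu^f_{n_j,T_j}$ (the case of $\mu^0_{n_j,T_j}$ is identical). To verify Definition~\ref{def:BG} I would fix $m$, $[\ell,r]$, a bounded continuous $F$ on $\Omega_{\ell,r}$ and a bounded continuous $\calB_{m;\ell,r}^{\sfe}$-measurable $G$, and aim to show $\bbE_\mu[FG]=\bbE_\mu[\Psi\,G]$, where $\Psi$ is the BG resampling functional $\uX\mapsto \bbE_{m;\ell,r}^{\uX^{\le m}(\ell),\uX^{\le m}(r)}[F(\cdot,X^{>m})\tc X^{m+1}]$. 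For $n_j\ge m$ the finite ensemble already satisfies the corresponding finite-$n$ BG identity, so $\bbE_{\mu^f_{n_j,T_j}}[FG]=\bbE_{\mu^f_{n_j,T_j}}[\Psi\,G]$, and it suffices to pass both sides to the limit. The left side passes by weak convergence. For the right side one needs $\Psi$ to be $\mu$-a.s.\ a bounded continuous function of $\uX$ and the convergence to be uniformly integrable. Continuity of $\Psi$ is the delicate point: the partition function in the denominator of $\Psi$ is continuous and strictly positive exactly when $\uX^{\le m}(\ell),\uX^{\le m}(r)\in\bbA_m^+$ and $X^{m+1}$ stays strictly below them at $\ell$ and $r$, so one must first show that $\mu$ charges only configurations with strictly ordered coordinates and strictly positive bottom line at the deterministic times $\ell,r$; this follows from an a~priori absolute-continuity (bounded-density) estimate at fixed times for the finite ensembles, uniform in $n,T$, which passes to the limit. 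Uniform integrability is provided by the $\log$-type tail bound underlying Theorem~\ref{th:logtight}. I expect this continuity/non-degeneracy step to be the main obstacle.

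For (iii), monotonicity gives that $T\mapsto\mu^0_{n,T}$ and $n\mapsto\mu^0_{n,T}$ are stochastically increasing on top-$k$ marginals: restricting $\mu^0_{n,T'}$ to $[-T,T]$ exhibits it as a mixture of finite ensembles with boundary data $\succ\underline 0$ and floor $0$, which by Lemma~\ref{lem:mono} dominate $\mu^0_{n,T}$; likewise $\mu^0_{n+1,T}$ projected to its top $n$ lines is, conditionally on $X^{n+1}\succ 0$, a finite ensemble with floor $X^{n+1}$ dominating $\mu^0_{n,T}$. A stochastically monotone family that is uniformly tight by (i) has a unique limit, and the limit over the directed set $\{(n,T)\}$ is independent of the order; call it $\mu^0$. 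For stationarity, shifting $\mu^0_{n;-T,T}$ by $h$ yields $\mu^0_{n;-T+h,T+h}$, a cofinal sequence of the same monotone family, hence with the same limit $\mu^0_n:=\lim_{T\to\infty}\mu^0_{n,T}$; thus each $\mu^0_n$ is translation invariant, and so is $\mu^0=\lim_{n\to\infty}\mu^0_n$. Finally $\mu^0$ is a subsequential limit of $\mu^0_{n,T}$, so by (ii) it is a Gibbs measure, and being also stationary it is the zero boundary $\l$-tilted LE.
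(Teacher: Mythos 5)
The paper does not prove this theorem: it is quoted verbatim from \cite{CIW19} (Th.\,1.3--1.5 there), with the surrounding text in Sections~2.2--2.4 indicating that the proof in that reference proceeds exactly along the lines you sketch, namely confinement estimates (Theorem~\ref{th:logtight}, Corollary~\ref{cor:logtight}) for tightness, passing the finite-volume Brownian--Gibbs resampling identity to the limit (with the non-degeneracy/continuity of the conditional kernel as the delicate step you correctly flag), and monotonicity in $n$ and $T$ (Lemma~\ref{lem:mono}, Remark~\ref{rem:nonct}) for existence and translation invariance of $\mu^0_n$ and $\mu^0$. Your outline is therefore essentially the same approach and is sound; the one place the wording could be tightened is in step (iii), where it is cleaner to phrase the ``cofinal family'' argument in terms of the net $\{\mu^0_{n;\ell,r}\}$ indexed by all intervals $[\ell,r]$ ordered by inclusion, so that both $\{[-T,T]\}$ and $\{[-T+h,T+h]\}$ are cofinal subfamilies with a common limit, giving shift invariance of $\mu^0_n$ directly.
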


\begin{theorem}\cite[Th.\,1.1]{DLZ}\label{th:DLZ} For any $n\in\bbN$,  $\mu^f_{n,T}$ and $\mu^0_{n,T}$ have the same weak limit $\mu^0_n$, as $T\to \infty$. In particular, taking first $T\to\infty$ and then $n\to\infty$ one has the weak convergence of $\mu^f_{n,T}$ to the zero boundary LE  $\mu^0$.
\end{theorem}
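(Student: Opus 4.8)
Both families $\{\mu^f_{n,T}\}_{T>0}$ and $\{\mu^0_{n,T}\}_{T>0}$ are tight as $T\to\infty$ (Theorem~\ref{tightLE12} together with the uniform height bounds of Corollary~\ref{cor:logtight} and Remark~\ref{freetozero}), and $\mu^0_{n,T}$ converges to $\mu^0_n$ by the monotonicity recalled in Section~\ref{sec:stationary}. It thus suffices to show that every subsequential weak limit $\nu$ of $\mu^f_{n,T}$ satisfies both $\nu\succ\mu^0_n$ and $\nu\prec\mu^0_n$ for the partial order $\prec$, since this forces $\nu=\mu^0_n$. The ``in particular'' assertion of the theorem then follows from the already known weak convergence $\mu^0_n\to\mu^0$ as $n\to\infty$ (Theorem~\ref{tightLE12}).

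\textbf{Lower bound.} For $S>0$ and $T>S$, the Gibbs property of the finite ensemble $\mu^f_{n,T}$ on the time interval $[-S,S]$ states that, conditionally on the path data outside $(-S,S)$, the restriction of $\mu^f_{n,T}$ to $[-S,S]$ has law $\bbP^{\uX(-S),\uX(S)}_{n;-S,S}$; since $\uX(\pm S)\in\bbA_n^+$ we have $\uX(\pm S)\succ\underline 0$, so Lemma~\ref{lem:mono} gives stochastic domination of $\bbP^{\underline 0,\underline 0}_{n;-S,S}=\mu^0_{n,S}$. Averaging and letting $T\to\infty$ along the subsequence yields $\nu|_{[-S,S]}\succ\mu^0_{n,S}$; letting $S\to\infty$ and using $\mu^0_{n,S}\to\mu^0_n$ gives $\nu\succ\mu^0_n$.

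\textbf{Upper bound.} This is the crux. Fix a window $I=[-A,A]$ and $\eta>0$. The uniform one-point bound \eqref{eq:tight2} (with $k=0$) gives $\sup_{t\in\bbR}\bbE_{\mu^f_{n,T}}[X^1(t)]\le C$ for all $T$, whence by Markov's inequality there is a height $h=h(\eta)$, \emph{independent of $T$}, such that $\mu^f_{n,T}\big(X^1(M)\le h,\ X^1(-M)\le h\big)\ge 1-\eta$ for all $M>A$ and $T>M$. On that event $\uX(\pm M)\prec\underline h$ for any fixed $\underline h\in\bbA_n^+$ with $\min_i h_i>h$, so by the Gibbs property on $[-M,M]$ and Lemma~\ref{lem:mono} the restriction of $\mu^f_{n,T}$ to $I$ is, up to total-variation error $\eta$, stochastically dominated by the restriction of $\bbP^{\underline h,\underline h}_{n;-M,M}$ to $I$. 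Letting first $T\to\infty$ and then $M\to\infty$, and recalling $\bbP^{\underline h,\underline h}_{n;-M,M}\succ\mu^0_{n,M}\to\mu^0_n$ for the lower side, the whole argument reduces to the following claim: for this \emph{fixed} $h$, the restriction to $I$ of $\bbP^{\underline h,\underline h}_{n;-M,M}$ converges weakly, as $M\to\infty$, to the restriction to $I$ of $\mu^0_n$; in words, boundary data of bounded height placed at distance $M$ has asymptotically no influence on a fixed window.

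\textbf{The reduction, and the main obstacle.} This last claim is a convergence-to-equilibrium statement for the $n$-line ensemble. The route I would take is through PDE theory: by the identification recalled in Section~\ref{sec:stationary}, $\mu^0_n$ is the law of the stationary Langevin diffusion on $\bbA_n^+$ generated by $\sfL_n$ in \eqref{eq:SturmLiouville}, an elliptic operator whose linear confining potential together with the Dirichlet condition on $\partial\bbA_n^+$ forces a discrete spectrum with a positive spectral gap (in one dimension this is the classical gap between consecutive zeros of the Airy function, cf.\ \eqref{eq:airyfs}); a spectral expansion of the killed heat kernel then shows that the marginal at the center of the $\sfL_n$-bridge from $\underline h$ to $\underline h$ over $[-M,M]$ converges to the invariant density $\propto\Phi_n^2$ at an exponential rate in $M$, the $\underline h$-dependence entering only through an $M$-independent prefactor, finite because $h$ is fixed. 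A purely probabilistic alternative, closer in spirit to the rest of the article, would couple $\bbP^{\underline h,\underline h}_{n;-M,M}$ monotonically with $\mu^0_{n,M}$ and control the difference on $I$ by a resampling argument, using that the area tilts drag each line down to its equilibrium height within a time polynomial in $h$ and that the lines below any given line stay low after removing the top lines and rescaling (via \eqref{eq:tight2}, Lemma~\ref{lem:scaling} and Remark~\ref{rem:mon}); this route, however, first requires extending the confinement estimates of Theorem~\ref{th:logtight} to general boundary conditions. Either way, the main obstacle is precisely this decay-of-boundary-influence estimate; everything else is bookkeeping with monotonicity, and combining it with the lower bound gives $\nu=\mu^0_n$.
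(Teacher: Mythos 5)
Your lower bound ($\nu\succ\mu^0_n$) is correct and coincides with the easy half of the paper's argument. However, your upper bound reduces the theorem to an unproved claim --- that the restriction to a fixed window of $\bbP^{\underline h,\underline h}_{n;-M,M}$ converges to that of $\mu^0_n$ as $M\to\infty$ --- and you acknowledge as much. This decay-of-boundary-influence statement is \emph{not} bookkeeping: it is a mixing result for the finite $n$-line Gibbs specification that is itself essentially on the same level of difficulty as Theorem~\ref{th:DLZ}. Neither of your two proposed routes (Airy-type spectral gap for $\sfL_n$, or a monotone resampling coupling that requires extending Theorem~\ref{th:logtight} to general boundary data) is carried out, so the proof is not complete.

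The paper's proof (Lemma~\ref{lem:coupling} together with Corollary~\ref{cor:DLZ}) sidesteps precisely this obstacle. Rather than deterministically trapping the free ensemble between two fixed-height ensembles on a growing interval and then arguing loss-of-memory, the paper couples \emph{independent} samples $\uX_{n,T}\sim\mu^f_{n,T}$ and $\uY_{n,T}\sim\mu^0_{n,T}$ and exploits the \emph{random fluctuations} of the zero-boundary ensemble: with probability exponentially close to $1$ (in $T$), the bottom path of $\uY_{n,T}$ rises above a fixed level $u$ at some stopping times $\tau_\ell<-T/2<T/2<\tau_r$, while at those same random times the top path of the independent $\uX_{n,T}$ is below $u$ (by the one-point moment bound of Corollary~\ref{cor:logtight}). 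Resampling both ensembles on the resulting stopping domain via the monotone coupling and the strong BG property then forces $\uX_{n,T}\prec\uY_{n,T}$ on $[-T/2,T/2]$ with probability $\to1$. The only quantitative input needed is the trivial lower bound $\d_n(u)>0$ for the probability that the lowest of the $n$ zero-boundary paths on $[-1,1]$ exceeds $u$ at the midpoint --- no spectral gap, no decay-of-correlation estimate, and no extension of the confinement bounds to raised boundary conditions. This is the key idea your proposal is missing: replacing a fixed boundary at a deterministic location by a random stopping domain where the desired ordering of boundary data is achieved by independent fluctuation.
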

 
Given the above preparation, we are now in a position to state our main results.
\section{Main results}\label{mainresults} We first start with the properties of $\mu^0$ before discussing other boundary conditions.

\subsection{Optimal tail behavior}
The first result concerns the one point tail behavior which will also serve as a crucial input in the proofs of several of the forthcoming results.
It is worth beginning by drawing an analogy with the known tail estimates for the top line of the Airy LE, namely the Airy$_2$ process. It is known that the one point distribution in this case is the GUE Tracy-Widom distribution $F_{TW}$ (see \cite{tracy1994level, ramirez2011beta}) which has the following tail behavior:
\begin{align}\label{twtails}
F_{TW}((t,\infty)) &= \exp(-\tfrac4{3} t^{3/2}+ O(\log t)), \,\,\text{as\,\,\,} t\to \infty,\\
F_{TW}((-\infty,-t)) &= \exp(-\tfrac1{12} t^{3}+ O(\log t)), \,\,\text{as\,\,\,} t\to \infty. 
\end{align}

Moreover, letting  $Y_{FS}(\cdot)$ denote the stationary  Ferrari-Spohn diffusion with one-point marginal given by \eqref{eq:airyfs}, the asymptotic formula for the Airy function ${\rm Ai}(z)=\exp{[(-2/3 +o(1))z^{3/2}]}$, as $z\to\infty$, 
shows that 
\begin{equation}\label{eq:domino1}
\bbP\left(Y_{FS}(0)> t\right) = \exp{\left(-\left(\tfrac{2\sqrt 2}3 
+ o(1)\right)t^{3/2}\right)}\,,\qquad t\to\infty.
\end{equation}

In the case of infinitely many lines, as recorded in Theorem \ref{th:logtight} and Corollary \ref{cor:logtight}, estimates from \cite{CIW18,CIW19} captured the right scale of fluctuation for the $\l$-tilted ensemble $\mu^0$. Here we establish an essentially optimal upper tail estimate for the height of the top path, showing that the Tracy-Widom/Ferrari-Spohn upper tail \eqref{eq:domino1} \emph{continues to hold at least up to first order even for $\mu^0$}, and using scaling arguments this extends to every subsequent path. 
On the other hand, the lower tail behavior for $\mu^0$ 
cannot be expected to be the same as that of Airy$_2$ because of the entropic repulsion induced by the hard floor constraint. For the Ferrari-Spohn distribution, it is not too difficult to see  that the probability of getting close to the floor decays as: 
\begin{equation}\label{FSlowertail}
\bbP(Y_{FS}(0)\le \varepsilon)\;\asymp\;
\varepsilon^{3} ,
\end{equation}
 since essentially it behaves in the same way as a Brownian excursion on a unit order interval. With multiple lines, we establish that the lower tail diminishes faster than polynomial; see Theorem \ref{lowertail}.

Let $X^i$ denote the $i$-th line in the zero boundary $\l$-tilted LE $\mu^0$. 
By monotonicity, $X^1(0)$ dominates $Y_{FS}(0)$, 
and therefore \eqref{eq:domino1} is a lower bound on the tail probability $\bbP\left(X^1(0)> t\right)$. The next result proves that, asymptotically, it is also an upper bound. 
\begin{theorem}[Upper tail]\label{th:stretched} 
The top line in the zero boundary $\l$-tilted LE satisfies  
\begin{equation}\label{eq:stretchedexp}
\bbP\left(X^1(0)> t\right)= \exp{\left(-\left(\tfrac{2\sqrt 2}3 
+ o(1)\right)t^{3/2}\right)}\,,\qquad t\to\infty.
\end{equation}
\end{theorem}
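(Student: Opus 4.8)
The lower bound is already available by monotonicity, since $X^1(0)$ stochastically dominates $Y_{FS}(0)$ and the Ferrari–Spohn tail \eqref{eq:domino1} supplies the matching lower bound; the entire task is the upper bound $\bbP(X^1(0)>t)\le \exp(-(\tfrac{2\sqrt2}{3}-o(1))t^{3/2})$. The plan is to realize, with high probability, a configuration in which the top line of $\mu^0$ is squeezed between the floor $0$ and a suitable curved \emph{deterministic} ceiling on a window of order-one length around $0$, and then to use the single-line estimate together with the Brownian-Gibbs property to pay exactly the Ferrari–Spohn cost. Concretely, I would condition on the event that $X^2$ — the second line, which by Remark~\ref{rem:mon}/Corollary~\ref{cor:logtight} is at height $O(\l^{-1/3})$ and hence much smaller than $t$ for large $t$ — stays below a fixed small constant $\e$ on an interval $[-L,L]$, an event whose probability is bounded below uniformly (and in fact tends to $1$). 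On this event the strong BG property on the stopping domain given by the single top line over a time interval slightly inside $[-L,L]$ shows that, conditionally, $X^1$ on that window is a single area-tilted Brownian bridge with endpoints of height $O(\log L)$ (by Theorem~\ref{th:logtight}) constrained to lie above the floor $X^2\ge 0$ and — crucially — \emph{below nothing}, i.e. it is stochastically dominated by the corresponding free single-line ensemble with floor $0$. That domination already reduces the problem to the $n=1$ case.

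For the $n=1$ bound I would use that the single line with area tilt $a=1$ and floor $0$ over a long interval is, up to negligible corrections controlled by Theorem~\ref{th:logtight} for the boundary data, comparable to the Ferrari–Spohn diffusion, whose one-point law has density $\propto \mathrm{Ai}(\sqrt[3]{2}x-\o_1)^2\ind_{x>0}$; the Airy asymptotics $\mathrm{Ai}(z)=\exp[(-\tfrac23+o(1))z^{3/2}]$ then give precisely the exponent $\tfrac{2\sqrt2}{3}$ after squaring and rescaling. To make this rigorous at the prelimit level I would instead bound $\bbP(X^1_{1;\ell,r}(0)>t)$ directly: write the event as an integral over Brownian-bridge paths passing above height $t$ at time $0$, split the bridge at $0$, and estimate the probability that a bridge with $O(\log)$ endpoints reaches height $t$ at the midpoint while staying positive, against the area-tilt penalty $\exp(-\int X)$. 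A second-moment or direct variational computation — the optimal path is the parabola-like profile of width $\sim t^{1/2}$ and the area under it is $\sim t^{3/2}$ — yields the constant; equivalently one quotes the known convergence $\mu^0_{1,T}\to Y_{FS}$ and \eqref{eq:domino1} directly. Finally I would let $L\to\infty$ slowly with $t$ (e.g. $L=\log t$) so that the conditioning event on $X^2$ has probability $1-o(1)$ and the boundary-data contribution $O(\log L)=O(\log\log t)$ is absorbed into the $o(1)t^{3/2}$ error, completing the upper bound.

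The main obstacle is the coupling/monotonicity step that decouples $X^1$ from the rest of the ensemble \emph{without losing the sharp constant}: one must ensure that replacing the random floor $X^2$ by $0$ and the random endpoints of $X^1$ by heights of size $O(\log L)$ costs only $\exp(o(1)t^{3/2})$, which requires the uniform confinement of $X^2$ from Corollary~\ref{cor:logtight} and the uniform (in $n,T$) top-line estimate \eqref{eq:tightao1} in an essential way, as well as care that the conditioning on $\{X^2\le\e \text{ on }[-L,L]\}$ does not push $X^1$ up by more than a controlled amount — this is where the FKG-type monotonicity of Lemma~\ref{lem:mono} (lowering the floor $h^-$ only lowers the law) is used to get domination by the single-line free ensemble rather than an upper bound. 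A secondary technical point is passing from the prelimit $\mu^0_{n,T}$ to $\mu^0$: one runs the whole argument for finite $n,T$ with constants uniform in $n,T$, and then uses the weak convergence \eqref{eq:latile} together with the fact that $\{X^1(0)>t\}$ is (essentially) an open event to conclude.
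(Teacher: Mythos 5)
Your high-level plan — decouple $X^1$ from the rest of the ensemble by monotonicity and the Brownian--Gibbs property, reduce to a single area-tilted line, and then invoke the Ferrari--Spohn one-point tail — is the same as the paper's, and the lower bound by monotonicity is of course correct. But the execution has genuine gaps that the paper's two-stage proof is specifically designed to avoid.

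The most serious issue is the choice of window $L=\log t$. The event $\{X^1(0)>t\}$ is typically realized by an excursion of width $\sim t^{1/2}$ (the Brownian cost $t^2/w$ balances the area cost $tw$ at $w\sim t^{1/2}$). On a window of length $\log t\ll t^{1/2}$, any path with $X^1(0)>t$ is with overwhelming probability still of height comparable to $t$ at the endpoints $\pm L$, so the boundary data cannot be taken to be $O(\log L)=O(\log\log t)$ and your reduction to a single line with low endpoints never gets off the ground. Moreover, Theorem~\ref{th:logtight} only supplies a first moment for the curved maximum of $X^1$; by Markov's inequality that gives a polynomial tail, which is nowhere near strong enough to discard boundary heights that are a power of $t$. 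This is precisely why the paper first proves a non-optimal stretched-exponential tail (Lemma~\ref{stretchpart1}, \textquotedblleft Part I\textquotedblright\ of the proof) and only then bootstraps: Part II takes $T=t^R$ for a large constant $R$, uses the stretched-exponential bound to control $X^1(\pm T)\le t^{2/\alpha}$ up to failure probability $e^{-ct^2}$, and then invokes Lemma~\ref{lem:comingdown} to bring the single dominating line down from $t^{2/\alpha}$ to height $O(\log T)=O(\log t)$ at stopping times inside $[-T,T]$ before resampling; the residual $O(\log t)$ is absorbed in the $o(1)t^{3/2}$ error exactly because $T$ is a polynomial in $t$ rather than a logarithm. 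Without the Part I input, your boundary-data step does not close.

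A secondary but real issue is the statement that $\{X^2\le\e\text{ on }[-L,L]\}$ for a fixed small constant $\e$ has probability bounded below uniformly and tending to $1$: it does not, since the typical height of $X^2$ is order $\l^{-1/3}$ and the probability of a flat tube event of fixed width decreases in $L$. The paper instead caps $X^2$ at the much larger level $C\log T$ (compatible with Corollary~\ref{cor:logtight}) and, crucially, bounds the conditional probability $\bbP(M^2_T>C\log T\mid X^1(0)>t)\le 1/2$ via the stochastic domination of $X^2$ given $X^1$ from Remark~\ref{rem:mon}; some control of this kind is needed because the lines are positively associated with the tail event being estimated, a point your sketch glosses over.
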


We remark that a minor 
refinement of our ideas would allow us to improve the above statement  with more precise information on the coefficient $c=\tfrac{2\sqrt 2}3 +o(1)$, by including logarithmic correction terms as in \eqref{twtails}. However,  we refrain from pursuing that for the sake of exposition. Also, as a straightforward consequence of Theorem \ref{th:stretched} and Remark \ref{rem:mon} one obtains the 
the following tail estimate for the $k$-th path.
\begin{equation}\label{eq:stretchedexpa}
\bbP\left(X^{k+1}(0)> t\l^{-k/3}\right)\le  \exp{\left(-\left(\tfrac{2\sqrt 2}3 
+ o(1)\right)t^{3/2}\right)}\,,\qquad t\to\infty.
\end{equation}
Note however that one should expect a sharper bound than the above for larger $k$, since the deviation $X^{k+1}(0)> t$ for the $(k+1)$-th curve forces the $k$ curves above it to deviate by the same amount. We record this improvement in the following corollary whose proof is also relatively straightforward from Theorem \ref{th:stretched}. 
\begin{corollary}\label{cor:stretched}
For any $\lambda >1,$ there exists an increasing sequence $c_k=c_k(\lambda)$ with  $c_0=\tfrac{2\sqrt 2}3$ and $c_k\to c_\infty = \tfrac{2\sqrt {2}}{3}\tfrac{\sqrt \l}{\sqrt {\l}-1}$, such that for all fixed $k\ge 0$,
the $(k+1)$-th line in the zero boundary $\l$-tilted LE satisfies 
\begin{equation}\label{eq:stretchedimproved}
\bbP\left(X^{k+1}(0)> t\l^{-k/3}\right)\le  \exp{\left(-\left(c_k 
+ o(1)\right)t^{3/2}\right)}\,,\qquad t\to\infty.
\end{equation}
\end{corollary}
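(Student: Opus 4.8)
The plan is to peel the lines off the top one at a time. Because the lines are strictly ordered, $X^1>X^2>\cdots$, the event $\{X^{k+1}(0)>h\}$ with $h:=t\l^{-k/3}$ is \emph{equal} to $\bigcap_{j=1}^{k+1}\{X^j(0)>h\}$: if the $(k+1)$-st line reaches height $h$ at the origin, so do all $k+1$ lines above it. Heuristically each of these $k+1$ constraints costs what it would cost a single area-tilted line of the appropriate tilt to reach height $h$, and since the $j$-th line carries tilt $\l^{j-1}$ these single-line costs add. To turn this into a proof I would establish, for $j=1,\dots,k+1$ and with $\calF_j:=\sigma(X^1,\dots,X^j)$, the deterministic conditional bound
\begin{equation}\label{eq:perlineproposal}
\bbP\big(X^j(0)>h \,\big|\, \calF_{j-1}\big)\;\le\; q_j(h):=\bbP\big(\widetilde X^1(0)>\l^{(j-1)/3}h\big)\qquad\text{a.s.},
\end{equation}
where $\widetilde X^1$ is the top line of $\mu^0$. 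This is obtained as follows: conditionally on $\calF_{j-1}$ the ensemble $(X^j,X^{j+1},\dots)$ is an infinite $\l$-tilted LE with area tilts $\l^{j-1},\l^{j},\dots$, floor $0$ and ceiling $X^{j-1}$ (no ceiling when $j=1$); by Lemma \ref{lem:mono} dropping the ceiling only raises the lines, and removing it decouples $(X^j,X^{j+1},\dots)$ from $\calF_{j-1}$, after which $j-1$ applications of the Brownian scaling of Lemma \ref{lem:scaling} turn the tilt sequence back into $1,\l,\l^2,\dots$; hence the conditional law of $X^j(0)$ is dominated by that of $\l^{-(j-1)/3}\widetilde X^1(0)$, which is \eqref{eq:perlineproposal}. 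This is precisely the infinite-volume version of the stochastic domination recorded in Remark \ref{rem:mon}.

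Granting \eqref{eq:perlineproposal}, the rest is short. With $A_j:=\{X^j(0)>h\}\in\calF_j$ and $A_1\cap\cdots\cap A_{j-1}\in\calF_{j-1}$, I would condition successively on $\calF_k,\calF_{k-1},\dots,\calF_0$ to obtain $\bbP(\bigcap_{j=1}^{k+1}A_j)\le\prod_{j=1}^{k+1}q_j(h)$. By Theorem \ref{th:stretched}, $q_j(h)=\exp\!\big(-(\tfrac{2\sqrt2}{3}+o(1))(\l^{(j-1)/3}h)^{3/2}\big)$ as $h\to\infty$; substituting $h=t\l^{-k/3}$ (so that $(\l^{(j-1)/3}h)^{3/2}=\l^{(j-1-k)/2}t^{3/2}$) the product becomes
\[
\bbP\big(X^{k+1}(0)>t\l^{-k/3}\big)\;\le\;\exp\!\Big(-\big(c_k+o(1)\big)t^{3/2}\Big),\qquad c_k:=\tfrac{2\sqrt2}{3}\sum_{m=0}^{k}\l^{-m/2},
\]
the finitely many per-line $o(1)$'s being absorbed into one $o(1)$ since $k$ is fixed. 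Finally $c_0=\tfrac{2\sqrt2}{3}$, the sequence $(c_k)$ is strictly increasing, and $c_k\uparrow\tfrac{2\sqrt2}{3}\,\tfrac1{1-\l^{-1/2}}=\tfrac{2\sqrt2}{3}\,\tfrac{\sqrt\l}{\sqrt\l-1}=c_\infty$, which is the assertion.

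The step I expect to be the main (essentially the only) technical obstacle is justifying \eqref{eq:perlineproposal} rigorously for the infinite measure $\mu^0$ — i.e.\ the conditional stochastic domination of the $j$-th line of $\mu^0$, given the $j-1$ lines above it, by an $\calF_{j-1}$-independent Brownian rescaling of the $\mu^0$ top line. Remark \ref{rem:mon} states this for the finite ensembles $\mu^0_{n,T}$ (and $\mu^f_{n,T}$), and the cleanest route is to run the entire peeling estimate of the previous paragraph at finite $(n,T)$ — where the conditional law of $(X^j_{n,T},\dots,X^n_{n,T})$ given the top $j-1$ lines is the \emph{explicit} $(n-j+1)$-line ensemble with zero endpoints, floor $0$ and ceiling $X^{j-1}_{n,T}$, so that Lemma \ref{lem:mono} removes the ceiling and Lemma \ref{lem:scaling} rescales the tilts with no infinite-volume subtleties — and only then pass to the limit $n,T\to\infty$, using the monotone convergence $\mu^0_{n,T}\to\mu^0$ (and continuity from below of probabilities along the monotone coupling, which also yields the convergence of the finite-volume analogues of $q_j$). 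Everything else is routine tracking of error terms.
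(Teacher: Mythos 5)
Your proposal is correct and follows essentially the same route as the paper: rewrite $\{X^{k+1}(0)>h\}$ as the intersection over $j\le k+1$, then bound the conditional probability of each layer by a rescaled top-line tail via the monotonicity/scaling argument of Remark \ref{rem:mon}, and finally sum the exponents using Theorem \ref{th:stretched}. Your extra discussion of running the peeling at finite $(n,T)$ and passing to the limit is a sound way to make the conditional domination rigorous for $\mu^0$, and is consistent with how the paper uses Remark \ref{rem:mon}.
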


Our next result provides a lower tail estimate quantifying the repulsion induced on the top path $X^1$ by the multiple lines below it. 
\begin{theorem}[Entropic repulsion]\label{lowertail} There exists a constant $C=C_\l>0$ such that 
$$\bbP(X^{1}(0)\le \varepsilon)\le \varepsilon^{C\log (1/\varepsilon)}. $$
\end{theorem}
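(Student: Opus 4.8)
The plan is to show that if the top line $X^1$ comes within $\varepsilon$ of the floor at time $0$, then a whole block of lines beneath it is forced down near the floor over a short interval, and the cost of this is super-polynomially small in $\varepsilon$. First I would fix a small interval $I_\delta = [-\delta,\delta]$ with $\delta = \delta(\varepsilon)$ a suitable small power of $\varepsilon$ (say $\delta = \varepsilon^{c}$), and use the BG property together with monotonicity to localize. On the event $\{X^1(0)\le\varepsilon\}$, after conditioning on the configuration outside $I_\delta$ and on all lines of index $> k$ for a parameter $k = k(\varepsilon)$ to be chosen (growing like $\log(1/\varepsilon)$), the conditional law of $(X^1,\dots,X^k)$ on $I_\delta$ is that of $k$ non-colliding Brownian bridges above the floor $X^{k+1}$, with the area tilts. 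Because the lines are ordered and $X^1$ is pinned near $0$ at time $0$, all of $X^1,\dots,X^k$ are squeezed into the slab between the floor and roughly $\varepsilon + \text{(bridge fluctuation on }I_\delta)$, which is of order $\varepsilon + \sqrt{\delta}$; choosing $\delta \asymp \varepsilon^2$ keeps this at scale $\varepsilon$.

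The second step is to estimate the probability of this squeezing event. For a single non-colliding ensemble of $k$ Brownian bridges above a floor, confined to a tube of width $O(\varepsilon)$ over an interval of length $O(\varepsilon^2)$, the Brownian cost is governed by the ground-state energy of $k$ fermions in an interval, which contributes a factor like $\exp(-c k^3 \cdot \text{(length)}/\text{(width)}^2) = \exp(-c k^3)$ after the scaling $\text{length}\asymp\text{width}^2$; combined over the $k$ lines and using that the tilt contributions on such a tiny interval are negligible, this produces a bound of the form $\exp(-c k^3 + (\text{lower order}))$ for the conditional probability. Here I would instead argue more cheaply: it suffices to have $X^1(0)\le\varepsilon$ force, say, $X^k(0)\le \varepsilon$ as well (by ordering $X^k \le X^1$ this is automatic only in one direction — rather one uses that to have $X^1$ near $0$ one needs all the lines below packed below it), and then use the one-point estimate of Corollary~\ref{cor:logtight}/\eqref{eq:tight2} which says $\bbE[X^{k}(0)] \le C\lambda^{-(k-1)/3}$, so $\bbP(X^1(0)\le\varepsilon)$ being comparable to an event on which many lines are simultaneously at height $\le\varepsilon$ is penalized. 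More precisely, I expect the clean route is: condition on $X^{k+1}$; on $\{X^1(0)\le\varepsilon\}$ monotonicity forces $X^j(0)\le\varepsilon$ is \emph{not} implied, so instead use that the ensemble $(X^1,\dots,X^k)$ with its floor at $X^{k+1}\le X^1(0)\le\varepsilon$ (near time $0$) and the requirement $X^1(0)\le\varepsilon$ is stochastically below a $k$-line zero-ish ensemble on $I_\delta$; a Brownian-bridge small-ball computation then gives the conditional probability $\le (C\varepsilon/\sqrt\delta)^{\gamma k^2}$ for some $\gamma>0$ — the $k^2$ exponent coming from the $k$ ordered bridges each needing to thread a window of relative width $\varepsilon/\sqrt\delta$ below the previous one — hence a bound $\le \varepsilon^{c k}$ up to constants. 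Optimizing, taking $k \asymp \log(1/\varepsilon)$ yields $\bbP(X^1(0)\le\varepsilon) \le \varepsilon^{C\log(1/\varepsilon)}$.

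The main obstacle is making the small-ball estimate for $k$ non-colliding Brownian bridges rigorous with the right dependence on $k$ and $\varepsilon$ simultaneously, while correctly handling that the floor $X^{k+1}$ is itself random and only controlled in expectation (via \eqref{eq:tight2}); one wants to run a two-step conditioning, first forcing $X^{k+1}$ (and the data outside $I_\delta$) into a good set of probability $\ge 1/2$ using Markov's inequality and Corollary~\ref{cor:logtight}, and then on that good set applying a deterministic small-ball bound. A secondary technical point is to choose $\delta$ and $k$ consistently so that the Brownian fluctuation on $I_\delta$, the tilt cost, and the entropic packing cost all balance to give the stated $\varepsilon^{C\log(1/\varepsilon)}$; since the claimed bound is not sharp in the constant $C$, there is enough slack that crude estimates at each step should suffice, and I would not try to optimize $C$.
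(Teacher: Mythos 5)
Your proposal and the paper's proof share the same starting observation --- that $\{X^1(0)\le\varepsilon\}$ is the same event as $\{X^i(0)\le\varepsilon,\ i\le K\}$ and that one should take $K\asymp\log(1/\varepsilon)$ --- but after that the routes diverge, and yours has concrete gaps.

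The paper does not attempt a direct small-ball estimate for $K$ non-colliding bridges. Instead, it pins the $i$-th curve to zero at $\pm x_i$ with $x_i\asymp\lambda^{-2i/3}$ (multi-scale, one pinning interval per line), which by monotonicity only increases the probability of the decreasing event. It then writes the interacting $K$-line probability as a ratio: the numerator is a product of \emph{independent} one-dimensional Ferrari--Spohn small-ball probabilities (each controlled by Lemma~\ref{lem:ecube}: $\bbP(Y^i_{FS}(0)\le\varepsilon)\le C\varepsilon^3\lambda^{i-1}\le C\varepsilon^2$ for $i\le K\asymp\log_\lambda(1/\varepsilon)$), while the denominator is a lower bound $c^K$ on the probability that the $K$ independent paths are nested, obtained from the trapezoid construction of \eqref{eq:trapezoid}. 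This change-of-measure argument reduces everything to single-path computations and exactly matches scales. Your proposal, by contrast, keeps the interaction in view and tries to pay for it with a non-colliding Brownian-bridge small-ball estimate on a single interval $I_\delta$.

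There are two concrete problems with that. First, the arithmetic in your final step is wrong: with $\delta\asymp\varepsilon^2$ you have $\varepsilon/\sqrt\delta\asymp1$, so $(C\varepsilon/\sqrt\delta)^{\gamma k^2}$ is a constant, not $\varepsilon^{ck}$; and if one instead takes $\delta\asymp1$ so that the ratio is $\asymp\varepsilon$, then the deduced exponent would be $\gamma k^2\asymp(\log(1/\varepsilon))^2$ (which would over-prove the theorem) but the small-ball estimate itself is no longer close to being correct, because on an $O(1)$ interval the area tilts and the $O(1)$ boundary data at $\pm\delta$ are no longer negligible. Second, the single fixed window $I_\delta$ does not respect the geometry of the ensemble: the $i$-th line lives at time scale $\lambda^{-2i/3}$, and the paper needs the nested pinning points $\pm x_i$ precisely to make the independent FS comparison valid line by line. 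A secondary but real confusion: you invoke the first-moment bound $\bbE[X^k(0)]\le C\lambda^{-(k-1)/3}$ as if it penalizes the event of lines being low --- it does the opposite (it is an upper-tail control), and by ordering $X^k(0)\le X^1(0)$ the implication $X^1(0)\le\varepsilon\Rightarrow X^k(0)\le\varepsilon$ is automatic, not something requiring argument. The missing idea, relative to the paper, is the conditioning-on-non-collision / Radon--Nikodym decomposition with the nested trapezoids, which is what lets one replace an intractable joint small-ball estimate by $K$ independent one-dimensional ones.
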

It is worth noticing the contrast with the polynomial tail displayed by  the Ferrari-Spohn diffusion in \eqref{FSlowertail}, where the only repulsive effect is due to the hard wall.

\subsection{Ergodicity and mixing properties}
We next turn to the study of the ergodic properties of $\mu^0,$ and let the corresponding 
stationary, infinite line ensemble be $\uX$. For any $t\in\bbR $, let $T_t$ denote the shift operator  
defined by $T_t\uX (\cdot ) = \uX (t + \cdot)$. Let $\cB$ denote the Borel $\si$-field generated by the finite dimensional cylinder sets (see e.g., sets appearing later in \eqref{eq:monpis}), and write $T_t B = \{ T_{-t}\uX \in B\}$, $B\in\cB$. 

For every fixed $n$ the stationary line ensemble $\mu^0_n=\lim_{T\to\infty}\mu^0_{n,T}$ with $n$ lines is known to have an exponential decay of correlations, but no quantitative dependence on $n$ is known; see \cite{CIW18, DLZ}. Here we address the mixing and ergodic properties in the case of infinitely many lines. 

\begin{theorem}\label{th:mixing}
The line ensemble  $\mu^0$ is mixing, that is for every $A,B\in\cB$,
\begin{gather}\label{eq:ergo1} 
\lim_{t\to\infty}\mu^0
\left(T_t A \cap B\right) = \mu^0
\left(A \right)\mu^0\left( B\right)  
\,.
\end{gather} 
In particular, $\mu^0$ is ergodic. 
\end{theorem}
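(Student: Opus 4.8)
The plan is to prove mixing by reducing it, via a standard approximation argument, to the case where $A$ and $B$ are cylinder events depending on finitely many lines restricted to a bounded time window, and then to use the Brownian-Gibbs property together with monotonicity and coupling to decorrelate the two windows as they are pushed apart in time. First I would observe that it suffices to verify \eqref{eq:ergo1} for $A\in\calB_{k;-S,S}^{\sfi}$ and $B\in\calB_{k;-S,S}^{\sfi}$ for arbitrary fixed $k\in\bbN$ and $S>0$, since such events generate $\calB$ and a routine $\e/3$-argument (using that $\mu^0$ is a probability measure and that these algebras are increasing) extends the convergence to all of $\calB$. Fix such $A,B$; for large $t$ the event $T_tA$ lives on the window $[t-S,t+S]$, which is disjoint from $[-S,S]$.

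\textbf{Key steps.} The heart of the matter is to show that, conditionally on suitable "good" boundary data, the restrictions of $\uX$ to $[-S,S]$ and to $[t-S,t+S]$ become asymptotically independent. I would proceed as follows. (i) Fix a level $m$ (to be chosen large) and a macroscopic scale $L\gg S$. By the asymptotically-pinned-to-zero property (Definition \ref{def:asympin}) and the confinement estimates of Corollary \ref{cor:logtight} and Remark \ref{freetozero}, with probability at least $1-\e$ the line $X^m$ stays below $\e$ on the big interval $[-L,t+L]$ for appropriate $L$; call this event $G$. (ii) On $G$, apply the strong BG property to the top $m$ lines on a stopping domain slightly larger than $[-L,L]\cup[t-L,t+L]$: the conditional law of the first $m$ lines on the two separated windows, given the external $\sigma$-algebra and the realization of $X^{m+1}$, factorizes into a product of two area-tilted non-intersecting bridge ensembles, one on each window, because the two intervals are disjoint and the bridge measure, the area tilt, and the non-intersection constraint are all local. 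Thus conditionally the two pieces are genuinely independent. (iii) The only remaining dependence is through the conditioning data: the endpoint values $\uX^{\le m}$ at the four times $\pm S'$, $t\pm S'$ (for $S<S'<L$), and the floor $X^{m+1}$ on each window. Using monotonicity (Lemma \ref{lem:mono}) I would sandwich the conditional law given arbitrary such data between the law with floor $0$ and endpoints $0$ (i.e.\ a zero-boundary block) and the law with a slightly raised floor $\e$ and endpoints given by the typical logarithmic height $C\log(1+S')$ from \eqref{eq:tight_max}; since both bounding measures are deterministic (not depending on the far window) and, by choosing $S'$ large and then $\e$ small, they can be made arbitrarily close in total variation on the window $[-S,S]$ to $\mu^0$ restricted there, the contribution of $A$ and of $T_tA$ each decouples from that of $B$.

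\textbf{Assembling the estimate.} Concretely, I would write
\begin{equation}\label{eq:mixplan}
\mu^0(T_tA\cap B) = \mu^0\big(T_tA\cap B\cap G\big) + O(\e),
\end{equation}
then condition on $\calB^{\sfe}$ for the stopping domain in step (ii) and use the conditional product structure to get $\mu^0(T_tA\cap B\cap G) = \mu^0\big[\,\widehat{\Prob}_{\rm right}(T_tA)\,\widehat{\Prob}_{\rm left}(B)\,\ind_G\big]$, where $\widehat{\Prob}_{\rm left},\widehat{\Prob}_{\rm right}$ are the (random) conditional window-laws. By stationarity of $\mu^0$ the law of $\widehat{\Prob}_{\rm right}(T_tA)$ equals that of $\widehat{\Prob}_{\rm left}(A)$ for the corresponding window, and by the monotone sandwiching in step (iii) both $\widehat{\Prob}_{\rm left}(A)$ and $\widehat{\Prob}_{\rm left}(B)$ concentrate (as $S'\to\infty$, $\e\to0$) around the deterministic values $\mu^0(A)$ and $\mu^0(B)$; importantly, for fixed large $t$ the two window-laws are conditionally independent, so the expectation of the product converges to the product of the expectations. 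Letting $t\to\infty$ (so that $G$ can be arranged with $L$ fixed relative to the gap), then $S'\to\infty$, then $\e\to0$, then $m\to\infty$ gives \eqref{eq:ergo1}. Ergodicity is then immediate since mixing implies ergodicity.

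\textbf{Main obstacle.} The delicate point is step (iii): controlling the conditional law on a window uniformly over the random boundary data that couple it to the rest of the line ensemble. The confinement bounds give control of $X^1$ only in expectation/with high probability, so the raised endpoints $\uX^{\le m}(\pm S')$ are not bounded deterministically; one must either truncate on a further high-probability event $\{\uX^{\le m}(\pm S')\le C\log(1+S')\}$ and absorb the complement into the $O(\e)$ error, or invoke a more careful FKG-type comparison. Getting this truncation to be simultaneously compatible with the stopping-domain construction of step (ii) — i.e.\ ensuring the boundary times $\pm S'$, $t\pm S'$ are chosen measurably and the "good" events are in the right external $\sigma$-algebras — is the part that requires the most care, and is where the bulk of the technical work will lie. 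The rest (the product structure from BG, the $\e/3$-reduction, passing from mixing to ergodicity) is standard.
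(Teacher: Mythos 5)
Your overall outline is in the right spirit (reduce to cylinder events, decouple via pinning/BG, use monotonicity), but there is a genuine gap precisely in the step you flagged as the "main obstacle," and it is more serious than a bookkeeping issue about stopping domains.  Your step (iii) requires a \emph{two-sided} sandwich: the conditional law on $[-S,S]$ is squeezed between a zero-boundary block and a raised-floor/raised-endpoint block, and you then want both to converge (indeed "in total variation") to $\mu^0$ restricted to $[-S,S]$.  The lower block converging to $\mu^0$ is exactly the known weak convergence $\mu^0_{n,T}\to\mu^0$ from Theorem~\ref{tightLE12}, but the upper block converging to $\mu^0$ is \emph{not} available at this point in the paper — showing that an area-tilted ensemble with raised boundary data still converges to $\mu^0$ is essentially the uniqueness statement of Theorem~\ref{th:uniqueness}, whose proof occupies the later sections and relies on the sharp tail bounds and the much harder reverse-coupling machinery.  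Moreover, even weak convergence gives only $\liminf$ bounds on open cylinder events, not TV closeness, so the concentration claim for the conditional window-laws as stated is unjustified.

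The paper avoids this entirely with a cleaner, \emph{one-sided} argument.  After the $\pi$--$\lambda$ reduction to increasing cylinder events, it introduces the pinned ensembles $\uY_{\infty,t}$ and $\uY'_{\infty,t}$ with zero boundary data at $\pm t/2$ and $3t/2$ respectively: by the pinning these are genuinely independent (no conditional independence argument needed), by monotonicity their concatenation is stochastically dominated by $\uX\sim\mu^0$, and since the events are increasing the resulting proxies $W_t\le X$, $W'_t\le X'_t$ are pointwise dominated under the monotone coupling.  It then writes the exact identity $\mu^0(T_tA\cap B)=\bbE[W'_t]\bbE[W_t]+\bbE[W_t\Delta'_t]+\bbE[W'_t\Delta_t]+\bbE[\Delta_t\Delta'_t]$ with $\Delta_t,\Delta'_t\ge 0$ indicator differences, so that to conclude it suffices to show $\bbE[\Delta_t]\to 0$, i.e.\ $\bbE[W_t]\to\mu^0(B)$.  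The key observation is that $\bbE[W_t]\le\mu^0(B)$ is automatic by domination, and the matching $\liminf\ge$ follows from Portmanteau's lemma applied to the weak convergence $\mu^0_{n,T}\to\mu^0$ and the openness of $B$.  No upper sandwich, no TV control, and no uniqueness-type input is needed.  You would do well to replace the two-sided sandwich with this one-sided domination-plus-Portmanteau step; the rest of your outline then closes.
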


The corresponding result for the Airy LE was established in \cite{corwin2014ergodicity}.
A natural quantitative version of the above result would seek to establish the decay of correlation of, say to begin with, the top line. Parallel inquiries have been undertaken in the case of exactly solvable models; see for instance \cite{widom2004asymptotics, shinault2011asymptotics} for sharp correlation estimates for the Airy$_2$ process. A more recent work \cite{basu2022exponent} considers the related Airy$_1$ process. Towards this we have the next result. 
\begin{theorem}\label{th:corrdecay}
There exist constants $c,C>0$ such that, for all $t>0$,
\begin{equation}\label{eq:corre1}
\left|\cov\left(X^1(0),X^1(t)\right)\right|\le C\exp\left[-c(\log t)^{3/7}\right].
\end{equation}
\end{theorem}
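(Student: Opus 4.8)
\textbf{Proof proposal for Theorem \ref{th:corrdecay}.}

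The plan is to combine the BG (resampling) property with the upper-tail estimate of Theorem \ref{th:stretched} and the monotone coupling of Lemma \ref{lem:mono} in a block-decoupling scheme. Fix $t>0$ large, and choose a length scale $L = L(t)$ and a height threshold $H = H(t)$ to be optimized at the end. Introduce the ``pinning'' events $G_0 = \{X^1 \text{ and } X^2 \text{ are both below } H \text{ on } [-L, L]\}$ and $G_t = T_t G_0 = \{X^1, X^2 \le H \text{ on } [t-L, t+L]\}$. By Theorem \ref{th:stretched}, Corollary \ref{cor:logtight}, and a union bound over a net of points in $[-L,L]$ (promoted to the sup via the curved-max estimate \eqref{eq:tightao1}), we get $\mu^0(G_0^c) + \mu^0(G_t^c) \le C L \exp(-c H^{3/2})$. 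On the event $G_0 \cap G_t$, the trajectories near $0$ and near $t$ each live under a ceiling at height $H$ restricted to windows of width $2L$.

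The key step is decorrelation conditionally on a ``separating'' configuration. Condition on the external $\sigma$-algebra $\calB^{\sfe}_{\,\cdot\,;\,\ell,r}$ for a suitably chosen pair of lines and intervals; concretely, I would resample the top line on an interval that separates a neighborhood of $0$ from a neighborhood of $t$ — for instance on $[L, t-L]$ — after first lowering everything to a ceiling at height $H$ using monotonicity (Lemma \ref{lem:mono}, in the non-continuous-ceiling form of Remark \ref{rem:nonct}). Under the resampled law the top line on $[L, t-L]$ is a Brownian-bridge–type path with area tilt $1$ under a ceiling $H$, whose endpoint values at $L$ and $t-L$ are, on $G_0\cap G_t$, both in $[0,H]$. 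Since an area-tilted Brownian bridge of length $t-2L$ between endpoints of height $O(H)$ under a ceiling $O(H)$ has a relaxation/mixing time of order $H^2$ (the area tilt on a length-$\ell$ interval with floor and ceiling of height $H$ confines the path to a tube and produces an exponential return to the invariant ``excursion-under-tilt'' measure after time $\sim H^2$), the values of $X^1$ near $0$ and near $t$ become conditionally nearly independent provided $t - 2L \gg H^2$. Equivalently, using the strong BG property one couples two copies of the resampled configuration that agree outside $[L,t-L]$ but have been forced by different conditioning near $0$: the coupling succeeds except with probability $\exp(-c\,(t-2L)/H^2)$, by a standard barrier-crossing/area-tilt argument for the interior line (remove the other lines by monotonicity, then use that the one-line area-tilted bridge under a ceiling $H$ hits any fixed level in time $O(H^2)$ with probability bounded below, giving geometric coupling on that timescale).

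Putting the pieces together, for bounded observables $f(X^1(0))$, $g(X^1(t))$ (it suffices to treat $X^1(0)\wedge M$ and let $M\to\infty$ using the exponential tail, so boundedness is harmless),
\begin{equation}\label{eq:corr-decomp}
\left|\cov\!\left(f(X^1(0)), g(X^1(t))\right)\right| \;\le\; C\Big( L\,e^{-cH^{3/2}} \;+\; e^{-c(t-2L)/H^{2}} \Big) \;+\; (\text{truncation error}).
\end{equation}
Balancing the two main terms requires $H^{3/2} \asymp t/H^2$, i.e. $H \asymp t^{2/7}$, which yields a bound of order $\exp(-c\, t^{3/7})$; since $\log L$ is negligible this gives \eqref{eq:corre1} (in fact the slightly stronger $\exp(-ct^{3/7})$, and one recovers the $(\log t)^{3/7}$ form as stated if one is forced to lose a further logarithm somewhere — e.g. in the curved-maximum control of \eqref{eq:tightao1}, where the sup over a window of width $L$ costs an additional $\log L$ in the height, forcing $H \asymp (\log t)^{2/7}$ in the regime where $L$ must itself be taken polynomial in $t$). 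I would then reduce the covariance of $X^1(0)$ and $X^1(t)$ themselves to covariances of bounded truncations via the tail bound \eqref{eq:stretchedexp}, absorbing the truncation error into the stated constants.

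The main obstacle I anticipate is the quantitative $O(H^2)$ mixing estimate for the interior resampled line under a ceiling of height $H$ with a unit area tilt: one needs a coupling statement that is uniform in the (random) boundary heights and in the presence of the hard floor and the lines below, and that degrades only like $\exp(-c\,\Delta/H^2)$ over a gap $\Delta$. This is where monotonicity (to sandwich between zero and height-$H$ boundary data and to strip the lower lines) and a careful one-line analysis of the Ferrari–Spohn–type bridge — essentially a Doob $h$-transform of Brownian motion killed at $0$ and at $H$ with the tilt $e^{-\int X}$ — have to be combined; controlling the spectral gap of that generator on $[0,H]$ and showing it is $\gtrsim H^{-2}$ is the technical heart of the argument. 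A secondary nuisance is bookkeeping the logarithmic losses so as to land exactly on the exponent $3/7$ claimed (versus $t^{3/7}$), which is why the theorem is stated with $(\log t)^{3/7}$.
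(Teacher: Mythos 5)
Your proposal does not match the paper's argument, and more importantly it has a genuine gap that I think is fatal as written.

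The central difficulty in this problem is that $\mu^0$ has \emph{infinitely many} lines, and the correlation between $X^1(0)$ and $X^1(t)$ can propagate through any of them. When you resample the top line (or the top two lines, or any \emph{fixed} number of lines) on the separating interval $[L, t-L]$ conditionally on $\calB^{\sfe}$, the external $\sigma$-algebra still contains the entire configuration of lines $3,4,5,\ldots$ on $[L,t-L]$, and that configuration connects the neighborhood of $0$ to the neighborhood of $t$. So the conditional law you obtain does \emph{not} factorize, and the one-line Doob-$h$-transform mixing estimate — whatever its correct order — never touches the actual source of the correlation. To sever the link you must control a number $k$ of lines that grows with $t$, and show that the $(k+1)$-th line is essentially pinned to zero (below $\l^{-k/6}$, say) on the whole window; that is exactly the event $F_t$ in the paper's Lemma \ref{reversecoupling}. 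Your event $G_0\cap G_t$ controls $X^1,X^2$ near $0$ and $t$ only, and never addresses the lower lines on the separating interval.

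Two further, secondary problems. First, the $O(H^2)$ relaxation-time claim for the area-tilted line under a ceiling $H$ is unjustified and likely wrong: the Ferrari–Spohn generator has an $O(1)$ spectral gap, and the descent from height $H$ against a drift of order $-x^{1/2}$ costs $O(H^{1/2})$, not $O(H^2)$. Second, and this should have been a red flag: your balancing gives a bound $\exp(-ct^{3/7})$, which is \emph{dramatically} stronger than the $\exp(-c(\log t)^{3/7})$ that the theorem states. The paper explicitly calls the optimal rate an open problem and contrasts its result with the exponential decay of the single-line FS process — so a proposed argument landing at a polynomial-in-$t$ exponent should be treated with suspicion. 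Your attempt to "lose a log" to reconcile the exponents is not coherent: if $H\asymp(\log t)^{2/7}$ and you balance $H^{3/2}\asymp (t-2L)/H^2$, you are forced to take $t-2L\asymp (\log t)$, i.e.\ $L$ must be within $O(\log t)$ of $t/2$, at which point the two windows $[-L,L]$ and $[t-L,t+L]$ overlap and the whole decoupling picture collapses.

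For comparison, the paper's route is the "reverse coupling": it writes $\cov$ via a pair $(W_t,W'_t)$ obtained from two ensembles pinned to zero at $\pm t/2$ and $3t/2$ (hence independent), dominated by $\uX$ from below, and bounds $\bbE[(X^1(0)-W_t)^2]$. Lemma \ref{reversecoupling} constructs the high-probability event $E_t=F_t\cap G_t\cap B_t$ with $k\asymp(\log t)^{3/7}$ lines: $F_t$ pins the $(k+1)$-th line below $\l^{-k/6}$; $B_t$ (using the lower bound $p_k(v)\ge e^{-Ckv^2}$ from Lemma \ref{lem:pkvu}) finds stopping times in $[-t/4,t/4]$ where the pinned $k$-line ensemble exceeds thresholds $v\l^{-(i-1)/3}$; $G_t$ (using Corollary \ref{cor:stretched}) ensures $\uX$ lies below those thresholds at the stopping times. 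Balancing $\l^{-k/7}$, $k e^{-ck^{3b/2}}$, and $e^{-c\,e^{k^a}}$ with $a=7/3$, $b=2/3$ is precisely where the exponent $3/7=1/a$ comes from. None of these mechanisms appear in your proposal, and the exponent in the theorem is not an artifact of sloppy bookkeeping — it reflects a genuine trade-off among these three competing costs.
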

Here we use the notation $\cov\left(f,g\right)=\bbE[fg]-\bbE[f]\bbE[g]$ for the covariance. We note that by translation invariance  Theorem \ref{th:stretched} implies that all moments of $X^1(s)$ are finite for all $s\in\bbR$. 
In particular, the covariance in \eqref{eq:corre1} is well defined. Our proof will in fact show that the above estimate holds for $X^{k}(\cdot)$ for any fixed $k.$

Note that the above is significantly weaker than the exponential decay of correlation exhibited by the Ferrari-Spohn diffusion. While one may speculate on the basis of geometrically decaying nature of the curves, that a similar correlation structure might be expected for $\mu^0$ as well, pinning down the correct order of correlations in this case remains an attractive open problem. 

\subsection{Uniqueness}
Classifying Gibbs measures and in certain cases proving their uniqueness is an important problem in statistical mechanics. As already alluded to in the introduction, taking limits of finite volume measures with varying boundary conditions is a natural recipe to construct infinite volume measures which leads to important questions about whether the limits depend on the sequence of boundary conditions. For instance, for the $\l$-tilted LE, the question of uniqueness of the limit points of the free boundary measures $ \mu^f_{n,T}$ as $n,T \to \infty$ has received some attention \cite{CIW19,DLZ}. Our final main result addresses these questions in rather general terms. 
We begin with a definition. Recall that a $\l-$tilted LE is a BG measure as in Definition \ref{def:BG}.
\begin{definition}\label{ut} A $\l-$tilted LE  $\uX=\{X^i\}_{i\ge 1}$ is said to be \emph{uniformly tight} (UT) if, 
given any $\varepsilon>0$, there exists $C>0$ such that, \emph{for all} $t>0$ 
$$\bbP(X^{1}(t)\ge C)\le \varepsilon.$$
\end{definition}
Note that while any stationary line ensemble is by definition uniformly tight, the above definition also allows non-stationary ensembles.
Finally, recall the notion of being asymptotically pinned to zero from Definition \ref{def:asympin}.
\begin{theorem}\label{th:uniqueness}
Suppose $\nu$ is the law of a UT $\lambda-$tilted LE which is also asymptotically pinned to zero. 
Then $\nu=\mu^0$. 
\end{theorem}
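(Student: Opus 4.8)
The plan is to establish the two stochastic dominations $\mu^0\prec\nu$ and $\nu\prec\mu^0$ (for the partial order \eqref{eq:partialorder}). Since a probability measure on $\Omega$ is determined by the laws of finitely many coordinates $X^i(t_j)$, and a stochastic domination holding in both directions is an equality, this yields $\nu=\mu^0$.

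\emph{The lower bound $\nu\succ\mu^0$} is the soft half, and uses neither uniform tightness nor asymptotic pinning. Fix $n\in\bbN$ and a window $[-T,T]$. Since $\nu$ is a $\l$-tilted LE, the BG property (Definition \ref{def:BG}) identifies the conditional law of $\uX^{\le n}$ on $[-T,T]$ given $\calB^{\sfe}_{n;-T,T}$ with the $n$-line ensemble with boundary data $\uX^{\le n}(\pm T)$ and floor $X^{n+1}|_{[-T,T]}$. As $\nu$ is supported on $\bbA^+_\infty$, both the boundary data and the floor dominate $\underline 0$, so Lemma \ref{lem:mono} shows this conditional law stochastically dominates $\mu^0_{n,T}=\bbP^{\underline 0,\underline 0}_{n;-T,T}[\,\cdot\tc h\equiv0]$. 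Taking expectations over $\calB^{\sfe}_{n;-T,T}$ gives $\bbE_\nu[F]\ge\bbE_{\mu^0_{n,T}}[F]$ for every increasing bounded $F$ of the top $n$ lines on $[-T,T]$; letting $n,T\to\infty$ and using $\mu^0_{n,T}\Rightarrow\mu^0$ (Theorem \ref{tightLE12}) gives $\nu\succ\mu^0$.

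\emph{The upper bound $\nu\prec\mu^0$} is where the hypotheses are used, and is the harder half. Fix an increasing bounded Lipschitz $F$ of finitely many lines on $[-S,S]$ and $\e>0$; we may assume $F\ge0$. Choose a large window $[-R,R]$. By asymptotic pinning (Definition \ref{def:asympin}) there is $n=n(R,\e)$ with $\nu\big(\sup_{[-R,R]}X^{n+1}\le\e\big)\ge1-\e$, and by uniform tightness there is a deterministic $\uC\in\bbA^+_n$ with components bounded uniformly in $R$ and $n$ (it suffices to dominate $X^1(\pm R)$, since $X^i(\pm R)\le X^1(\pm R)$) such that $\nu\big(\uX^{\le n}(\pm R)\prec\uC\big)\ge1-2\e$. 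Both events lie in $\calB^{\sfe}_{n;-R,R}$, so on their intersection the BG property and Lemma \ref{lem:mono} (raising the floor to the constant $\e$ and the boundary to $\uC$) show that the conditional law of $\uX^{\le n}$ on $[-R,R]$ is stochastically dominated by $\rho_R:=\bbP^{\uC,\uC}_{n;-R,R}[\,\cdot\tc h\equiv\e]$. Hence $\bbE_\nu[F]\le\bbE_{\rho_R}[F]+C\e\|F\|_\infty$, and, shifting the floor from $\e$ down to $0$ (possible up to an error that vanishes with $\e$, since a constant shift preserves the area-tilted bridge law on a fixed window), the whole matter reduces to the \emph{relaxation estimate}
\begin{equation*}
\limsup_{R\to\infty}\;\bbE_{\bbP^{\uC,\uC}_{n(R);-R,R}[\,\cdot\tc h\equiv0]}[F]\;\le\;\bbE_{\mu^0}[F],
\end{equation*}
after which $\e\downarrow0$ concludes, combining with $\nu\succ\mu^0$ to give $\nu=\mu^0$.

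\emph{The main obstacle} is precisely this relaxation estimate: the influence on the fixed window $[-S,S]$ of the (bounded, but not vanishing) boundary data imposed at the far endpoints $\pm R$ must disappear as $R\to\infty$. The matching lower bound is free, as $\bbP^{\uC,\uC}_{n;-R,R}[\,\cdot\tc h\equiv0]\succ\mu^0_{n,R}$ and $\mu^0_{n,R}\Rightarrow\mu^0$; the content is the upper bound. The natural route is to apply the strong BG property inside $\bbP^{\uC,\uC}_{n;-R,R}[\,\cdot\tc h\equiv0]$ on an intermediate window $[-M,M]$ with $S\ll M\ll R$, and to combine this with confinement estimates of the type of Theorem \ref{th:logtight}/Corollary \ref{cor:logtight} (extended to these bounded-boundary ensembles) to show that the resampled boundary data $\uX^{\le n}(\pm M)$ is, up to an $R$-dependent error going to $0$, dominated by an $R$-independent profile of stationary size; at a \emph{fixed} number of lines one can then invoke the identification of $\mu^0_n$ with the positive-recurrent stationary Langevin diffusion attached to $\sfL_n$ in \eqref{eq:SturmLiouville}, whose bridges relax to equilibrium, and finally send $n\to\infty$ via $\mu^0_n\Rightarrow\mu^0$. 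The genuinely delicate point is that the pinning step forces $n=n(R,\e)$ to grow with $R$, so one cannot freeze $n$ while $R\to\infty$; reconciling these two limits appears to require either a control of the boundary-influence decay that is uniform in the number of lines — for which the quantitative correlation decay of Theorem \ref{th:corrdecay} (extended to all lines) and the mixing of $\mu^0$ in Theorem \ref{th:mixing} are the natural inputs, e.g.\ by dominating $\rho_R$ by $\mu^0$ conditioned on a rare event localized near $\pm R$ and estimating the effect of that conditioning at the origin — or an interlaced limiting scheme in which $R\to\infty$ is taken far faster than $n\to\infty$. Finally, the free boundary statement follows at once: $\mu^f_{n,T}$ is uniformly tight (by \eqref{eq:tightao1}) and asymptotically pinned to zero (by Corollary \ref{cor:logtight}), so any weak limit point of $\mu^f_{n,T}$ as $n,T\to\infty$ is a UT, asymptotically pinned $\l$-tilted LE, hence equals $\mu^0$, which is Corollary \ref{freeunique}.
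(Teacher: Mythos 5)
Your proposal correctly identifies the two halves of the argument — the soft domination $\nu\succ\mu^0$ via the BG property and monotonicity, and the harder reverse domination — and correctly isolates the true difficulty: the boundary influence at $\pm R$ must be shown to vanish in the bulk as $R\to\infty$, while asymptotic pinning forces the effective number of lines $n=n(R,\e)$ to grow with $R$, so one cannot take the two limits independently. You then explicitly leave this ``relaxation estimate'' unproven and sketch two possible routes. This is a genuine gap, and the routes you sketch do not close it.

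The paper's actual resolution does not go through relaxation of the $n$-line Langevin diffusion (your second suggestion) nor through the correlation decay and mixing of $\mu^0$ (your first) — those results are \emph{about} $\mu^0$, whereas what must be controlled is the arbitrary UT ensemble $\nu$, which a priori coincides with no $\mu^0_n$. The paper instead develops a substantial bootstrap (Theorem~\ref{ergodicle}: $UT\Rightarrow UC$) whose engine is a confinement estimate for line ensembles with constant-height boundary data $L$, uniform in $L$ for $T\ge T_0(L)$ (Proposition~\ref{pro:tightmom}, proved via a nested-shapes construction in Lemma~\ref{lem:th}). This is precisely the ``confinement estimate of the type of Corollary~\ref{cor:logtight} extended to bounded-boundary ensembles'' you gesture at, but it is not a routine extension: with boundary height $L$ and no hard wall at the boundary, the argument requires an iterative coming-down-from-the-boundary scheme across $n$ geometrically scaled shells, and it is exactly this device that reconciles the $n(R)\uparrow\infty$ with $R\uparrow\infty$ issue you flag. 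Once $UC$ is in hand, Lemma~\ref{univtail1} upgrades it to stretched-exponential one-point tails for $\nu$, and the final identification $\nu=\mu^0$ is then obtained not by a boundary-relaxation argument but by a \emph{reverse coupling} (Lemma~\ref{lem:cnamle}, modelled on Lemma~\ref{reversecoupling}): one samples $\uZ\sim\nu$ and the pinned zero-boundary ensemble $\uY_{\infty,t}$ independently, uses the tail bounds to find a stopping domain in $[-t/2,t/2]$ where the boundary data of $\uY$ dominates that of the top $k(t)$ lines of $\uZ$, and resamples monotonically on that domain so that $\uZ\prec\uY+\text{(small shift)}$ in the bulk. That coupling mechanism is the central new idea of the proof, and it is absent from your proposal. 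In short, the skeleton you set up is sound and the gap you identify is the correct one, but bridging it requires the $UT\Rightarrow UC$ bootstrap and the resampling coupling, neither of which your sketch supplies or substitutes for.
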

An obvious consequence of the above theorem is that there exists a unique stationary, asymptotically pinned to zero, $\lambda-$tilted LE. 

Finally, we record that a straightforward consequence of the above result is the uniqueness of the line ensemble with free boundary conditions.
\begin{corollary}\label{freeunique} The finite LE with free boundary conditions $\mu^f_{n,T}$ from \eqref{eq:notation} satisfy $$\lim_{n, T\to \infty}\mu^f_{n,T}=\mu^{0},$$
with $n, T\to \infty$ arbitrarily. 
\end{corollary}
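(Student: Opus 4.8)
\textbf{Proof proposal for Corollary \ref{freeunique}.}

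The plan is to deduce the corollary from Theorem \ref{th:uniqueness} by verifying, for an arbitrary diverging sequence $(n_j, T_j)$ with $n_j, T_j \to \infty$, that every subsequential weak limit $\nu$ of $\mu^f_{n_j, T_j}$ is a $\lambda$-tilted LE which is both uniformly tight and asymptotically pinned to zero; then $\nu = \mu^0$ forces the whole sequence to converge to $\mu^0$. Tightness of the family $\{\mu^f_{n,T}\}$ along any finite collection of lines, guaranteeing that subsequential limits exist, is exactly the content of Theorem \ref{tightLE12}, and that theorem also records that any such limit point has the BG property of Definition \ref{def:BG}, i.e.\ is a $\lambda$-tilted LE. So the work is entirely in checking the two structural properties (UT and asymptotically pinned to zero), and these should both be read off from the uniform confinement estimates.

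First I would establish uniform tightness of the limit $\nu$. By Theorem \ref{th:logtight}, equation \eqref{eq:tight_max} with $S$ a fixed constant (say $S=1$), we have $\bbE[\max_{t\in[-1,1]}X^1_{n,T}(t)] \le C$ uniformly in $n, T$, hence $\sup_{t} \bbE[X^1_{n,T}(t)] \le C$ by stationarity-free reasoning — actually more directly one uses \eqref{eq:tight2} with $k=0$, namely $\sup_{t\in\bbR}\bbE[X^1_{n,T}(t)]\le C$ for all $n, T$. Markov's inequality then gives $\bbP_{\mu^f_{n,T}}(X^1(t)\ge C/\varepsilon)\le \varepsilon$ for all $t$, uniformly in $n, T$; passing to the weak limit along the subsequence (using that, for fixed $t$, $X^1(t)$ is a continuous functional on the space of finitely many paths on compacts, together with Fatou/portmanteau to handle the closed event $\{X^1(t)\ge C'\}$ up to an arbitrarily small adjustment of $C'$) yields $\bbP_\nu(X^1(t)\ge C')\le \varepsilon$ for all $t>0$, which is precisely Definition \ref{ut}.

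Next I would verify that $\nu$ is asymptotically pinned to zero in the sense of Definition \ref{def:asympin}. Here the key input is Corollary \ref{cor:logtight}, specifically \eqref{eq:tight2a}: for fixed $S$ (e.g.\ $S=T_{\mathrm{fixed}}$ in the statement of Definition \ref{def:asympin}) and any integer $k\ge 0$, $\sup_{t}\bbE_{\mu^f_{n,T}}[\max_{u\in[-S,S]}X^{k+1}_{n,T}(t+u)]\le C\lambda^{-k/3}[1+\log(1+|S\lambda^{2k/3}|)]$. For $S$ fixed this bound tends to $0$ as $k\to\infty$ (the $\lambda^{-k/3}$ beats the logarithmic factor), so given $\varepsilon>0$ and $T\in\bbR$ we may fix $k=k(T,\varepsilon)$ with the right-hand side at most $\varepsilon^2$; Markov's inequality then gives $\bbP_{\mu^f_{n,T}}(\sup_{s\in[-T,T]}X^{k}(s)>\varepsilon)\le \varepsilon$, uniformly in $n\ge k+1$ and $T>0$. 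Passing to the weak limit along the subsequence — again being slightly careful with the direction of the inequality on the event, replacing $\varepsilon$ by $\varepsilon/2$ inside and using that $\{\sup_{[-T,T]}X^k \le \varepsilon/2\}$ is, up to boundary, an open event so that the portmanteau lower bound applies — yields \eqref{eq:asymppin} for $\nu$. Combining the three facts (BG property, UT, asymptotically pinned to zero), Theorem \ref{th:uniqueness} gives $\nu = \mu^0$. Since every subsequential limit equals $\mu^0$ and the family is tight, $\mu^f_{n,T}\to\mu^0$ as $n,T\to\infty$ arbitrarily.

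The only mild subtlety — and the place I would be most careful — is the interchange of the weak limit with the pointwise/supremum functionals used in the two verifications: $X^1(t)$ and $\max_{[-T,T]}X^k$ are continuous in the topology of uniform convergence on compacts of finitely many paths (for $X^k$ this uses $k\le n$ eventually along the sequence, which is fine since $n_j\to\infty$), so the portmanteau theorem applies, but one must point the closed/open events the right way and absorb an arbitrarily small slack into the constants $C$ and $\varepsilon$. This is entirely routine, so the corollary follows.
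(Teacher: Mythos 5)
Your proposal is correct and follows essentially the same route as the paper: use Theorem \ref{tightLE12} for tightness and the BG property of any limit point, pass the uniform first-moment estimates of Corollary \ref{cor:logtight} to the limit (Fatou/portmanteau as in Remark \ref{freetozero}) to obtain uniform tightness and asymptotic pinning, then invoke Theorem \ref{th:uniqueness}. The only cosmetic difference is that the paper bundles the two structural checks into the single intermediate assertion that any limit point is uniformly confined (UC) and then reads off UT and \eqref{eq:asymppin} from that, while you verify the two properties directly from \eqref{eq:tight2} and \eqref{eq:tight2a} — but these are the same estimates and the same logic.
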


Note that it was mentioned in the discussion preceding Definition \ref{def:asympin} that for any bounded continuous function $g,$  a BG measure with floor $g$ could be made sense of if $g$ is smooth enough and bounded. This would then be UT as well. Further, such a measure could also be  constructed if $g$ is not growing too fast. In that case it would yield a BG but a non-UT measure.  However, perhaps more intriguing is the possibility of the existence of non-UT, yet asymptotically pinned to zero,  $\lambda-$tilted LEs. We anticipate that our arguments are robust enough to 
treat such cases as well,
and this will be the subject of forthcoming work.  

\subsection{Proof ideas}\label{iop} We end this section with a brief overview of the proofs. As already indicated, in contrast to past work on Airy LE, in absence of any integrability, our arguments do not have access to algebraic inputs and are completely probabilistic in nature. In the upcoming section, we present a key observation which is the starting point of many of our arguments, and already in its vanilla form yields a quick alternate proof of Theorem \ref{th:DLZ}.

In its core the observation is simple. Note that by monotonicity, for any $n, T,$ there exists a coupling of the measures $\mu^f_{n,T}$ and  $\mu^0_{n,T}$  such that deterministically the former dominates the latter. The crucial observation then is that one can also simultaneously construct another coupling such that the \emph{latter dominates the former}, at least on a given compact interval, say around the origin, with high probability, provided that $T$ is large enough compared to $n$. To accomplish this we sample two independent copies of $\mu^f_{n,T}$ and  $\mu^0_{n,T}$. Given the samples, all one needs to ensure is the existence of a stopping domain (see Figure \ref{fig:key105}) 
where the boundary data for the latter dominates the former. Since then one can simply resample both  ensembles on this domain (the strong BG property from Section \ref{bgsec} allows this) under the monotone coupling and on account of the reverse ordering of the boundary data, the zero boundary ensemble will dominate the free boundary counterpart.

\begin{figure}[h]
\centering
\begin{tikzpicture}[scale=0.7]
\colorlet{colora}{magenta}
\colorlet{colorb}{teal}
\draw[dashed,thick] (-2.47475,-1.1) -- (-2.47475,4);
\draw[dashed,thick] (1.9697,-1.1) -- (1.9697,4);
\draw[thick] (-5,-0.5) -- (5,-0.5);
\draw (-2.77475,-0.8) node {$\tau_\ell$};
\draw (2.2697,-0.8) node {$\tau_r$};
\draw[thick,colora] (-5,0) -- (-4.89899,-0.07914) -- (-4.79798,-0.02241) -- (-4.69697,-0.05041) -- (-4.59596,0.00655) -- (-4.49495,-0.02758) -- (-4.39394,0.07223) -- (-4.29293,0.0737) -- (-4.19192,0.14667) -- (-4.09091,-0.03266) -- (-3.9899,-0.01653) -- (-3.88889,-0.03666) -- (-3.78788,-0.05354) -- (-3.68687,-0.02013) -- (-3.58586,0.05229) -- (-3.48485,0.08212) -- (-3.38384,0.07378) -- (-3.28283,0.15761) -- (-3.18182,0.21682) -- (-3.08081,0.15884) -- (-2.9798,0.2592) -- (-2.87879,0.27165) -- (-2.77778,0.20364) -- (-2.67677,0.12366) -- (-2.57576,0.08241) -- (-2.47475,0.22533) -- (-2.37374,0.09353) -- (-2.27273,0.09352) -- (-2.17172,0.08446) -- (-2.07071,0.12118) -- (-1.9697,0.0675) -- (-1.86869,0.09931) -- (-1.76768,0.18577) -- (-1.66667,0.20232) -- (-1.56566,0.17366) -- (-1.46465,0.09764) -- (-1.36364,0.17895) -- (-1.26263,0.29786) -- (-1.16162,0.30819) -- (-1.06061,0.2885) -- (-0.9596,0.30159) -- (-0.85859,0.22901) -- (-0.75758,0.16479) -- (-0.65657,0.18196) -- (-0.55556,0.20894) -- (-0.45455,0.13651) -- (-0.35354,0.15288) -- (-0.25253,0.18165) -- (-0.15152,0.16866) -- (-0.05051,0.16932) -- (0.05051,0.09739) -- (0.15152,0.16887) -- (0.25253,0.29791) -- (0.35354,0.35609) -- (0.45455,0.3333) -- (0.55556,0.27764) -- (0.65657,0.24499) -- (0.75758,0.33675) -- (0.85859,0.24113) -- (0.9596,0.23521) -- (1.06061,0.16546) -- (1.16162,0.17603) -- (1.26263,0.14545) -- (1.36364,0.09195) -- (1.46465,0.11509) -- (1.56566,0.18776) -- (1.66667,0.11902) -- (1.76768,0.00407) -- (1.86869,0.04758) -- (1.9697,0.16003) -- (2.07071,0.29282) -- (2.17172,0.36183) -- (2.27273,0.21199) -- (2.37374,0.1817) -- (2.47475,0.1505) -- (2.57576,0.07604) -- (2.67677,0.08291) -- (2.77778,0.0168) -- (2.87879,-0.03994) -- (2.9798,-0.0222) -- (3.08081,-0.15706) -- (3.18182,-0.14493) -- (3.28283,-0.17105) -- (3.38384,-0.05926) -- (3.48485,-0.13212) -- (3.58586,-0.05611) -- (3.68687,-0.26411) -- (3.78788,-0.2938) -- (3.88889,-0.20986) -- (3.9899,-0.2011) -- (4.09091,-0.17243) -- (4.19192,-0.14853) -- (4.29293,-0.17532) -- (4.39394,-0.20191) -- (4.49495,-0.14987) -- (4.59596,-0.08922) -- (4.69697,0.05924) -- (4.79798,0.0801) -- (4.89899,-0.01881) -- (5,0.06988);
\draw[thick,colorb] (-5,0) -- (-4.89899,-0.03513) -- (-4.79798,-0.12263) -- (-4.69697,-0.04131) -- (-4.59596,-0.19071) -- (-4.49495,-0.21967) -- (-4.39394,-0.23424) -- (-4.29293,-0.29077) -- (-4.19192,-0.39942) -- (-4.09091,-0.41873) -- (-3.9899,-0.28739) -- (-3.88889,-0.18827) -- (-3.78788,-0.14035) -- (-3.68687,-0.16009) -- (-3.58586,-0.149) -- (-3.48485,-0.10989) -- (-3.38384,-0.02762) -- (-3.28283,0.02569) -- (-3.18182,-0.07797) -- (-3.08081,-0.07541) -- (-2.9798,-0.12636) -- (-2.87879,-0.1086) -- (-2.77778,-0.18277) -- (-2.67677,-0.12656) -- (-2.57576,-0.17731) -- (-2.47475,-0.05998) -- (-2.37374,-0.14761) -- (-2.27273,-0.09458) -- (-2.17172,-0.16609) -- (-2.07071,-0.10656) -- (-1.9697,-0.03819) -- (-1.86869,-0.13326) -- (-1.76768,-0.15349) -- (-1.66667,-0.12581) -- (-1.56566,-0.02323) -- (-1.46465,-0.05432) -- (-1.36364,-0.07715) -- (-1.26263,-0.0366) -- (-1.16162,-0.05863) -- (-1.06061,-0.14982) -- (-0.9596,-0.14343) -- (-0.85859,-0.13871) -- (-0.75758,-0.01581) -- (-0.65657,0.07262) -- (-0.55556,0.02843) -- (-0.45455,-0.05262) -- (-0.35354,-0.06262) -- (-0.25253,-0.09183) -- (-0.15152,-0.0377) -- (-0.05051,-0.14965) -- (0.05051,-0.1356) -- (0.15152,-0.03984) -- (0.25253,-0.12542) -- (0.35354,-0.14374) -- (0.45455,-0.0633) -- (0.55556,-0.03828) -- (0.65657,0.18303) -- (0.75758,0.3365) -- (0.85859,0.40148) -- (0.9596,0.36731) -- (1.06061,0.30856) -- (1.16162,0.27512) -- (1.26263,0.33319) -- (1.36364,0.20503) -- (1.46465,0.12227) -- (1.56566,0.09331) -- (1.66667,0.09731) -- (1.76768,-0.01829) -- (1.86869,-0.09596) -- (1.9697,-0.02658) -- (2.07071,0.05377) -- (2.17172,-0.00878) -- (2.27273,0.15913) -- (2.37374,0.1617) -- (2.47475,0.25056) -- (2.57576,0.22359) -- (2.67677,0.32266) -- (2.77778,0.25263) -- (2.87879,0.18062) -- (2.9798,0.24331) -- (3.08081,0.08891) -- (3.18182,-0.02803) -- (3.28283,-0.13581) -- (3.38384,-0.13902) -- (3.48485,-0.12597) -- (3.58586,-0.21186) -- (3.68687,-0.24398) -- (3.78788,-0.17209) -- (3.88889,-0.20537) -- (3.9899,-0.18179) -- (4.09091,-0.13994) -- (4.19192,-0.15419) -- (4.29293,-0.16685) -- (4.39394,-0.22336) -- (4.49495,-0.22277) -- (4.59596,-0.18498) -- (4.69697,-0.21961) -- (4.79798,-0.18946) -- (4.89899,-0.09895) -- (5,-0.28498);
\fill[fill=magenta] (-2.47475,0.22533) circle[radius=0.07];
\fill[fill=magenta] (1.9697,0.16003) circle[radius=0.07];
\fill[fill=teal] (-2.47475,-0.05998) circle[radius=0.07];
\fill[fill=teal] (1.9697,-0.02658) circle[radius=0.07];
\draw[thick,colora] (-5,1) -- (-4.89899,0.95811) -- (-4.79798,0.97491) -- (-4.69697,0.99605) -- (-4.59596,0.96936) -- (-4.49495,1.0253) -- (-4.39394,0.95602) -- (-4.29293,1.09908) -- (-4.19192,1.01587) -- (-4.09091,1.08926) -- (-3.9899,1.09107) -- (-3.88889,1.09796) -- (-3.78788,1.15283) -- (-3.68687,1.18289) -- (-3.58586,1.10533) -- (-3.48485,1.14243) -- (-3.38384,1.12249) -- (-3.28283,1.08819) -- (-3.18182,1.11577) -- (-3.08081,1.15754) -- (-2.9798,1.10094) -- (-2.87879,1.09887) -- (-2.77778,1.01731) -- (-2.67677,0.93324) -- (-2.57576,1.01365) -- (-2.47475,1.09816) -- (-2.37374,1.05757) -- (-2.27273,0.94565) -- (-2.17172,0.96451) -- (-2.07071,0.74661) -- (-1.9697,0.69311) -- (-1.86869,0.77789) -- (-1.76768,0.65556) -- (-1.66667,0.702) -- (-1.56566,0.77557) -- (-1.46465,0.90679) -- (-1.36364,0.79122) -- (-1.26263,0.70657) -- (-1.16162,0.70933) -- (-1.06061,0.79532) -- (-0.9596,0.80877) -- (-0.85859,0.8294) -- (-0.75758,1.02649) -- (-0.65657,0.99607) -- (-0.55556,1.01342) -- (-0.45455,0.91333) -- (-0.35354,0.95574) -- (-0.25253,0.89276) -- (-0.15152,0.84744) -- (-0.05051,0.97245) -- (0.05051,0.93757) -- (0.15152,1.03611) -- (0.25253,1.0603) -- (0.35354,1.04985) -- (0.45455,0.9872) -- (0.55556,0.9833) -- (0.65657,1.10922) -- (0.75758,1.14541) -- (0.85859,1.10881) -- (0.9596,1.09339) -- (1.06061,1.01804) -- (1.16162,1.09376) -- (1.26263,1.05955) -- (1.36364,1.07203) -- (1.46465,1.0204) -- (1.56566,0.8925) -- (1.66667,0.9778) -- (1.76768,1.06593) -- (1.86869,1.20209) -- (1.9697,1.19047) -- (2.07071,1.19173) -- (2.17172,1.0141) -- (2.27273,0.96793) -- (2.37374,1.05082) -- (2.47475,1.11656) -- (2.57576,1.05146) -- (2.67677,0.98389) -- (2.77778,1.03852) -- (2.87879,1.04246) -- (2.9798,1.1166) -- (3.08081,1.01988) -- (3.18182,1.03532) -- (3.28283,1.04575) -- (3.38384,0.969) -- (3.48485,1.03654) -- (3.58586,1.17411) -- (3.68687,1.14241) -- (3.78788,1.25302) -- (3.88889,1.28298) -- (3.9899,1.13516) -- (4.09091,1.06261) -- (4.19192,1.04696) -- (4.29293,1.12372) -- (4.39394,1.1081) -- (4.49495,1.2323) -- (4.59596,1.20369) -- (4.69697,1.24253) -- (4.79798,1.27642) -- (4.89899,1.28739) -- (5,1.27768);
\draw[thick,colorb] (-5,1) -- (-4.89899,0.90614) -- (-4.79798,0.73849) -- (-4.69697,0.79498) -- (-4.59596,0.78689) -- (-4.49495,0.65636) -- (-4.39394,0.65322) -- (-4.29293,0.73808) -- (-4.19192,0.68456) -- (-4.09091,0.71279) -- (-3.9899,0.67792) -- (-3.88889,0.79087) -- (-3.78788,0.89769) -- (-3.68687,1.00866) -- (-3.58586,0.9729) -- (-3.48485,0.86579) -- (-3.38384,0.91158) -- (-3.28283,1.01724) -- (-3.18182,0.96226) -- (-3.08081,0.97263) -- (-2.9798,0.97131) -- (-2.87879,1.09551) -- (-2.77778,0.9801) -- (-2.67677,0.91527) -- (-2.57576,0.91239) -- (-2.47475,0.90417) -- (-2.37374,0.85682) -- (-2.27273,0.86075) -- (-2.17172,0.68361) -- (-2.07071,0.61908) -- (-1.9697,0.62751) -- (-1.86869,0.71871) -- (-1.76768,0.72402) -- (-1.66667,0.78098) -- (-1.56566,0.77528) -- (-1.46465,0.74518) -- (-1.36364,0.67442) -- (-1.26263,0.6932) -- (-1.16162,0.81493) -- (-1.06061,0.83021) -- (-0.9596,0.87436) -- (-0.85859,0.95014) -- (-0.75758,0.98457) -- (-0.65657,0.90347) -- (-0.55556,0.89635) -- (-0.45455,0.91329) -- (-0.35354,0.93259) -- (-0.25253,0.93566) -- (-0.15152,1.0016) -- (-0.05051,0.92232) -- (0.05051,0.96476) -- (0.15152,0.9523) -- (0.25253,1.03983) -- (0.35354,1.132) -- (0.45455,1.221) -- (0.55556,1.15825) -- (0.65657,1.05504) -- (0.75758,1.1188) -- (0.85859,1.01741) -- (0.9596,0.99756) -- (1.06061,0.9674) -- (1.16162,0.85844) -- (1.26263,0.85052) -- (1.36364,0.86771) -- (1.46465,0.79795) -- (1.56566,0.7866) -- (1.66667,0.83769) -- (1.76768,0.86486) -- (1.86869,1.00843) -- (1.9697,1.03535) -- (2.07071,1.09312) -- (2.17172,1.04077) -- (2.27273,1.03258) -- (2.37374,1.08002) -- (2.47475,1.12456) -- (2.57576,1.17326) -- (2.67677,1.25912) -- (2.77778,1.24291) -- (2.87879,1.20194) -- (2.9798,1.3128) -- (3.08081,1.33978) -- (3.18182,1.27716) -- (3.28283,1.36171) -- (3.38384,1.31857) -- (3.48485,1.23505) -- (3.58586,1.20327) -- (3.68687,1.2487) -- (3.78788,1.15019) -- (3.88889,1.04962) -- (3.9899,1.11691) -- (4.09091,1.11014) -- (4.19192,0.96222) -- (4.29293,0.95917) -- (4.39394,0.96685) -- (4.49495,1.11247) -- (4.59596,1.12363) -- (4.69697,1.21684) -- (4.79798,1.21563) -- (4.89899,1.1747) -- (5,1.15081);
\fill[fill=magenta] (-2.47475,1.09816) circle[radius=0.07];
\fill[fill=magenta] (1.9697,1.19047) circle[radius=0.07];
\fill[fill=teal] (-2.47475,0.90417) circle[radius=0.07];
\fill[fill=teal] (1.9697,1.03535) circle[radius=0.07];
\draw[thick,colora] (-5,2) -- (-4.89899,1.92105) -- (-4.79798,1.98195) -- (-4.69697,2.17228) -- (-4.59596,2.18787) -- (-4.49495,2.23316) -- (-4.39394,2.35502) -- (-4.29293,2.32874) -- (-4.19192,2.32405) -- (-4.09091,2.29398) -- (-3.9899,2.27969) -- (-3.88889,2.16662) -- (-3.78788,2.1358) -- (-3.68687,2.14898) -- (-3.58586,2.12682) -- (-3.48485,2.05589) -- (-3.38384,2.01989) -- (-3.28283,1.95135) -- (-3.18182,1.98941) -- (-3.08081,2.00954) -- (-2.9798,1.97954) -- (-2.87879,1.81824) -- (-2.77778,1.87033) -- (-2.67677,1.97199) -- (-2.57576,1.94668) -- (-2.47475,2.0082) -- (-2.37374,2.04592) -- (-2.27273,2.12169) -- (-2.17172,2.04472) -- (-2.07071,1.90743) -- (-1.9697,1.93065) -- (-1.86869,1.89127) -- (-1.76768,1.84453) -- (-1.66667,1.83955) -- (-1.56566,1.8769) -- (-1.46465,1.94072) -- (-1.36364,1.87982) -- (-1.26263,1.97841) -- (-1.16162,1.96444) -- (-1.06061,1.96607) -- (-0.9596,1.925) -- (-0.85859,1.94543) -- (-0.75758,1.81979) -- (-0.65657,1.78995) -- (-0.55556,1.83491) -- (-0.45455,1.86954) -- (-0.35354,1.86723) -- (-0.25253,1.7912) -- (-0.15152,1.80721) -- (-0.05051,1.76762) -- (0.05051,1.84407) -- (0.15152,1.88295) -- (0.25253,1.93204) -- (0.35354,1.87644) -- (0.45455,1.96421) -- (0.55556,2.09481) -- (0.65657,2.06888) -- (0.75758,1.92569) -- (0.85859,1.89207) -- (0.9596,1.85621) -- (1.06061,1.92705) -- (1.16162,2.04612) -- (1.26263,2.16414) -- (1.36364,2.12964) -- (1.46465,2.11536) -- (1.56566,2.13088) -- (1.66667,2.05142) -- (1.76768,2.05) -- (1.86869,2.08138) -- (1.9697,2.10271) -- (2.07071,2.14494) -- (2.17172,2.0135) -- (2.27273,2.00346) -- (2.37374,1.86928) -- (2.47475,1.89125) -- (2.57576,1.83225) -- (2.67677,1.77149) -- (2.77778,1.79649) -- (2.87879,1.7203) -- (2.9798,1.79316) -- (3.08081,1.88618) -- (3.18182,1.84822) -- (3.28283,1.76733) -- (3.38384,1.82243) -- (3.48485,1.85982) -- (3.58586,1.93749) -- (3.68687,1.99624) -- (3.78788,2.01309) -- (3.88889,2.04876) -- (3.9899,2.00743) -- (4.09091,1.93172) -- (4.19192,1.83035) -- (4.29293,1.83953) -- (4.39394,1.89433) -- (4.49495,1.75597) -- (4.59596,1.80239) -- (4.69697,1.78571) -- (4.79798,1.67782) -- (4.89899,1.65453) -- (5,1.78188);
\draw[thick,colorb] (-5,2) -- (-4.89899,1.97783) -- (-4.79798,1.91389) -- (-4.69697,2.08033) -- (-4.59596,2.07625) -- (-4.49495,2.1533) -- (-4.39394,2.00684) -- (-4.29293,2.0066) -- (-4.19192,1.97531) -- (-4.09091,1.98428) -- (-3.9899,2.05416) -- (-3.88889,2.11913) -- (-3.78788,2.32307) -- (-3.68687,2.45284) -- (-3.58586,2.40648) -- (-3.48485,2.37392) -- (-3.38384,2.24361) -- (-3.28283,2.20925) -- (-3.18182,2.17734) -- (-3.08081,2.20224) -- (-2.9798,1.99398) -- (-2.87879,1.99471) -- (-2.77778,1.97932) -- (-2.67677,1.8988) -- (-2.57576,1.75118) -- (-2.47475,1.89352) -- (-2.37374,1.89333) -- (-2.27273,1.92452) -- (-2.17172,1.97761) -- (-2.07071,1.95747) -- (-1.9697,1.94261) -- (-1.86869,1.90626) -- (-1.76768,1.84006) -- (-1.66667,1.88481) -- (-1.56566,1.85922) -- (-1.46465,1.91215) -- (-1.36364,2.13676) -- (-1.26263,2.26335) -- (-1.16162,2.16894) -- (-1.06061,2.11045) -- (-0.9596,2.03307) -- (-0.85859,1.94266) -- (-0.75758,1.97753) -- (-0.65657,1.97939) -- (-0.55556,2.11203) -- (-0.45455,2.09778) -- (-0.35354,2.14652) -- (-0.25253,2.2154) -- (-0.15152,2.11444) -- (-0.05051,2.08588) -- (0.05051,2.06786) -- (0.15152,2.07819) -- (0.25253,2.16331) -- (0.35354,2.06151) -- (0.45455,2.07223) -- (0.55556,2.0045) -- (0.65657,2.11991) -- (0.75758,2.00274) -- (0.85859,1.92489) -- (0.9596,2.04343) -- (1.06061,2.01713) -- (1.16162,2.03595) -- (1.26263,1.99021) -- (1.36364,2.05644) -- (1.46465,1.87121) -- (1.56566,1.89933) -- (1.66667,1.93826) -- (1.76768,1.95883) -- (1.86869,1.98948) -- (1.9697,1.91426) -- (2.07071,1.95974) -- (2.17172,1.89747) -- (2.27273,1.79465) -- (2.37374,1.75373) -- (2.47475,1.61105) -- (2.57576,1.64544) -- (2.67677,1.80823) -- (2.77778,1.77022) -- (2.87879,1.8925) -- (2.9798,1.95956) -- (3.08081,1.97212) -- (3.18182,2.0396) -- (3.28283,2.03395) -- (3.38384,2.06828) -- (3.48485,2.07288) -- (3.58586,2.08699) -- (3.68687,2.00482) -- (3.78788,2.061) -- (3.88889,2.04703) -- (3.9899,2.04235) -- (4.09091,2.02793) -- (4.19192,1.99598) -- (4.29293,1.97776) -- (4.39394,1.97936) -- (4.49495,2.03649) -- (4.59596,2.00169) -- (4.69697,1.97747) -- (4.79798,1.97051) -- (4.89899,1.98188) -- (5,1.95588);
\fill[fill=magenta] (-2.47475,2.0082) circle[radius=0.07];
\fill[fill=magenta] (1.9697,2.10271) circle[radius=0.07];
\fill[fill=teal] (-2.47475,1.89352) circle[radius=0.07];
\fill[fill=teal] (1.9697,1.91426) circle[radius=0.07];
\draw[thick,colora] (-5,3) -- (-4.89899,2.94297) -- (-4.79798,2.8128) -- (-4.69697,2.833) -- (-4.59596,2.88338) -- (-4.49495,2.7553) -- (-4.39394,2.78597) -- (-4.29293,2.82327) -- (-4.19192,2.90075) -- (-4.09091,2.81606) -- (-3.9899,2.93204) -- (-3.88889,3.07821) -- (-3.78788,3.0485) -- (-3.68687,3.17675) -- (-3.58586,3.1403) -- (-3.48485,3.08373) -- (-3.38384,3.09733) -- (-3.28283,3.15799) -- (-3.18182,3.19138) -- (-3.08081,3.15084) -- (-2.9798,3.13138) -- (-2.87879,3.01541) -- (-2.77778,2.99298) -- (-2.67677,3.09027) -- (-2.57576,3.14074) -- (-2.47475,3.10858) -- (-2.37374,3.1101) -- (-2.27273,3.13348) -- (-2.17172,3.06442) -- (-2.07071,3.09771) -- (-1.9697,3.08069) -- (-1.86869,3.16434) -- (-1.76768,3.09395) -- (-1.66667,3.11375) -- (-1.56566,3.08835) -- (-1.46465,3.0783) -- (-1.36364,3.0507) -- (-1.26263,3.07824) -- (-1.16162,3.13911) -- (-1.06061,3.20937) -- (-0.9596,3.15177) -- (-0.85859,2.98928) -- (-0.75758,3.03338) -- (-0.65657,2.97014) -- (-0.55556,2.96673) -- (-0.45455,2.88349) -- (-0.35354,2.99853) -- (-0.25253,2.99899) -- (-0.15152,3.01379) -- (-0.05051,3.00951) -- (0.05051,2.9861) -- (0.15152,2.89459) -- (0.25253,2.97019) -- (0.35354,2.90423) -- (0.45455,2.87049) -- (0.55556,2.84231) -- (0.65657,2.90586) -- (0.75758,2.92025) -- (0.85859,2.92935) -- (0.9596,3.01649) -- (1.06061,2.85483) -- (1.16162,2.921) -- (1.26263,2.98176) -- (1.36364,2.93968) -- (1.46465,2.86155) -- (1.56566,2.87255) -- (1.66667,2.91731) -- (1.76768,3.06234) -- (1.86869,3.10392) -- (1.9697,3.16457) -- (2.07071,3.12164) -- (2.17172,3.10441) -- (2.27273,3.19174) -- (2.37374,3.1243) -- (2.47475,3.09522) -- (2.57576,2.96667) -- (2.67677,2.98204) -- (2.77778,2.98577) -- (2.87879,3.0284) -- (2.9798,3.0982) -- (3.08081,3.13808) -- (3.18182,3.09762) -- (3.28283,3.11817) -- (3.38384,3.04829) -- (3.48485,3.05073) -- (3.58586,3.01549) -- (3.68687,3.02829) -- (3.78788,2.87358) -- (3.88889,2.88586) -- (3.9899,2.89037) -- (4.09091,2.75496) -- (4.19192,2.87104) -- (4.29293,2.80198) -- (4.39394,2.85684) -- (4.49495,3.03303) -- (4.59596,3.00562) -- (4.69697,2.93692) -- (4.79798,2.95515) -- (4.89899,3.00088) -- (5,2.89474);
\draw[thick,colorb] (-5,3) -- (-4.89899,3.04426) -- (-4.79798,3.0339) -- (-4.69697,3.10163) -- (-4.59596,3.00095) -- (-4.49495,3.11372) -- (-4.39394,3.13436) -- (-4.29293,3.09077) -- (-4.19192,3.0227) -- (-4.09091,2.96097) -- (-3.9899,2.98766) -- (-3.88889,2.90391) -- (-3.78788,2.93068) -- (-3.68687,2.99752) -- (-3.58586,3.03593) -- (-3.48485,3.03516) -- (-3.38384,3.05231) -- (-3.28283,2.96511) -- (-3.18182,2.94408) -- (-3.08081,2.90698) -- (-2.9798,2.88833) -- (-2.87879,2.85712) -- (-2.77778,2.95143) -- (-2.67677,2.97224) -- (-2.57576,2.90848) -- (-2.47475,2.87402) -- (-2.37374,3.00444) -- (-2.27273,3.0884) -- (-2.17172,3.03844) -- (-2.07071,2.99361) -- (-1.9697,3.0903) -- (-1.86869,2.9978) -- (-1.76768,2.9279) -- (-1.66667,2.92517) -- (-1.56566,2.88424) -- (-1.46465,2.78818) -- (-1.36364,2.84633) -- (-1.26263,2.74373) -- (-1.16162,2.75766) -- (-1.06061,2.82685) -- (-0.9596,2.87887) -- (-0.85859,2.81772) -- (-0.75758,2.91769) -- (-0.65657,2.91315) -- (-0.55556,2.89186) -- (-0.45455,2.90504) -- (-0.35354,2.82232) -- (-0.25253,2.87945) -- (-0.15152,2.87613) -- (-0.05051,2.88327) -- (0.05051,2.83039) -- (0.15152,2.83185) -- (0.25253,2.81362) -- (0.35354,2.76375) -- (0.45455,2.65685) -- (0.55556,2.71922) -- (0.65657,2.81531) -- (0.75758,2.8255) -- (0.85859,2.85869) -- (0.9596,2.69301) -- (1.06061,2.74582) -- (1.16162,2.80765) -- (1.26263,2.89149) -- (1.36364,2.84779) -- (1.46465,2.80026) -- (1.56566,2.73391) -- (1.66667,2.77422) -- (1.76768,2.9602) -- (1.86869,2.89192) -- (1.9697,2.81848) -- (2.07071,2.80998) -- (2.17172,2.92835) -- (2.27273,2.91033) -- (2.37374,2.86712) -- (2.47475,2.86637) -- (2.57576,2.91476) -- (2.67677,2.89957) -- (2.77778,2.84981) -- (2.87879,2.8016) -- (2.9798,2.69138) -- (3.08081,2.6259) -- (3.18182,2.61733) -- (3.28283,2.71827) -- (3.38384,2.78428) -- (3.48485,2.81179) -- (3.58586,2.86994) -- (3.68687,2.92897) -- (3.78788,2.97864) -- (3.88889,3.02978) -- (3.9899,2.99245) -- (4.09091,3.04467) -- (4.19192,3.00238) -- (4.29293,3.11621) -- (4.39394,3.0664) -- (4.49495,3.06023) -- (4.59596,3.15167) -- (4.69697,3.06794) -- (4.79798,3.12199) -- (4.89899,3.121) -- (5,3.037);
\fill[fill=magenta] (-2.47475,3.10858) circle[radius=0.07];
\fill[fill=magenta] (1.9697,3.16457) circle[radius=0.07];
\fill[fill=teal] (-2.47475,2.87402) circle[radius=0.07];
\fill[fill=teal] (1.9697,2.81848) circle[radius=0.07];
\end{tikzpicture}%
\qquad%
\begin{tikzpicture}[scale=0.7]
\colorlet{colora}{magenta}
\colorlet{colorb}{teal}
\colorlet{colorc}{orange}
\colorlet{colord}{blue}
\draw[dashed,thick] (-2.47475,-1.1) -- (-2.47475,4);
\draw[dashed,thick] (1.9697,-1.1) -- (1.9697,4);
\draw[thick] (-5,-0.5) -- (5,-0.5);
\draw (-2.77475,-0.8) node {$\tau_\ell$};
\draw (2.2697,-0.8) node {$\tau_r$};
\draw[thick,colora] (-5,0) -- (-4.89899,-0.07914) -- (-4.79798,-0.02241) -- (-4.69697,-0.05041) -- (-4.59596,0.00655) -- (-4.49495,-0.02758) -- (-4.39394,0.07223) -- (-4.29293,0.0737) -- (-4.19192,0.14667) -- (-4.09091,-0.03266) -- (-3.9899,-0.01653) -- (-3.88889,-0.03666) -- (-3.78788,-0.05354) -- (-3.68687,-0.02013) -- (-3.58586,0.05229) -- (-3.48485,0.08212) -- (-3.38384,0.07378) -- (-3.28283,0.15761) -- (-3.18182,0.21682) -- (-3.08081,0.15884) -- (-2.9798,0.2592) -- (-2.87879,0.27165) -- (-2.77778,0.20364) -- (-2.67677,0.12366) -- (-2.57576,0.08241) -- (-2.47475,0.22533);
\draw[thick,colorb] (-5,0) -- (-4.89899,-0.03513) -- (-4.79798,-0.12263) -- (-4.69697,-0.04131) -- (-4.59596,-0.19071) -- (-4.49495,-0.21967) -- (-4.39394,-0.23424) -- (-4.29293,-0.29077) -- (-4.19192,-0.39942) -- (-4.09091,-0.41873) -- (-3.9899,-0.28739) -- (-3.88889,-0.18827) -- (-3.78788,-0.14035) -- (-3.68687,-0.16009) -- (-3.58586,-0.149) -- (-3.48485,-0.10989) -- (-3.38384,-0.02762) -- (-3.28283,0.02569) -- (-3.18182,-0.07797) -- (-3.08081,-0.07541) -- (-2.9798,-0.12636) -- (-2.87879,-0.1086) -- (-2.77778,-0.18277) -- (-2.67677,-0.12656) -- (-2.57576,-0.17731) -- (-2.47475,-0.05998);
\draw[thick,colora] (1.9697,0.16003) -- (2.07071,0.29282) -- (2.17172,0.36183) -- (2.27273,0.21199) -- (2.37374,0.1817) -- (2.47475,0.1505) -- (2.57576,0.07604) -- (2.67677,0.08291) -- (2.77778,0.0168) -- (2.87879,-0.03994) -- (2.9798,-0.0222) -- (3.08081,-0.15706) -- (3.18182,-0.14493) -- (3.28283,-0.17105) -- (3.38384,-0.05926) -- (3.48485,-0.13212) -- (3.58586,-0.05611) -- (3.68687,-0.26411) -- (3.78788,-0.2938) -- (3.88889,-0.20986) -- (3.9899,-0.2011) -- (4.09091,-0.17243) -- (4.19192,-0.14853) -- (4.29293,-0.17532) -- (4.39394,-0.20191) -- (4.49495,-0.14987) -- (4.59596,-0.08922) -- (4.69697,0.05924) -- (4.79798,0.0801) -- (4.89899,-0.01881) -- (5,0.06988);
\draw[thick,colorb] (1.9697,-0.02658) -- (2.07071,0.05377) -- (2.17172,-0.00878) -- (2.27273,0.15913) -- (2.37374,0.1617) -- (2.47475,0.25056) -- (2.57576,0.22359) -- (2.67677,0.32266) -- (2.77778,0.25263) -- (2.87879,0.18062) -- (2.9798,0.24331) -- (3.08081,0.08891) -- (3.18182,-0.02803) -- (3.28283,-0.13581) -- (3.38384,-0.13902) -- (3.48485,-0.12597) -- (3.58586,-0.21186) -- (3.68687,-0.24398) -- (3.78788,-0.17209) -- (3.88889,-0.20537) -- (3.9899,-0.18179) -- (4.09091,-0.13994) -- (4.19192,-0.15419) -- (4.29293,-0.16685) -- (4.39394,-0.22336) -- (4.49495,-0.22277) -- (4.59596,-0.18498) -- (4.69697,-0.21961) -- (4.79798,-0.18946) -- (4.89899,-0.09895) -- (5,-0.28498);
\draw[very thick,colorc] (-2.47475,0.22533) -- (-2.37374,0.09353) -- (-2.27273,0.09353) -- (-2.17172,0.06026) -- (-2.07071,0.14144) -- (-1.9697,0.06721) -- (-1.86869,0.05571) -- (-1.76768,-0.04221) -- (-1.66667,0.01394) -- (-1.56566,0.0031) -- (-1.46465,0.09178) -- (-1.36364,0.16299) -- (-1.26263,0.1612) -- (-1.16162,0.23403) -- (-1.06061,0.25734) -- (-0.9596,0.25905) -- (-0.85859,0.38207) -- (-0.75758,0.4566) -- (-0.65657,0.52858) -- (-0.55556,0.50038) -- (-0.45455,0.4529) -- (-0.35354,0.42027) -- (-0.25253,0.53111) -- (-0.15152,0.5226) -- (-0.05051,0.62778) -- (0.05051,0.40552) -- (0.15152,0.36131) -- (0.25253,0.35387) -- (0.35354,0.30411) -- (0.45455,0.3398) -- (0.55556,0.30721) -- (0.65657,0.32631) -- (0.75758,0.28878) -- (0.85859,0.34895) -- (0.9596,0.34717) -- (1.06061,0.3492) -- (1.16162,0.29116) -- (1.26263,0.16955) -- (1.36364,0.19622) -- (1.46465,0.22186) -- (1.56566,0.12973) -- (1.66667,0.12939) -- (1.76768,0.04758) -- (1.86869,0.04758) -- (1.9697,0.16003);
\draw[very thick,colord] (-2.47475,-0.05998) -- (-2.37374,-0.14761) -- (-2.27273,-0.14761) -- (-2.17172,-0.12022) -- (-2.07071,-0.15873) -- (-1.9697,-0.23345) -- (-1.86869,-0.1702) -- (-1.76768,-0.11827) -- (-1.66667,-0.1799) -- (-1.56566,-0.00583) -- (-1.46465,-0.11825) -- (-1.36364,-0.19265) -- (-1.26263,-0.17055) -- (-1.16162,-0.2264) -- (-1.06061,-0.08582) -- (-0.9596,-0.12513) -- (-0.85859,-0.28563) -- (-0.75758,-0.19189) -- (-0.65657,-0.17758) -- (-0.55556,-0.10153) -- (-0.45455,-0.09311) -- (-0.35354,0.06829) -- (-0.25253,-0.00991) -- (-0.15152,-0.08422) -- (-0.05051,-0.15943) -- (0.05051,-0.22236) -- (0.15152,-0.12896) -- (0.25253,-0.18864) -- (0.35354,-0.28057) -- (0.45455,-0.25426) -- (0.55556,-0.33099) -- (0.65657,-0.29088) -- (0.75758,-0.24146) -- (0.85859,-0.27014) -- (0.9596,-0.29492) -- (1.06061,-0.27498) -- (1.16162,-0.23529) -- (1.26263,-0.22231) -- (1.36364,-0.23198) -- (1.46465,-0.24917) -- (1.56566,-0.17267) -- (1.66667,-0.11026) -- (1.76768,-0.09596) -- (1.86869,-0.09596) -- (1.9697,-0.02658);
\fill[fill=magenta] (-2.47475,0.22533) circle[radius=0.07];
\fill[fill=magenta] (1.9697,0.16003) circle[radius=0.07];
\fill[fill=teal] (-2.47475,-0.05998) circle[radius=0.07];
\fill[fill=teal] (1.9697,-0.02658) circle[radius=0.07];
\draw[thick,colora] (-5,1) -- (-4.89899,0.95811) -- (-4.79798,0.97491) -- (-4.69697,0.99605) -- (-4.59596,0.96936) -- (-4.49495,1.0253) -- (-4.39394,0.95602) -- (-4.29293,1.09908) -- (-4.19192,1.01587) -- (-4.09091,1.08926) -- (-3.9899,1.09107) -- (-3.88889,1.09796) -- (-3.78788,1.15283) -- (-3.68687,1.18289) -- (-3.58586,1.10533) -- (-3.48485,1.14243) -- (-3.38384,1.12249) -- (-3.28283,1.08819) -- (-3.18182,1.11577) -- (-3.08081,1.15754) -- (-2.9798,1.10094) -- (-2.87879,1.09887) -- (-2.77778,1.01731) -- (-2.67677,0.93324) -- (-2.57576,1.01365) -- (-2.47475,1.09816);
\draw[thick,colorb] (-5,1) -- (-4.89899,0.90614) -- (-4.79798,0.73849) -- (-4.69697,0.79498) -- (-4.59596,0.78689) -- (-4.49495,0.65636) -- (-4.39394,0.65322) -- (-4.29293,0.73808) -- (-4.19192,0.68456) -- (-4.09091,0.71279) -- (-3.9899,0.67792) -- (-3.88889,0.79087) -- (-3.78788,0.89769) -- (-3.68687,1.00866) -- (-3.58586,0.9729) -- (-3.48485,0.86579) -- (-3.38384,0.91158) -- (-3.28283,1.01724) -- (-3.18182,0.96226) -- (-3.08081,0.97263) -- (-2.9798,0.97131) -- (-2.87879,1.09551) -- (-2.77778,0.9801) -- (-2.67677,0.91527) -- (-2.57576,0.91239) -- (-2.47475,0.90417);
\draw[thick,colora] (1.9697,1.19047) -- (2.07071,1.19173) -- (2.17172,1.0141) -- (2.27273,0.96793) -- (2.37374,1.05082) -- (2.47475,1.11656) -- (2.57576,1.05146) -- (2.67677,0.98389) -- (2.77778,1.03852) -- (2.87879,1.04246) -- (2.9798,1.1166) -- (3.08081,1.01988) -- (3.18182,1.03532) -- (3.28283,1.04575) -- (3.38384,0.969) -- (3.48485,1.03654) -- (3.58586,1.17411) -- (3.68687,1.14241) -- (3.78788,1.25302) -- (3.88889,1.28298) -- (3.9899,1.13516) -- (4.09091,1.06261) -- (4.19192,1.04696) -- (4.29293,1.12372) -- (4.39394,1.1081) -- (4.49495,1.2323) -- (4.59596,1.20369) -- (4.69697,1.24253) -- (4.79798,1.27642) -- (4.89899,1.28739) -- (5,1.27768);
\draw[thick,colorb] (1.9697,1.03535) -- (2.07071,1.09312) -- (2.17172,1.04077) -- (2.27273,1.03258) -- (2.37374,1.08002) -- (2.47475,1.12456) -- (2.57576,1.17326) -- (2.67677,1.25912) -- (2.77778,1.24291) -- (2.87879,1.20194) -- (2.9798,1.3128) -- (3.08081,1.33978) -- (3.18182,1.27716) -- (3.28283,1.36171) -- (3.38384,1.31857) -- (3.48485,1.23505) -- (3.58586,1.20327) -- (3.68687,1.2487) -- (3.78788,1.15019) -- (3.88889,1.04962) -- (3.9899,1.11691) -- (4.09091,1.11014) -- (4.19192,0.96222) -- (4.29293,0.95917) -- (4.39394,0.96685) -- (4.49495,1.11247) -- (4.59596,1.12363) -- (4.69697,1.21684) -- (4.79798,1.21563) -- (4.89899,1.1747) -- (5,1.15081);
\draw[very thick,colorc] (-2.47475,1.09816) -- (-2.37374,1.05757) -- (-2.27273,1.05757) -- (-2.17172,1.08081) -- (-2.07071,1.16313) -- (-1.9697,1.06433) -- (-1.86869,1.1101) -- (-1.76768,1.24502) -- (-1.66667,1.23193) -- (-1.56566,1.19605) -- (-1.46465,1.17352) -- (-1.36364,1.13725) -- (-1.26263,1.1125) -- (-1.16162,1.09212) -- (-1.06061,1.10246) -- (-0.9596,1.11663) -- (-0.85859,1.00356) -- (-0.75758,1.06846) -- (-0.65657,0.93002) -- (-0.55556,0.92533) -- (-0.45455,0.95962) -- (-0.35354,1.09718) -- (-0.25253,1.06556) -- (-0.15152,1.14348) -- (-0.05051,1.196) -- (0.05051,1.25913) -- (0.15152,1.29612) -- (0.25253,1.21884) -- (0.35354,1.31276) -- (0.45455,1.31043) -- (0.55556,1.34738) -- (0.65657,1.32459) -- (0.75758,1.28922) -- (0.85859,1.33092) -- (0.9596,1.26318) -- (1.06061,1.24573) -- (1.16162,1.30932) -- (1.26263,1.29939) -- (1.36364,1.1779) -- (1.46465,1.13174) -- (1.56566,1.20158) -- (1.66667,1.17753) -- (1.76768,1.20209) -- (1.86869,1.20209) -- (1.9697,1.19047);
\draw[very thick,colord] (-2.47475,0.90417) -- (-2.37374,0.85682) -- (-2.27273,0.85682) -- (-2.17172,0.80555) -- (-2.07071,0.93663) -- (-1.9697,1.00615) -- (-1.86869,1.04245) -- (-1.76768,0.96352) -- (-1.66667,0.9423) -- (-1.56566,0.85717) -- (-1.46465,0.91183) -- (-1.36364,0.95332) -- (-1.26263,0.89956) -- (-1.16162,0.86381) -- (-1.06061,0.93904) -- (-0.9596,0.88371) -- (-0.85859,0.7843) -- (-0.75758,0.77323) -- (-0.65657,0.76565) -- (-0.55556,0.75021) -- (-0.45455,0.85309) -- (-0.35354,0.70407) -- (-0.25253,0.79127) -- (-0.15152,0.68035) -- (-0.05051,0.71671) -- (0.05051,0.7428) -- (0.15152,0.68969) -- (0.25253,0.70502) -- (0.35354,0.66131) -- (0.45455,0.60222) -- (0.55556,0.68013) -- (0.65657,0.70114) -- (0.75758,0.69546) -- (0.85859,0.80643) -- (0.9596,0.71303) -- (1.06061,0.7911) -- (1.16162,0.78997) -- (1.26263,0.70343) -- (1.36364,0.81597) -- (1.46465,0.87264) -- (1.56566,0.90218) -- (1.66667,1.01235) -- (1.76768,1.00843) -- (1.86869,1.00843) -- (1.9697,1.03535);
\fill[fill=magenta] (-2.47475,1.09816) circle[radius=0.07];
\fill[fill=magenta] (1.9697,1.19047) circle[radius=0.07];
\fill[fill=teal] (-2.47475,0.90417) circle[radius=0.07];
\fill[fill=teal] (1.9697,1.03535) circle[radius=0.07];
\draw[thick,colora] (-5,2) -- (-4.89899,1.92105) -- (-4.79798,1.98195) -- (-4.69697,2.17228) -- (-4.59596,2.18787) -- (-4.49495,2.23316) -- (-4.39394,2.35502) -- (-4.29293,2.32874) -- (-4.19192,2.32405) -- (-4.09091,2.29398) -- (-3.9899,2.27969) -- (-3.88889,2.16662) -- (-3.78788,2.1358) -- (-3.68687,2.14898) -- (-3.58586,2.12682) -- (-3.48485,2.05589) -- (-3.38384,2.01989) -- (-3.28283,1.95135) -- (-3.18182,1.98941) -- (-3.08081,2.00954) -- (-2.9798,1.97954) -- (-2.87879,1.81824) -- (-2.77778,1.87033) -- (-2.67677,1.97199) -- (-2.57576,1.94668) -- (-2.47475,2.0082);
\draw[thick,colorb] (-5,2) -- (-4.89899,1.97783) -- (-4.79798,1.91389) -- (-4.69697,2.08033) -- (-4.59596,2.07625) -- (-4.49495,2.1533) -- (-4.39394,2.00684) -- (-4.29293,2.0066) -- (-4.19192,1.97531) -- (-4.09091,1.98428) -- (-3.9899,2.05416) -- (-3.88889,2.11913) -- (-3.78788,2.32307) -- (-3.68687,2.45284) -- (-3.58586,2.40648) -- (-3.48485,2.37392) -- (-3.38384,2.24361) -- (-3.28283,2.20925) -- (-3.18182,2.17734) -- (-3.08081,2.20224) -- (-2.9798,1.99398) -- (-2.87879,1.99471) -- (-2.77778,1.97932) -- (-2.67677,1.8988) -- (-2.57576,1.75118) -- (-2.47475,1.89352);
\draw[thick,colora] (1.9697,2.10271) -- (2.07071,2.14494) -- (2.17172,2.0135) -- (2.27273,2.00346) -- (2.37374,1.86928) -- (2.47475,1.89125) -- (2.57576,1.83225) -- (2.67677,1.77149) -- (2.77778,1.79649) -- (2.87879,1.7203) -- (2.9798,1.79316) -- (3.08081,1.88618) -- (3.18182,1.84822) -- (3.28283,1.76733) -- (3.38384,1.82243) -- (3.48485,1.85982) -- (3.58586,1.93749) -- (3.68687,1.99624) -- (3.78788,2.01309) -- (3.88889,2.04876) -- (3.9899,2.00743) -- (4.09091,1.93172) -- (4.19192,1.83035) -- (4.29293,1.83953) -- (4.39394,1.89433) -- (4.49495,1.75597) -- (4.59596,1.80239) -- (4.69697,1.78571) -- (4.79798,1.67782) -- (4.89899,1.65453) -- (5,1.78188);
\draw[thick,colorb] (1.9697,1.91426) -- (2.07071,1.95974) -- (2.17172,1.89747) -- (2.27273,1.79465) -- (2.37374,1.75373) -- (2.47475,1.61105) -- (2.57576,1.64544) -- (2.67677,1.80823) -- (2.77778,1.77022) -- (2.87879,1.8925) -- (2.9798,1.95956) -- (3.08081,1.97212) -- (3.18182,2.0396) -- (3.28283,2.03395) -- (3.38384,2.06828) -- (3.48485,2.07288) -- (3.58586,2.08699) -- (3.68687,2.00482) -- (3.78788,2.061) -- (3.88889,2.04703) -- (3.9899,2.04235) -- (4.09091,2.02793) -- (4.19192,1.99598) -- (4.29293,1.97776) -- (4.39394,1.97936) -- (4.49495,2.03649) -- (4.59596,2.00169) -- (4.69697,1.97747) -- (4.79798,1.97051) -- (4.89899,1.98188) -- (5,1.95588);
\draw[very thick,colorc] (-2.47475,2.0082) -- (-2.37374,2.04592) -- (-2.27273,2.04592) -- (-2.17172,2.09995) -- (-2.07071,2.06054) -- (-1.9697,2.01024) -- (-1.86869,2.02132) -- (-1.76768,2.12711) -- (-1.66667,2.07476) -- (-1.56566,2.01735) -- (-1.46465,2.0929) -- (-1.36364,2.15151) -- (-1.26263,2.27138) -- (-1.16162,2.24432) -- (-1.06061,2.12721) -- (-0.9596,2.22456) -- (-0.85859,2.21162) -- (-0.75758,2.22221) -- (-0.65657,2.15374) -- (-0.55556,2.20909) -- (-0.45455,2.22854) -- (-0.35354,2.27417) -- (-0.25253,2.30949) -- (-0.15152,2.24549) -- (-0.05051,2.2769) -- (0.05051,2.34039) -- (0.15152,2.26683) -- (0.25253,2.23649) -- (0.35354,2.21623) -- (0.45455,2.14324) -- (0.55556,2.1487) -- (0.65657,2.1249) -- (0.75758,2.25401) -- (0.85859,2.24785) -- (0.9596,2.2039) -- (1.06061,2.25277) -- (1.16162,2.25851) -- (1.26263,2.16591) -- (1.36364,2.19144) -- (1.46465,2.12385) -- (1.56566,2.06133) -- (1.66667,2.05835) -- (1.76768,2.08138) -- (1.86869,2.08138) -- (1.9697,2.10271);
\draw[very thick,colord] (-2.47475,1.89352) -- (-2.37374,1.89333) -- (-2.27273,1.89333) -- (-2.17172,1.79156) -- (-2.07071,1.78112) -- (-1.9697,1.78411) -- (-1.86869,1.79929) -- (-1.76768,1.73604) -- (-1.66667,1.72666) -- (-1.56566,1.65312) -- (-1.46465,1.68243) -- (-1.36364,1.66369) -- (-1.26263,1.6557) -- (-1.16162,1.75089) -- (-1.06061,1.70561) -- (-0.9596,1.81058) -- (-0.85859,1.76271) -- (-0.75758,1.65261) -- (-0.65657,1.63955) -- (-0.55556,1.64832) -- (-0.45455,1.63911) -- (-0.35354,1.798) -- (-0.25253,1.81191) -- (-0.15152,1.92148) -- (-0.05051,1.93924) -- (0.05051,1.82524) -- (0.15152,1.79405) -- (0.25253,1.70908) -- (0.35354,1.71204) -- (0.45455,1.68649) -- (0.55556,1.82389) -- (0.65657,1.9247) -- (0.75758,1.93687) -- (0.85859,1.82941) -- (0.9596,1.86203) -- (1.06061,1.93216) -- (1.16162,1.90473) -- (1.26263,1.86319) -- (1.36364,1.85054) -- (1.46465,1.73972) -- (1.56566,1.87369) -- (1.66667,1.99933) -- (1.76768,1.98948) -- (1.86869,1.98948) -- (1.9697,1.91426);
\fill[fill=magenta] (-2.47475,2.0082) circle[radius=0.07];
\fill[fill=magenta] (1.9697,2.10271) circle[radius=0.07];
\fill[fill=teal] (-2.47475,1.89352) circle[radius=0.07];
\fill[fill=teal] (1.9697,1.91426) circle[radius=0.07];
\draw[thick,colora] (-5,3) -- (-4.89899,2.94297) -- (-4.79798,2.8128) -- (-4.69697,2.833) -- (-4.59596,2.88338) -- (-4.49495,2.7553) -- (-4.39394,2.78597) -- (-4.29293,2.82327) -- (-4.19192,2.90075) -- (-4.09091,2.81606) -- (-3.9899,2.93204) -- (-3.88889,3.07821) -- (-3.78788,3.0485) -- (-3.68687,3.17675) -- (-3.58586,3.1403) -- (-3.48485,3.08373) -- (-3.38384,3.09733) -- (-3.28283,3.15799) -- (-3.18182,3.19138) -- (-3.08081,3.15084) -- (-2.9798,3.13138) -- (-2.87879,3.01541) -- (-2.77778,2.99298) -- (-2.67677,3.09027) -- (-2.57576,3.14074) -- (-2.47475,3.10858);
\draw[thick,colorb] (-5,3) -- (-4.89899,3.04426) -- (-4.79798,3.0339) -- (-4.69697,3.10163) -- (-4.59596,3.00095) -- (-4.49495,3.11372) -- (-4.39394,3.13436) -- (-4.29293,3.09077) -- (-4.19192,3.0227) -- (-4.09091,2.96097) -- (-3.9899,2.98766) -- (-3.88889,2.90391) -- (-3.78788,2.93068) -- (-3.68687,2.99752) -- (-3.58586,3.03593) -- (-3.48485,3.03516) -- (-3.38384,3.05231) -- (-3.28283,2.96511) -- (-3.18182,2.94408) -- (-3.08081,2.90698) -- (-2.9798,2.88833) -- (-2.87879,2.85712) -- (-2.77778,2.95143) -- (-2.67677,2.97224) -- (-2.57576,2.90848) -- (-2.47475,2.87402);
\draw[thick,colora] (1.9697,3.16457) -- (2.07071,3.12164) -- (2.17172,3.10441) -- (2.27273,3.19174) -- (2.37374,3.1243) -- (2.47475,3.09522) -- (2.57576,2.96667) -- (2.67677,2.98204) -- (2.77778,2.98577) -- (2.87879,3.0284) -- (2.9798,3.0982) -- (3.08081,3.13808) -- (3.18182,3.09762) -- (3.28283,3.11817) -- (3.38384,3.04829) -- (3.48485,3.05073) -- (3.58586,3.01549) -- (3.68687,3.02829) -- (3.78788,2.87358) -- (3.88889,2.88586) -- (3.9899,2.89037) -- (4.09091,2.75496) -- (4.19192,2.87104) -- (4.29293,2.80198) -- (4.39394,2.85684) -- (4.49495,3.03303) -- (4.59596,3.00562) -- (4.69697,2.93692) -- (4.79798,2.95515) -- (4.89899,3.00088) -- (5,2.89474);
\draw[thick,colorb] (1.9697,2.81848) -- (2.07071,2.80998) -- (2.17172,2.92835) -- (2.27273,2.91033) -- (2.37374,2.86712) -- (2.47475,2.86637) -- (2.57576,2.91476) -- (2.67677,2.89957) -- (2.77778,2.84981) -- (2.87879,2.8016) -- (2.9798,2.69138) -- (3.08081,2.6259) -- (3.18182,2.61733) -- (3.28283,2.71827) -- (3.38384,2.78428) -- (3.48485,2.81179) -- (3.58586,2.86994) -- (3.68687,2.92897) -- (3.78788,2.97864) -- (3.88889,3.02978) -- (3.9899,2.99245) -- (4.09091,3.04467) -- (4.19192,3.00238) -- (4.29293,3.11621) -- (4.39394,3.0664) -- (4.49495,3.06023) -- (4.59596,3.15167) -- (4.69697,3.06794) -- (4.79798,3.12199) -- (4.89899,3.121) -- (5,3.037);
\draw[very thick,colorc] (-2.47475,3.10858) -- (-2.37374,3.1101) -- (-2.27273,3.1101) -- (-2.17172,3.19963) -- (-2.07071,3.15016) -- (-1.9697,3.13447) -- (-1.86869,3.16032) -- (-1.76768,3.24403) -- (-1.66667,3.29854) -- (-1.56566,3.3177) -- (-1.46465,3.26884) -- (-1.36364,3.23026) -- (-1.26263,3.23051) -- (-1.16162,3.20627) -- (-1.06061,3.22728) -- (-0.9596,3.22732) -- (-0.85859,3.16025) -- (-0.75758,3.26082) -- (-0.65657,3.35363) -- (-0.55556,3.41909) -- (-0.45455,3.47089) -- (-0.35354,3.52359) -- (-0.25253,3.5135) -- (-0.15152,3.43967) -- (-0.05051,3.30267) -- (0.05051,3.21385) -- (0.15152,3.24577) -- (0.25253,3.31379) -- (0.35354,3.21988) -- (0.45455,3.19648) -- (0.55556,3.07558) -- (0.65657,2.98762) -- (0.75758,3.07212) -- (0.85859,3.14922) -- (0.9596,3.21685) -- (1.06061,3.21407) -- (1.16162,3.1704) -- (1.26263,3.1433) -- (1.36364,3.17252) -- (1.46465,3.19577) -- (1.56566,3.17058) -- (1.66667,3.13536) -- (1.76768,3.10392) -- (1.86869,3.10392) -- (1.9697,3.16457);
\draw[very thick,colord] (-2.47475,2.87402) -- (-2.37374,3.00444) -- (-2.27273,3.00444) -- (-2.17172,2.88778) -- (-2.07071,2.86693) -- (-1.9697,2.75644) -- (-1.86869,2.80489) -- (-1.76768,2.83566) -- (-1.66667,2.90989) -- (-1.56566,2.9312) -- (-1.46465,2.83381) -- (-1.36364,2.95262) -- (-1.26263,2.95062) -- (-1.16162,2.77457) -- (-1.06061,2.66168) -- (-0.9596,2.68828) -- (-0.85859,2.62738) -- (-0.75758,2.59735) -- (-0.65657,2.58363) -- (-0.55556,2.63991) -- (-0.45455,2.71219) -- (-0.35354,2.80113) -- (-0.25253,2.75468) -- (-0.15152,2.76883) -- (-0.05051,2.81437) -- (0.05051,2.72812) -- (0.15152,2.73099) -- (0.25253,2.67896) -- (0.35354,2.55364) -- (0.45455,2.62929) -- (0.55556,2.64921) -- (0.65657,2.61179) -- (0.75758,2.6704) -- (0.85859,2.64212) -- (0.9596,2.65665) -- (1.06061,2.70533) -- (1.16162,2.73432) -- (1.26263,2.76663) -- (1.36364,2.81484) -- (1.46465,2.8843) -- (1.56566,2.76204) -- (1.66667,2.81845) -- (1.76768,2.89192) -- (1.86869,2.89192) -- (1.9697,2.81848);
\fill[fill=magenta] (-2.47475,3.10858) circle[radius=0.07];
\fill[fill=magenta] (1.9697,3.16457) circle[radius=0.07];
\fill[fill=teal] (-2.47475,2.87402) circle[radius=0.07];
\fill[fill=teal] (1.9697,2.81848) circle[radius=0.07];
\end{tikzpicture}
\caption{An illustration of the resampling argument outlined. On the left, we have two independent samples of $\mu^{0}_{n,T}$ (pink) and $\mu^{f}_{n,T}$ (green) with $n=4$. The stopping domain where the pink data dominates the green data is denoted by $[\tau_{\ell}, \tau_r]$. On the right, we have the resampled ensembles (denoted by orange and blue respectively) under the monotone coupling causing the orange curves to deterministically be above the blue curves.}
\label{fig:key105}
\end{figure}
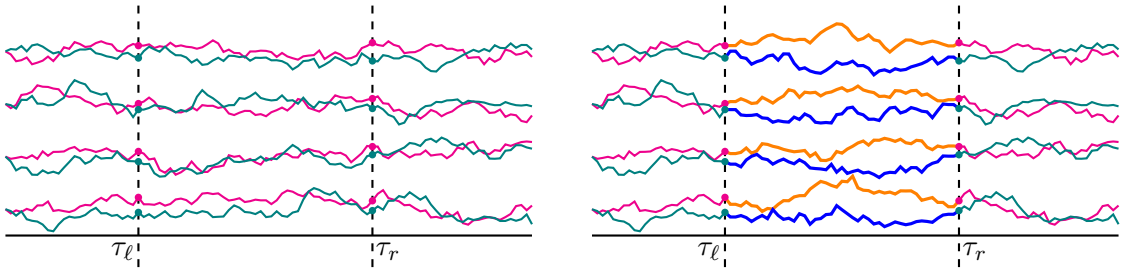

Note that the existence of the stopping domain relies on the independent fluctuations of the two ensembles to find a random time such that the zero boundary ensemble is above the free ensemble. This necessitates $T$ to be large enough as a function of $n$ to allow enough room to fluctuate. The order in which limits are taken in Theorem \ref{th:DLZ} permits taking $T$ much larger compared to $n$ and hence this strategy suffices. On the other hand, for stronger claims such as Theorem \ref{th:uniqueness} and Corollary \ref{freeunique}, this does not work as $n$ can be in principle infinite for finite $T$. This is where we need to refine our approach. A finer coupling of the free and zero boundary measures is also at the heart of our ergodicity results Theorem \ref{th:mixing} and Theorem \ref{th:corrdecay}. Without diving further into details let us only  mention that a crucial input at this point is a strong one point tail estimate for general uniformly tight ensembles. For the zero boundary ensemble this is expressed by Theorem \ref{th:stretched}. To deliver such estimates we set up a bootstrap program, which is broadly divided into two parts.
The first part proves that a uniformly tight hypothesis (recall Definition \ref{ut}) can be upgraded to a uniform confinement property, a notion we introduce later (see Definition \ref{ust}) which essentially amounts to satisfying first moment bounds as in Corollary \ref{cor:logtight}. In the second part, such first moment bounds are then upgraded further to the sought tail bounds. 

\subsection{Organization of the article}
In the upcoming section we implement the reverse coupling strategy outlined above for an alternative proof of Theorem \ref{th:DLZ}. 
Section \ref{sec:tails} is devoted to the one point tail estimate, Theorem \ref{th:stretched}. We then investigate mixing properties and prove Theorems 
 \ref{th:mixing} and \ref{th:corrdecay} in Section \ref{sec:ergodicity}. The argument developed in this section also allows us to prove Theorem \ref{lowertail}. Finally, we prove Theorem \ref{th:uniqueness} and Corollary \ref{freeunique} in Section \ref{sec:uniqueness} which also contains the proof of stretched exponential tail bounds under the assumption of UT.

\subsection{Acknowledgements}
The authors thank Evgeni Dimitrov and Senya Shlosman for useful comments on an earlier draft of the paper. The authors also thank Mriganka Basu Roy Chowdhury for help with the figures.  
PC was supported in part by the Miller Institute at UC Berkeley.  SG was partially supported by NSF grant DMS-$1855688$, NSF CAREER Award
DMS-$1945172$, and a Sloan Research Fellowship. This work was initiated when PC was visiting UC Berkeley as a Visiting Miller Professor.

\section{A first coupling argument}\label{sec:coupling}
In this section we introduce the vanilla version of the crucial coupling argument which as indicated in Section \ref{iop} will be a key device in many of our arguments. While we will need refined versions of this in the proofs of our main results, here as a warmup we introduce the key idea which already will suffice to provide an alternate proof of Theorem \ref{th:DLZ}. 

In what follows $a=1$ and $\l>1$ are fixed, and the constants 
appearing below are allowed to depend on their value, but they may not depend on other parameters.
 Let $\uX_{n,T}$ and $\uY_{n,T}$ denote the random lines with law $ \mu^f_{n,T}$ and $ \mu^0_{n,T}$ respectively. By monotonicity there exits a coupling of $\uX_{n,T},\uY_{n,T}$ such that $\uX_{n,T}\succeq \uY_{n,T}$ with probability one. The following coupling lemma  states that one can find another coupling such that if $T$ is large enough then, in the bulk,  the inequality can be reversed with large probability.   
\begin{lemma} 
\label{lem:coupling} 
For all $n\in\bbN$, and $T>0$ there exists a coupling $\bbP_{n,T}$ of $\uX_{n,T}$ and $\uY_{n,T}$ such that 
\begin{align}\label{eq:coup1} 
\bbP_{n,T}\left(\uX_{n,T}(t)\le \uY_{n,T}(t)\,,\;\forall\,t\in[-T/2,T/2]\right)  \ge 1-\e(T)\,, 
\end{align}
for some $\e(T)\to0$, as $T\to\infty$. 
\end{lemma}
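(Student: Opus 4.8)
\emph{Plan of proof.} Following the strategy sketched after Figure~\ref{fig:key105}, I would realize the reversed domination by resampling on a random stopping domain. Start from \emph{independent} copies $\uX_{n,T}\sim\mu^f_{n,T}$ and $\uY_{n,T}\sim\mu^0_{n,T}$. The heart of the matter is to produce a left stopping time $\tau_\ell\in[-\tfrac78T,-\tfrac58T]$ and a right stopping time $\tau_r\in[\tfrac58T,\tfrac78T]$, together with an event $G$ determined by the data outside $(\tau_\ell,\tau_r)$, with $\bbP(G)\ge1-\e(T)$, on which the \emph{reversed} boundary inequality $\uX_{n,T}(\tau_\ell)\preceq\uY_{n,T}(\tau_\ell)$ and $\uX_{n,T}(\tau_r)\preceq\uY_{n,T}(\tau_r)$ holds (here $\preceq$ is the coordinatewise order). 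Granting this, one resamples both ensembles on the stopping domain $[\tau_\ell,\tau_r]$: by the strong Brownian--Gibbs property of the finite ensembles (Section~\ref{bgsec}), conditionally on the external data the law of $\uX_{n,T}$ (resp.\ $\uY_{n,T}$) on $[\tau_\ell,\tau_r]$ is the $n$-line area-tilted ensemble on $[\tau_\ell,\tau_r]$ with hard floor $0$ and boundary data $\uX_{n,T}(\tau_\ell),\uX_{n,T}(\tau_r)$ (resp.\ $\uY_{n,T}(\tau_\ell),\uY_{n,T}(\tau_r)$); since on $G$ the first boundary datum is dominated by the second while floors and area tilts coincide, Lemma~\ref{lem:mono} lets us couple the resampled paths so that $\uX_{n,T}(t)\le\uY_{n,T}(t)$ for all $t\in[\tau_\ell,\tau_r]\supseteq[-T/2,T/2]$, while off $G$ we couple arbitrarily. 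As $\tau_\ell,\tau_r$ and $G$ depend only on the data outside $(\tau_\ell,\tau_r)$, the resampling preserves both marginals, so the resulting coupling $\bbP_{n,T}$ satisfies \eqref{eq:coup1} with $\e(T)=\bbP(G^c)$.

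\emph{Construction of $\tau_\ell$.} Fix a small $\delta=\delta(n)>0$, and for $s\in[-\tfrac78T,-\tfrac58T]$ let $C(s)=\{X^1_{n,T}(s)\le\delta\}\cap\{Y^n_{n,T}(s)\ge\delta\}$. Because the coordinates of $\uX_{n,T}(s)$ and $\uY_{n,T}(s)$ are ordered, $C(s)$ already forces $X^i_{n,T}(s)\le\delta\le Y^i_{n,T}(s)$ for every $i$, i.e.\ $\uX_{n,T}(s)\preceq\uY_{n,T}(s)$. Set $\tau_\ell=\inf\big(\{s\in[-\tfrac78T,-\tfrac58T]:C(s)\ \text{holds}\}\cup\{-\tfrac58T\}\big)$, a bona fide left stopping time; on $G_\ell:=\{C(s)\ \text{holds for some}\ s\in[-\tfrac78T,-\tfrac58T]\}$ the reversed inequality holds at $\tau_\ell$. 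Define $\tau_r,G_r$ symmetrically on $[\tfrac58T,\tfrac78T]$ and put $G=G_\ell\cap G_r$. Everything reduces to the bound $\bbP(G_\ell^c)\le\e(T)\to0$.

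\emph{The core estimate.} This I would prove in two steps. \emph{(i) A one-point lower bound, uniform in large $T$:} there is $p_0=p_0(n)>0$ with $\bbP(C(s))\ge p_0$ for every $s\in[-\tfrac78T,-\tfrac58T]$. By independence it suffices to bound $\bbP(X^1_{n,T}(s)\le\delta)$ and $\bbP(Y^n_{n,T}(s)\ge\delta)$ from below separately. For the former, time-translation covariance of the construction together with the translation-uniform confinement bound \eqref{eq:tightao1} (applied on the shifted interval $[-\tfrac14T,\tfrac74T]$, which contains the origin) gives $\bbE[X^1_{n,T}(u)]\le C_0$ for all $u$ in a unit window $I\ni s$, with $C_0$ independent of $n$ and $T$; hence the boundary data at $\partial I$ is $\le C'$ with probability $\ge1-2C_0/C'$, and conditionally on the external data of $I$ the interior of $\uX_{n,T}$ on $I$ is a non-intersecting Brownian bridge ensemble with floor $0$, boundary data $\le C'$ and area tilts, for which a Brownian-bridge small-ball estimate gives a conditional lower bound $\ge c(C',n)>0$ on $\{X^1_{n,T}(s)\le\delta\}$ (this costs a factor of order $\exp(-\Theta((C')^2))$ as $C'$ grows). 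For the latter, monotonicity (Lemma~\ref{lem:mono}) and translation covariance show that the law of $\uY_{n,T}$ near $s$ stochastically dominates that of $\mu^0_{n,T_0}$ near the origin for a fixed $T_0$, whence $\bbP(Y^n_{n,T}(s)\ge\delta)\ge\bbP_{\mu^0_{n,T_0}}(Y^n(0)\ge\delta)=:p_b(n)>0$. \emph{(ii) Boosting:} partition $[-\tfrac78T,-\tfrac58T]$ into $M\asymp T$ disjoint unit windows $I_1,\dots,I_M$ with midpoints $s_1,\dots,s_M$. Conditionally on the external data of $\bigcup_kI_k$ the pairs $(\uX_{n,T},\uY_{n,T})|_{I_k}$ are independent over $k$ (disjointness plus the Brownian--Gibbs property, applied separately to the two independent ensembles), and on the event $H_k$ that the boundary data at $\partial I_k$ is $\le C'$ one has a conditional lower bound $\ge p_0'(C',n)>0$ on $C(s_k)$. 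Writing $N=\#\{k:H_k\}$,
\[
\bbP(G_\ell^c)\le\bbP\Big(\bigcap_{k=1}^{M}C(s_k)^c\Big)=\bbE\big[(1-p_0')^{N}\big]\le\bbP(N<\tfrac M2)+(1-p_0')^{M/2}.
\]
Markov's inequality bounds $\bbP(N<\tfrac M2)\le 4C_0/C'$, while $(1-p_0'(C',n))^{M/2}\le\exp(-\Theta(T)\exp(-\Theta_n((C')^2)))$, so choosing $C'=C'(T)\to\infty$ of order $\sqrt{\log T}$ drives both terms to $0$ and gives $\bbP(G_\ell^c)\le\e(T)=O((\log T)^{-1/2})\to0$, as desired.

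\emph{Main obstacle.} The delicate part is the core estimate: producing a lower bound on the ``reversed'' event $C(s)$ that is uniform in $T$, and boosting it all the way to $1-\e(T)$ over order-$T$ windows despite the boundary data of the free ensemble being only \emph{logarithmically} (not uniformly) bounded across the bulk region. The quantitative trade-off in the boosting step is exactly what forces $T$ to be taken large relative to $n$ (as noted after Figure~\ref{fig:key105}), and is precisely the step that must be strengthened in the sequel, where $n$ may be infinite.
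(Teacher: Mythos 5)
Your argument is correct and follows the same resampling-on-a-stopping-domain strategy as the paper, but the construction of the stopping domain differs in a way worth noting. The paper defines $\tau_\ell(u),\tau_r(u)$ \emph{purely from} $\uY_{n,T}$: they are the first/last times that $Y^n_{n,T}$ exceeds a level $u$, and a lower bound $\delta_n(u)>0$ on the one-point probability for the pinned zero-BC ensemble, combined with independence of the pinned blocks, shows $\tau_\ell<-T/2<T/2<\tau_r$ with probability $\ge 1-2e^{-\delta_n(u)\lfloor T/4\rfloor}$. The fact that $\uX_{n,T}$ is below $u$ at those two random (but $\sigma(\uY_{n,T})$-measurable) times is then handled \emph{separately}, using independence of $\uX_{n,T}$ and $\uY_{n,T}$ together with the confinement estimate: $\bbP(A^c)\le 2\sup_t\bbP(X^1_{n,T}(t)>u)$, which tends to $0$ as $u\to\infty$ uniformly in $T$. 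Sending $T\to\infty$ first, then $u\to\infty$, gives $\e(T)\to0$ without any quantitative tuning. You instead fold both conditions into a single compound event $C(s)=\{X^1_{n,T}(s)\le\delta\}\cap\{Y^n_{n,T}(s)\ge\delta\}$, which makes the stopping time depend on the \emph{joint} process; this is still a valid stopping domain for the (independent) joint ensemble, and your boosting argument with $C'(T)\asymp\sqrt{\log T}$ goes through, at the cost of a more delicate one-point lower bound for the compound event $C(s)$ and the need to control the $\uX$-boundary data window by window. What you gain is an explicit (if very slow, $O(1/\sqrt{\log T})$) rate; what the paper gains by decoupling the two conditions is a shorter, soft argument where the single non-quantitative ingredient is $\delta_n(u)>0$. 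Both are correct; the paper's version is the leaner one, while yours makes explicit the quantitative trade-off (height cutoff versus number of independent trials) that indeed forces $T$ to be large compared to $n$, exactly as you observe.
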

\begin{proof}
Let $Y^n_{n,T}$ denote the $n$-th path in $\uY_{n,T}$. For any $u>0$, consider the random times 
\begin{align}\label{eq:coup2} 
\t_\ell(u) = \inf\{t>-T: \; Y_{n,T}^n(t)\ge u\}\,,\quad 
\t_r(u) = \sup\{t<T: \; Y_{n,T}^n(t)\ge u\}\,.
\end{align}
Consider now $Y_{n,1}^n$, that is the lowest path in the interval $[-1,1]$ with zero boundary conditions, and let 
\begin{align}\label{eq:pnueq} 
\d_n(u) = \bbP(Y_{n,1}^n(0)\ge u).
\end{align}
For any fixed $n\in\bbN$, and $u>0$, one has $\d_n(u)>0$.  
This is a direct consequence of the definition of the ensemble $\mu^0_{n,T}$. We refer however the interested reader to Lemma \ref{lem:pkvu} below for a quantitative lower bound on such probabilities that will be crucial in some of our later arguments involving more refined couplings. 

We seek to bound from below the probability 
\begin{align}\label{eq:coup3} 
\bbP\(\t_\ell(u)<-T/2\,,\;\t_r(u)>T/2 \).
\end{align}
By monotonicity, see Lemma \ref{lem:mono} and Remark \ref{rem:nonct}, we can replace $\uY_{n,T}$ by the ensemble obtained by pinning all  paths at zero height at the endpoints of the  intervals 
\[I_j = [-T+2(j-1), -T + 2j]\,,\quad j=1,\dots,j_{\max},
\] 
where $j_{\max}=\lfloor T\rfloor$.
Call $\widehat\uY_{n,T}$ this pinned process, and let $\widehat Y^n$ denote its $n$-th path.
Note that each interval $I_j$ has size $2$ with midpoint $s_j=-T + (2j-1)$. We consider the index $j_\ell$ defined as the smallest $j$ such that $\widehat Y^n(s_j)\ge u$ (see Figure \ref{fig:stopdomain}). 
Since the intervals are independent, one has 
\begin{align}\label{eq:coup41} 
\bbP(\t_\ell(u)\ge -T/2)\le (1-\d_n(u))^{\lfloor T/4\rfloor}\le e^{-\d_n(u)\lfloor T/4\rfloor}.
\end{align}
By symmetry, the same bound applies to 
$\bbP(\t_r(u)\le T/2)$. It follows that
\begin{align}\label{eq:coup4a} 
\bbP(\t_\ell(u)<-T/2\,,\;\t_r(u)>T/2 )\ge 1 - 2e^{-\d_n(u)\lfloor T/4\rfloor}
\end{align}
\begin{figure}[h]
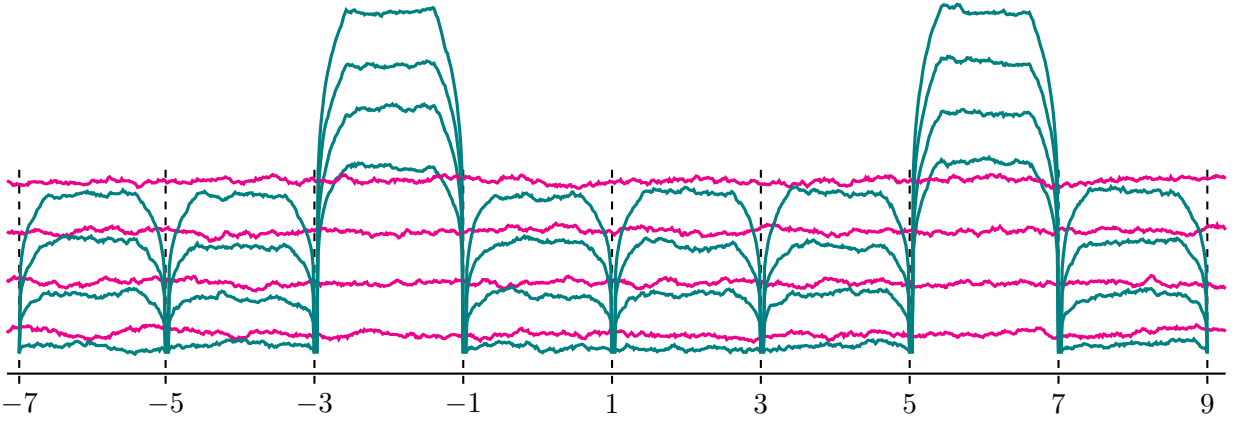


\caption{The ensemble pinned at zero at gaps of $2$ with finitely many lines (four in the figure) owing to random fluctuations will be above the infinite ensemble with the same number of lines at the random locations $\tau_{\ell}, \tau_r,$ as in the proof.}
\label{fig:stopdomain}
\end{figure}

If we condition on $\{\uY_{n,T}(t), t\le \t_\ell(u)\}$ and $\{\uY_{n,T}(t), t\ge \t_r(u)\}$,
then by the  strong  BG property  we may resample 
\[
\{\uY_{n,T}(t),\; t\in[\t_\ell(u),\t_r(u)]\}
\]
 by using the law with left boundary data $\ux:=\uY_{n,T}(\t_\ell(u))$ and right boundary data $\uy:=\uY_{n,T}(\t_r(u))$. In particular, we know that the boundary values are all higher than $u$. 

Consider now an independent sample of $\uX_{n,T}$, and define the event 
\[A=\{X^1_{n,T}(\t_\ell(u))\le u\,,\;  X^1_{n,T}(\t_r(u))\le u\},\] 
where $X^1_{n,T}$ denotes the top path in $\uX_{n,T}$. We can now construct the desired coupling $\bbP_{n,T}$. We start with two independent sample of $\uX_{n,T}, \uY_{n,T}$, as above.   If the event 
\[
B=A\cap\{\t_\ell(u)<-T/2\,,\;\t_r(u)>T/2\}
\] occurs then we resample $\{(\uX_{n,T}(t),\uY_{n,T}(t)),\; t\in[\t_\ell(u),\t_r(u)]\}$
according to the monotone coupling which by construction guarantees that 
\begin{align}\label{eq:coup4a4} 
\uX_{n,T}(t)\le \uY_{n,T}(t), \qquad t\in[-T/2,T/2],
\end{align} with probability one. If instead $B$ does not occur, then we keep the independent samples of $(\uX_{n,T},\uY_{n,T})$ everywhere. The previous observations and the strong BG property  guarantee that this is a valid coupling. Thus, we have shown that 
there exists a coupling $\bbP_{n,T}$ of $(\uX_{n,T},\uY_{n,T})$ such that 
\eqref{eq:coup4a4} holds 
with probability at least $1- q(n,T,u) - q'(n,T,u)$, where  
\begin{align}\label{eq:coup5} 
q(n,T,u)=2e^{-\d_n(u)\lfloor T/4\rfloor}\,\,\qquad  q'(n,T,u) =2 \sup_{t\in[-T,T]}\bbP( X^1_{n,T}(t)> u).
\end{align}
Next we observe that, for every fixed $n\in\bbN$, $q'(n,T,u)\to 0$ as $u\to\infty$ uniformly in $T$.  Indeed, this follows from Corollary \ref{cor:logtight} 
and Markov's inequality. Therefore, we may take $T\to\infty$ so that $q(n,T,u)\to0$ and then take $u\to\infty$, which concludes the proof. 
 \end{proof}

\subsection{Free vs.\ zero boundary conditions 
}
The coupling from the above lemma provides a rather crude comparison  of the free and zero boundary paths $\uX_{n,T}, \uY_{n,T}$.
However, this is already sufficient to provide an alternative proof of Theorem \ref{th:DLZ}.
\begin{corollary}\label{cor:DLZ}
Let $\mu^0_n$ denote the weak limit of $\mu^0_{n,T}$, as $T\to\infty$, as in \eqref{eq:CIWa}. Then \eqref{eq:DLZ} holds, that is $\mu^0_n$ is also the weak limit of $\mu^f_{n,T}$, as $T\to\infty$. 
\end{corollary}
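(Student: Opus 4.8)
The plan is to combine the two couplings available to us: the deterministic monotone coupling in which $\mu^f_{n,T}\succeq\mu^0_{n,T}$, and the reverse coupling from Lemma~\ref{lem:coupling} in which $\uX_{n,T}\le\uY_{n,T}$ on $[-T/2,T/2]$ with probability $1-\e(T)$. Since for fixed $n$ the measures $\mu^0_{n,T}$ converge weakly to $\mu^0_n$ as $T\to\infty$ (this is \eqref{eq:CIWa}), it suffices to show that $\mu^f_{n,T}$ has the \emph{same} weak limit. Tightness of $\{\mu^f_{n,T}\}_{T>0}$ for fixed $n$ is already known from \cite{CIW18,CIW19} (it is also recorded in Theorem~\ref{tightLE12}), so any subsequential limit exists; the task is to identify every such limit with $\mu^0_n$. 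I would do this by sandwiching: fix a bounded continuous, coordinatewise nondecreasing functional $F$ depending only on finitely many lines restricted to a fixed compact time window $[-S,S]$.

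First I would use the monotone coupling: $\mu^f_{n,T}(F)=\bbE[F(\uX_{n,T})]\ge\bbE[F(\uY_{n,T})]=\mu^0_{n,T}(F)$, so $\liminf_{T\to\infty}\mu^f_{n,T}(F)\ge\mu^0_n(F)$ by \eqref{eq:CIWa}. For the reverse inequality I would take $T$ large enough that $[-S,S]\subset[-T/2,T/2]$ and invoke Lemma~\ref{lem:coupling}: under $\bbP_{n,T}$, outside an event of probability $\e(T)\to0$ we have $\uX_{n,T}(t)\le\uY_{n,T}(t)$ for all $t\in[-T/2,T/2]$, hence $F(\uX_{n,T})\le F(\uY_{n,T})$ on that event since $F$ is monotone and window-localized. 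On the exceptional event I bound $F$ by $\|F\|_\infty$. Therefore $\mu^f_{n,T}(F)=\bbE_{\bbP_{n,T}}[F(\uX_{n,T})]\le\bbE_{\bbP_{n,T}}[F(\uY_{n,T})]+\|F\|_\infty\,\e(T)=\mu^0_{n,T}(F)+\|F\|_\infty\,\e(T)$, and letting $T\to\infty$ gives $\limsup_{T\to\infty}\mu^f_{n,T}(F)\le\mu^0_n(F)$. Combining, $\lim_{T\to\infty}\mu^f_{n,T}(F)=\mu^0_n(F)$ for every such $F$.

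To promote convergence on monotone window-localized functionals to full weak convergence, I would argue as follows: the class of bounded continuous coordinatewise-monotone cylinder functionals is closed under the lattice operations needed, and more simply, any bounded continuous cylinder functional $G$ on the window $[-S,S]$ can be written as a difference of two bounded monotone ones (e.g.\ by adding a large multiple of $\sum_i\max_{t\in[-S,S]}X^i(t)$, whose integrability and tightness under both ensembles follow from Corollary~\ref{cor:logtight} and Remark~\ref{freetozero}); alternatively, one notes that the already-established tightness of $\{\mu^f_{n,T}\}_T$ plus convergence of $\mu^f_{n,T}(F)$ for the separating monotone family forces every subsequential limit to agree with $\mu^0_n$ on that family and hence to equal $\mu^0_n$. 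Either route closes the argument. The step I expect to be the main obstacle is the upper bound, i.e.\ controlling the contribution of the rare event where the reverse coupling fails; but this is exactly what Lemma~\ref{lem:coupling} is built to handle, since $\e(T)\to0$ and the functional is bounded, so the only care needed is to ensure the localization window $[-S,S]$ sits inside $[-T/2,T/2]$ before taking $T\to\infty$.
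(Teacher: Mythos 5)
Your proposal is correct and follows essentially the same strategy as the paper: use the deterministic monotone coupling for one inequality, Lemma~\ref{lem:coupling} for the other, and tightness to reduce to a separating family. The paper works with indicators of increasing cylinder events $E(\calS,\calI,\calT)$ rather than bounded continuous monotone functionals, but this is a cosmetic difference. One small caveat: your ``first route'' for upgrading from monotone functionals to full weak convergence --- writing a bounded continuous $G$ as a difference of two \emph{bounded} monotone functionals by adding a multiple of $\sum_i\max_{t\in[-S,S]}X^i(t)$ --- does not literally work, since that correction term is unbounded and the construction implicitly needs Lipschitz control on $G$. Your ``second route'' (tightness plus agreement of subsequential limits on the separating monotone family, since increasing cylinder events form a generating $\pi$-system) is the one that actually closes the argument, and it coincides with what the paper does.
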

\begin{proof}
In view of Theorem \ref{tightLE12}, the measures $\mu_{n,T}^f$ indexed by $\{n,T\}$ are tight. It remains  to show that for each fixed $n$, finite dimensional distributions converge, as $T\to\infty$, to the finite dimensional distributions of  $\mu^0_n$. 
To this end, fix $m\in\bbN$, let $\calS=(s_1,\dots,s_m)\in \bbR^m$, $\calI=(i_1,\dots,i_m)\in\{1,\dots,n\}^m$, and let $\calT=(t_1,\dots,t_m)\in \bbR_+^m$. Consider the event
\begin{equation}\label{eq:monpis}
E=E(\calS,\calI,\calT)=\{\uX\in\Omega:\; X^{i_j}(s_j) > t_j\,,\; j=1,\dots,m\}.
\end{equation}
Using the tightness of the measures $\mu^f_{n,T}$, by standard measure theoretic arguments it follows that it suffices to show that for each $n\in\bbN$, and for each choice of $\calS,\calI,\calT$,
\begin{align}\label{eq:fdconv} 
 \mu^f_{n,T}(E)-\mu^0_{n,T}(E)\to 0\,,\qquad T\to\infty.
 \end{align}
 
Since the event $E$ is increasing and $ \mu^0_{n,T}$ is stochastically dominated by $ \mu^f_{n,T}$, it follows that 
\[
\mu^f_{n,T}(E)\ge \mu^0_{n,T}(E).
\]
 On the other hand, if $T$ is large enough so that all points in $\cT$ are contained in $[-T/2,T/2]$, by Lemma \ref{lem:coupling} we
have
\begin{align}\label{eq:fdconv2} 
 \mu^f_{n,T}(E) &= \bbP_{n,T}\left(X^{i_j}(s_j) > t_j\,,\; j=1,\dots,m\right)
\\ &\le \bbP_{n,T}\left(Y^{i_j}(s_j) > t_j\,,\; j=1,\dots,m\right) + \e(T) = \mu^0_{n,T}(E) + \e(T)
. 
 \end{align}
Letting $T\to\infty$ we obtain \eqref{eq:fdconv} .
\end{proof}
 
\section{One point tail estimate}\label{sec:tails}
The proof of Theorem \ref{th:stretched} is divided into two parts. The first part establishes a weaker statement, namely a non optimal stretched exponential bound, while the second part bootstraps the first argument to reach the optimal $3/2$ exponent.  
\subsection{Proof of Theorem \ref{th:stretched}: Part I}\label{sec:thp1}
For later purposes, it is convenient to consider the following general setup. Recall that a $\l-$tilted LE is a BG measure as in Definition \ref{def:BG}. 
\begin{definition}[Uniformly confined LE]
\label{ust} A $\l-$tilted LE $\underline X$ (not necessarily stationary) is said to be {\em uniformly  confined} (UC) if there exists a constant $C$ such that
for all integers $ k\ge 0$, for all $s\in\bbR$,
\begin{equation}\label{eq:tight3}
\bbE\left[X^{k+1}(s)\right]\le C\l^{-k/3}, 
\end{equation}
and such that, for all $s\in\bbR$, $S>0$ and all $k\ge 0$
\begin{equation}\label{eq:tight3a}
\bbE\left[\max_{u\in[-S,S]}X^{k+1}(s+u) \right]\le C\l^{-k/3}[1+\log(1+|S\l^{2k/3}|)].
\end{equation}
\end{definition}
Recalling the definition of UT from Definition \ref{ut}, observe that  
\begin{equation}\label{implication}
UC\implies UT.
\end{equation}
Note also that
the UC property  
is invariant under time translation, that is if $\nu$ is UC and $\nu_s$ is the law induced by $\nu$ on the translated paths $\uX(\cdot-s)$, then $\nu_s$ is also UC, with the same constants in \eqref{eq:tight3} and \eqref{eq:tight3a}.  Lastly, the zero boundary $\l$-tilted LE $\mu^0$ is UC by Remark \ref{freetozero}.

We will now show that the UC 
assumption can already be bootstrapped to  establish the significantly stronger stretched exponential tails. We first start with the statement for the top line.

\begin{lemma}[Stretched exponential bound]\label{stretchpart1}
For any UC $\l-$tilted LE $\underline X$, there exist positive constants $\a,c$ and $C$ such that for all $t>0$,
\begin{equation}\label{eq:stretchedexp1}
\bbP\left[X^1(0)> t\right]\le C\,e^{-c\,t^\a}.
\end{equation}
Moreover, by the invariance under translation of the UC property, the tail bound \eqref{eq:stretchedexp1} holds for $X^1(s)$ uniformly in $s \in \bbR$.
\end{lemma}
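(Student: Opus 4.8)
\textbf{Proof proposal for Lemma \ref{stretchpart1}.}

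The plan is to exploit the BG property together with the UC first-moment bounds \eqref{eq:tight3}--\eqref{eq:tight3a}, via a resampling argument on a well-chosen stopping domain. First I would fix a large deviation level $t>0$ and consider the event $\{X^1(0)>t\}$. The idea is that if the top line is at height $>t$ at time $0$, then by continuity and the logarithmic first-moment control it must remain reasonably high on a time interval around the origin of length proportional to (a power of) $t$; meanwhile the curve $X^2$ below it, being UC with the improved constant $C\l^{-1/3}$, is typically of order one, so one can find a pair of stopping times $\t_\ell<0<\t_r$ at which $X^2$ drops below some threshold $h$ (say $h=1$), with $\t_r-\t_\ell$ not too large. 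On the stopping domain $[\t_\ell,\t_r]$, conditioned on the exterior $\sigma$-algebra, the top line $X^1$ is distributed as a single Brownian bridge with area tilt $a$, conditioned to stay above the floor $\max(X^2(\cdot),0)\le \mathrm{something}$ and with boundary data $X^1(\t_\ell),X^1(\t_r)$.

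The key steps, in order. (i) Use \eqref{eq:tight3a} to show that with probability at least $1-\e$ (for a fixed small $\e$), the curve $X^2$ on an interval $[-L,L]$ with $L=L(t)$ a suitable polynomial in $t$ hits the level $1$ both in $[-L,-L/2]$ and in $[L/2,L]$; this produces left and right stopping times $\t_\ell\in[-L,-L/2]$, $\t_r\in[L/2,L]$ with $X^2(\t_\ell),X^2(\t_r)\le 1$. (ii) On the same high-probability event, use \eqref{eq:tight3a} applied to $X^1$ to control $X^1(\t_\ell)$ and $X^1(\t_r)$: these are at most $C\log(1+L)\approx C'\log t$, up to a further loss of probability $\e$ via Markov. (iii) Apply the strong BG property on the stopping domain $[\t_\ell,\t_r]$ identified by the single top line: conditionally, $X^1$ is a Brownian bridge from $X^1(\t_\ell)$ to $X^1(\t_r)$ over a time interval of length $\asymp L(t)$, tilted by $e^{-a\int X^1}$, conditioned to stay above the (now frozen) floor $h^-(\cdot)=\max(X^2(\cdot),0)$. (iv) Bound the conditional probability that such a tilted bridge reaches height $>t$ at time $0$. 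Dropping the floor constraint only increases this probability (monotonicity, Lemma \ref{lem:mono}), and one can further dominate by removing the tilt on a neighborhood of the origin while keeping it away — or more cleanly, use that the area tilt forces the bridge down: a standard Cameron--Martin / Girsanov computation, or a direct union bound over the Brownian bridge with endpoints of size $O(\log t)$ over a time interval of length $O(L(t))$, gives that $\bbP(\sup_{[\t_\ell,\t_r]}X^1> t \mid \calB^{\sfe})\le e^{-c t^2/L(t)}$ modulo the tilt, and the tilt only helps. Choosing $L(t)$ to be, e.g., $t^{3/2}$ or a small power of $t$, optimizes the resulting exponent and yields \eqref{eq:stretchedexp1} with some $\a>0$ (not the optimal $3/2$, which is deferred to Part II).

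The main obstacle I expect is step (iv): controlling the conditional probability cleanly in the presence of the area tilt and the random (but conditioned-upon) boundary data and floor. The subtlety is that the interval length $L(t)$ over which we resample is itself growing with $t$, so a naive reflection/union-bound estimate for a plain Brownian bridge gives $e^{-ct^2/L(t)}$, and one must check that the area tilt $e^{-a\int X^1}$ does not spoil this — in fact it should improve it, since conditioning a bridge to have a large value somewhere costs extra area, but making this quantitative requires either a direct Radon--Nikodym estimate against the FS diffusion / tilted bridge or a chaining argument. A secondary technical point is ensuring the two halves of step (i) are handled with genuinely independent-enough randomness; here one should invoke the monotonicity trick used in Lemma \ref{lem:coupling}, pinning $X^2$ (or rather replacing it by a stochastically smaller pinned ensemble) at zero at a grid of points of spacing $2$, so that the excursions of $X^2$ over disjoint unit-scale blocks become independent and a lower bound on the per-block hitting probability of level $1$ — available from the analogue of $\d_n(u)>0$, or more precisely from the quantitative Lemma \ref{lem:pkvu} alluded to in the text — can be amplified into the desired $1-\e$ estimate. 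Once these pieces are assembled, the translation-invariance of the UC property immediately upgrades the bound to hold for $X^1(s)$ uniformly in $s$, as claimed.
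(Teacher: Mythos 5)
Your plan has the right ingredients in spirit (resampling on a stopping domain, using the first-moment bounds from the UC hypothesis, and letting the area tilt pull the path down), but it falls short on scale. The decisive issue is step (ii): you want to control $X^1(\t_\ell),X^1(\t_r)$ by $O(\log t)$ up to a ``further loss of probability $\e$'', but \eqref{eq:tight3a} only gives $\bbE\bigl[\max_{[-L,L]}X^1\bigr]\le C\log(1+L)$, so by Markov the probability that this max exceeds $C'\log t$ with $L=L(t)$ a polynomial in $t$ is only bounded by a \emph{constant}, not a quantity going to zero. Consequently, your final estimate degenerates to $e^{-ct^2/L(t)}+\e$ with $\e$ bounded below, and there is no choice of $L(t)$ that turns this into a stretched exponential. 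The paper sidesteps exactly this obstruction by decoupling the two scales: it works with boundary-control parameter $y$ that is allowed to be \emph{much larger} than $t$ (indeed $t\approx\log^{6}y$), so that the bad-event probability $y^{-\beta}$ from Markov becomes $e^{-\beta' t^{1/6}}$; the price is that the resampled ensemble then has boundary data as large as $y$, and one must bring the top path all the way down from height $y$ to height $\log^{5}y$. This is done via a cascade through $K\sim\log y$ lines and nested trapezoids, at each stage reducing to a single line and applying the coming-down estimate (Lemma~\ref{lem:comingdown}) together with the FS maximum bound (Lemma~\ref{fsmax10}). Your two-line scheme does not contain this cascade and cannot reach the same conclusion.

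Two further, more local, problems. First, the claim that ``dropping the floor constraint only increases this probability'' is backwards: lowering or removing the floor pushes the path \emph{down} (Lemma~\ref{lem:mono}), so the probability of the increasing event $\{X^1(0)>t\}$ \emph{decreases}; to get an upper bound you must keep or raise the floor, not drop it, which means you cannot pass to an unconstrained bridge this way. Second, the pinning argument you invoke (à la Lemma~\ref{lem:coupling}, $\delta_n(u)>0$, Lemma~\ref{lem:pkvu}) lower-bounds the probability that a pinned ensemble goes \emph{high}, which is what is needed to construct the reversing stopping domain in Section~\ref{sec:coupling}; it does not lower-bound the probability that $X^2$ dips \emph{low}, since the pinned version being small does not imply the original $X^2$ is small. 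The mechanism that makes $X^2$ (or any path) dip low is the area tilt itself, and quantifying this is exactly the content of Lemma~\ref{lem:comingdown} which drives the paper's cascade.
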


The proof is technical, so we begin with a brief roadmap. Recall that we only have one point as well as curved max first moment bounds at our disposal. This allows us, simply by Markov's inequality, to argue that up to a small failure probability, paths with a large enough index (polynomial in $t$) are below a slightly raised floor. Further, Markov's inequality again allows us some, albeit weak, control on the entry and exit data of the remaining top paths (stretched exponential in $t$). Thus we have a finite problem at our hand with  polynomial in $t$ many paths whose entry and exit data are bounded by stretched exponential in $t$, on a domain we choose to be suitably large (stretched exponential in $t$). We then obtain a lower bound on the partition function of this system to argue that the paths are likely to come down to height smaller than $t$ with stretched exponentially small failure probability which finishes the proof. Crucial ingredients in the proof include the BG property, monotonicity, comparison to the Ferrari-Spohn diffusion and tail estimates of its maximum.

\begin{proof}
Fix some $y>0$ large, define $K=C_\l \log(y)$, where $C_\l>0$ is a constant depending only on $\l$ to be taken large enough, and $T=y^{10}$ and consider the events 
\begin{equation}\label{eq:stretchedexp2}
A=\left\{\max_{s\in[-T,T]}X^K(s)\le \frac1y
\right\}\,,\qquad B=\left\{\max_{s\in[-T,T]}X^1(s)\le y
\right\}.
\end{equation}
Let us first show that 
\begin{equation}\label{eq:stretchedexp30}
\bbP(A\cap B) \ge 1- \frac1{y^\b}\,,
\end{equation}
for any constant $\b\in(0,1)$ provided $y$ is large enough. Indeed, for any $k\in\bbN$, using Markov's inequality along with \eqref{eq:tight3a} (which holds by hypothesis), 
 \begin{equation}\label{eq:stretchedexp4}
\bbP\(\max_{s\in[-T,T]}X^{k+1}(s)> u\)\le \frac{C\l^{-k/3}(k+\log(T))}u .
\end{equation}
If $k=0$ and $u=y$ we have $\bbP(B^c)\le Cy^{-1}\log(y)$ for some new constant $C>0$. If $k+1=C_\l \log(y)$, where $C_\l>0$ is sufficiently large depending only on $\l$, and $u=1/y$, we have 
$\bbP(A^c)\le Cy^{-1}$. This proves \eqref{eq:stretchedexp30}.

Next, we observe that, thanks to \eqref{eq:stretchedexp30}, to prove \eqref{eq:stretchedexp1} it is sufficient to show that there exist constants $a> 1,b>0$ such that for all $y>0$ large enough one has 
\begin{equation}\label{eq:stretchedexp5}
\bbP\left(X^1(0)>2\log^a(y) \tc A\cap B\right)\le \frac1{y^b}.
\end{equation}
By monotonicity and the BG property, it follows that it suffices to prove the estimate 
\begin{equation}\label{eq:stretchedexp6}
\bbP\left(\xi^1_y(0)>\log^a(y) \right)\le \frac1{y^b},
\end{equation}
where $\xi^1_y(0)$ is the height at zero of the top line of the line ensemble $\underline \xi_y = (\xi^1_y,\dots,\xi^K_y)$, consisting  of $K$ lines in the interval $[-T,T]$ with boundary conditions all equal to $y$, with a ceiling at height $y$ and a floor at zero, with 
all lines subject to an area tilt with the same coefficient $\l\equiv 1$.   

To prove \eqref{eq:stretchedexp6}, we introduce the nested shapes defined by the trapezoids 
$\cT_i=\cT_{S_i,\ell,h_i,y}$, where
\begin{align}\label{eq:trapezoids}
 \cT_i(s)=\begin{cases}
y & s\in[-T ,-S_i]\cup[S_i,T]
\\
y-(y-h_i)\tfrac{s+S_i}{\ell} & s\in[-S_i,-S_i+\ell]\\
h_i & s\in[-S_i+\ell,S_i-\ell]\\
y-(y-h_i)\tfrac{S_i-s}{\ell} &s\in[S_i-\ell,S_i]
\end{cases}
\end{align}
and we set 
\[
S_i:=T-2(K-i+1)\ell,\quad \ell:=y^{9}, \quad h_i:=(K-i+1)\log^5(y)\,.
\]
Note that $\cT_{i+1}\prec \cT_i$, $i=1,\dots K-1$, see Figure \ref{fig:trapa2}.

To prove \eqref{eq:stretchedexp6}, 
by monotonicity we may replace the ensemble $\{\xi^i_y\}$ by the ensemble $\{\bar \xi^i_y\}$ obtained by restricting, for each $i$, the $i$-th path $\xi^i_y$ on the interval $[-S_i-\ell,S_i+\ell]$ with boundary conditions $y$ with floor at zero and ceiling at $y$, and with area tilt $\l=1$.
Define the events 
\begin{align}\label{eq:aevpkv} 
 A_i=\{  \bar \xi^i_y\prec\cT_i\}\,,\qquad i=1,\dots,K.
\end{align} 
\begin{figure}[h]
\center
\begin{tikzpicture}[scale=0.8]
\draw [black] (0,0) -- (6,0) -- (6,4) -- (0,4) -- cycle;
\draw (1/2,4-0) -- (6-1/2,4-0) -- (5,4-3) -- (1,4-3) -- cycle;
\draw (3/2,4-0) -- (6-3/2,4-0) -- (4,4-2) -- (2,4-2) -- cycle;

\draw [dashed,black]  (1,0) -- (1,4) ;
\draw [dashed,black]  (4.5,0) -- (4.5,4) ;
\draw [dashed,black]  (5.5,0) -- (5.5,4) ;
\draw [dashed,black]  (0,4-3) -- (1,4-3) ;
\draw [dashed,black]  (0,2) -- (2,2);

  \node[shape=circle, draw=black, fill = black, scale = .2]  at (0,4-3) {}; 
  \node[scale = .7]  at (-.5,4-3) {$h_{i+1}$};
   \node[shape=circle, draw=black, fill = black, scale = .2]  at (0,2) {}; 
  \node[scale = .7]  at (-.5,2) {$h_{i}$};

 \node[shape=circle, draw=black, fill = black, scale = .2]  at (0,4) {}; 
  \node[scale = .7]  at (-.3,4) {$y$};
  \node[scale = .8]  at (-.3,-.2) {$-T$};
   \node[scale = .8]  at (6.1,-.2) {$T$};

\node[shape=circle, draw=black, fill = black, scale = .22]  at (1,0) {}; 
\node[scale = .7]  at (.85,-1/4) {$-S_{i+1}+\ell$};
\node[shape=circle, draw=black, fill = black, scale = .22]  at (6-3/2,0) {}; 
\node[shape=circle, draw=black, fill = black, scale = .22]  at (5.5,0) {}; 
\node[scale = .7]  at (4.5,-1/4) {$S_{i}$};
\node[scale = .7]  at (5.5,-1/4) {$S_{i+1}$};
\node[shape=circle, draw=black, fill = black, scale = .22]  at (3,0) {}; 
\node[scale = .7]  at (3,-1/4) {$0$};

\end{tikzpicture}
\caption{The trapezoids $\cT_i$ and $\cT_{i+1}$.}
\label{fig:trapa2}
\end{figure}
By construction, if e.g.\ $a=6$, and $ \xi^1_y(0)>\log^a(y) $, then there must exist $i=1,\dots,K$ such that $A_i$ did not occur while all $A_j$, $j=i+1,\dots,K$ have occurred. Therefore,
\begin{equation}\label{eq:stretchedexp6a6}
\bbP\left(\xi^1_y(0)>\log^a(y) \right)\le 
\sum_{i=1}^K\bbP\left(A_i^c\tc A_{i+1}\right)\,,
\end{equation}
where $A_{K+1}$ denotes the certain event. 

To prove an upper bound on $\bbP\left(A_i^c\tc A_{i+1}\right)$, by monotonicity we may estimate from below the probability that a \emph{single} random path $Z^i$  on $[-S_i-\ell,S_i+\ell]$ with boundary condition $y$, floor at $h_{i+1}$, and area tilt $\l=1$, satisfies $Z^i\prec \cT_i$, so that 
\begin{equation}\label{eq:stretchedexp6a6a}
\bbP\left(\xi^1_y(0)>\log^a(y) \right)\le 
\sum_{i=1}^K\(1-\bbP\left(Z^i \prec  \cT_i\right)\)\,,
\end{equation}
where we set $h_{K+1}=0$ for the floor of the $K$-th path $Z^K$. We emphasize that each line $Z^i$ is now analyzed separately, that is we have reduced the problem from an ensemble of $K$ lines to a single line. Next, we are going to prove
\begin{equation}\label{eq:stretchedexp8}
\bbP\left(Z^i \prec  \cT_i\right)
\ge 1- \frac1{y^b}\,,
\end{equation}
for some $b>0$, and for all $i=1,\dots,K$. Combined with \eqref{eq:stretchedexp6a6a}, and adjusting the value of the constant $b>0$,  this proves the desired claim \eqref{eq:stretchedexp6} since $K=O(\log y)$.
 
We are going to prove \eqref{eq:stretchedexp8}
in the case $i=K$,  but the same argument works for $i=1,\dots,K-1$ with no modification. Thus, we now have $Z=Z^K$, a single path on $[-S,S]:=[-T+\ell,T-\ell]$ with ceiling at $y$, floor at $0$, and area tilt $\l=1$; see Figure \ref{fig:trap2}.

\begin{figure}[h]
\center
\begin{tikzpicture}[scale=0.8]
\draw [black] (0,0) -- (6,0) -- (6,4) -- (0,4) -- cycle;
\draw (1/2,4) -- (1,1) -- (5,1) -- (6-1/2,4) -- cycle;
\draw [dashed,black]  (1/2,0) -- (1/2,4) ;
\draw [dashed,black]  (0,1) -- (1,1) ;
\draw [dashed,black]  (1,0) -- (1,1) ;

  \node[shape=circle, draw=black, fill = black, scale = .17]  at (0,1) {}; 
  \node[scale = .7]  at (-.3,1) {$h$};
 \node[shape=circle, draw=black, fill = black, scale = .17]  at (0,4) {}; 
  \node[scale = .7]  at (-.3,4) {$y$};
  \node[scale = .6]  at (-.3,-.2) {$-S$};
   \node[scale = .6]  at (6.1,-.2) {$S$};

  \node[shape=circle, draw=black, fill = black, scale = .17]  at (1/2,0) {}; 
  \node[scale = .5]  at (.5,-1/4) {$-S+\ell$};
\node[shape=circle, draw=black, fill = black, scale = .17]  at (1,0) {}; 
\node[scale = .5]  at (1.04,.2) {$-S+2\ell$};

\draw[thick,magenta,rounded corners=0.9mm] (0,4-0)--(0.1,4-0.05)--(0.3,4-0.4)--(0.4,4-1.07)--(0.5,4-1.63)--(0.55,4-1.33)--(0.65,4-1.73)--(0.75,4-1.9)--(0.8,4-2.19)--(.9,4-3.04)--(1.1,4-3.1)--(1.3,4-3.05)--(1.6,4-3.18)--(1.8,4-3.15)--(2.2,4-3.1)--(2.66,4-3.37)--(2.96,4-3.27)--(3.1,4-3.47)--(3.36,4-3.3)--(3.68,4-3.2)--(3.96,4-3.17)--(4.1,4-3.1)--(4.36,4-3.2)--(4.84,4-3.05)--(5.1,4-3.1)--(5.32,4-2.47)--(5.45,4-2.67)--(5.56,4-1.3)--(5.63,4-1.5)--(5.8,4-0.6)--(5.85,4-0.9)--(5.96,4-0.07)--(6,4-0);
\end{tikzpicture}
\caption{A sketch of the event in \eqref{eq:stretchedexp8} for $i=K$. Here  $S:=T-\ell$, $h=\log^5(y)$.
}
\label{fig:trap2}
\end{figure}
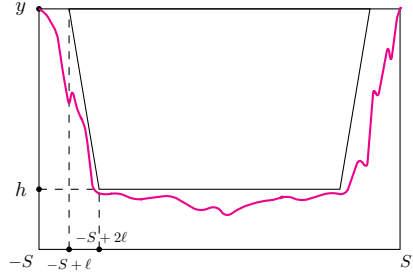

Define the stopping times
\begin{equation}\label{eq:stretchedexp9}
\t_1 = \inf\{s>-S:\;Z(s)\le h/2\}\,,\quad \t_2 = \sup\{s<S:\;Z(s)\le h/2\}.
\end{equation}
Since $y=T^{1/10}\le \sqrt S$ and $\ell=y^9\ge S^\a$ for some $\a\in(3/4,1)$, monotonicity and Lemma \ref{lem:comingdown} below show that 
\begin{gather}\label{eq:stretchedexp10} 
\bbP\left(\t_1<-S+\ell\,,\;\t_2>S-\ell \right)\ge 1- \frac1{y^b}\,.
\end{gather} 

Next, we show that 
\begin{gather}
\bbP\left(\max_{s\in[-S+\ell,S-\ell]}Z(s)\le h\,\,\Big|\,\, \t_1<-S+\ell\,,\;\t_2>S-\ell \right)\ge 1- \frac1{y^b}.
\label{eq:stretchedexp11}
\end{gather} 
Note that \eqref{eq:stretchedexp10}  and \eqref{eq:stretchedexp11}, by adjusting the value of $b$, are sufficient to conclude the proof of the desired bound \eqref{eq:stretchedexp8} at $i=K$ for all $y$ sufficiently large. 
On the other hand it is not difficult to see that, by using the strong BG property, and then monotonicity, the estimate \eqref{eq:stretchedexp11} follows from the tail bound on the maximum for the Ferrari-Spohn diffusion proven in Lemma \ref{fsmax10} below. Indeed, we may impose a floor at $h/2$ so that the probability of the event $\max_{s\in[-S+\ell,S-\ell]}Z(s)$ can be estimated by the probability of the event $\max_{s\in[-S+\ell,S-\ell]}Y_{FS}(s)\ge h/2$. This concludes the proof of Lemma \ref{stretchpart1}.
\end{proof}

\begin{lemma}\label{fsmax10}
There exist constants $c,C>0$ such that the  stationary Ferrari-Spohn diffusion $Y_{FS}$ satisfies, for all $T\ge 1$, $t>0$,
\begin{gather}\label{eq:stretchedexp14} 
\bbP
\left(
\max_{s\in[-T,T]} Y_{FS}(s) \ge t
\right)\le C\,T\exp{\left(-c
\,t^{3/2}\right)}\,.
\end{gather} 
\end{lemma}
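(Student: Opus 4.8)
\textbf{Proof proposal for Lemma \ref{fsmax10}.}

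The plan is to reduce the tail of the running maximum of the stationary Ferrari-Spohn diffusion $Y_{FS}$ on a long interval to a single one-point tail estimate, using the Gibbs/Markov structure of the process and a union bound over unit sub-intervals. First I would recall that $Y_{FS}$ is the $n=1$ case of the zero boundary $\l$-tilted LE, so by the BG property and stationarity its restriction to any interval $[j,j+1]$ is, conditionally on the endpoint values, a Brownian bridge above the hard wall subject to the area tilt $\exp(-\int B(s)\,\dd s)$; crucially the one-point marginal is given explicitly by \eqref{eq:airyfs}, which yields the clean tail \eqref{eq:domino1}, namely $\bbP(Y_{FS}(0)>t)=\exp(-(\tfrac{2\sqrt2}{3}+o(1))t^{3/2})$, and in particular $\bbP(Y_{FS}(0)>t)\le C\exp(-c\,t^{3/2})$ for suitable $c,C>0$ and all $t>0$.

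The main step is then to bound the probability that the maximum over a single unit interval $[0,1]$ exceeds $t$ in terms of the one-point tail at a slightly smaller level. I would argue as follows. Condition on $(Y_{FS}(0),Y_{FS}(1))$. If $\max_{s\in[0,1]}Y_{FS}(s)\ge t$, then either one of the two endpoints already exceeds $t/2$ — which by stationarity and \eqref{eq:domino1} costs $\le 2C\exp(-c(t/2)^{3/2})$ — or both endpoints are below $t/2$, and conditionally on that event the path must still cross level $t$. For the latter, by monotonicity (Lemma \ref{lem:mono}) removing the area tilt only increases the path, so conditionally on endpoints below $t/2$ the interior process is dominated by a Brownian bridge on $[0,1]$ from some height $\le t/2$ to some height $\le t/2$, reflected (or just not conditioned) above the wall; the probability that such a bridge reaches height $t$ is at most the reflection-principle bound $2\exp(-(t-t/2)^2/2)=2\exp(-t^2/8)$, which is much smaller than $\exp(-c\,t^{3/2})$ for large $t$, and can be absorbed into the constants for small $t$ by adjusting $C$. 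Combining, $\bbP(\max_{s\in[0,1]}Y_{FS}(s)\ge t)\le C'\exp(-c'\,t^{3/2})$ for new constants.

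Finally I would cover $[-T,T]$ by at most $2\lceil T\rceil+1\le 3T+3$ intervals of the form $[j,j+1]$, apply the single-interval bound to each (legitimate by time-stationarity of $Y_{FS}$), and take a union bound, which gives $\bbP(\max_{s\in[-T,T]}Y_{FS}(s)\ge t)\le (3T+3)\,C'\exp(-c'\,t^{3/2})\le C\,T\exp(-c\,t^{3/2})$ after relabelling constants and using $T\ge 1$. The only slightly delicate point is handling the conditional law of the interior of $Y_{FS}$ on a unit interval given its endpoints: one must check that the area-tilted, wall-constrained bridge is stochastically dominated by the free Brownian bridge with the same (or higher) endpoints, so that the crossing probability of the free bridge serves as an upper bound — this is exactly an application of Lemma \ref{lem:mono} with $\urho\succ\underline 0$, $\underline h^-\equiv 0$, and no ceiling. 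I expect this domination bookkeeping, rather than any estimate, to be the main (minor) obstacle; once it is in place the Gaussian crossing bound and the explicit one-point tail \eqref{eq:domino1} finish the argument immediately.
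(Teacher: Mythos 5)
Your proposal follows essentially the same route as the paper: discretise $[-T,T]$ at unit spacing, control the mesh values by the explicit one-point tail \eqref{eq:domino1}, then control the maximum on each unit subinterval by a Gaussian estimate, and close with a union bound over the $O(T)$ subintervals. The paper's proof is precisely this. The one step where your write-up is imprecise is the domination claim for the interior of a unit interval: after removing the area tilt, the bridge conditioned to stay above the hard wall is \emph{not} stochastically dominated by the unconditioned free bridge — conditioning on staying above the floor pushes the path up, so dropping that conditioning is a comparison in the wrong direction. The paper instead raises the floor from $0$ to $t/2$ (this is the monotone-in-the-right-direction move by Lemma \ref{lem:mono}), which yields a shifted standard Brownian excursion from $0$ to $0$ whose maximum must exceed $t/2$; the excursion-maximum tail then gives $\exp(-ct^2)$. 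Your reflection-principle bound can also be repaired without changing the conclusion — one writes the conditional probability as the ratio $\bbP(\text{free bridge max}\ge t, \text{stay above wall})/\bbP(\text{stay above wall})$ and notes that for endpoints at $t/2$ the denominator is bounded away from $0$ — but the paper's direct excursion comparison is cleaner. Either way the final bound and the union-bound assembly match the paper.
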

\begin{proof}
We assume for simplicity that $T$ is an integer, and let $s_1,\dots, s_N$, with $N=2T$, denote a mesh of the interval $[-T,T]$, such that $s_{i+1}-s_i=1$. Further, let $E$ be the event that $Y_{FS}(s_i)\le t/2$ for all $i=1,\dots,N$. Then by the tail estimate \eqref{eq:domino1} and a union bound, the complement of $E$ has probability at most $C\,T\exp{\left(-c\,t^{3/2}\right)}$ for some constants $c,C$.  Thus, we may assume that $E$ holds in \eqref{eq:stretchedexp14}. By the BG property, using monotonicity and  a union bound it is then sufficient to consider the probability that at least one of $2T$ independent Brownian bridges on the interval $[0,1]$, conditioned to stay nonnegative, with boundary $0$ at both ends, has maximum larger than $t/2$. By a union bound, and using well known estimates on the maximum of Brownian excursions, 
one has that this probability is at most $ C\,T \exp{\left(-c\,t^{2}\right)}$. 
\end{proof}

The following technical lemma quantifies the pull to the floor, as a consequence of the area tilt, even in the presence of high boundary conditions. 
\begin{lemma}\label{lem:comingdown}
Let $Y_T$ denote the Brownian bridge on $[-T,T]$ with left and right boundary data at height $\sqrt T$, with floor at zero and area tilt $1$, and fix $\a\in(3/4,1)$. For $L>0$,  let 
\begin{equation}\label{eq:stretchedexp99}
\t_{\ell,L} = \inf\{s>-T:\;Y_T(s)\le L\}\,,\quad \t_{r,L} = \sup\{s<T:\;Y_T(s)\le L\}.
\end{equation}
There exist constants  $c>0$ and $L>0$ such that for all $T$ large enough
\begin{equation}\label{eq:leri}
\bbP(\t_{\ell,L} >-T+T^\a)\le e^{-T^c}\,,  \qquad \bbP(\t_{r,L} <T-T^\a)\le e^{-T^c}.
\end{equation}
\end{lemma}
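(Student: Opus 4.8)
\textbf{Proof plan for Lemma \ref{lem:comingdown}.}

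The plan is to reduce the two-sided statement to a one-sided estimate by symmetry, and then to prove $\bbP(\t_{\ell,L} > -T+T^\a)\le e^{-T^c}$ by a ``renewal on small blocks'' argument. First I would tile the interval $[-T,-T+T^\a]$ into $M=\lfloor T^\a/\ell_0\rfloor$ consecutive blocks of a fixed length $\ell_0$ (a large constant to be chosen). By monotonicity (Lemma \ref{lem:mono}, together with Remark \ref{rem:nonct} on pinned fields), I may lower the boundary data to $\sqrt T$ only at the two endpoints $\pm T$ and otherwise pin $Y_T$ at height $\sqrt T$ at the block endpoints inside $[-T,-T+T^\a]$; this makes the blocks independent and can only increase $\bbP(\t_{\ell,L}>-T+T^\a)$. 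On a single block of length $\ell_0$ with both endpoints at height $\sqrt T$, floor at $0$ and area tilt $1$, I claim that with probability at least some constant $p_0>0$ (depending on $\ell_0$ but not on $T$) the path dips below a fixed level $L$ somewhere in the block. Granting the block estimate, independence of the $M$ blocks gives $\bbP(\t_{\ell,L}>-T+T^\a)\le (1-p_0)^{M}\le e^{-p_0 M}\le e^{-T^{c}}$ for any $c<\a$ once $T$ is large, which is the claim; the right-hand estimate follows by the reflection $s\mapsto -s$.

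So the crux is the single-block estimate: a Brownian bridge on an interval of constant length $\ell_0$, with endpoints at the (large, $T$-dependent) height $\sqrt T$, conditioned to stay above $0$, and weighted by $e^{-\int Y}$, reaches below a fixed constant $L$ with probability bounded below uniformly in $T$. The area tilt $e^{-\int_{block} Y(s)\dd s}$ is the mechanism: it is exponentially large precisely on the paths that come down near the floor and exponentially small on paths that stay high. Quantitatively, I would lower-bound the partition-function ratio
\[
\bbP\bigl(\exists\, s\ \text{in block}:\ Y(s)\le L\bigr)
= \frac{{\mathbf B}^{\sqrt T,\sqrt T}_{\ell_0}\!\left[\ind_{\{Y\ge 0\}}\ind_{\{\min Y\le L\}}e^{-\int Y}\right]}{{\mathbf B}^{\sqrt T,\sqrt T}_{\ell_0}\!\left[\ind_{\{Y\ge 0\}}e^{-\int Y}\right]}.
\]
For the numerator I restrict to the set of paths that go from $\sqrt T$ down to a narrow ``tube'' of width $O(1)$ near the floor on the middle portion of the block and back up to $\sqrt T$; on such paths the area integral is $O(\ell_0)$ (a constant), so $e^{-\int Y}$ is bounded below by a constant, while the Brownian-bridge probability of this tube is bounded below by $c(\ell_0)\,e^{-c'\sqrt T^{2}/\ell_0}=c(\ell_0)e^{-c'T/\ell_0}$ (the cost of travelling a distance $\sqrt T$ in time $O(\ell_0)$). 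For the denominator I use the crude upper bound ${\mathbf B}^{\sqrt T,\sqrt T}_{\ell_0}[\ind_{\{Y\ge 0\}}e^{-\int Y}]\le {\mathbf B}^{\sqrt T,\sqrt T}_{\ell_0}[1] = q_{\ell_0}(\sqrt T,\sqrt T)$, which is just a Gaussian normalization of order $\ell_0^{-1/2}$ independent of $T$, together with the observation that on the event $\{\min Y\ge L\}$ we can only gain by dropping $e^{-\int Y}\le 1$; more carefully, to get a ratio bounded below I compare both integrals on the same ``funnel'' event and note the exponential factors $e^{-c'T/\ell_0}$ cancel, leaving a constant. The cleanest route is actually to choose $\ell_0$ large enough that the probability-cost exponent $c'T/\ell_0$ is dominated: since everything is happening on a \emph{fixed-length} block but with $T$-dependent height, I would instead rescale the block by Brownian scaling (Lemma \ref{lem:scaling}) to a block of length $O(T^{1/3}\ell_0)$ with $O(1)$ endpoint height and area tilt $\lambda\to\lambda$; then the comparison is with the Ferrari--Spohn diffusion and Lemma \ref{lem:comingdown}-type ``coming down from infinity'' behaviour is governed by $O(1)$ quantities, giving $p_0>0$ uniformly. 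This rescaling step is the one I expect to require the most care, because I must track that rescaling a constant-length block produces a block of length $\asymp T^{1/3}$, so the number of independent blocks becomes $M\asymp T^{\a}/T^{1/3} = T^{\a-1/3}$, still a positive power of $T$ (using $\a>3/4>1/3$), which is exactly why the hypothesis $\a\in(3/4,1)$ is comfortably sufficient and where the final exponent $c$ in $e^{-T^c}$ comes from (any $c<\a-1/3$ works).

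The main obstacle, then, is the uniform-in-$T$ lower bound $p_0$ on the single-block dip probability: one has to show that the entropic repulsion of the area tilt genuinely overwhelms the Gaussian cost of descending from height $\sqrt T$ (or, after rescaling, from a height comparable to the equilibrium scale) within a single block. I would handle it by the rescaling-to-Ferrari--Spohn comparison sketched above, invoking the stationarity and the explicit density \eqref{eq:airyfs} of $Y_{FS}(0)$ plus monotonicity to conclude that within a block of length comparable to the relaxation time the rescaled path is below a constant with probability bounded away from zero; all the remaining steps (independence of blocks, the union over $M$ blocks, the symmetric argument for $\t_{r,L}$) are routine. I would also remark that the same argument, applied verbatim with the height $\sqrt T$ replaced by any polynomial height $T^{\kappa}$, yields the analogous bound, which is the form in which it is used (via \eqref{eq:stretchedexp10}) in the proof of Lemma \ref{stretchpart1}, with $y=T^{1/10}$ playing the role of the boundary height.
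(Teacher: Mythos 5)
Your blocking strategy is a genuinely different route from the paper's proof, which does not block at all: the paper first caps the path at $2\sqrt T$ using Lemma~\ref{fsmax10}, restricts to $[-T,-T+T^\a]$, and then directly bounds $\bbP(\min Y>L)$ by a partition-function ratio, lower-bounding the denominator with a single ``funnel'' event of width $\ell=T^\b$ on the whole interval of length $T^\a$. The reason $\a>3/4$ enters is exactly that the funnel cost consists of a Brownian-bridge cost $e^{-CT/\ell}$ plus an area cost $e^{-C\sqrt T\,\ell}$; optimizing over the descent time gives an irreducible cost of order $e^{-cT^{3/4}}$, and this can only be beaten by the no-dip penalty $e^{-LT^\a}$ if $\a>3/4$ (equivalently one needs $1-\a<\b<\a-\tfrac12$). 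Unfortunately your proposal has several gaps that stem from not seeing this balance.

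First, the pinning step is not justified by the machinery you cite. Remark~\ref{rem:nonct} covers pinning at \emph{zero} (a degenerate ceiling $\bar h^+=\infty\ind_{\Gamma^c}$), which stochastically \emph{lowers} the path; pinning at $\sqrt T$ is not a one-sided constraint and therefore not a monotone modification. What monotonicity does give is a floor $\sqrt T\ind_{\{s_i\}}$, which raises the path and yields conditionally independent blocks only after conditioning on the now-random block-boundary values, with no a priori upper bound on them (unless one separately imposes a ceiling, e.g.\ via Lemma~\ref{fsmax10} — a step your plan omits but the paper uses crucially to control the area on the funnel event).

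Second, and more fundamentally, the single-block dip estimate cannot give a $T$-independent $p_0$ on blocks that are too short. Your numerator heuristic — that on the funnel event the area is $O(\ell_0)$ while the Brownian cost is only $e^{-c'T/\ell_0}$ — is inconsistent: a path starting and ending at $\sqrt T$ that dips to $O(1)$ accumulates area at least $\sqrt T\times(\text{descent time})$ during the descent, and the Brownian cost of descending in time $\delta$ is $e^{-cT/\delta}$. The product is optimized at $\delta\asymp T^{1/4}$, giving an irreducible funnel weight $\asymp e^{-cT^{3/4}}$; the no-dip weight on a block of length $\ell$ is $\lesssim e^{-L\ell}$, so you need $\ell\gtrsim T^{3/4}$ for the dip to dominate. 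Your proposed block length $T^{1/3}$ is therefore far too small, the claimed ``rescaled'' block length $T^{1/3}\ell_0$ with $O(1)$ endpoint height does not arise from Lemma~\ref{lem:scaling} (scaling height $\sqrt T\to O(1)$ forces $\lambda=T^{3/2}$ and shrinks the time axis, it does not stretch it to $T^{1/3}$), and the conclusion that $\a>1/3$ suffices is incorrect — the constraint $\a>3/4$ is not an artifact of one particular proof but reflects the $T^{3/4}$ descent cost. Once the blocks are taken of length $T^{3/4+\epsilon}$, the blocking offers essentially no simplification over the paper's single-interval funnel computation, which is why the paper dispenses with blocking entirely.
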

\begin{proof}
By symmetry it suffices to prove the bound on the left random time $\t_{\ell,L}$. By adding a floor at height $\sqrt T$, and using the fact that a Ferrari-Spohn diffusion with zero boundary data  exceeds height $a$ on $[-T,T]$ with probability at most $O(Te^{-c\, a^{3/2}})$, see Lemma \ref{fsmax10}, we know that $Y_T$ will stay below $2\sqrt T$ with probability at least  $1-Ce^{-T^c}$ for some constants $c,C>0$, for all $T$ large enough. Thus, at a negligible cost, we may add a ceiling at $2\sqrt T$ and we may consider the path $\tilde Y_T$ defined as 
the single line with area tilt $1$ with both boundary data at $2\sqrt T$  on the restricted time interval $[-T,-T+T^\a]$ with length $T^\a$. Thus we may reduce to 
the probability  that the minimum height of the path $\tilde Y_T$ exceeds $L$. 
We have
\begin{gather}\label{eq:stret} 
\bbP\left(\t_{\ell,L}\ge -T+T^\a\right)\le \bbP\left(\min_{s}\tilde Y_T(s)>L\right) + Ce^{-T^c}.
\end{gather} 
Moreover, 
\begin{gather}
\bbP\left(\min_{s}\tilde Y_T(s)>L\right) \le 
\frac{\bbE\left[ e^{-A(B)}\ind_{A(B)\ge LT^\a}\right]}{\bbE\left[ e^{-A(B)}\right]}
\le \frac{e^{-LT^\a}}{\bbE\left[ e^{-A(B)}\right]}\,,
\label{eq:stretchedexp301} 
\end{gather} 
where $\bbE$ denotes the normalized expectation over the standard Brownian bridge $B(\cdot)$ in 
$[-t_\a,t_\a]$, $t_\a:=T^\a/2$, with boundary height $2\sqrt T$ at both ends, conditioned to satisfy $B(s)\in[0, 2\sqrt T]$ for all $s\in[-t_\a,t_\a]$, and with $A(B)=\int_{-t_\a}^{t_\a} B(s)\dd t$. We 
  have used the fact that $\min_{s}\tilde Y_T(s)>L$ implies that $A(B)\ge 2Lt_\a=LT^\a$. It remains to provide a lower bound on the denominator in \eqref{eq:stretchedexp301} . 
  
  Let $\ell= T^\b$, $0<\b <\a$, and $h\ge 1$ be  parameters to be fixed, and consider the  event that the path comes down from height $2\sqrt T$ to height $h$ within distance $\ell$ from both left and right of the interval $ [-t_\a,t_\a]$ and that it stays below height $2h$ for the rest of the time. More precisely, define 
  \[
  F_{\ell,h}=\{B(-t_\a+\ell)\le h,\; B(t_\a-\ell)\le h) \,,\; B(s)\le 2h, \;\forall s\in [-t_\a+\ell ,t_\a-\ell] \}\,.
  \] 
On the event $F_{\ell,h}$ we have 
\[
A(B)\le 2(2\sqrt T)\ell + 2h(2t_\a-\ell)\le 4 T^{\b+\tfrac12} + 2hT^\a\le (4+2h)T^\a,
\]   
provided 
$\b\le\a-1/2$, so that $\bbE\left[ e^{-A(B)}\right]\ge \bbP\left(  F_{\ell,h}\right)e^{-(4+2h)T^\a}$. Next, we  show that 
 \begin{gather}\label{eq:leri1} 
\bbP\left(  F_{\ell,h}\right)\ge e^{-T^\a}\,,
\end{gather} 
for suitable values of the constants $\b$ and $h$.
By considering the joint distribution of $X=B(-t_\a+\ell)$, and $Y=B(t_\a-\ell)$, one can check by gaussian computations that the event $\{X\in [0,h], Y\in [0,h]\}$
has probability at least $c\ell^{-1/2}e^{-C T/\ell}$, for suitable constants $c,C$. Moreover, conditioned on the occurrence of this event, a computation for the  probability of the tube event  $0\le B(s)\le 2h$, $s\in[-t_\a+\ell,t_\a-\ell]$ 
for a Brownian bridge shows that  $F_{\ell,h}$ satisfies 
 \begin{gather}\label{eq:leri2} 
\bbP\left(  F_{\ell,h}\right)\ge c\ell^{-1/2}e^{-C T/\ell}e^{-C T^\a/h^2}\,.
\end{gather} 
Since $\a>3/4$ we may take $\ell=T^\b$ with  $\b>1-\a$ and $\b<\a-1/2$, and we may choose the constant $h$ large enough, independently of $T$, to ensure that $\bbP\left(  F_{\ell,h}\right)\ge e^{-T^\a}$, for all $T$ sufficiently large. This proves \eqref{eq:leri1}.
Summarizing, using \eqref{eq:stret}  and \eqref{eq:stretchedexp301}, if we fix $L=2h+6$ we have obtained, for all $T$ large enough, 
\begin{gather}
\bbP\left(\t_{\ell,L}\ge -T+T^\a\right)\le 
e^{-(L-2h-5)T^\a}+Ce^{-T^c}\le e^{-T^\a}+Ce^{-T^c}\,.
\end{gather} 
\end{proof}

\subsection{Proof of Theorem \ref{th:stretched}: Part II}
Next, we want to  bootstrap the stretched exponential behavior in \eqref{eq:stretchedexp1} to obtain the optimal exponent $3/2$.  Let us fix $t$ large, and consider the time interval $T=t^{R}$, with $R$ a large constant to be fixed later. Thanks to 
\eqref{eq:stretchedexp1} the event 
\begin{equation}\label{eq:stretcha1}
A=\left\{X^1(-T)\le t^{2/\a}\,,\;X^1(T)\le t^{2/\a}\right\},
\end{equation}
has probability at least $1-e^{-ct^2}$ for some constant $c>0$ and all $t>1$. Moreover, recall by Remark \ref{rem:mon}, under $\mu^0_{n,T},$ conditional on the top curve, the law of the second curve is stochastically dominated by that of the unconditional first curve. Using this, that $\lim_{n,T \to \infty}\mu^0_{n,T}=\mu^0,$ and Corollary \ref{cor:logtight}, it follows by using standard measure theoretic arguments that 
the maximum of the second line $X^2$, 
\begin{equation}\label{eq:stretcha2}
M^2_T= \max_{s\in[-T,T]}X^2(s)\,,
\end{equation}
satisfies 
{\begin{equation}\label{eq:stretcha3}
\bbP
\left(M^2_T > C \log T \tc X^1(0)>t \right) \le \frac12.  
\end{equation}}
It follows that 
\begin{equation}\label{eq:stretcha4}
\bbP
\left(X^1(0)>t \right) \le \bbP
\left( X^1(0)>t \,,\; M^2_T \le C \log T \right) + \frac12\,\bbP
\left(X^1(0)>t \right).  
\end{equation}
Therefore, using also the previous observation about the event $A$ in \eqref{eq:stretcha1} one has 
\begin{equation}\label{eq:stretcha5}
\bbP
\left(X^1(0)>t \right) \le 2\,\bbP
\left( X^1(0)>t \,,\; M^2_T \le C \log T\,,\; A \right) + e^{-ct^2}.
\end{equation}
Now, on the event $\{M^2_T \le C \log T\}\cap  A$, by stochastic domination one can replace $X^1(s)$, $s\in[-T,T]$ by the  single Brownian bridge $Z(s)$, $s\in [-T,T]$, 
with boundary conditions $y:=t^{2/\a}$, with area tilt $1$ and with floor at $y_0:=  C \log T$. 
From Lemma \ref{lem:comingdown}, if $T=t^R$ with $R$ large enough, it follows  that with probability at least $1- 2e^{-t^2}$ one has $Z(s)\le y_0+\log(t)\le 2y_0$ for some stopping times $\t_1<-T/2$ and $\t_2>T/2$. On the latter event one can use the strong BG property to resample on the stopping domain $[\t_1,\t_2]\supset [-T/2,T/2]$ with boundary condition $2y_0
$. Thus, letting $Y_{FS}(\cdot)$ denote the stationary Ferrari-Spohn diffusion, by monotonicity and using \eqref{eq:domino1},  it follows that 
 \begin{align}\label{eq:stretcha6}
\bbP
\left(X^1(0)>t \right) &\le 2\,\bbP
\left( Z(0)>t  \right) + e^{-ct^2}\\
& \le 
2\,\bbP
\left( Y_{FS}(0)>t - 2y_0  \right) + 3e^{-ct^2} \\
& = \exp{\left(-\left(\tfrac{2\sqrt 2}{3}
+ o(1)\right)t^{3/2}\right)}\,,\qquad t\to\infty.
\end{align} 
This ends the proof of \eqref{eq:stretchedexp}.
\qed

Below we record a tail bound for the maximum of $X^1,$ which is a straightforward consequence of the above argument.
\begin{corollary}\label{cor:roughS}
There exist constants $c,C>0$, such that for all $S\ge 1$, for all $t>0$, 
\begin{equation}\label{eq:stretchedexpmax}
\bbP\left(\max_{ s\in[-S,S]}X^1(s)> C\log(S)+ t\right)\le CS\exp{\left(-c\, 
t^{3/2}\right)}\,.
\end{equation}
\end{corollary}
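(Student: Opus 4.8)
\textbf{Proof plan for Corollary \ref{cor:roughS}.}

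The plan is to combine the one-point tail bound of Theorem \ref{th:stretched} (now available at every time $s\in\bbR$ by stationarity) with the confinement estimate \eqref{eq:tightao1} and the strong Brownian-Gibbs property, following the same resampling scheme used in Part II of the proof of Theorem \ref{th:stretched}. First I would fix $S\ge 1$ and $t>0$, and set a working interval $[-T,T]$ with $T=S^{R}$ for a large constant $R$ to be chosen. By Theorem \ref{th:stretched} and stationarity, the event $A=\{X^1(-T)\le T^{2/\a},\,X^1(T)\le T^{2/\a}\}$ has probability at least $1-e^{-cT^2}$ (here $\a$ is the stretched-exponential exponent from Lemma \ref{stretchpart1}); since $T=S^R$ this cost is negligible compared with the target bound $CS\exp(-c\,t^{3/2})$ as soon as, say, $t\le T$ — the complementary range $t>T$ being handled trivially because then $\bbP(\max_{[-S,S]}X^1>C\log S+t)\le \bbP(X^1(-T)>T^{2/\a})$ is already super-polynomially small, or simply because the claimed bound is vacuous once its right-hand side exceeds $1$.

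Next I would control the second line. By Remark \ref{rem:mon} and \eqref{eq:tightao1} (passing to $\mu^0$ via Remark \ref{freetozero} and Fatou as in \eqref{eq:stretcha3}), the event $G=\{\max_{s\in[-T,T]}X^2(s)\le C_0\log T\}$ has probability at least $1-T^{-1}$, say, for a suitable constant $C_0$; in particular $\mu^0(A\cap G)\ge 1-e^{-cT^2}-T^{-1}$, which is $\ge 1/2$ for $T$ large. On the event $A\cap G$, monotonicity (Lemma \ref{lem:mono}, together with the strong BG property of Section \ref{bgsec}) lets me replace $X^1$ on $[-T,T]$ by a single Brownian bridge $Z$ on $[-T,T]$ with boundary heights $T^{2/\a}$, area tilt $1$, and floor at height $y_0:=C_0\log T$. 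Then Lemma \ref{lem:comingdown} (applied after adding a ceiling at $2T^{1/\a}$ at negligible cost, exactly as in that lemma's proof) shows that, with probability at least $1-2e^{-T^{c'}}$, there are stopping times $\t_1<-S$ and $\t_2>S$ with $Z(\t_1),Z(\t_2)\le 2y_0$; this is where choosing $R$ large is essential, so that the interval $[-T,T]$ is long enough for the area tilt to pull the path down to logarithmic height well before reaching $[-S,S]$.

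Finally, on this last event I would resample $Z$ on the stopping domain $[\t_1,\t_2]\supset[-S,S]$ using the strong BG property, with boundary data $2y_0$ and floor $2y_0$; by monotonicity the resulting path on $[-S,S]$ is stochastically dominated by the stationary Ferrari-Spohn diffusion shifted up by $2y_0=2C_0\log T=2C_0 R\log S$. Lemma \ref{fsmax10} then gives $\bbP(\max_{[-S,S]}Y_{FS}(s)>t')\le C S\exp(-c\,t'^{3/2})$, and taking $t'=t$ after absorbing the additive $2C_0R\log S$ into the $C\log S$ term in the statement (and adjusting $C$) yields \eqref{eq:stretchedexpmax}. The additive error terms collected along the way — $e^{-cT^2}$, $T^{-1}$, $e^{-T^{c'}}$ with $T=S^R$ — are all dominated by $CS\exp(-c\,t^{3/2})$ in the relevant range $t\lesssim T$, up to shrinking $c$ and enlarging $C$. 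The only mildly delicate point is bookkeeping the choice of $R$ so that Lemma \ref{lem:comingdown} applies on $[-T,T]$ with the coming-down happening outside $[-S,S]$ while keeping all error terms subdominant; since everything is polynomial in $S$ versus stretched-exponential, a fixed large $R$ works, so I expect no real obstacle here — the corollary is essentially a localized restatement of the Part II argument.
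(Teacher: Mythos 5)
Your high-level plan is the right one (reuse the Part II resampling scheme, then invoke Lemma~\ref{fsmax10} on the stopping domain), and this is indeed the route the paper takes. However, the specific parameter choice $T=S^{R}$ introduces two genuine gaps.

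First, and most seriously, all of your additive error terms — $e^{-cT^2}$, the coming-down failure $e^{-T^{c'}}$, and so on — are, for fixed $S$, \emph{constants} that do not decay in $t$; but the target right-hand side $CS\exp(-c\,t^{3/2})$ does decay in $t$. As soon as $t$ is large compared with a fixed power of $S$ (roughly $t\gtrsim S^{4R/3}$), your error terms exceed the claimed bound and the argument collapses. Your fallback that ``the complementary range $t>T$ is handled trivially because $\bbP(\max_{[-S,S]}X^1>C\log S+t)\le\bbP(X^1(-T)>T^{2/\a})$'' is not a valid inequality: the maximum being large on $[-S,S]$ implies nothing about the endpoint value $X^1(\pm T)$, and the ``bound is vacuous'' observation only covers $t\lesssim(\log S)^{2/3}$, not large $t$. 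The paper avoids this by choosing the working interval as a power of the \emph{deviation} $t$, namely $T_R=t^{R}$: in the regime $S\ge t^R$ it runs the Part~II argument on $[-S,S]$ directly (errors are then $\exp(-ct^2)$, decaying in $t$), and in the regime $S<t^R$ it simply bounds $\max_{[-S/2,S/2]}X^1\le\max_{[-T_R/2,T_R/2]}X^1$ and reuses the first case on the larger interval $[-T_R/2,T_R/2]$, paying a factor $t^R$ and an additive $C\log t$ shift, which get absorbed after shrinking $c$ in $\exp(-ct^{3/2})$. You cannot do without this two-regime split.

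Second, the boundary height you impose, $T^{2/\a}$ with $\a$ the stretched-exponential exponent from Lemma~\ref{stretchpart1}, exceeds $\sqrt{T}$ by a wide margin (the bootstrap produces $\a$ well below $1$, whereas $T^{2/\a}\le\sqrt T$ would need $\a\ge4$), so Lemma~\ref{lem:comingdown} does not apply to your bridge $Z$. In the paper's Part~II, the boundary is $t^{2/\a}$, which expressed in terms of $T=t^{R}$ is the tiny power $T^{2/(\a R)}\ll\sqrt T$ once $R>4/\a$ — you conflated the deviation parameter with the interval length. A smaller third issue: your claim $\bbP(\max_{[-T,T]}X^2>C_0\log T)\le T^{-1}$ does not follow from Markov and Corollary~\ref{cor:logtight}, which only give a bound of order $1/C_0$; the paper instead uses the conditional-doubling device of \eqref{eq:stretcha3}--\eqref{eq:stretcha4}, conditioning on the deviation event and moving the resulting factor $1/2$ multiplicatively to the left.
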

\begin{proof}
Fix $t>0$ large  and $T\ge 1$. If $T\ge T_R:=t^R$ for some large enough constant $R>0$, the exact same argument as above yields
\begin{align}
\bbP
\left(\max_{s\in [-T/2,T/2]}X^1(s)>t +2y_0 \right) &
\le 2\,\bbP
\left(\max_{s\in [-T/2,T/2]}Z(s)>t \right) + e^{-ct^2}\\
& \le 
2\,\bbP
\left( \max_{s\in [-T/2,T/2]}Y_{FS}(s)>t   \right) +3e^{-ct^2} \\
& \le C\,T \exp{\left(-c\,t^{3/2}
\right)}\,. 
\label{eq:pfofmax}\end{align}
with the last inequality following from Lemma \ref{fsmax10}. This proves \eqref{eq:stretchedexpmax} in the case $T\ge T_R$.  If $T\le T_R$ one may
estimate 
 \begin{align}
\bbP
\left(\max_{s\in [-T/2,T/2]}X^1(s)>t +C\log(T) \right)&\le \bbP
\left(\max_{s\in [-T_R/2,T_R/2]}X^1(s)>t \right)\\
& \le C\,t^R \exp{\left(-c\,(t-C\log(t))^{3/2}
\right)}\,,
\label{eq:pfofmaxa}
\end{align}
where the last bound follows from \eqref{eq:pfofmax} at $T=T_R$, for a suitable constant $C>0$. Taking $t$ sufficiently large and adjusting the value of the constants shows that for any $1\le T\le T_R$ 
the last expression is bounded by $CT \exp{(-c\,t^{3/2})}$ for suitable constants $c,C>0$. This finishes the proof.
\end{proof}

\smallskip

Corollary \ref{cor:stretched} is also a quick consequence of Theorem \ref{th:stretched}.
\begin{proof}[Proof of Corollary \ref{cor:stretched}]
By monotonicity and rescaling, as in Remark \ref{rem:mon}, for each $k$ we have 
 \begin{align}
\bbP
\left(X^{k+1}(0)>\l^{-k/3}t \right)&=  \bbP
\left(X^{i+1}(0)>\l^{-k/3}t\,,\;\forall i\le k \right)\\
& \le \prod_{i=0}^k\bbP
\left(X^{1}(0)>\l^{-(k-i)/3}t \right)\\
& = \exp{\left(-\left(c_k 
+ o(1)\right)t^{3/2}\right)}\,,\qquad t\to\infty,
\label{eq:cklab}
\end{align}
where $c_k:= \tfrac{2\sqrt 2}{3}\sum_{i=0}^k \l^{-i/2}$, which satisfies $c_k\to c_\infty(\l) = \tfrac{2\sqrt {2}}{3}\tfrac{\sqrt \l}{\sqrt {\l}-1}$ as $k\to\infty$.
\end{proof}

While this in principle finishes the proof of our results about the upper tail, for later applications we show how to prove a counterpart of the above corollary and hence an extension of Lemma \ref{stretchpart1} to general $k$, simply under the assumption UC from Definition \ref{ust}.  Note that the proof of the above corollary relied on the monotonicity and scale invariance exhibited by the zero boundary LE $\mu^0$ which a priori might not hold for a general LE (though a posteriori it does as a consequence of Theorem \ref{th:uniqueness}).
\begin{lemma}\label{univtail1}
For any UC $\l-$tilted LE $\underline X$, there exist constants  $\a>0$, $c>0$ and $C>0$, such that for any $s\in \R$ and any $k\ge 0$ and $y>0$,
\begin{equation}\label{eq:stretchedexp100}
\bbP\left[X^{k+1}(s)> \l^{-k/3}y\right]\le C\,e^{-c\,y^\a}.
\end{equation}
\end{lemma}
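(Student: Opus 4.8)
The plan is to mimic the bootstrap from Lemma \ref{stretchpart1} and the Part~II argument of Theorem \ref{th:stretched}, but carried out around the $(k+1)$-th line and without the luxury of the scaling relation (which is specific to $\mu^0$). First I would reduce to a statement about a finite sub-ensemble. Fix $s\in\bbR$; by translation invariance of the UC property we may take $s=0$. Fix $y>0$ large and let $\l^{-k/3}y=:\eta$ be the target height. Set $T=y^{R}$ for a large constant $R$, $K=k+C_\l\log y$ for $C_\l$ large depending only on $\l$, and consider the events that all lines with index $\ge K$ stay below $\l^{-k/3}/y$ on $[-T,T]$, and that line $k+1$ has entry and exit data at $-T,T$ bounded by $\l^{-k/3}t^{2/\a}$ (using Lemma \ref{stretchpart1} applied to $X^{k+1}$ after removing the top $k$ lines is not available, so instead use \eqref{eq:tight3a} and Markov as in \eqref{eq:stretchedexp4}, which only gives a polynomial-in-$y$ control on the endpoint data --- this is enough). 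As in \eqref{eq:stretchedexp30}, by Markov's inequality and \eqref{eq:tight3a}, the intersection of these events has probability at least $1-y^{-\b}$ for some $\b\in(0,1)$; here the extra factor $\l^{-k/3}$ in the heights is exactly matched by the factor $\l^{-k/3}$ in the first-moment bound \eqref{eq:tight3a}, so the estimate is uniform in $k$.

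Next, conditioned on these events and on the external data, the BG property and monotonicity let me replace lines $k+1,\dots,K$ by an ensemble $\underline\xi$ of $K-k$ lines on $[-T,T]$ with all boundary data at a common height $z:=\l^{-k/3}\max(t^{2/\a}, y)$ (say), with a ceiling at $z$ and a floor at $\l^{-k/3}/y$, and --- crucially --- I would \emph{lower} all the area-tilt coefficients to the smallest one present, namely $\l^{k}$, which is $\ge 1$; by monotonicity in the tilt (Lemma \ref{lem:mono}) this only raises the ensemble. After rescaling space by $\l^{-2k/3}$ and height by $\l^{-k/3}$ (Brownian scaling, which does not need the zero-boundary structure --- it is just a change of variables for a single Brownian bridge and its area integral) the tilt becomes $1$, the number of lines is $O(\log y)$, the ceiling/floor/boundary heights become $O(t^{2/\a})$ up to logarithmic factors, and the time horizon is still a large power of $y$. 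At this point the problem is \emph{identical} to the one solved inside the proof of Lemma \ref{stretchpart1}: a $\l\equiv 1$ ensemble of $O(\log y)$ lines with polynomially-large boundary data on an interval of length a large power of $y$, and I want to show the top line is below $\log^a y$ (in the rescaled coordinates) except with probability $y^{-b}$. I would run the nested-trapezoid argument \eqref{eq:trapezoids}--\eqref{eq:stretchedexp11} verbatim, decomposing line by line via \eqref{eq:stretchedexp6a6a}, reducing each line to a single Brownian bridge with area tilt $1$, and invoking Lemma \ref{lem:comingdown} (coming down from a polynomial height) and Lemma \ref{fsmax10} (tail of the max of the Ferrari--Spohn diffusion) exactly as before. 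Undoing the rescaling converts ``top line below $\log^a y$'' into ``$X^{k+1}(0)\le \l^{-k/3}\cdot(\text{polylog in }y)$'', which after relabeling $y\leftrightarrow y^{1/a}$ or adjusting constants gives \eqref{eq:stretchedexp100} with some $\a>0$.

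The main obstacle, and the only place the argument genuinely differs from Lemma \ref{stretchpart1}, is getting \emph{uniformity in $k$} of all the constants. Two points need care. First, the endpoint-data control: Lemma \ref{stretchpart1} used the (already proven) stretched-exponential tail of $X^1$ for the boundary data in \eqref{eq:stretcha1}, but here I can only afford the polynomial Markov bound from \eqref{eq:tight3a}; this is fine because in the trapezoid argument the boundary height only enters through $\ell=y^9 \gg (\text{boundary height})$ and through the partition-function lower bound in Lemma \ref{lem:comingdown}, both of which tolerate polynomially-large boundary heights --- one just needs $T$ a sufficiently large power of $y$, which costs nothing. Second, after the rescaling by $\l^{-2k/3}$ the time horizon becomes $\l^{-2k/3}T$; I must check this is still $\ge$ a fixed large power of (rescaled height), i.e. that $R$ can be chosen once and for all, independent of $k$. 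Since the rescaled height is $\asymp\l^{-k/3}\cdot\mathrm{poly}(y)$ and the rescaled horizon is $\l^{-2k/3}y^{R}$, the ratio (horizon)/(height)$^{2}$ is $\asymp y^{R}/\mathrm{poly}(y)$, \emph{independent of $k$} --- the $\l$-powers cancel, which is the whole point of the scaling relation. Hence $R$, and all the constants $c,\a,C$, can be taken independent of $k$, and the claimed bound holds uniformly. I would also remark that the same reasoning, stopping one step earlier, gives the analogue of Corollary \ref{cor:stretched} under UC, but that is not needed for \eqref{eq:stretchedexp100}.
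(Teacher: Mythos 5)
Your overall strategy --- restrict to $O(\log y)$ lines around index $k$ via UC plus Markov, lower the tilts and rescale to the $\lambda\equiv 1$ setting, and run the trapezoid argument of Lemma \ref{stretchpart1} --- is exactly the paper's strategy, and the idea of lowering all tilts to $\lambda^{k}$ before a single rescaling is an acceptable variant (the paper applies the scaling while keeping the geometric tilts and lowers them to $\lambda\equiv1$ only inside the proof of Lemma \ref{stretchpart1}; both routes reach the same $\lambda\equiv1$ ensemble). However, there is a genuine gap in your choice $T=y^{R}$ with $R$ a fixed constant, and the uniformity in $k$ that you claim does \emph{not} hold with it. The UC bound \eqref{eq:tight3a} controls $\bbE[\max_{[-T,T]}X^{K}]$ by $C\lambda^{-(K-1)/3}\bigl[1+\log(1+T\lambda^{2(K-1)/3})\bigr]$; with $K\approx k+C_\lambda\log y$ and $T=y^{R}$ the argument of the logarithm is $y^{R}\lambda^{2(k+C_\lambda\log y)/3}$, whose logarithm $\sim R\log y + \tfrac{2}{3}k\log\lambda$ grows linearly in $k$. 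Running Markov against the threshold $\lambda^{-k/3}/y$, the $\lambda^{-k/3}$ prefactors do cancel as you note, but the surviving bound is of the form $Cy\cdot y^{-C_\lambda(\log\lambda)/3}\cdot\bigl[R\log y + c\,k\bigr]$, which cannot be made $\le y^{-\beta}$ for all $k\ge 0$ at fixed $y$. The dimensionless-ratio argument at the end rests on a sign error: rescaling to tilt $1$ \emph{lengthens} the time interval to $\lambda^{2k/3}T$, not $\lambda^{-2k/3}T$, so with $T=y^{R}$ the rescaled horizon is $\lambda^{2k/3}y^{R}$ and is still $k$-dependent, which in turn breaks the factor $T$ in Lemma \ref{fsmax10} and the union bound in the trapezoid step. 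The fix, and what the paper does, is to take $T=\lambda^{-2k/3}y^{10}$ from the outset: then $T\lambda^{2(k+j)/3}=y^{10}\lambda^{2j/3}$ for every index $j=0,\dots,K-1$ of the retained sub-ensemble, so both the Markov estimates for the events $A,B$ and the rescaled interval $[-y^{10},y^{10}]$ are genuinely $k$-independent, and the reduction to the setting of Lemma \ref{stretchpart1} goes through verbatim.
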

\begin{proof} The proof is a straightforward adaptation of the arguments in Lemma \ref{stretchpart1} and so we will be brief. First of all, without loss of generality we will take $s=0$, since the UC 
property is translation invariant. 
Fix $k\in\bbN$,  and some $y>0$ large. Define $K=C_\l \log(y)$, where $C_\l>0$ is a constant depending only on $\l$ to be taken large enough, and $T=\lambda^{-2k/3}y^{10}$, and consider the events 
\begin{equation}\label{eq:stretchedexp2a}
A=\left\{\max_{s\in[-T,T]}X^{k+1+K}(s)\lambda^{k/3}\le \frac1y
\right\}\,,\qquad B=\left\{\max_{s\in[-T,T]}X^{k+1}(s)\lambda^{k/3}\le y
\right\}.
\end{equation}
It follows from the definition of UC in \eqref{eq:tight3} and \eqref{eq:tight3a} along with an application of Markov's inequality that
\begin{equation}\label{eq:stretchedexp3aw}
\bbP(A\cap B) \ge 1- \frac1{y^\b}\,,
\end{equation}
for any constant $\b\in(0,1)$ provided $y$ is large enough. 

Further, on $A\cap B,$ by the BG property, and using monotonicity, it suffices to ignore the top $k$ curves and hence we simply have to analyze the ensemble of $K$ lines $X^{k+1},\dots,X^{k+K}$, on the domain $[-T,T]$ starting and ending at $\lambda^{-k/3}y$ with a floor at $\lambda^{-k/3}/y$ and a ceiling at $\lambda^{-k/3}y.$ We can now appeal to Brownian scaling and instead consider the ensemble on $[-y^{10},y^{10}]$ (recall that $T=\lambda^{-2k/3}y^{10}$) with $K$ lines with the usual geometric area tilts, starting and ending at $y$ with a floor at $1/y$ and a ceiling at $y.$ Let us denote this new ensemble by $\{Z^i\}_{i\le K}.$
This is exactly the setting of the proof of Lemma \ref{stretchpart1}, the arguments of which imply e.g.\ that $$\bbP\(\max_{s\in [-y^{5},y^{5}]}Z^{1}(s)\ge \log^a(y)\)\le \frac{1}{y^{b}},$$ for some constants $a,b>0$, for all $y$ large enough.  Since $X^{k+1}(0)$ is stochastically dominated by $Z^1(0)$, 
this finishes the proof. 
\end{proof}

The proof of Theorem \ref{lowertail} quantifying the entropic repulsion effect on the lower tail behavior relies on an argument developed in detail in Section \ref{sec:ergodicity}, particularly a construction outlined \eqref{eq:trapezoid}, and hence is postponed to the end thereof. 

\section{Ergodicity and mixing}\label{sec:ergodicity}
This section is devoted to the proofs of Theorems \ref{th:mixing} and \ref{th:corrdecay}.
We begin by recalling that the stationary, infinite zero boundary LE is denoted by $\uX$ and for any $t\in\bbR $, $T_t$ denotes the shift operator  
defined by $T_t\uX (\cdot ) = \uX (t + \cdot)$. 
\begin{proof}[Proof of Theorem \ref{th:mixing}]
An application of the $\pi-\l$ theorem, see e.g.\ \cite{durrett2019probability}, shows that it is sufficient to prove \eqref{eq:ergo1} for all sets $A,B$ of the form \eqref{eq:monpis}. Therefore, adopting the notation from \eqref{eq:monpis}, we fix $A=E(\calS,\calI,\calT)$ and $B=E(\calS',\calI',\calT')$, where $m\in\bbN$, 
$\calS=(s_1,\dots,s_m)\in \bbR^m$, $\calI=(i_1,\dots,i_m)\in\bbN^m$, and $\calT=(t_1,\dots,t_m)\in \bbR_+^m$ are given, and {similarly for $\calS',\calI',\calT'$}. 
Let $X'_t = \ind_{T_t A}$ and $X=\ind_{B}$, so that $\bbE\left[X'_t\right]=\bbE\left[X'_0\right]=\mu^0
\left(A \right)$ and $
\bbE
\left[X\right]  = \mu^0\left( B\right)$. 
We are going to find two independent random variables $W'_t\in[0,1]$, $W_t\in[0,1]$ and a coupling of $(X'_t,X)$ and $(W'_t,W_t)$ such that 
a.s.\ $X'_t\ge W'_t$, $X\ge W_t$  and such that, setting $\D_t= X-W_t$, $\D'_t=X'_t-W'_t$, one has
\begin{gather}\label{eq:ergo2} 
\lim_{t\to\infty}\bbE
\left[\D_t\right] = \lim_{t\to\infty}\bbE
\left[\D'_t\right] = 0
\,.
\end{gather}
By stationarity, \eqref{eq:ergo2}  is equivalent to $\lim_{t\to \infty}\bbE \left[W'_t\right]=\bbE\left[X'_0\right]$ and  $\lim_{t\to \infty}\bbE \left[W_t\right]=\bbE\left[X\right]$. 
Assuming the existence of such a coupling, we see that
\begin{align}\label{eq:ergo3} 
&\mu^0
(T_t A \cap B) = \bbE
\left[X'_tX\right]  \nonumber \\
 &\qquad  = \bbE\left[W'_t\right]
\bbE\left[ W_t\right] + \bbE
\left[W_t \D'_t\right] + \bbE
\left[W'_t \D_t\right] +\bbE
\left[\D_t\D'_t\right]    \nonumber \\
 &\qquad   \longrightarrow \bbE\left[X'_0\right]\bbE
\left[X\right]  = \mu^0
(A)\mu^0 (B)\,,\quad t\to\infty
\,,
\end{align} 
 where we use \eqref{eq:ergo2}, the independence of $W'_t,W_t$,   and the fact that $W_t,W'_t,\D_t,\D'_t\in[0,1]$.

To construct the desired coupling we take $s>0$ so that $|s_i|\le s$ and $|s'_i|\le s$, and $\ell\in\bbN$ such that $\ell\ge \max\{i_j,i'_{j'}\}$ for all $j=1,\dots, m,$ and $j'=1,\dots, m'$, so that the event $B$ concerns the first $\ell$ lines in the time interval $[-s,s]$, while the event $T_tA$ concerns  the first $\ell$ lines in the time interval $[t-s,t+s]$.  

\begin{figure}[h]
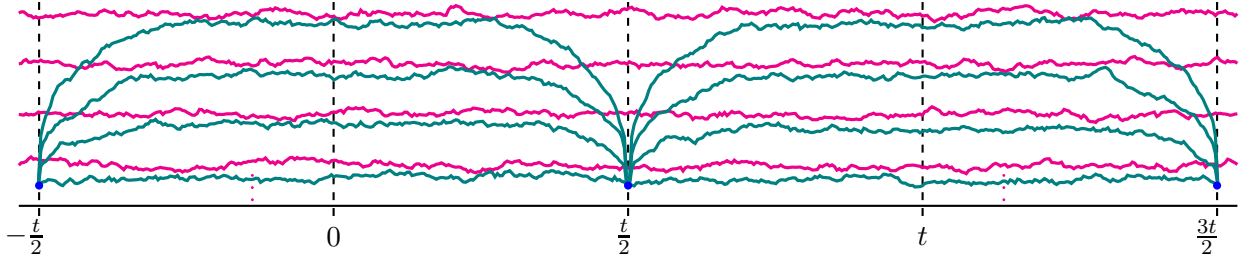


\caption{Illustration of the proof strategy for Theorem \ref{th:mixing}. The ensemble $\uY$ pinned at $\pm t/2$ and ${3t}/{2}$ (green lines) is monotonically coupled to $\uX$ with law $\mu^{0}$ (pink lines). The weak convergence guarantees that the gap between the top green lines and the top pink lines goes to zero in expectation on compact domains centered around $0$ and $t.$ Further, the green lines on $[-t/2,t/2]$ and $[t/2,3t/2]$ are independent due to the pinning.}
\end{figure}
To construct the independent proxy random variables $W'_t$ and $W_t$, consider the line ensemble $\uY_{\infty,t}$ with zero boundary condition on $[-t/2,t/2]$ and with infinitely many lines. Note that this is well defined by monotonicity as the increasing limit of the zero boundary ensemble with finitely many lines, as noted in \cite{CIW19}. Call $W_t$ the indicator function
that the first $\ell$ lines of $\uY_{\infty,t}$ satisfy the event $B$. Similarly, let $\uY'_{\infty,t}$ be the line ensemble with zero boundary condition on $[t/2,3t/2]$, with infinitely many lines, and call $W'_t$ the indicator function of the event 
that the first $\ell$ lines of $\uY'_{\infty,t}$ satisfy the event $T_tA$. 
Note that owing to the pinning, the line ensembles $\uY_{\infty,t}$ and $\uY'_{\infty,t}$ and hence the random variables $W_t,W'_t$ are independent. 

Now observing that the LE 
\begin{align}\label{eq:ley}
\uY:=(\uY_{\infty,t},\uY'_{\infty,t})
\end{align} 
obtained by concatenating $\uY_{\infty,t}$ and $\uY'_{\infty,t}$ is stochastically dominated by $\uX$, and that the events involved are all increasing, there exists a coupling of the random variables $(X'_t,X),(W'_t,W_t)$ such that 
a.s.\ $X'_t\ge W'_t$, $X\ge W_t$. It remains to prove  $ \bbE
\left[W_t \right]\uparrow  \mu^0
(B)$ and $   \bbE
\left[W'_t \right]\uparrow  \mu^0
(A)$. That $\bbE
\left[W_t \right]$ and  $\bbE
\left[W'_t \right]$ increases and that the limits are upper bounded by $\mu^0
(B)$ and $\mu^0
(A)$ respectively is a simple consequence of monotonicity and the increasing nature of $A,B$. Equality now follows by Portmanteau's lemma about weak convergence, and the fact that $A$ and $B$ are open, which implies $\liminf\bbE
\left[W_t \right]\ge \mu^0(B)$ and similarly $\liminf\bbE
\left[W'_t \right]\ge \mu^0(A)$, which finishes the proof. 

\end{proof}
 
While the above provides a qualitative proof of mixing, we next seek to obtain the quantitative bound stated in Theorem \ref{th:corrdecay}. The key technical step 
 is to estimate the convergence rate in \eqref{eq:ergo2}. We begin with a brief overview. Recall the coupling argument based on resampling presented in Section \ref{sec:coupling} where in the case of finitely many lines, a reverse coupling was constructed under which with high probability the zero boundary LE dominates the free boundary LE. In the case of infinitely many lines, this is hard to argue but nonetheless we show that  indeed such a reverse coupling exists where the requisite reverse domination holds up to a small shift, which is enough for our purposes.

To accomplish this we rely on \eqref{eq:tight3a} which allows us to ignore the curves with index larger than a chosen threshold on a finite domain by raising the floor by an appropriately chosen amount which effectively reduces the case to finitely many lines. The reverse coupling strategy can now be employed as long as we can show that with high probability there exist random times in the finite domain where the top few curves in a pinned ensemble, say, $\uY$ as in the previous proof, are higher than that of an independently chosen infinite ensemble $\uX$. Note now that there is a tradeoff in the choice of the domain size $t$. The smaller it is, the lower the floor is since by \eqref{eq:tight3a} the growth of the maximum is controlled on such a domain. On the other hand, choosing the domain to be too small makes it harder for the top few lines of $\uY$ to have large enough fluctuations in order to dominate the corresponding lines in $\uX$ at some time.

To ensure that a desirable domain exists, we will rely on the stretched exponential tail estimates developed in Corollary  \ref{cor:stretched}.

\begin{proof}[Proof of Theorem \ref{th:corrdecay}]
We start by arguing as in \eqref{eq:ergo3}.
Namely, define the random variables $X'_t=X^1(t)$, $X=X^1(0)$, where $\uX$ denotes the LE with law $\mu^0$. As before, consider the line ensemble $\uY$ defined by \eqref{eq:ley}, where 
$\uY_{\infty,t}$ has zero boundary condition on $[-t/2,t/2]$, and $\uY'_{\infty,t}$ has zero boundary condition on $[t/2,3t/2]$. Call $W_t=Y^1(0)$ the height at zero of the top line of $\uY_{\infty,t}$ and call $W'_t=Y^1(t)$ the height at $t$ of the top line of $\uY'_{\infty,t}$. Clearly, $W'_t,W_t$ are independent. Moreover, by stochastic domination and translation invariance, 
\begin{gather}\label{eq:corre02}
\bbE\left[W_t \right]=\bbE\left[W'_t \right]\le  \bbE
\left[X\right]=\bbE
\left[X'_t\right] ,
\\
\bbE\left[W^2_t \right]=\bbE\left[(W'_t)^2 \right]\le  \bbE
\left[X^2\right]=\bbE
\left[Y^2_t\right] .
\end{gather}
Now for any coupling of $(X,X'_t),(W'_t,W_t)$, by independence of $W'_t,W_t$ one has 
\begin{align}\label{eq:corre2}
&\cov\left(X^1(0),X^1(t)\right)=\bbE
\left[X X'_t\right]-\bbE
\left[X\right]\bbE\left[ X'_t\right]\\
&\qquad =  \bbE
\left[W_t\right]\bbE\left[ W'_t\right]-\bbE
\left[X\right]\bbE\left[ X'_t\right] + \bbE
\left[W_t \D'_t\right] + \bbE
\left[W'_t \D_t\right] +\bbE
\left[\D_t\D'_t\right]\,, 
\end{align}
where $\D_t=X-W_t$ and $\D'_t=X'_t-W'_t$.
Therefore, taking absolute values, using \eqref{eq:corre02} and translation invariance, the left hand side of \eqref{eq:corre1} is bounded by 
\begin{equation}\label{eq:corre21}
2\bbE[X]
\bbE\left[ \D_t\right] + 2\left|\bbE\left[ W_t\D'_t\right]\right|
+\left|\bbE
\left[\D_t\D'_t\right]\right|  .
\end{equation}
We note that this is valid for any coupling of $(X,X'_t),(W'_t,W_t)$, and that while  the inequalities \eqref{eq:corre02} ensure that $\bbE\left[ \D_t\right]=\bbE\left[ \D'_t\right] \ge 0$, not all couplings have $\D_t,\D'_t\ge 0$ pointwise. 
From \eqref{eq:corre21}, Theorem \ref{th:stretched}, and Schwarz' inequality, we see that \eqref{eq:corre1} follows once we prove that for suitable constants $c,C>0$, 
\begin{equation}\label{eq:corre3}
\bbE
\left[(\D_t)^2\right]   \le C\varphi(t)^2,
\end{equation}
where we define 
\begin{equation}\label{eq:defphit}
 \varphi(t) := \exp\left[-c(\log t)^{3/7}\right].
 \end{equation} 
By \eqref{eq:corre02}, 
\begin{equation}\label{eq:corre31}
\bbE
\left[(\D_t)^2\right] =  \bbE[(X-W_t)^2]\le 2\bbE[X^2] - 2 \bbE\left[X W_t\right] = 2\bbE\left[ X\D_t\right]\,.
\end{equation}
We are going to show that there exists a coupling of $(X,X'_t),(W'_t,W_t)$ and an event $E_t$ such that 
under $E_t$ one has $\D_t\le \varphi(t)^2$ and such that 
\begin{equation}\label{eq:corre4}
\bbP
\left(E_t^c\right)\le C\varphi(t)^4\,.
\end{equation}
In this case, 
\begin{align}\label{eq:corre32}
\bbE\left[ X\D_t\right] &\le \varphi(t)^2\bbE\left[ X\right]+\bbE\left[ X\D_t\ind_{E_t^c}\right]  \\
& \le C\varphi(t)^2 + 
\sqrt{\bbP
\left(E_t^c\right)\bbE\left[X^2(\D_t)^2\right]}\le 2C\varphi(t)^2 \,,
\end{align}
where in the first equality we use the positivity of $X$. In the final inequality we use $\bbE\left[X^2(\D_t)^2\right]\le C$, which follows e.g.\ from the crude bound $|\D_t| \le X+ W_t$ and applying Theorem \ref{th:stretched}. From \eqref{eq:corre32} and \eqref{eq:corre31} we infer
\eqref{eq:corre3} with a different constant $C$. 
Thus, it remains to prove \eqref{eq:corre4} and hence the proof is now complete with the aid of the following lemma. We end by pointing out that the same argument verbatim works for any fixed $k$ and hence shows that Theorem \ref{th:corrdecay} continues to hold in this case as well.\end{proof}

Though we only need to produce a coupling of the top lines $X^1$ and $Y^1$, for a later application in the proof of Theorem \ref{th:uniqueness}, as in Theorem \ref{th:mixing}, we will design a coupling of $\uX$ with $\uY_{\infty,t}$, the $\infty$-LE 
with zero boundary conditions on $[-t/2,t/2]$, such that with high probability the top $i$ lines  are close to each other, for any fixed $i$, provided $t$ is large enough.

\begin{lemma}\label{reversecoupling}
 Given any $i$ and $S\ge 0$, for all large enough $t$, there exists a coupling of $\uY_{\infty,t}$  and $\uX$, and an event $E_t$ such that on $E_t$ one has 
\[
X^j(s)-\uY^j_{\infty,t}(s) 
\le  \varphi(t)^2\,,\qquad j=1,\dots,i\,,\quad s\in[-S,S],
\]
and such that $\bbP(E_t^c)\le \varphi(t)^2$
 is satisfied, where $\varphi(t)$ is defined in \eqref{eq:defphit}. 
 \end{lemma}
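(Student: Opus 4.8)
\textbf{Proof proposal for Lemma \ref{reversecoupling}.}

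The plan is to implement the ``reverse coupling'' strategy outlined in Section \ref{sec:coupling}, but now for an infinite ensemble on a bounded window, leaning on the confinement estimate \eqref{eq:tight3a} (equivalently Corollary \ref{cor:logtight}) to reduce to a finite problem and on the stretched exponential tails of Corollary \ref{cor:stretched} to control the entry/exit data and guarantee the existence of a good stopping domain. First I would fix $i$ and $S$, set $t$ large, and choose a cutoff index $k=k(t)$ growing like a small power of $\log t$, and a raised floor of height $h=h(t)$ of order, say, $(\log t)^{1/3}$ or some comparable quantity to be optimized; the exponent $3/7$ in $\varphi$ will come out of balancing these choices. By \eqref{eq:tight3a} applied to $\uX$ on the window $[-t/2,t/2]$, together with Markov's inequality, on an event of probability at least $1-\varphi(t)^2/4$ the lines $X^{k+1},X^{k+2},\dots$ all stay below $h$ throughout $[-t/2+1,t/2-1]$; the same holds for $\uY_{\infty,t}$ by monotonicity (Remark \ref{freetozero} and the fact that $\uY_{\infty,t}$ is dominated by $\mu^0$). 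On this event, by the BG property we may raise the floor for both ensembles to height $h$ on the window and thereby work only with the top $k$ lines of each.

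Next I would locate a stopping domain. Consider $\uY_{\infty,t}$ on $[-t/2,t/2]$: since $Y^1_{\infty,t}$ is an area-tilted excursion pinned at $\pm t/2$, by the monotonicity/pinning trick of Lemma \ref{lem:coupling} (pin at gaps of length $2$, use independence across blocks) and the quantitative lower bound $\delta_n(u)$ on the probability that the bottom line of a zero-boundary block exceeds a given height --- here for the \emph{top} line it is even easier, using a comparison with a Ferrari--Spohn diffusion and the fact that $Y^1_{\infty,t}$ dominates $Y^n_{\infty,t}$ --- one shows that with probability at least $1-\varphi(t)^2/4$ there exist left/right stopping times $\tau_\ell<-S$, $\tau_r>S$ at which \emph{all} of the top $k$ lines of $\uY_{\infty,t}$ exceed some threshold $u=u(t)$, taken of order $(\log t)^{2}$ or a suitable power. (One uses Remark \ref{rem:mon} and the scaling to propagate the lower bound from the top line down to line $k$, at the cost of the factor $\lambda^{-k/3}$, which is why $k$ is only a power of $\log t$ and not a constant; and one needs $u$ large enough that the top $k$ lines of $\uX$ are, with high probability, all below $u$ at the times $\tau_\ell,\tau_r$, which is exactly where Corollary \ref{cor:stretched} enters: $\bbP(X^{j}(s)>u)\le \exp(-(c_{j-1}+o(1))u^{3/2}\lambda^{(j-1)/2})$ for each $j\le k$, and a union bound over $j\le k$ and over the (deterministically located, after conditioning) times still leaves probability at least $1-\varphi(t)^2/8$.) Intersecting all these events gives $E_t$ with $\bbP(E_t^c)\le\varphi(t)^2$.

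On $E_t$, I would conclude by resampling. We have two independent samples: $\uX$ and $\uY_{\infty,t}$, with a raised floor at $h$, restricted to the top $k$ lines. On the stopping domain $[\tau_\ell,\tau_r]\supset[-S,S]$, the boundary data of $\uY_{\infty,t}$ dominates that of $\uX$ (both boundary tuples of $\uY$ exceed $u$, both of $\uX$ are below $u$), so by the strong BG property we may resample the restriction of $(\uX,\uY_{\infty,t})$ to $[\tau_\ell,\tau_r]$ under the monotone coupling of Lemma \ref{lem:mono}, obtaining $\uX\prec\uY_{\infty,t}$ on $[\tau_\ell,\tau_r]$ for the top $k\ge i$ lines. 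Hence $X^j(s)\le Y^j_{\infty,t}(s)$ for $j\le i$, $s\in[-S,S]$ --- in fact the difference is $\le 0\le\varphi(t)^2$, which is stronger than claimed; the $\varphi(t)^2$ slack is a harmless cushion (it also absorbs the fact that we only raised the floor to $h$ rather than $0$, since $h$ does not actually affect the top lines on $E_t$). Outside $E_t$ we keep the independent coupling. This is a valid coupling of $\uY_{\infty,t}$ and $\uX$ with the stated properties.

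\textbf{Main obstacle.} The delicate point is the simultaneous choice of the three scales $k(t)$, $h(t)$, $u(t)$ so that (a) line $k$ of $\uX$ (and of $\uY$) lies below $h$ with failure probability $\ll\varphi(t)^2$ on a window of width $t$ --- this needs $\lambda^{-k/3}\log t\ll h\,\varphi(t)^2$, roughly $k\gtrsim \log\log t$; (b) the top $k$ lines of $\uY_{\infty,t}$ reach height $u$ somewhere in each of the two halves of the pinned window, with failure probability $\ll\varphi(t)^2$, which via the block-independence argument costs something like $\exp(-c\,t\,\delta_k(u))$ and forces $\delta_k(u)\gg (\log t)/t$, i.e. $u$ not too large relative to how $\delta_k$ decays (a quantitative lower bound on $\delta_k(u)$ of Ferrari--Spohn type, $\delta_k(u)\ge \exp(-C u^{3/2}\lambda^{k/2})$, is needed, presumably the content of the ``Lemma \ref{lem:pkvu}'' alluded to in Section \ref{sec:coupling}); and (c) the top $k$ lines of $\uX$ stay below $u$ at the two stopping times, with failure probability $\ll\varphi(t)^2$, which by Corollary \ref{cor:stretched} needs $u^{3/2}\gg \log(1/\varphi(t))=c(\log t)^{3/7}$. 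Chasing these inequalities through, with $k\asymp\log\log t$, $h\asymp(\log t)^{1/3}$, $u\asymp(\log t)^{2/7}$ (or whatever the optimization yields) produces precisely the exponent $3/7$ in $\varphi(t)=\exp[-c(\log t)^{3/7}]$; getting the bookkeeping of these exponents right, and making sure the union bounds over $k$ lines and over the conditioning are genuinely negligible, is the technical heart of the argument.
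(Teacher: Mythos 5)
Your high-level strategy matches the paper's: reduce to finitely many lines via the confinement/Markov bound, locate a stopping domain where the pinned ensemble's boundary data dominates that of the infinite ensemble (using block independence and a Ferrari--Spohn type lower bound, which is indeed Lemma~\ref{lem:pkvu}), and resample on that domain under the monotone coupling. You also correctly identify the three competing scales and that balancing them produces the $3/7$ exponent. However, there are two substantive errors that would make the argument fail as written.

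First, your treatment of $\uY_{\infty,t}$ goes in the wrong direction. Raising the floor of $\uY_{\infty,t}$ to some height $h$ (by replacing line $k+1$ with a flat line at $h$) produces an ensemble that stochastically \emph{dominates} the top $k$ lines of $\uY_{\infty,t}$; an upper bound on $\uY$ is useless here since you need a lower bound. What is actually needed is the trivial monotonicity $Y^j_{\infty,t}\ge Y^j_{k,t}$, so the comparison should be against $\uY_{k,t}$ (the $k$-line ensemble with zero boundary and floor at zero), and the floor is raised \emph{only} for $\uX$.

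Second, and more fatally, your choice of floor height $h\asymp(\log t)^{1/3}$ is far too large. Once you compare the top $k$ lines of $\uX$ with floor at $h$ against $\uY_{k,t}$ with floor at $0$, the monotone coupling yields $X^j\le Y^j_{k,t}+h$, not $X^j\le Y^j_{k,t}$; the shift by $h$ is unavoidable and must itself be $\le\varphi(t)^2$ for the lemma's conclusion to hold. The paper takes $h=\lambda^{-k/6}$, i.e.\ stretched-exponentially small in $\log t$, precisely so that $\lambda^{-k/6}\le\varphi(t)^2$ after adjusting constants. With $h\asymp(\log t)^{1/3}$ the resulting bound $X^j-Y^j\le h$ is trivially true and says nothing. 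Relatedly, your closing guess $k\asymp\log\log t$ is off: the paper takes $k\asymp(\log t)^{3/7}$ (so that $e^{Ak^a}\le t$ with $a=7/3$), and only then do the three failure probabilities $\lambda^{-k/7}$, $k\exp(-ck^{3b/2})$, and $\exp(-c\,e^{k^a})$ all decay like $\varphi(t)^2$. Your estimate $u\asymp(\log t)^{2/7}$ is consistent with the paper's $v=k^{2/3}$, so that part of your bookkeeping is sound.
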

\begin{proof} 
Let $\uY_{k,t}$ denote the $k$-line ensemble with zero boundary condition on $[-t/2,t/2]$.  We take $t>0$ large and $k\in\bbN$ as a function of $t$ defined by  $k = \lfloor \left(\tfrac1A \log t\right)^{1/a}\rfloor$,  where $a>0$ and $A>0$ are constants to be tuned later, so that 
\begin{equation}\label{eq:tandk}
\exp(Ak^a)\le t\le \exp(A(k+1)^a)\,.
\end{equation}
Since,  by monotonicity, \[
Y^j_{\infty,t}\ge Y^j_{k,t},\]
it will be sufficient to show that  
\begin{equation}\label{eq:clama}
X^j(s)-Y^j_{k,t}(s) 
\le  \varphi(t)^2\,,\qquad j=1,\dots,i\,,\quad s\in[-S,S].
\end{equation}

We are going to use an enhanced version of the coupling argument in the proof of Lemma \ref{lem:coupling}.
We start by ensuring that  all lines below the $k$-th line $X^k$ are below a certain height throughout the whole interval. From Markov inequality and Corollary \ref{cor:logtight}, for any $u>0$ and $t$ large enough one has
\begin{equation}\label{eq:ft1}
\bbP(F_t)\ge 1- \frac{C k\log(t)}{u}\,,\qquad 
F_t := \left\{\max_{s\in[-t/2,t/2]}X^{k+1}(s) \le u\l^{-k/3}
\right\}\,.
\end{equation}
Using \eqref{eq:tandk} 
and setting $u:=\l^{k/6}$, one has 
\begin{equation}\label{eq:ft2}
\bbP(F_t)\ge 1- C\l^{-k/7},
\end{equation}
for all $t$ large enough. In other words, the $(k+1)$-th line exceeds the  height $\l^{-k/6}$ in $[-t/2,t/2]$ with probability $O(\l^{-k/7})$. 

Next, consider the discrete time steps $s_j=-t/2+2j-1$, $j=1,\dots,j_{\max}$,
where $j_{\max}=\lfloor t/2\rfloor$. Similarly, let $u_j = t/2 -2j+1$, $j=1,\dots,j_{\max}$. 
Set $v:=k^b$, with $b>0$ to be fixed later, and consider the indexes 
\begin{align}\label{eq:coupp2} 
&\ell_* = \inf\{j\in\{1,\dots,j_{\max}\}: Y_{k,t}^i(s_j)\ge v\l^{-(i-1)/3}\,,\;\forall i=1,\dots,k\}\,,\\
&r_* = \inf\{j\in\{1,\dots,j_{\max}\}: \; Y_{k,t}^i(u_j)\ge v\l^{-(i-1)/3}\,,\;\forall i=1,\dots,k\}\,.
\end{align}
Accordingly, we define the random times $\t_\ell=s_{\ell_*}$ and $\t_r=u_{r_*}$, and  
consider the events 
\begin{align}\label{eq:coupp3} 
&B_t=\{\t_\ell<-t/4\,,\;\t_r>t/4 \}\,,\qquad G_t=\cap_{i=1}^kG^{\,i}_t\,,\\
&G^{\,i}_t = \{X^i(\t_\ell) \le v\l^{-(i-1)/3}\}\cap \{X^i(\t_r) \le v\l^{-(i-1)/3}\}. 
\end{align}

We start with independent samples of the infinite line ensemble $\uX$ and the $k$-line ensemble $\uY_{k,t}$ with zero boundary conditions on $[-t/2,t/2]$, and consider the event 
\[
E_t=F_t\cap G_t\cap B_t.
\] 
If $E_t$ occurs, then by definition of the events $F_t, G_t$ and $B_t$,  at the stopping domain $[\t_\ell(u),\t_r(u)]$ the boundary values of $\uY_{k,t}$ are higher than the boundary values of the top $k$ lines of $\uX$ and the $(k+1)$-th line of $\uX$ does not exceed height $\l^{-k/6}$. Thus, on this event we may resample $\{(\uX^{i}(s),\uY^{i}_{k,t}(s)),\; s\in[\t_\ell(u),\t_r(u)]\}$
according to the monotone coupling which by construction guarantees that 
\begin{align}\label{eq:coupp31} 
X^i(s)\le Y^i_{k,t}(s) + \l^{-k/6}, \qquad s\in[-t/4,t/4],\;\;\;i=1,\dots, k,
\end{align}
with probability one. This follows from the fact that the resampling of the top $k$ lines of $\uX$, on the event $F_t$, using stochastic domination, can be performed with an effective floor at height $\l^{-k/6}$, and therefore one  can compare with the lines $Y^i_{k,t}$, $i=1,\dots,k$ with floor at zero, with an overall shift by $ \l^{-k/6}$.  
If instead $E_t$ does not occur, then we keep the independent samples of $(\uX,\uY_{k,t})$ everywhere. The previous observations and the strong BG property  guarantee that this is a valid coupling.  Note that above we crucially used the fact that 
$[\t_\ell(u),\t_r(u)]$ is a stopping domain.

From \eqref{eq:coupp31} it follows that, if $  \l^{-k/6} \le \varphi(t)^2$, then on the event $E_t$ we have \eqref{eq:clama}.
It remains to show that  $\bbP(E_t^c)\le \varphi(t)^2$ holds. One has 
\begin{align}\label{eq:coupp4} 
\bbP(E^c_t) & \le  \bbP(F_t^c)+\bbP(G_t^c) + \bbP( B_t^c) .
\end{align} 
Now, \eqref{eq:ft2} shows that $ \bbP(F_t^c)\le C\l^{-k/7}$, while using the independence of $(\t_\ell,\t_r)$ and the line ensemble $\uX$, Corollary \ref{cor:stretched} and a union bound imply 
 \begin{align}\label{eq:couppe4} 
\bbP(G_t^c) \le2 k \exp(-ck^{3b/2}),
\end{align}
for some constant $c>0$. 
We turn to an upper bound on $\bbP(B_t^c)$.

Note that the points $s_j$ are the midpoints of the intervals $I_j$ of size $2$, where 
\[
I_j = [-t/2+2(j-1), -t/2 + 2j]\,,\quad j=1,\dots,j_{\max}.
\] 
By monotonicity, see Lemma \ref{lem:mono} and Remark \ref{rem:nonct}, we can replace $\uY_{k,T}$ by the ensemble obtained by pinning all $k$ paths at zero height at the endpoints of the intervals $I_j$.
Call $Z^i$, $i=1,\dots,k$, the lines of this pinned process.
We define the index $\ell_*'$ as the smallest $j$ such that $Z^i(s_j)\ge v\l^{-(i-1)/3},\; \forall i=1,\dots,k$. 
Since the intervals are independent, one has 
\begin{align}\label{eq:coup4} 
\bbP(\t_\ell\ge -t/4)\le \bbP(\ell_*'>\lfloor t/8\rfloor)\le (1-p_k(v))^{\lfloor t/8\rfloor}\le e^{-p_k(v)\lfloor t/8\rfloor},
\end{align}
where $p_k(v)$ is defined as the probability that the $\l$-tilted  $k$-line ensemble $\uY_{k,2}$  with zero boundary conditionson the interval $[-1,1]$ satisfies 
\begin{align}\label{eq:pkvu0} 
Y^i_{k,2}(0)\ge v\l^{-(i-1)/3},\qquad i=1,\dots,k.
\end{align} 
By symmetry, the same bound applies to 
$\bbP(\t_r\le t/4)$. It follows that
\begin{align}\label{eq:coupas4} 
\bbP(B_t^c)\le \bbP(\t_\ell\ge -t/4) + \bbP(\t_r\le t/4)
\le  2e^{-p_k(v)\lfloor t/8\rfloor}.
\end{align}
In Lemma \ref{lem:pkvu} below we show that for all $k\in\bbN, v\ge 1$,
$p_k(v)\ge  e^{ - Ckv^2 }$,
for some constant $C>0$, see also the remark following that lemma for a discussion on the true asymptotics. Taking $v=k^b$, 
we obtain
\begin{align}\label{eq:trap4c} 
p_k(v)\ge  e^{ - Ck^{1+2b} } .
\end{align}
Summarizing, since $t\ge e^{Ak^a}$ as in \eqref{eq:tandk}, if $a=1+2b$ and $A$ is large enough, it follows that 
 \begin{align}\label{eq:coupass4} 
\bbP(B_t^c)\le   2\exp{(-c\,e^{k^a})},
\end{align}
for some constant $c>0$. 

Therefore, from \eqref{eq:ft2}, \eqref{eq:couppe4} and \eqref{eq:coupass4} we have 
 \begin{align}\label{eq:coupas5} 
\bbP(E_t^c)\le   \l^{-k/7} + k \exp(-ck^{3b/2}) + 2\exp{(-c\,e^{k^a})}\,.
\end{align}
By choosing $b=2/3$, $a = 1+2b=7/3$, we obtain \eqref{eq:corre4} with $\varphi(t)=e^{-ck}$ for some new constant $c>0$.  By adjusting the value of $c$ we can also ensure that $  \l^{-k/6} \le \varphi(t)^2$ as required to have $\D_t\le  \varphi(t)^2$ on the event $E_t$.  By our definition of $k$ in \eqref{eq:tandk}, this proves the desired correlation decay with $\varphi(t) =  \exp\left[-c(\log t)^\d\right]$, and $\d=1/a=3/7$. 
\end{proof}

\begin{lemma}\label{lem:pkvu} 
There exists a constant $C>0$, such that for any $k\in\bbN$, $v\ge 1$, the probability $p_k(v)$ defined in \eqref{eq:pkvu0}  satisfies
\begin{align}\label{eq:pkvu2} 
p_k(v)\ge  e^{ - Ckv^2 }.
\end{align}
\end{lemma}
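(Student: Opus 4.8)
The plan is to give a lower bound on the probability that a $k$-line $\lambda$-tilted ensemble with zero boundary conditions on $[-1,1]$, denoted $\uY_{k,2}$, has all its lines simultaneously above the thresholds $v\lambda^{-(i-1)/3}$ at time $0$. The strategy is to restrict, via monotonicity (Lemma \ref{lem:mono} and Remark \ref{rem:nonct}), to a deterministic tube event whose probability is easy to bound from below by a Gaussian computation, and to track how the area-tilt cost enters.

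First I would use monotonicity to lower the problem to a single favourable configuration. Concretely, consider the event $E$ that the $i$-th line $Y^i_{k,2}$ stays inside a thin corridor of width of order $1$ around the height $h_i := (v+2(k-i))\lambda^{-(i-1)/3}$ throughout a subinterval of $[-1,1]$ of length $1$, say $[-1/2,1/2]$, while being pinned appropriately near the endpoints; the vertical gaps $h_i - h_{i+1}$ are bounded below by a constant times $\lambda^{-(i-1)/3}$, so the non-intersection constraint is respected and the ordering $Y^1 > \cdots > Y^k > 0$ holds on $E$. On $E$ one has $Y^i_{k,2}(0)\ge v\lambda^{-(i-1)/3}$ for every $i$, so $p_k(v)\ge \bbP(E)$ where the probability is taken under $\uY_{k,2}$. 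Then, writing out the partition-function ratio as in \eqref{eq:PF-BC-T01}--\eqref{eq:PF-BC-T02}, one has
\begin{equation}\label{eq:pkvu-ratio}
\bbP(E) = \frac{{\mathbf B}^{\underline 0,\underline 0}_{k;-1,1}\left[\ind_E\,\ind_{\Omega^+_{k;-1,1}}\,e^{-\sum_{i=1}^k\lambda^{i-1}\int_{-1}^1 Y^i(s)\dd s}\right]}{Z^{\underline 0,\underline 0}_{k;-1,1}(1,\lambda)}.
\end{equation}
On the event $E$ the area of the $i$-th line is at most $2h_i = O(v\lambda^{-(i-1)/3})$, so the tilt factor $\lambda^{i-1}\int Y^i$ is at most $O(v\lambda^{(2(i-1))/3})$, and summing over $i=1,\dots,k$ the total tilt cost on $E$ is at most $O(v\lambda^{2k/3})=O(v\,k\,\lambda^{2k/3})$; a bound of the shape $Ckv^2$ is far more generous than needed, so in fact there is room. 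Meanwhile the denominator $Z^{\underline 0,\underline 0}_{k;-1,1}(1,\lambda)\le {\mathbf B}^{\underline 0,\underline 0}_{k;-1,1}[1] = q_2(\underline 0,\underline 0) = (4\pi)^{-k/2}$ is bounded above, giving $Z\le 1$. Hence $\bbP(E)\ge c\,{\mathbf B}^{\underline 0,\underline 0}_{k;-1,1}[\ind_E\ind_{\Omega^+}]\,e^{-Ckv^2}$ for suitable constants, and it remains to lower bound the unnormalized Brownian-bridge probability of the corridor-plus-non-intersection event $E\cap\Omega^+$ by $e^{-C'kv^2}$.

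The key step is therefore the Gaussian tube estimate: the probability that $k$ independent standard Brownian bridges on $[-1,1]$ from $0$ to $0$ simultaneously (a) climb from height $0$ near the endpoints to height $h_i$ within a short initial and final segment, (b) stay in a corridor $[h_i - c_i, h_i + c_i]$ with $c_i\asymp\lambda^{-(i-1)/3}$ on $[-1/2,1/2]$, and (c) remain ordered and positive. For a single bridge, forcing it to reach height $h$ and stay within a width-$c$ tube around height $h$ has probability at least of order $e^{-C h^2/c^2}\cdot (\text{tube factor})$ by standard small-ball / reflection estimates; with $h_i/c_i \asymp v + 2(k-i) \le v + 2k$ this contributes $e^{-C(v+2k)^2}$ per line, hence $e^{-Ck(v+2k)^2}$ over all $k$ lines, which is $\le e^{-C'kv^2}$ only after absorbing the $k^3$ term — so one should instead choose the corridor widths and gaps more carefully (e.g. gaps of constant multiple of $\lambda^{-(i-1)/3}$ but corridor width comparable to the gap, and note $h_i \le v\lambda^{-(i-1)/3} + 2k\lambda^{-(i-1)/3}$, so $h_i^2/c_i^2 \lesssim (v+k)^2$, still giving a $k^3$ from summing). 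The cleanest fix, and the main obstacle to get right, is to realise $E$ with the $i$-th line forced up to height only $v\lambda^{-(i-1)/3}$ plus a constant-order cushion and to exploit that the bridges are scaled by $\lambda^{-(i-1)/3}$: rescaling line $i$ vertically by $\lambda^{(i-1)/3}$ turns it into a standard bridge that must reach height $\asymp v$ in a corridor of width $\asymp 1$, with probability $\ge e^{-Cv^2}$, independently across $i$ up to the ordering constraint which costs only a further constant factor per consecutive pair (since consecutive rescaled lines live at heights differing by a constant). Multiplying $k$ such factors gives exactly $e^{-Ckv^2}$, and combined with the tilt bound and $Z\le 1$ this yields $p_k(v)\ge e^{-Ckv^2}$ as claimed.

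I expect the delicate point to be bookkeeping the interaction between the vertical rescaling of the $i$-th line (by $\lambda^{(i-1)/3}$), the area-tilt coefficient $\lambda^{i-1}$, and the non-intersection constraint, so that all three contributions are simultaneously controlled and the final exponent is linear in $k$ (not $k^2$ or $k^3$) and quadratic in $v$; the Brownian small-ball/tube estimates themselves are standard and I would cite them rather than reprove them. As noted in the paper, the true asymptotics are presumably better (the exponent in $v$ should be $3/2$ rather than $2$, matching the Ferrari-Spohn tail), but the crude bound $e^{-Ckv^2}$ suffices for the application in Lemma \ref{reversecoupling}.
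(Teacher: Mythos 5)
Your approach has a genuine gap that comes from confining every line to a corridor over the \emph{fixed} time interval $[-1/2,1/2]$: both the area-tilt cost and the Brownian tube cost then blow up exponentially in $k$, not linearly, and your ``vertical rescaling'' fix does not repair either.

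Concretely: if the $i$-th line is forced to height $\sim v\lambda^{-(i-1)/3}$ over a time window of fixed length $1$, its contribution to the tilt is
$\lambda^{i-1}\int_{-1}^1 Y^i\,\dd s \gtrsim \lambda^{i-1}\cdot v\lambda^{-(i-1)/3} = v\lambda^{2(i-1)/3}$,
and summing gives $\sim v\lambda^{2k/3}$; you compute this yourself but then assert that ``a bound of the shape $Ckv^2$ is far more generous than needed,'' which has the inequality backwards since $\lambda^{2k/3}\gg kv^2$ for large $k$. Independently, keeping a bridge inside a tube of width $c_i\asymp\lambda^{-(i-1)/3}$ for a time interval of \emph{fixed} length $1$ costs $\sim e^{-C/c_i^2}=e^{-C\lambda^{2(i-1)/3}}$, and summing this over $i$ again produces $e^{-C\lambda^{2k/3}}$. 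The vertical rescaling you invoke does not turn the bridge into a \emph{standard} bridge: multiplying a bridge on $[-1,1]$ by $\lambda^{(i-1)/3}$ produces a bridge of variance $\lambda^{2(i-1)/3}$, so the tube cost is unchanged; to normalise one must simultaneously contract the time axis by $\lambda^{-2(i-1)/3}$, i.e.\ the $i$-th line must be examined only on a horizontal window of length $\sim\lambda^{-2(i-1)/3}$. That geometric shrinking of the \emph{horizontal} scale is the essential missing ingredient, and it is exactly what the paper's nested trapezoids $\Pi_i$ (base width $b_i=\lambda^{-2(i-1)/3}$) together with pinning the $i$-th path to zero outside $[-a_{i-1},a_{i-1}]$ provide. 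After that pinning, the area of line $i$ is $\sim v\lambda^{-(i-1)/3}\cdot\lambda^{-2(i-1)/3}=v\lambda^{-(i-1)}$, so the tilt cost per line is $\lambda^{i-1}\cdot v\lambda^{-(i-1)}\sim v$, and both the horizontal and vertical scale become $i$-independent after the Brownian rescaling of Lemma~\ref{lem:scaling}, giving $\gamma_i(v)\ge e^{-Cv^2}$ with a constant uniform in $i$ and hence $p_k(v)\ge e^{-Ckv^2}$. Without the shrinking horizontal windows, the bound cannot be linear in $k$.
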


Note that the above bound is expected to be suboptimal when $k$ is much larger than  $\log v,$ where the one point estimate in Corollary \ref{cor:stretched} is expected to yield a bound $e^{ - Ckv^{3/2+o(1)}}.$ On the other hand when $k$ is much smaller, say $k=1$, this is the right behavior as on a fixed domain size, the one point tail of the top line is indeed Gaussian and not given by Theorem \ref{th:stretched} .

\begin{proof}
We use a geometric construction involving the nested trapezoidal shapes defined as follows. Given parameters $0<a_i<b_i<1$ and $h_i>0$, consider the $i$-th trapezoid $\Pi_i$ defined by the piecewise linear function 
\begin{align}\label{eq:trapezoid}
 \Pi_i(t)=\begin{cases}
0 & t\in[-1,-b_i]\cup[b_i,1]
\\
\tfrac{h_i(b_i+t)}{b_i-a_i} & t\in[-b_i,-a_i]\\
h_i & t\in[-a_i,a_i]\\
\tfrac{h_i(b_i-t)}{b_i-a_i} & t\in[a_i,b_i]
\end{cases}
\end{align}
We choose
\[
b_i:=\l^{-2(i-1)/3}, \quad a_i:=\frac12(\l^{2/3}+1)\l^{-2i/3}, \quad h_i:=v\l^{-(i-1)/3}.
\]
 Note that $a_i$ is the midpoint in the interval $[b_{i+1},b_i]$, and one has $\Pi_{i+1}\prec \Pi_i$; see Figure \ref{fig:trap10}.

\begin{figure}[h]
\center
\begin{tikzpicture}[scale=0.8]
\draw [black] (0,0) -- (6,0) -- (6,4) -- (0,4) -- cycle;
\draw (1/2,0) -- (6-1/2,0) -- (5,3) -- (1,3) -- cycle;
\draw (3/2,0) -- (6-3/2,0) -- (4,2) -- (2,2) -- cycle;

\draw [dashed,black]  (1,0) -- (1,3) ;
\draw [dashed,black]  (5,0) -- (5,3) ;
\draw [dashed,black]  (0,3) -- (1,3) ;
\draw [dashed,black]  (0,2) -- (2,2);

  \node[shape=circle, draw=black, fill = black, scale = .2]  at (0,3) {}; 
  \node[scale = .7]  at (-.3,3) {$h_i$};
   \node[shape=circle, draw=black, fill = black, scale = .2]  at (0,2) {}; 
  \node[scale = .7]  at (-.5,2) {$h_{i+1}$};

 \node[shape=circle, draw=black, fill = black, scale = .2]  at (0,4) {}; 
  \node[scale = .7]  at (-.3,4) {$2v$};
  \node[scale = .8]  at (-.3,-.2) {$-1$};
   \node[scale = .8]  at (6.1,-.2) {$1$};

  \node[shape=circle, draw=black, fill = black, scale = .22]  at (1/2,0) {}; 
  \node[scale = .7]  at (.27,-1/4) {$-b_i$};
  \node[shape=circle, draw=black, fill = black, scale = .22]  at (1/2,0) {}; 
  \node[scale = .7]  at (1.65,-1/4) {$-b_{i+1}$};
\node[shape=circle, draw=black, fill = black, scale = .22]  at (1,0) {}; 
\node[scale = .7]  at (.85,-1/4) {$-a_i$};
\node[shape=circle, draw=black, fill = black, scale = .22]  at (3/2,0) {}; 
\node[shape=circle, draw=black, fill = black, scale = .22]  at (6-3/2,0) {}; 
\node[shape=circle, draw=black, fill = black, scale = .22]  at (5,0) {}; 
\node[shape=circle, draw=black, fill = black, scale = .22]  at (5.5,0) {}; 
\node[scale = .7]  at (4.5,-1/4) {$b_{i+1}$};
\node[scale = .7]  at (5,-1/4) {$a_{i}$};
\node[scale = .7]  at (5.5,-1/4) {$b_{i}$};
\node[shape=circle, draw=black, fill = black, scale = .22]  at (3,0) {}; 
\node[scale = .7]  at (3,-1/4) {$0$};

\end{tikzpicture}
\caption{The trapezoids $\Pi_i$ and $\Pi_{i+1}$.}
\label{fig:trap10}
\end{figure}
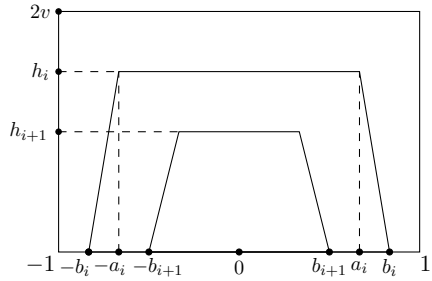

Consider the $\l$-tilted  $k$-line ensemble $\uY_{k,2}$ on the interval $[-1,1]$ with zero boundary conditions, and let  $A$ denote the event that for each $i=1,\dots,k$ one has the $i$-th line above the $i$-th trapezoid:
\begin{align}\label{eq:aevpkva} 
 A = \{  
Y^i_{k,2}\succ\Pi_i,\;\; i=1,\dots,k\}.
\end{align} 
By construction, one has $p_k(v)\ge \bbP(A)$. By monotonicity we may replace the ensemble $\uY_{k,2}$ by the ensemble $\tilde \uY_{k}$ obtained by pinning the $i$-th path $Y^i_{k,2}$ at zero outside of the interval $[-a_{i-1},a_{i-1}]$, for each $i=1,\dots,k$, where we set $a_0=1$. Thus, we have 
\begin{align}\label{eq:aevpkv1} 
 p_k(v)\ge \bbP\left(  
\tilde Y^i_{k}\succ\Pi_i,\;\; i=1,\dots,k\right),
\end{align} 
see Figure \ref{fig:traps2}.

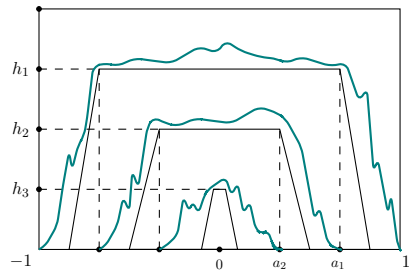
\begin{figure}[h]
\center
\begin{tikzpicture}[scale=0.8]
\draw [black] (0,0) -- (6,0) -- (6,4) -- (0,4) -- cycle;
\draw (1/2,0) -- (6-1/2,0) -- (5,3) -- (1,3) -- cycle;
\draw (3/2,0) -- (6-3/2,0) -- (4,2) -- (2,2) -- cycle;
\draw (2.7,0) -- (6-2.7,0) --  (3.1,1) -- (2.9,1) -- cycle;

\draw [dashed,black]  (1,0) -- (1,3) ;
\draw [dashed,black]  (0,3) -- (1,3) ;
\draw [dashed,black]  (4,0) -- (4,2) ;
\draw [dashed,black]  (5,0) -- (5,3) ;
\draw [dashed,black]  (2,0) -- (2,2) ;
\draw [dashed,black]  (0,1) -- (3,1) ;
\draw [dashed,black]  (0,2) -- (2,2) ;

  \node[shape=circle, draw=black, fill = black, scale = .17]  at (0,3) {}; 
   \node[shape=circle, draw=black, fill = black, scale = .17]  at (4,0) {}; 
     \node[shape=circle, draw=black, fill = black, scale = .17]  at (5,0) {}; 
  \node[shape=circle, draw=black, fill = black, scale = .17]  at (3,0) {}; 
\node[shape=circle, draw=black, fill = black, scale = .17]  at (0,1) {}; 
\node[shape=circle, draw=black, fill = black, scale = .17]  at (0,2) {};

  \node[scale = .6]  at (-.3,3) {$h_1$};
  \node[scale = .6]  at (-.3,1) {$h_3$};
  \node[scale = .6]  at (-.3,2) {$h_2$};
 \node[shape=circle, draw=black, fill = black, scale = .17]  at (0,4) {}; 
  \node[scale = .6]  at (-.3,-.2) {$-1$};
   \node[scale = .6]  at (6.1,-.2) {$1$};

  \node[shape=circle, draw=black, fill = black, scale = .17]  at (2,0) {}; 
 \node[shape=circle, draw=black, fill = black, scale = .17]  at (1,0) {}; 
\node[scale = .5]  at (4,-1/4) {$a_2$};
\node[scale = .5]  at (4.97,-1/4) {$a_1$};
\node[scale = .5]  at (3,-1/4) {$0$};

\draw[thick,teal,rounded corners=0.9mm] (0,0)--(0.1,0.05)--(0.3,0.4)--(0.4,1.07)--(0.5,1.63)--(0.55,1.33)--(0.65,1.73)--(0.75,1.9)--(0.8,2.19)--(.9,3.04)--(1.1,3.1)--(1.3,3.05)--(1.6,3.18)--(1.8,3.15)--(2.2,3.1)--(2.66,3.37)--(2.96,3.27)--(3.1,3.47)--(3.36,3.3)--(3.68,3.2)--(3.96,3.17)--(4.1,3.1)--(4.36,3.2)--(4.84,3.05)--(5.1,3.1)--(5.32,2.47)--(5.45,2.67)--(5.56,1.3)--(5.63,1.5)--(5.8,0.6)--(5.85,0.9)--(5.96,0.07)--(6,0);

\draw[thick,teal,rounded corners=0.9mm] (1,0)--(1.1,0.05)--(1.3,0.4)--(1.4,0.87)--(1.5,0.63)--(1.55,1.3)--(1.65,1.03)--(1.75,1.7)--(1.8,2.03)--(1.9,2.2)--(2.1,2.1)--(2.3,2.05)--(2.6,2.18)--(2.8,2.15)--(3.2,2.1)--(3.66,2.37)--(3.96,2.27)--(4.1,2.17)--(4.36,1.7)--(4.58,0.5)--(4.66,0.7)--(4.8,0.1)--(4.92,0.02)--(5,0);

\draw[thick,teal,rounded corners=0.8mm] (2,0)--(2.1,0.05)--(2.3,0.24)--(2.4,0.87)--(2.5,0.63)--(2.55,0.8)--(2.65,0.6)--(2.75,0.9)--(2.8,0.99)--(3.1,1.2)--(3.2,0.9)--(3.3,1.1)--(3.4,.6)--(3.5,.79)--(3.6,.34)--(3.7,0.25)--(3.8,.32)--(3.88,0.1)--(3.9,0.05)--(3.95,0.005)--(4,0);

\end{tikzpicture}
\caption{A sketch of the event in \eqref{eq:aevpkv1}  in the case $k=3$.}
\label{fig:traps2}
\end{figure}
Let $E_i$ denote the event that $\tilde Y^i_{k}\succ\Pi_i$. Conditioned on $E_i$, to compute the probability of $E_{i+1}$ one may, by monotonicity, impose a ceiling at height $h_i$ and remove all paths below the $(i+1)$-th path. This shows that 
\begin{align}\label{eq:aevpkv2} 
 p_k(v)\ge \prod_{i=1}^k \g_i(v) \,,
 \end{align} 
where $\g_i(v)$ is defined as the probability that the random path $Z_i$ on the interval $[-a_{i-1},a_{i-1}]$, with zero boundary condition, with a floor at zero, a ceiling at height $h_{i-1}$, with area tilt $\l^{i-1}$, satisfies $Z_i\succ\Pi_i$. Here we may set $h_0=\l^{1/3}v$ for the ceiling acting on the top path. It remains to show that 
there exists an absolute constant $C>0$ such that for all $v\ge 1$, for all $i\in\bbN$, 
 \begin{align}\label{eq:aevpkv3} 
 \g_i(v) \ge e^{-Cv^2}\,.
 \end{align} 
To prove \eqref{eq:aevpkv3} we observe that by the scaling from Lemma \ref{lem:scaling} applied to a single path, one has that $ \g_i(v)$ is the probability that the random path $\xi$  on the interval $[-a,a]$, with zero boundary condition, with a floor at zero, a ceiling at height $\l^{1/3}v$, with area tilt $1$, satisfies $\xi\succ\Pi$, where $\tilde \Pi$ is the trapezoid defined by \eqref{eq:trapezoid} with $a_i,b_i,h_i$ replaced by $\tilde a,\tilde b,\tilde h$ respectively, and where 
\begin{gather*}
 \tilde b =\l^{2(i-1)/3}b_i=1, \quad \tilde h =\l^{(i-1)/3}h_i=v,\\
a=\l^{2(i-1)/3}a_{i-1} = \tfrac12(\l^{2/3}+1), \quad \tilde a = \l^{2(i-1)/3}a_{i}= \tfrac12(1+\l^{-2/3}). 
\end{gather*}
Therefore,
  \begin{align}\label{eq:aevpkv4} 
 \g_i(v) \ge  \bbP(  
\xi\succ\tilde \Pi)\,,
 \end{align} 
 see Figure \ref{fig:trap1}.

\begin{figure}[h]
\center
\begin{tikzpicture}[scale=0.8]
\draw [black] (0,0) -- (6,0) -- (6,4) -- (0,4) -- cycle;
\draw (1/2,0) -- (6-1/2,0) -- (5,3) -- (1,3) -- cycle;
\draw [dashed,black]  (1,0) -- (1,3) ;
\draw [dashed,black]  (0,3) -- (1,3) ;

  \node[shape=circle, draw=black, fill = black, scale = .17]  at (0,3) {}; 
  \node[scale = .6]  at (-.3,3) {$v$};
 \node[shape=circle, draw=black, fill = black, scale = .17]  at (0,4) {}; 
  \node[scale = .6]  at (-.6,4) {$\l^{1/3}v$};
  \node[scale = .6]  at (-.2,-.2) {$-a$};
   \node[scale = .6]  at (6.1,-.2) {$a$};

  \node[shape=circle, draw=black, fill = black, scale = .17]  at (1/2,0) {}; 
  \node[scale = .6]  at (.5,-1/4) {$-1$};
  \node[shape=circle, draw=black, fill = black, scale = .17]  at (6-1/2,0) {}; 
  \node[scale = .6]  at (6-.5,-1/4) {$1$};
\node[shape=circle, draw=black, fill = black, scale = .17]  at (1,0) {}; 
\node[scale = .6]  at (1.05,-1/4) {$-\tilde a$};

\draw[thick,teal,rounded corners=0.9mm] (0,0)--(0.1,0.05)--(0.3,0.4)--(0.4,1.07)--(0.5,1.63)--(0.55,1.33)--(0.65,1.73)--(0.75,1.9)--(0.8,2.19)--(.9,3.04)--(1.1,3.1)--(1.3,3.05)--(1.6,3.18)--(1.8,3.15)--(2.2,3.1)--(2.66,3.37)--(2.96,3.27)--(3.1,3.47)--(3.36,3.3)--(3.68,3.2)--(3.96,3.17)--(4.1,3.1)--(4.36,3.2)--(4.84,3.05)--(5.1,3.1)--(5.32,2.47)--(5.45,2.67)--(5.56,1.3)--(5.63,1.5)--(5.8,0.6)--(5.85,0.9)--(5.96,0.07)--(6,0);
\end{tikzpicture}
\caption{The rescaled trapezoid $\tilde \Pi$ and a sketch of the event in \eqref{eq:aevpkv4}.}
\label{fig:trap1}
\end{figure}
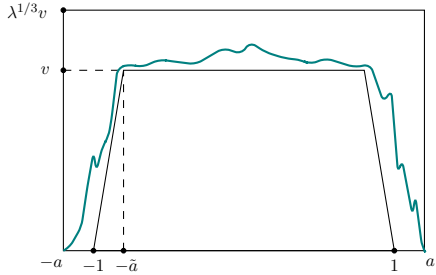
To estimate the probability $  \bbP(  
\xi\succ\tilde \Pi)$ we may write 
\begin{align}\label{eq:aevpkv55} 
 \bbP(  
\xi\succ\tilde \Pi) &= \frac{\bbE(  e^{-\int_{-a}^a\xi_0(s)\dd s};\;
\tilde \Pi\prec \xi_0\prec \l^{1/3}v)}{\bbE(  e^{-\int_{-a}^a\xi_0(s)\dd s};\;
\xi_0\prec \l^{1/3}v)}\\
& \ge e^{-2a\l^{1/3}v} \bbP(  
\tilde \Pi\prec \xi_0\prec \l^{1/3}v)
\,,
 \end{align} 
 where now $\xi_0$ is the simple Brownian excursion on $[-a,a]$, without area tilt and without ceiling.  Standard computations for  Brownian excursions now imply that 
 \begin{align}\label{eq:aevpkv6} 
\bbP(  
\tilde \Pi\prec \xi_0\prec \l^{1/3}v)\ge e^{-Cv^2}, 
\end{align} 
for all $v\ge1$, and for some constant $C=C(\l)>0$ for all $\l>1$. By adjusting the value of the constant $C$ we arrive at the desired estimate \eqref{eq:aevpkv3}. 

\end{proof}

We conclude this section with the proof of Theorem \ref{lowertail} relying on the construction in \eqref{eq:trapezoid}. 

\subsection{Proof of Theorem \ref{lowertail}} We will employ monotonicity and the strategy outlined in Figure \ref{fig:traps2}. 
Note first that as a simple consequence of ordering, 
\begin{equation}\label{push}
\bbP(X^{1}(0)\le \e)=\bbP(X^{i}(0)\le \e, \,\, i\le K)
\end{equation}
 for any $K.$
Now suppose there was no interaction between the paths, i.e., if the $i$th path was an independent FS diffusion with tilting factor $\lambda^{i-1}$. Then from the above display, the proof would follow  if we can prove that for each such FS diffusion $Y_{FS}^i$,
$$\bbP(Y_{FS}^i(0)\le \e)\le \e^2, \text{\,say}.$$ 
Now note that the above bound cannot possibly be true for all $i$ since for any $i$, the typical
value of $Y_{FS}^i(0)$ is of order $\lambda^{-(i-1)/3}.$
However our proof will indeed proceed by showing that an approximate independence holds true and that the above estimate also continues to hold, as long as $i\le K=c\log (1/\e)$ for some appropriate $c.$

We start with a generalized version of \eqref{FSlowertail}.
\begin{lemma}\label{lem:ecube}
There exists a universal constant $C>0,$ such that for any $i\in\bbN$,
$$\bbP\(Y_{FS}^i(0)\le \e\)\le C\e^3\l^{i-1}.$$
In particular, $\bbP\(Y_{FS}^i(0)\le \e\)\le C\e^2$, for all $i$ such that $\l^{i-1}\le 1/\e.$
\end{lemma}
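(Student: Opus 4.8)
\textbf{Proof plan for Lemma \ref{lem:ecube}.}
The plan is to prove the bound $\bbP(Y_{FS}^i(0)\le\e)\le C\e^3\l^{i-1}$ by a scaling argument reducing to the case $i=1$, and then to establish the case $i=1$ directly from the explicit density of $Y_{FS}(0)$ given by \eqref{eq:airyfs}. First I would recall the Brownian scaling relation from Lemma \ref{lem:scaling}: the stationary Ferrari--Spohn diffusion $Y_{FS}^i$ with area tilt coefficient $\l^{i-1}$ is obtained from the standard Ferrari--Spohn diffusion $Y_{FS}=Y_{FS}^1$ (area tilt $1$) by the deterministic rescaling $Y_{FS}^i(\cdot)\eqd \l^{-(i-1)/3}Y_{FS}(\l^{2(i-1)/3}\cdot)$. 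In particular, at the single time $0$ this gives the identity in distribution $Y_{FS}^i(0)\eqd \l^{-(i-1)/3}Y_{FS}(0)$, so that
\begin{equation}\label{eq:ecubescale}
\bbP\bigl(Y_{FS}^i(0)\le\e\bigr)=\bbP\bigl(Y_{FS}(0)\le \l^{(i-1)/3}\e\bigr).
\end{equation}

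Next I would prove that $\bbP(Y_{FS}(0)\le \delta)\le C\delta^3$ for all $\delta>0$, with $C$ a universal constant. By \eqref{eq:airyfs} the random variable $Y_{FS}(0)$ has density proportional to ${\rm Ai}(\sqrt[3]{2}\,x-\o_1)^2\ind_{x>0}$, where $-\o_1$ is the largest zero of the Airy function. Since ${\rm Ai}$ is analytic and vanishes at $-\o_1$ with a simple zero, ${\rm Ai}(\sqrt[3]{2}\,x-\o_1)^2 = O(x^2)$ as $x\downarrow 0$; hence the (normalized) density is bounded by $C' x^2$ on a neighbourhood $[0,\delta_0]$ of the origin and is bounded by a constant everywhere on $[0,\infty)$ thanks to the fast (super-exponential) decay of ${\rm Ai}$ at $+\infty$. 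Integrating, $\bbP(Y_{FS}(0)\le\delta)\le \int_0^{\delta} C' x^2\,dx = \tfrac{C'}{3}\delta^3$ for $\delta\le\delta_0$, while for $\delta\ge\delta_0$ the bound $\bbP(Y_{FS}(0)\le\delta)\le 1\le \delta^3/\delta_0^3$ holds trivially; combining gives a universal $C$ with $\bbP(Y_{FS}(0)\le\delta)\le C\delta^3$ for all $\delta>0$. Plugging $\delta=\l^{(i-1)/3}\e$ into \eqref{eq:ecubescale} yields $\bbP(Y_{FS}^i(0)\le\e)\le C\l^{i-1}\e^3$, which is the claimed inequality. The final assertion is immediate: if $\l^{i-1}\le 1/\e$ then $C\l^{i-1}\e^3\le C\e^2$.

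This lemma is essentially a soft statement and I do not anticipate a serious obstacle; the only point requiring a little care is justifying the local behaviour of the Ferrari--Spohn density near zero uniformly, i.e.\ extracting the quantitative constant $C'$ in the bound ${\rm Ai}(\sqrt[3]{2}\,x-\o_1)^2\le C' x^2$ on $[0,\delta_0]$ together with the global boundedness of the normalized density (which also requires the normalizing constant $\int_0^\infty {\rm Ai}(\sqrt[3]{2}\,x-\o_1)^2\,dx$ to be finite and positive, a standard fact). Alternatively, and perhaps more in the spirit of the rest of the paper, one can avoid the Airy asymptotics entirely and obtain the $i=1$ bound by the same Brownian-excursion comparison used for \eqref{FSlowertail}: by monotonicity $Y_{FS}(0)$ stochastically dominates the value at the midpoint of a Brownian excursion on a unit interval subject to the area tilt, and the probability that such an excursion is below height $\delta$ at its midpoint is $O(\delta^3)$ by an elementary Gaussian computation (two independent near-bridges each contributing a factor $\delta$ from being $O(\delta)$ at an endpoint, times a factor $\delta$ from the positivity constraint / local time normalization). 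Either route delivers the universal cubic bound, and the scaling identity then propagates it to all $i$.
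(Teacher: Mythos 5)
Your proposal is correct, and it is interesting to compare with the paper's route. Your primary argument reads off the cubic bound directly from the explicit one-point marginal \eqref{eq:airyfs}: the scaling $Y_{FS}^i(0)\eqd \l^{-(i-1)/3}Y_{FS}(0)$ reduces to $i=1$, and the simple zero of ${\rm Ai}$ at $-\o_1$ forces the (normalized, globally bounded) density of $Y_{FS}(0)$ to be $O(x^2)$ near the origin, giving $\bbP(Y_{FS}(0)\le\delta)\le C\delta^3$ after integrating and patching in the trivial bound for $\delta$ bounded away from $0$. This is a clean and self-contained route. The paper instead follows what you describe as the alternative in your last paragraph: it first uses monotonicity to pin $Y_{FS}^i$ to zero at $\pm\l^{-2(i-1)/3}$ (which can only decrease the one-point marginal, so the desired probability increases), then applies Brownian scaling to land on a unit-length Brownian excursion with area tilt $1$, bounds the tilt density by $1$ and the partition function below by a constant, and invokes the standard $O(\delta^3)$ entropic-repulsion estimate for the midpoint of a Brownian excursion. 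What your primary route buys is a shorter argument and an explicit constant tied to ${\rm Ai}'(-\o_1)$; what the paper's route buys is independence from Airy-function special-function facts, relying only on elementary excursion estimates, which is in line with the paper's deliberately probabilistic style. Either way the scaling step is sound, and the final reduction $\l^{i-1}\le 1/\e \Rightarrow C\l^{i-1}\e^3\le C\e^2$ is immediate. One very small remark: the pinning/monotonicity step in the paper's version is not needed in your primary approach, since you appeal directly to the stationary marginal rather than to a finite-interval excursion, which slightly streamlines the derivation.
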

\begin{proof}
By monotonicity, $\bbP(Y_{FS}^i(0)\le \e)\le \bbP(\widetilde{Y}_{FS}^i(0)\le \e)$ where $\widetilde{Y}_{FS}^i$ has the same law as $Y_{FS}^i$ except now it is pinned at zero at $\pm \lambda^{-2(i-1)/3}.$
By Brownian scaling, 
\[
\bbP\(\widetilde{Y}_{FS}^i(0)\le \e\)=\bbP\(\widetilde{Y}_{FS}^0(0)\le \e \lambda^{(i-1)/3}\),
\] 
where $\widetilde{Y}_{FS}^0$ is simply a FS diffusion with tilting parameter $1$ and pinned at zero at $\pm 1.$ 
Thus it is a Brownian excursion $B(s)$ on $[-1,1]$ with the tilting factor
\begin{equation}\label{areatiltfactore}
\exp{\(-\int_{-1}^1 B(s)\dd s\)}.
\end{equation}
By Gaussian tails of the maximum of a Brownian excursion on $[-1,1]$, the partition function is a constant,  say $1/C_1$, and since the above density term is bounded by $1$, 
$$
\bbP\(\widetilde{Y}_{FS}^0(0)\le \e \lambda^{(i-1)/3}\)
\le C_1 \bbP\(B(0)\le \e \lambda^{(i-1)/3}\)\le C_2 \e^3\lambda^{i-1},$$
for some constant $C_2>0$, 
where the last estimate follows from standard repulsion estimates for Brownian excursion which e.g. can be found in \cite{ito1996diffusion}.
\end{proof}

Equipped with this estimate we return to the proof of Theorem \ref{lowertail}.
Now by monotonicity, the right hand side of \eqref{push}, can only increase on removal of all lines beyond $K$ as well as introducing boundary conditions where now the $i^{th}$ curve is pinned at $0$ at $\pm x_i$ where $x_K= c_\l\lambda^{-2K/3}$ and $x_{i-1}-x_i= C_\l \lambda^{-2i/3}$ where $c_\l,C_\l$ are constants depending on $\l$ to be fixed later. 

An advantage of this pinning is that the independent Ferrari-Spohn diffusions $\widetilde Y^i_{FS}$ with pinnings at $\pm x_i$, have a non-trivial chance of being non-intersecting. {As depicted in Figure \ref{fig:traps2}}  for the $i^{th}$ curve consider a trapezoid $\Pi_i$ of width and height comparable to $\lambda^{-2i/3}$ and $\lambda^{-i/3}$ respectively. The exact construction can be done as in \eqref{eq:trapezoid}, with the choice of the parameter $v$ being $1$. By construction $\Pi_i$ are nested and by scaling arguments it is easy to see that $\widetilde Y^i_{FS}$ stays within $\Pi_{i-1}\setminus \Pi_{i}$ with a probability bounded away from $0$ independently of $i,$ say $c.$

Thus, using also Lemma \ref{lem:ecube},
\begin{align*}
\bbP\(X^{i}(0)\le \e, \,\, i\le K\)\le \frac{\prod_{i=1}^K \bbP\(\widetilde Y_{FS}^i (0)\le \varepsilon\)}{\prod_{i=1}^K \bbP\(\widetilde Y_{FS}^i (0)\in \Pi_{i-1}\setminus \Pi_{i}\)}\le \frac{\varepsilon^{2K}}{c^{K}},
\end{align*}
for some universal constant $c,$ as long as $\lambda^{-K}\ge {\varepsilon},$ which finishes the proof.

%
%

\section{Uniqueness}\label{sec:uniqueness}
This section is devoted to the proof of Theorem \ref{th:uniqueness}.
We will again employ the reverse coupling strategy in the way outlined before the proof of Theorem \ref{th:corrdecay}, with the difference that now $\uX$ is replaced by the paths, denoted $\uZ$, 
with law $\nu$, where $\nu$ stands for a generic uniformly tight $\l$-tilted measure satisfying the asymptotic pinning condition in \eqref{eq:asymppin}.

Note that as commented before the proof of Theorem \ref{th:corrdecay}, one point tail estimates were crucial in the implementation of this strategy, for which we relied on Corollary  \ref{cor:stretched} which delivers such estimates for the zero boundary LE.  Thus  in this general case, our first step is to establish a counterpart tail bound simply under the assumption of UT. The most important part of the proof is a bootstrapping result which establishes that 
\[UT\implies UC,\] 
reversing \eqref{implication}, and hence showing their equivalence.
This will
allow us to appeal to our previously established bounds in Lemma \ref{univtail1} yielding the
tail estimates needed to carry out the program.
\subsection{Uniformly tight implies uniformly confined}
The first step delivers the above mentioned crucial input which shows that an asymptotically pinned at zero,  uniformly tight LE is also uniformly  confined (recall from Definitions  \ref{def:asympin}, \ref{ut}, and \ref{ust}). 
\begin{theorem}[$UT\implies UC$]\label{ergodicle}Suppose $\nu$ is the law of a UT $\l-$tilted LE satisfying \eqref{eq:asymppin} to be denoted by $\uZ=\{Z^{i}\}_{i\ge 1}$. Then $\nu$ is also UC. Namely, 
there exists a constant  $C>0$ such that for all $k\ge 0$ and $s\in\bbR$,
\begin{equation}\label{eq:tight2z}
\bbE\left[Z^{k+1}(s)\right]\le C\l^{-k/3} .
\end{equation}
Moreover,
 for all $k\ge 0,S>0$ and $s\in\bbR$,
\begin{equation}\label{eq:tightob}
\bbE\left[\max_{t\in[-S,S]}Z^{k+1}(s+t) \right]\le C\l^{-k/3}\log(1+|\l^{2k/3}S|).
\end{equation}
\end{theorem}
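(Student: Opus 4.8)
The goal is the two bounds \eqref{eq:tight2z} and \eqref{eq:tightob}, and I would deduce both of them from a single \emph{tail} estimate, proved for all depths at once: for every $R>0$ there is $C_R$ with
\[
\bbP\bigl(Z^{k+1}(s)>\lambda^{-k/3}t\bigr)\le C_R\,t^{-R}\qquad\text{for all }k\ge 0,\ s\in\bbR,\ t\ge 1 .
\]
Integrating in $t$ gives all moments of $\lambda^{k/3}Z^{k+1}(s)$, uniformly in $k$ and $s$, hence \eqref{eq:tight2z}; the curved-maximum bound \eqref{eq:tightob} follows from the same scheme, using the maximal Ferrari--Spohn tail (Lemma \ref{fsmax10}) in place of the one-point tail \eqref{eq:domino1}. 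All constants below are uniform in $s$ since the hypotheses (UT and asymptotic pinning) are; I write $s=0$ and work on $[-2T,2T]$ with $T=t^{R}$ for a large $R$.

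\emph{Reduction to a finite ensemble.} The first step is to find random times near $\pm T$ at which $Z^1$ --- hence, by ordering, every $Z^i$ --- is bounded by a fixed constant. Apply asymptotic pinning (Definition \ref{def:asympin}) \emph{with a vanishing parameter} $\varepsilon=T^{-1}$: this produces $n=n(T)$ (possibly enormous, but this turns out to be harmless) with $\bbP(Z^n\le T^{-1}\text{ on }[-2T,2T])\ge 1-T^{-1}$, controlling the floor felt by the top $n-1$ lines. By UT, fix a constant $M_0$ with $\sup_s\bbP(Z^1(s)>M_0)\le \tfrac1{10}$. A coming-down estimate (the content of the next paragraph) then shows that, off an event of probability $\le e^{-cT^{c'}}$, the ensemble cannot stay above $M_0$ throughout a sub-interval of length $\sim T$; consequently there are stopping times $\tau_\ell\in[-T,-T/2]$ and $\tau_r\in[T/2,T]$ (stopping with respect to the $(n-1)$-line external $\sigma$-algebras) with $Z^i(\tau_\ell),Z^i(\tau_r)\le M_0$ for all $i\le n-1$. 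On the intersection of these events, the strong BG property together with monotonicity (Lemma \ref{lem:mono}, Remark \ref{rem:nonct}) let us resample $Z^{\le n-1}$ on the stopping domain $[\tau_\ell,\tau_r]$ and dominate it by the $(n-1)$-line zero-boundary-type ensemble $\mathcal E$ on $[\tau_\ell,\tau_r]$ with constant endpoint data $\simeq M_0$ and negligible floor.

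\emph{Controlling $\mathcal E$ in the bulk.} Now $\mathcal E$ is a genuine finite area-tilted ensemble with bounded boundary on an interval of length $\ge T$, so the tools of Section \ref{sec:tails} apply. A multi-line version of Lemma \ref{lem:comingdown} --- run the argument there for all $n-1$ lines simultaneously, using the event that the whole ensemble descends to its equilibrium profile near each endpoint and stays low in between --- shows that, off probability $\le e^{-cT^{c''}}$, the top line of $\mathcal E$ has come down to a fixed height within distance $T^{\alpha}$ of each end; crucially this bound is \emph{independent of $n$}, because the (large) Gibbs weight of the deep lines cancels between numerator and denominator of the relevant conditional probability. For the $(k+1)$-th line one then removes the top $k$ lines of $\mathcal E$ and applies the Brownian scaling of Lemma \ref{lem:scaling} (as in Remark \ref{rem:mon}), reducing $\mathcal E^{k+1}$ restricted to the bulk to $\lambda^{-k/3}$ times the top line of a finite zero-floor ensemble with $O(\lambda^{-k/3})$ boundary on an interval of length $\ge T$; a further pull-down and comparison with the stationary Ferrari--Spohn diffusion --- exactly as in Part II of the proof of Theorem \ref{th:stretched}, the required curved-max control on the second line now being supplied by the same pull-down --- give $\bbP(\mathcal E^{k+1}(0)>\lambda^{-k/3}t)\le Ce^{-ct^{3/2}}$. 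Collecting the three error probabilities yields $\bbP(Z^{k+1}(0)>\lambda^{-k/3}t)\le T^{-1}+e^{-cT^{c'}}+e^{-cT^{c''}}+Ce^{-ct^{3/2}}\le C_R\,t^{-R}$ for $T=t^{R}$, which is the claim.

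\emph{Main obstacle.} The delicate point is this very first bootstrap: UT supplies only $\sup_s\bbP(Z^1(s)>M)\to 0$ with \emph{no rate}, so one cannot bound $\bbE[Z^1(s)]$ directly. Two ideas break the circularity. First, one may feed asymptotic pinning a parameter $\varepsilon=\varepsilon(T)\to 0$; the price is an uncontrolled number of lines $n(T)$, but $n$ enters the subsequent estimates only through deep-line Gibbs factors that cancel, so this does no damage. Second --- and this is the real technical work --- one needs the coming-down estimate of Lemma \ref{lem:comingdown} in a multi-line form with constants \emph{independent of the number of lines}. Once these are in place, the remaining steps (resampling on a stopping domain, comparison with $\mu^0$ and the Ferrari--Spohn diffusion, and the scaling for deeper lines) are routine given the machinery of Sections \ref{sec:coupling}--\ref{sec:ergodicity}.
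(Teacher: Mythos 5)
Your route tries to extract a quantitative \emph{tail} $\bbP(Z^{k+1}(s)>\lambda^{-k/3}t)\le C_R\,t^{-R}$ directly, but this is exactly what UT cannot supply, and the gap is not cosmetic. To run a coming-down argument on $[-2T,2T]$ (with $T=t^R$) you must first condition on boundary data $Z^1(\pm 2T)$ being below some height compatible with Lemma~\ref{lem:comingdown}, i.e.\ $\lesssim\sqrt{T}$. UT yields only $\bbP(Z^1(\pm 2T)>\sqrt{T})\le \varepsilon(\sqrt{T})$ with $\varepsilon(M)\to 0$ qualitatively, \emph{with no rate}. This error term does not appear in your final tally $T^{-1}+e^{-cT^{c'}}+e^{-cT^{c''}}+Ce^{-ct^{3/2}}$, and it cannot be bounded by any power of $t$ without more than UT gives. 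As a result the asserted polynomial decay does not follow. Feeding asymptotic pinning a vanishing parameter $\varepsilon=T^{-1}$ addresses the \emph{floor}, not the boundary at $\pm 2T$, so it does not repair this.

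The paper avoids exactly this obstruction by never asking for a quantitative tail at this stage. The key intermediate result (Proposition~\ref{pro:tightmom}) is a first-moment statement for the ensemble with constant boundary data $L$ on $[-T,T]$, valid for $T\ge T_0(L)$, with a constant $C_0$ that is \emph{independent of $L$}. With that in hand, the proof of Theorem~\ref{ergodicle} is soft: choose $L_i\uparrow\infty$ so that $\sup_t\bbP(Z^1(t)\ge L_i)\le 2^{-i}$, set $T_i=T_0(L_i)$, form the increasing events $A_i=\cap_{j\ge i}\{Z^1(\pm T_j)\le L_j\}$ (Borel--Cantelli gives $\mathbf 1_{A_i}\uparrow 1$ a.s.), condition on $A_i$ and on the asymptotic-pinning event $B(n,T_i,\varepsilon)$, apply Proposition~\ref{pro:tightmom} inside the BG window $[-T_i,T_i]$, and finish with two applications of monotone convergence. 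No rate on $\varepsilon(M)$ is needed anywhere; only the qualitative decay. In your scheme the rate is indispensable, so the argument as written does not close.

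Two further points. (i) Your ``multi-line version of Lemma~\ref{lem:comingdown} with constants independent of the number of lines'' is where the real work would have to happen, and it is not a routine generalization; the paper replaces it by the recursive nested-shape construction of Lemma~\ref{lem:th}, which reduces each line to a \emph{single} area-tilted path above the shape $\phi_{i+1}$ for the line below. (ii) The curved-maximum bound \eqref{eq:tightob} is not a corollary of the boxed FS-maximum Lemma~\ref{fsmax10}; it requires a curved-maximum estimate with a concave floor (the paper invokes~\cite[Eq.\,(3.8)]{CIW18} for this), which your ``same scheme'' remark glosses over. If you want to pursue a tail-first route, the workable version is the one the paper takes one level up: first establish UC via moments, and only then use Lemma~\ref{univtail1} (a bootstrap whose input \emph{is} UC) to get stretched-exponential tails; going directly from UT to a quantitative tail is not available.
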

Once Theorem \ref{ergodicle} is available,  we obtain stretched exponential tails by Lemma \ref{univtail1}: 
There exist constants $c,C>0$ and $\a>0$ such that for any $k$ and any $s\in \bbR$, $x>0$
\begin{equation}\label{eq:stretchedexpa10}
\bbP\left[Z^{k+1}(s)> \lambda^{-k/3}x\right]\le C\,e^{-c\,x^\a}.
\end{equation}

To prove Theorem \ref{ergodicle}, we start with a tightness result where the ensemble is assumed to have bounded boundary data.  
\begin{proposition}\label{pro:tightmom}
Fix $L>0$ and let $Z_{\infty,T}^i(s)$, $i=1,2,\dots$, $s\in[-T,T]$ denote the infinite $\l$-tilted LE on $[-T,T]$ with boundary data all equal to $L$ defined as a monotone limit of the finite $\l$-tilted LE $Z_{n,T}^i(s)$, $i=1,2,\dots n$, $s\in[-T,T]$, with boundary data all equal to $L$. There exists a constant  $T_0=T_0(L)>0$ such that for all $T\ge T_0$, for all $k\ge 0$,
{\begin{equation}\label{eq:tight2y}
\bbE\left[Z^{k+1}_{\infty,T}(0)\right]\le C_0\l^{-k/3} ,
\end{equation}}
where $C_0$ is an absolute constant (independent of $L$).
Moreover,
 for all $0\le S\le T/2$,
{
\begin{equation}\label{eq:tightoa}
\bbE\left[\max_{s\in[-S,S]}Z^{k+1}_{\infty,T}(s) \right]\le C_0\l^{-k/3}\log(1+|\l^{2k/3}S|).
\end{equation}
}
\end{proposition}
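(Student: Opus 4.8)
The plan is to show that the constant boundary height $L$ is forgotten after a deterministic time $T_0(L)$, reducing everything to the zero boundary ensemble, for which the bounds $C\l^{-k/3}$ and $C\l^{-k/3}[1+\log(1+|\l^{2k/3}S|)]$ are already available (Corollary~\ref{cor:logtight}, Remark~\ref{freetozero}, and the maximal bound of Corollary~\ref{cor:roughS} for the top line of $\mu^0$). First I would perform two monotonicity/scaling reductions. \emph{(i)} For $T\ge L^2$ one has $L\le\sqrt T$, so by Lemma~\ref{lem:mono} applied to the finite ensembles $Z_{n,T}$ and passed to the monotone limit as in \cite{CIW19}, $Z^i_{\infty,T}\prec W^i_{\infty,T}$, where $W_{\infty,T}$ is the infinite $\l$-tilted LE on $[-T,T]$ with all boundary data equal to $\sqrt T$; any additional \emph{absolute} lower threshold on $T$ demanded below is absorbed into $T_0(L)$. \emph{(ii)} Removing the top $k$ lines of $W_{\infty,T}$ and iterating the scaling relation of Lemma~\ref{lem:scaling} $k$ times (the infinite-volume form of Remark~\ref{rem:mon}) gives
\begin{equation*}
W^{k+1}_{\infty,T}(\,\cdot\,)\ \prec\ \l^{-k/3}\,W^1_{\infty,S'}(\l^{2k/3}\,\cdot\,),\qquad S':=\l^{2k/3}T,
\end{equation*}
where $W^1_{\infty,S'}$ is the top line of the infinite $\l$-tilted LE on $[-S',S']$ with all boundary data $\l^{k/3}\sqrt T=\sqrt{S'}$. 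Since $0$ and $[-\l^{2k/3}S,\l^{2k/3}S]$ (with $S\le T/2$) lie inside $[-S'/2,S'/2]$, the proposition reduces to showing, with an \emph{absolute} constant $C_0$ and for all large $S'$,
\begin{equation*}
\E\big[W^1_{\infty,S'}(0)\big]\le C_0,\qquad \E\big[\max_{|u|\le R}W^1_{\infty,S'}(u)\big]\le C_0\big[1+\log(1+R)\big]\ \ (0\le R\le S'/2).
\end{equation*}

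The core is a ``coming down from infinity'' estimate for $W^1_{\infty,S'}$. Shifting $\mu^0_{\infty,S'}$ (the zero boundary LE on $[-S',S']$) upward by $\sqrt{S'}$ yields a BG ensemble with floor and boundary both at $\sqrt{S'}$, and raising the floor only increases the ensemble (Lemma~\ref{lem:mono}), so $W_{\infty,S'}\prec \mu^0_{\infty,S'}+\sqrt{S'}$; with Corollary~\ref{cor:roughS} (and $\mu^0_{\infty,S'}\prec\mu^0$) this gives $\sup_{[-S',S']}W^1_{\infty,S'}\le 3\sqrt{S'}$ off an event of probability $\le C S' e^{-c(S')^{3/4}}$, so one may add a ceiling at $3\sqrt{S'}$ at negligible cost, and by Theorem~\ref{th:stretched} the exceptional event contributes negligibly to first and second moments. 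I would then run, near each endpoint, the stopping-domain mechanism of Lemma~\ref{lem:comingdown}, adapted to the full ensemble exactly as in the nested-trapezoid argument proving Lemma~\ref{stretchpart1}: only the first $K=C\log S'$ lines matter (the rest lie under a raised floor with overwhelming probability, by Markov and reduction~(ii)), and proceeding trapezoid by trapezoid from the bottom line upward one obtains, with probability $\ge 1-e^{-(S')^{c}}$, left/right stopping times $\tau_\ell\in(-S',-S'/2)$, $\tau_r\in(S'/2,S')$ at which every one of these lines is below an absolute constant $L_0$ — the total relaxation distance staying below $S'/2$ because the geometric gain $\l^{-1/3}$ per line turns it into a convergent geometric multiple of the single-line relaxation distance $(S')^{\alpha}$ of Lemma~\ref{lem:comingdown}. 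On this event the strong BG property allows resampling $W_{\infty,S'}$ on the stopping domain $[\tau_\ell,\tau_r]\supset[-S'/2,S'/2]$, and monotonicity (boundary $\le L_0$ there; then raise the floor to $L_0$) gives $W^1_{\infty,S'}(u)\prec X^1_{\mu^0}(u)+L_0$ for $u\in[-S'/2,S'/2]$. Taking expectations and using $\E[X^1_{\mu^0}(0)]\le C$ and $\E[\max_{|u|\le R}X^1_{\mu^0}(u)]\le C[1+\log(1+R)]$ yields the two displays above, hence \eqref{eq:tight2y} and \eqref{eq:tightoa}.

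The main obstacle is precisely this coming-down step: since the top line of the infinite ensemble is pushed up by infinitely many lines beneath it, there is no cheap a priori upper bound to start from, and the partition-function lower bound that drives Lemma~\ref{lem:comingdown} requires the adjacent lower line $W^2$ to have \emph{itself} descended near the endpoints. Making this rigorous — through the comparison $W^{i+1}_{\infty,S'}\prec\l^{-1/3}W^1_{\infty,\l^{2/3}S'}$ used inductively, the nested trapezoids, and the summability $\sum_{i\ge1}\l^{-i/3}<\infty$, all with error terms uniform in $S'$ — is the only genuinely delicate part; the remaining steps are routine applications of monotonicity (Lemma~\ref{lem:mono}, Remark~\ref{rem:nonct}), scaling (Lemma~\ref{lem:scaling}), and the already-established bounds for the zero boundary ensemble.
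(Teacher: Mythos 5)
Your reduction in step (ii) — removing the top $k$ lines of $W_{\infty,T}$ and scaling $k$ times so that everything becomes the $k=0$ problem for an ensemble with boundary $\sqrt{S'}$ on $[-S',S']$ — is a clean observation and a legitimate simplification. The gap is in how you then treat the $k=0$ case, and it is not the sort of gap that a more careful write-up would fill; the central claim is too strong.

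You claim that with probability $\ge 1-e^{-(S')^{c}}$ there exist $\tau_\ell\in(-S',-S'/2)$, $\tau_r\in(S'/2,S')$ at which \emph{every} one of the first $K=C\log S'$ lines is below an absolute constant $L_0$, and you propose to then resample on $[\tau_\ell,\tau_r]$ with boundary data $\le L_0$ and raised floor. There are two problems. First, this is not what the trapezoid recursion actually produces. What the nested-shape argument (the paper's Lemma~\ref{lem:th}) yields is that with high probability the $(i+1)$-th line lies below a deterministic log-shape $D\log(1+|s\lambda^{2i/3}|)$ translated by a \emph{random} vertical offset $h_i$ satisfying only $\sup_i\E[h_i]\le C$. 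There is no super-polynomial-in-$S'$ tail bound on the $h_i$ built into that argument, so upgrading ``$\E[h_i]\le C$'' to ``all $h_i\le L_0$ for an absolute $L_0$, off an event of probability $e^{-(S')^{c}}$'' is not available: by Markov you only get $\Prob(h_i>L_0)\le C/L_0$, and even assuming a Chernoff-style tail for the building blocks $\hat k_i,\Gamma_i$, a union bound over $K\sim\log S'$ lines forces $L_0$ to grow with $S'$, which destroys the absolute constant $C_0$ in the conclusion. Second, your appeal to the ``convergent geometric multiple of the single-line relaxation distance'' and to the comparison $W^{i+1}_{\infty,S'}\prec\lambda^{-1/3}W^1_{\infty,\lambda^{2/3}S'}$ is circular: that comparison reduces the bound for $W^{i+1}$ on a domain of length $S'$ to the same bound for $W^1$ on a larger domain $\lambda^{2/3}S'$, which is precisely what you are trying to establish uniformly in $S'$. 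The trapezoids of Lemma~\ref{stretchpart1} are also not the right device here — those serve a stretched-exponential tail bound for the zero-boundary ensemble, whereas what you need is a first-moment bound with a $\log$ growth rate in $S$, and the correct tool (used in the paper's Lemma~\ref{lem:th}) is the curved-maximum estimate of \cite[Eq.~(3.8)]{CIW18} applied after a scaling, combined with the bookkeeping identity $h_i=\lambda^{-1/3}h_{i+1}+\hat k_i+\Gamma_i$ whose geometric contraction $\lambda^{-1/3}$ is what makes $\sup_i\E[h_i]<\infty$. In short, the paper does not look for a single stopping domain at which all lines are simultaneously at $O(1)$ height; it propagates a \emph{random} log-shaped envelope from the bottom line upward and then takes expectations, and that extra flexibility is essential.
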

Notice the important difference with the apparently similar estimate in Corollary \ref{cor:logtight}.
In the latter, the line ensemble has either free or zero boundary conditions, while here it has a fixed height boundary $L$, the same for every path, and this is a huge difference for low lying paths. 
  
{We reiterate that the field $\uZ_{\infty,T}$ appearing in the above statement is well defined, by taking limits $n\to\infty$ of an ensemble of $n$ paths and using monotonicity.}    Using a floor at $L$ and Theorem  \ref{th:logtight} one can immediately obtain rough upper bounds on the paths $Z^k_{\infty,T}$, but Proposition
\ref{pro:tightmom} is a much finer statement providing information on the rapid decrease of the $k$-th path to the correct scale $\l^{-k/3}$.

  Before proving Proposition \ref{pro:tightmom} we quickly finish the proof of Theorem \ref{ergodicle}.

\begin{proof}[Proof of Theorem \ref{ergodicle} assuming Proposition \ref{pro:tightmom}]  
Let $\uZ=\{Z^i,\,i\ge 1\}$ denote the random lines with law $\nu$. Given the uniform tightness hypothesis on $\nu$, for any $i\in\bbN$ let $L_i>0$ be such that $\sup_{t\in\bbR} \bbP(Z^{1}(t)\ge L_i)\le 2^{-i}.$
Now recalling the statement of Proposition \ref{pro:tightmom}, let $T_i=T_0(L_i)$ and define the events $$A_{i}:=\bigcap_{j=i}^\infty \{Z^{1}(\pm T_j)\le L_j\}.$$
By Borel-Cantelli lemma, $\mathbf{1}(A_i)\uparrow  \mathbf{1}, a.s.$, $i\to\infty$. 
Finally, fixing $S$ as in the second part of the theorem, let $i_0$ be such that $T_i>2S$ when $i\ge i_0.$

Now for any $i\ge i_0,$ observe that $A_i$ is measurable with respect to $\sigma(Z^{1}(s): |s|\ge T_i)$.
Further, to apply the asymptotic pinning hypothesis \eqref{eq:asymppin}, for any $\e >0,$ let $B(n,T,\e)$ be the event that  $\sup_{s\in [-T,T]}Z^{n+1}(s)\le \e$.

Then it follows by Proposition \ref{pro:tightmom} that for any $k<n-1$,
\begin{align*}
\bbE\left[Z^{k+1}(0) \mathbf{1}_{A_i} \mathbf{1}_{B (n,T_i,\e)}\right]&\le 
\bbE\left[Z^{k+1}(0)\mid \mathbf{1}_{A_i}=1,\mathbf{1}_{B (n,T_i,\e)}=1 \right]\le C_0\l^{-k/3} +\e \,\,\,\ \text{and}\\
\bbE\left[\max_{t\in[-S,S]}Z^{k+1}(t) \mathbf{1}_{A_i}\mathbf{1}_{B (n,T_i,\e)}\right]&\le 
\bbE\left[\max_{t\in[-S,S]}Z^{k+1}(t)\mid \mathbf{1}_{A_i}=1, \mathbf{1}_{B (n,T_i,\e)}=1 \right]\\&\le C_0\l^{-k/3}\log(1+|\l^{2k/3}S|)+\e.
\end{align*}

Note that above we used the BG property of $\uZ$ on the domain $[-T_i,T_i]$ with the top $n-1$ lines, and that on $A_i \cap B (n,T_i,\e)$ the hypothesis of Proposition \ref{pro:tightmom} is satisfied by the definition of $T_i$ as a function of $L_i$. The boundary condition induced by $Z^n$ can be raised to a floor at $\e$ (conditioning on $B (n,T_i,\e)$ permits that) 
leading to the extra $\e$ appearing in the bounds above.
Moreover,  note that the constant $C_0$ does not depend on $L_i$.

We will now finish the proof with a couple of applications of the monotone convergence theorem.
Note that since $\underline Z$ is assumed to satisfy \eqref{eq:asymppin}, for any $\e, T_i,$ we have $\mathbf 1_{B (n,T_i,\e)}\uparrow \mathbf{1}, a.s.$ as $n \to \infty.$
Therefore,
 \begin{align*}
\bbE\left[Z^{k+1}(0) \mathbf{1}_{A_i}\right]&\le C_0\l^{-k/3} +\e, \,\,\,\ \text{and,}\\
\bbE\left[\max_{t\in[-S,S]}Z^{k+1}(t) \mathbf{1}_{A_i}\right]&\le C_0\l^{-k/3}\log(1+|\l^{2k/3}S|)+\e.
\end{align*}

Since $\e$ was arbitrary, using  $\mathbf 1_{A_i}\uparrow \mathbf{1}, a.s.$  proves \eqref{eq:tight2z} and \eqref{eq:tightob} for the case $s=0$.  Applying the above argument to the LE shifted by any real number $s$ finishes the proof.
\end{proof}

We now turn to the proof of the key Proposition \ref{pro:tightmom}.
\subsection{Nested shapes and curved maxima}
To prove Proposition \ref{pro:tightmom} we want to define nested shapes $\phi_{i+1} \prec \phi_{i}$, where for each $i\ge 0$, $\phi_i:[-T,T]\mapsto \bbR_+$ is a function such that $\phi_i(s)$ restricted to $s\in[-T/2,T/2]$ is of the form $\l^{-i/3}\left(\log(1+|s\l^{2i/3}|)\right)$ up to a suitable vertical shift and such that with high probability $Z^{i+1}_{\infty,T}\prec \phi_i$ throughout $[-T,T]$.  To do this we will consider the ensemble $Z^i_{n,T}$ with $n$ lines and with the same boundary condition $L$ on $[-T,T]$, and will prove estimates that are uniform in the number of lines $n$. We start by defining a scaled version of the nested shapes $\phi_i$.  

Fix some parameters $\d\in(0,1/2)$, $\a\in(3/4,1)$, $L\in\bbR_+$. We define, for all $T$ large, and for any given choice of the vertical shift parameters $h_i\in\bbR_+$, 
 \begin{gather}\label{eq:psii}
\psi_i^{h_i}(s) = \begin{cases}
L_i + \e_i & s\in[-T_i,-T_i+T_i^\a]\cup[T_i-T_i^\a,T_i]\\
h_i + \Psi(s) & s\in[-T_i+T_i^\a,T_i-T_i^\a]
\end{cases}\\
\e_i:= \l^{i/4}\vee T^\d\,,\quad L_i:=L\l^{i/3}\,,\quad T_i:=T\l^{2i/3},\quad \Psi(s):=D\log(1+|s|),
\end{gather}
for some constant $D$ to be fixed below. We assume that the $h_i$ are such that the curved part of $\psi_i^{h_i}$ is lower than the flat part, that is 
 \begin{gather}\label{eq:psiia}
h_i + \Psi(T_i-T_i^\a) \le L_i + \e_i\,.
\end{gather}
Since $T$ will be taken large enough, this will be automatically satisfied 
in what follows.
Thus, for each $i\in\bbN$, $\psi^{h_i}_i$ is a function on the interval $[-T_i,T_i]$ which looks 
approximately like Figure \ref{fig:shape1}.

\begin{figure}[h]
\center
\begin{tikzpicture}[scale=0.8]
\draw [black] (0,0) -- (0,4);
\draw [black] (1,4) -- (0,4);
\draw [black] (1,1.8) -- (1,4);
\draw [black] (6,0) -- (6,4);
\draw [black] (5,4) -- (6,4);
\draw [black] (5,1.8) -- (5,4);
\draw (0,0) -- (6,0);
\draw [dashed,black]  (0,.6) -- (3,.6) ;
\draw [dashed,black]  (3,0) -- (3,.6) ;
\draw [dashed,black]  (0,3) -- (1,3) ;
\draw [dashed,black]  (1,1.8) -- (1,0) ;

 \node[shape=circle, draw=black, fill = black, scale = .17]  at (0,.6) {}; 
   \node[scale = .7]  at (-.4,.6) {$h_i$};
   \node[shape=circle, draw=black, fill = black, scale = .17]  at (3,0) {}; 
   \node[scale = .7]  at (3,-.25) {$0$};
  \node[shape=circle, draw=black, fill = black, scale = .17]  at (0,3) {}; 
  \node[scale = .7]  at (-.4,3) {$L_i$};
 \node[shape=circle, draw=black, fill = black, scale = .17]  at (0,4) {}; 
  \node[scale = .7]  at (-.8,4) {$L_i+\e_i$};
  \node[scale = .6]  at (-.3,-.2) {$-T_i$};
   \node[scale = .6]  at (6.1,-.2) {$T_i$};

  \node[shape=circle, draw=black, fill = black, scale = .17]  at (0,0) {}; 
\node[shape=circle, draw=black, fill = black, scale = .17]  at (1,0) {}; 
\node[scale = .5]  at (1.08,-1/4) {$-T_i+T_i^\a$};
\node[shape=circle, draw=black, fill = black, scale = .17]  at (1,1.8) {};
\node[shape=circle, draw=black, fill = black, scale = .17]  at (5,1.8) {};

\draw [black] plot [smooth] coordinates { (1,1.8) (1.1,1.75) (1.5,1.68) (2,1.5) (2.5,1.2) (2.7,1) (2.8,.88) (3,.6)};
\draw [black] plot [smooth] coordinates { (5,1.8) (6-1.1,1.75) (6-1.5,1.68) (6-2,1.5) (6-2.5,1.2) (6-2.7,1) (6-2.8,.88) (6-3,.6)};
%
\end{tikzpicture}
\caption{A sketch of the function $\psi^{h_i}_i$ in \eqref{eq:psii}}
\label{fig:shape1}
\end{figure}
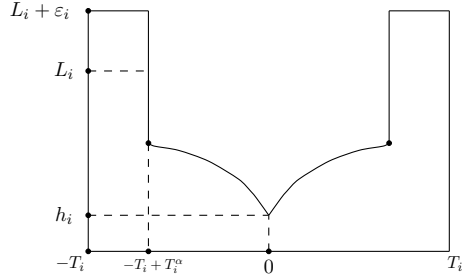

Next, we define the rescaled function $\phi_i^{h_i}(s) := \l^{-i/3}\psi_i^{h_i}(\l^{2i/3}s)$, $s\in[-T,T]$. In other words, 
 \begin{gather}\label{eq:phii}
\phi_i^{h_i}(s) = \begin{cases}
L + \tilde\e_i & s\in[-T,-T+t_i]\cup[T-t_i,T]\\
\tilde h_i +  \l^{-i/3}\Psi(s\l^{2i/3})  & s\in[-T+t_i,T-t_i]
\end{cases}\\
\tilde \e_i:= \l^{-i/3}\e_i,\qquad \tilde h_i:= \l^{-i/3}h_i\,,\qquad t_i:=T^\a\l^{-(1-\a)2i/3}.
\end{gather}
Notice that  for each $i\ge 0$, $\phi^{h_i}_i$ is a function on the interval $[-T,T]$.
The key technical step in the proof of Proposition \ref{pro:tightmom} can be stated as follows. 
\begin{lemma}\label{lem:th}
Let $Z_{n,T}^i(s)$, $i=1,\dots,n$, $s\in[-T,T]$ denote the $n$-line ensemble on $[-T,T]$ with boundary data all equal to $L$. There exist some absolute constants  $c,C>0$, a collection of random variables $h_0,h_1,\dots$ with $\sup_i\bbE[h_i]\le C$,  and a constant $T_0=T_0(L)$, such that for all 
$T\ge T_0$, for all $n\in\bbN$, the ensemble $\{Z^i_{n,T}\}$ can be coupled to $(h_0,\dots,h_{n-1})$ in such a way that for all $0\le i< n$, with probability at least $1-Ce^{-(T\l^i)^c}$,
\begin{equation}\label{eq:th101}
Z^{j+1}_{n,T}(s) \le \phi_j^{h_j}(s)\,,\qquad s\in[-T,T]\,,\;\;\;j=i,i+1,\dots,n-1.
\end{equation}
\end{lemma}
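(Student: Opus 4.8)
The plan is to prove Lemma~\ref{lem:th} by a downward induction on the line index $j$, coupled with a union bound over the $n$ lines, using the strong BG property and monotonicity to reduce each step to a single-line computation. The random variables $h_j$ will be defined pathwise in terms of the ensemble; concretely, $h_j$ should be roughly the exit/entry height of the $(j+1)$-th line from the flat boundary region, rescaled by $\l^{j/3}$. The key structural observation is the scaling relation of Lemma~\ref{lem:scaling}: if we remove the top $j$ paths (only raising the lower ones, by monotonicity, Lemma~\ref{lem:mono} and Remark~\ref{rem:nonct}), the $(j+1)$-th line becomes the top line of an $(n-j)$-line ensemble with area tilt $\l^j$, and after the Brownian rescaling $s\mapsto \l^{-2j/3}s$, $X\mapsto \l^{-j/3}X$ it becomes the top line of an ensemble with area tilt $1$ on the interval $[-T\l^{2j/3},T\l^{2j/3}]$ with boundary height $L\l^{j/3}$. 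This is precisely why the rescaled shapes $\phi_j^{h_j}(s)=\l^{-j/3}\psi_j^{h_j}(\l^{2j/3}s)$ are the natural objects: it suffices to control the (unrescaled) top line of the $\l=1$ ensemble $Z^1_{m, T\l^{2j/3}}$ with height-$L\l^{j/3}$ boundary against the shape $\psi_j^{h_j}$, uniformly in the number of lines $m=n-j$.

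First I would establish the single-line estimate that underlies the whole induction: for the top line $\zeta$ of an $m$-line $\l$-tilted LE on $[-S,S]$ with boundary data at height $\ell$ (here $S=T\l^{2j/3}$, $\ell=L\l^{j/3}$), one has, with probability at least $1-Ce^{-S^c}$, that $\zeta$ comes down from height $\ell$ to height $O(\ell^{1/?}\vee S^\d)$ within distance $S^\a$ of each endpoint, and thereafter stays below $h + D\log(1+|s|)$ for a suitable absolute constant $D$ and a random $h$ with $\bbE[h]\le C$ absolute. The ``coming down'' part is exactly Lemma~\ref{lem:comingdown} (after adding a floor at $\ell$ and a ceiling at $2\ell$ via Lemma~\ref{fsmax10}, at a cost $Ce^{-S^c}$, so the relevant $S$-dependence $e^{-S^c}$ translates to the claimed $e^{-(T\l^j)^c}$). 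For the curved tail bound $\zeta(s)\le h+D\log(1+|s|)$: once the line is below height $O(S^\d)$ at two stopping times $\t_\ell<-S+S^\a$ and $\t_r>S-S^\a$, one uses the strong BG property to resample on $[\t_\ell,\t_r]$ with floor $0$ and bounded boundary data; monotonicity lets one impose a floor and compare the top line to the Ferrari-Spohn diffusion (or directly to a Brownian excursion with area tilt), and the logarithmic envelope for the curved max follows from the argument behind Theorem~\ref{th:logtight} / estimate \eqref{eq:tightao1} together with the maximum tail bound in Lemma~\ref{fsmax10}; $h$ is then the (integrable) overshoot $\max_s [\zeta(s)-D\log(1+|s|)]_+$, whose expectation is bounded by an absolute constant uniformly in $m$ and $S$.

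The induction is then run as follows. For $j=n-1$ (the bottom line), there is no ceiling constraint from below beyond the floor at $0$, so the single-line estimate above applied to $Z^n_{n,T}$ directly yields \eqref{eq:th101} for $j=n-1$ with an appropriate $h_{n-1}$. For the inductive step from $j+1$ to $j$: condition on the event that \eqref{eq:th101} holds for all indices $j+1,\dots,n-1$; on this event the $(j+1)$-th line $Z^{j+1}_{n,T}$ has a ceiling $\phi_{j+1}^{h_{j+1}}\prec\phi_j^{\text{(anything)}}$ below it which, by monotonicity, can only \emph{lower} $Z^{j+1}_{n,T}$, so we may drop it. We then remove the top $j$ lines (raising the rest) and apply the scaling relation, reducing to the single-line estimate for $Z^1$ of the rescaled $\l=1$ ensemble with boundary height $L\l^{j/3}$, giving \eqref{eq:th101} for index $j$ up to a further failure probability $Ce^{-(T\l^j)^c}$ and producing $h_j$. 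Summing the failure probabilities over $j=i,\dots,n-1$ (these form a geometrically decaying series in $\l^j$, hence are dominated by the $j=i$ term) gives the stated bound $1-Ce^{-(T\l^i)^c}$. The uniform bound $\sup_j\bbE[h_j]\le C$ comes from the single-line estimate at each scale, using that the constants $D$ and the $h$-overshoot bound there are absolute (scale-invariant after the Brownian rescaling).

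The main obstacle I expect is twofold. First, one must verify that the shape parameters are genuinely consistent across all scales: the flat level $\tilde\e_i = \l^{-i/3}(\l^{i/4}\vee T^\d)$, the curved level, and the transition width $t_i=T^\a\l^{-(1-\a)2i/3}$ must be arranged so that $\phi_{i+1}^{h_{i+1}}\prec\phi_i^{h_i}$ deterministically (so that the ``drop the ceiling'' step in the induction is legitimate) while simultaneously the ``coming down'' Lemma~\ref{lem:comingdown} applies at every scale $S=T\l^{2i/3}$ with the \emph{same} exponents $\a\in(3/4,1)$, $\d\in(0,1/2)$ — this forces $\tilde\e_i$ to be of the stated mixed form $\l^{i/4}\vee T^\d$ (the $\l^{i/4}$ term handling large $i$, the $T^\d$ term handling the base scale) and requires checking the hypothesis $\e_i\ge S^\d$, $\e_i\le \sqrt S$ type inequalities. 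Second, and more delicate, is uniformity in $n$: all estimates must be proved for the finite $n$-line ensemble with constants independent of $n$, so that the monotone limit $n\to\infty$ can be taken at the end (for Proposition~\ref{pro:tightmom}); this is where one must be careful that removing the top $j$ lines and applying scaling does not secretly use $n$, and that the single-line comparison arguments (via Theorem~\ref{th:logtight} and Lemma~\ref{fsmax10}) are themselves $n$-uniform, which they are since they ultimately reduce to Brownian excursion / Ferrari-Spohn estimates.
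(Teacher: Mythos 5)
Your overall strategy — recursive control from the bottom line upward, rescaling via Lemma~\ref{lem:scaling}, a coming-down step near the boundary via Lemma~\ref{lem:comingdown}, a curved-max bound in the bulk, and a union bound over indices — matches the paper's. But the argument as written has gaps at the two load-bearing places. First, the reduction in the inductive step must turn $Z^{j+1}_{n,T}$ into a genuine \emph{single} line with the deterministic concave floor $\phi_{j+1}^{h_{j+1}}$: on the event $Z^{j+2}_{n,T}\prec\phi_{j+1}^{h_{j+1}}$ one \emph{raises} the floor from the random $Z^{j+2}_{n,T}$ to the deterministic $\phi_{j+1}^{h_{j+1}}$ and removes the ceiling from above; both operations stochastically raise $Z^{j+1}_{n,T}$ and are therefore legitimate when proving an upper bound. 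Your phrase ``so we may drop it'' reads as removing the floor altogether, which lowers $Z^{j+1}_{n,T}$ and gives nothing, and your ``single-line estimate'' as formulated concerns the top line of an $m$-line ensemble with boundary $\ell$ — which is essentially the content of Proposition~\ref{pro:tightmom} itself and therefore circular. Second, after the coming-down step the object to bound is a single path on $[-T_j+T_j^\a,T_j-T_j^\a]$ above a concave, logarithmically growing floor; Theorem~\ref{th:logtight}/\eqref{eq:tightao1} concern the top line of the \emph{free-boundary} ensemble with floor at zero and do not transfer (dominating the concave floor by a flat floor at its peak destroys the logarithmic envelope near the centre). The correct input is the curved-maximum estimate for a single path above a concave floor from [CIW18, Eq.\ (3.8)], extended from power-law to logarithmic $\Psi$.

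Beyond these two, your description of $h_j$ as ``the exit/entry height rescaled by $\l^{j/3}$'' misses the crucial recursion $h_j=\l^{-1/3}h_{j+1}+\hat k_j+\Gamma_j$ with $h_n:=0$, where $\hat k_j$ is the overshoot at the transition points and $\Gamma_j$ is the curved-max overshoot above the floor. The geometric factor $\l^{-1/3}$ is what turns the uniform bounds $\bbE[\hat k_j]\le C$, $\bbE[\Gamma_j]\le C$ into $\sup_j\bbE[h_j]<\infty$, and it is also what guarantees the deterministic nesting $\phi_{j+1}^{h_{j+1}}\prec\phi_j^{h_j}$ (which requires $h_j\ge C_1+\l^{-1/3}h_{j+1}$ and holds automatically by construction). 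Without specifying this recursion, neither the uniform first-moment bound on $h_j$ nor the nesting property — which you correctly flag as the main obstacle — is actually established.
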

Let us first show how Proposition \ref{pro:tightmom} follows from Lemma \ref{lem:th}. By monotonicity, one has convergence of the line ensemble $\{Z^i_{n,T}\}_{i\ge 1}$ to $\{Z^i_{\infty,T}\}_{i\ge 1}$ and therefore it is sufficient to establish \eqref{eq:tight2y} and \eqref{eq:tightoa} for the ensemble $\{Z^i_{n,T}\}_{i\ge 1}$, 
with a constant $C$ independent of $n$. Let $A_i$ denote the event that \eqref{eq:th101} holds under the coupling introduced in Lemma \ref{lem:th}, so that  $\bbP(A_i^c)\le Ce^{-(T\l^i)^c}$. Notice that, 
\begin{align}\label{eq:th102a}
 \bbE\left[\max_{s\in[-T,T]}Z^{i+1}_{n,T} \,\ind_{A_i^c}\right]&  \le \bbP(A_i^c)^{1/2}\,\bbE\left[\left(\max_{s\in[-T,T]}Z^i_{\infty,T}\right)^2\right]^{1/2}\\
 & \le C \,e^{-\tfrac12(T\l^i)^c}\, T^\d,
\end{align}
for some new constants $C,\d>0$, where we have used a rough estimate on the second moment of the maximum of the top path in the infinite line ensemble $Z_{\infty,T}$. The latter can be obtained for instance adding a floor at the boundary height $L$ and then using the bounds in Corollary \ref{cor:roughS} for the zero boundary measure with a floor at zero. Note that since we are taking $T$ large enough depending on $L$, the above constant $C$ can be take independent of $L$. 

Thus, to prove Proposition \ref{pro:tightmom} it is sufficient to establish 
\begin{align}\label{eq:th102}
  \bbE\left[\max_{s\in[-S,S]}Z^{i+1}_{n,T}(s)\,\ind_{A_i}\ \right]\le C\l^{-i/3}\log(1+|\l^{2i/3}S|)\,
  \end{align}
  with an absolute constant $C$, for all $S\in[-T/2,T/2]$. On the event $A_i$, we know that  $Z^i_{n,T}(s)\le \phi_i^{h_i}(s) $, and since $t_i\le T/2$, one has 
\begin{gather}\label{eq:phii21}
 \l^{i/3}\,Z^{i+1}_{n,T}(s) \le  h_i + D\log(1+|s|\l^{2i/3})\,,\qquad    s\in[-T/2,T/2].
\end{gather}
Using $\sup_i\bbE[h_i]<\infty$ we obtain the desired estimate \eqref{eq:th102}.
 It remains to prove Lemma \ref{lem:th}. 
 
 \subsection{Proof of Lemma \ref{lem:th}}
 Let us begin with a high level description of the ideas of the proof. One consequence of our definitions is that the rescaled functions $\phi_i:=\phi^{h_i}_i$ defined in \eqref{eq:phii} describe nested shapes, see \eqref{eq:phii24} and Figure \ref{fig:evzi}. Towards the proof of \eqref{eq:th101}, by monotonicity, for each $i$  we may raise the boundary of the path $Z^{i+1}_{n,T}$ from $L$ to  $L+\tilde \e_{i+1}$, and 
 thus the probability of the event \eqref{eq:th101} is bounded below by the probability  that this modified path, for each $i$,  is contained in the region between the two shapes $\phi_{i}$ and $\phi_{i+1}$, see Figure \ref{fig:wiz}. If we start from the bottom path then a simple recursion together with monotonicity allows us to reduce the problem to the analysis of a single path, call it $Z_{i+1}$, with a floor at $\phi_{i+1}$, for each $i$. The key step is then an estimate of the probability that this path $Z_{i+1}$ satisfies $Z_{i+1} \prec  \phi_i$. To prove this, we use Lemma \ref{lem:comingdown} to ensure that $Z_{i+1}$ comes down rapidly enough from the boundary, so that we may resample in the bulk of our interval using boundary data that are only a small vertical shift from the floor $\phi_{i+1}$. At this point we apply a first moment estimate for the maximum of an area tilted path with a concave floor that was derived in \cite{CIW18}. It is precisely this confinement estimate, combined with scaling and recursion, that will allow us to construct the random variables $h_i$ representing the random vertical shifts in the definition of the nested shapes $\phi_i$, and to guarantee that their expected values satisfy the required uniform bounds $\sup_i \bbE[h_i]\le C$. 
 
 We now turn to the technical details of the proof.   
As mentioned above,  we proceed recursively, starting from the bottom path $Z^n_{n,T}$, and use monotonicity at each step to raise the boundary condition and to impose a floor. Namely, if we assume that $Z^{i+2}_{n,T}$ satisfies $Z^{i+2}_{n,T} \prec \phi_{i+1}$ in $[-T,T]$, then we can dominate $Z^{i+1}_{n,T}$ by a single random line $Z_{i+1}$ with area tilt $\l^i$, with boundary data $\phi_{i+1}(-T)=\phi_{i+1}(T)=L+\tilde\e_{i+1}$ and with a floor at $\phi_{i+1}(s)$, $s\in[-T,T]$. We are going to show that this random line $Z_{i+1}$ satisfies 
 \begin{gather}\label{eq:phii22}
 \bbP\left(Z_{i+1} \prec  \phi_{i}\right)\ge 1-Ce^{-(T\l^i)^c}\,,
\end{gather}
for some constants $c,C>0$ independent of $n$ and $i$. Once we have such a bound, we can obtain the desired estimate, starting from $Z_n$, where the floor is $\phi_{n}:= 0$, and then stopping at the desired path $i$, to obtain that the probability of the event $A_i$ in \eqref{eq:th101} satisfies, by the union bound, 
\begin{gather}\label{eq:phii23}
 \bbP\left(A_i \right)\ge 1-\sum_{j=i}^nCe^{-(T\l^j)^c}\ge 1-C'e^{-(T\l^i)^c}\,,
\end{gather}
for some new constant $C'$. Thus the proof of Lemma \ref{lem:th} has been reduced to the statement \eqref{eq:phii22}, for every $0\le i< n$. 
 
 To prove \eqref{eq:phii22} 
 we observe that since $Z_{i+1}$ is conditioned to stay above $\phi_{i+1}$, we must ensure  that 
 \begin{gather}\label{eq:phii24}
 \phi_i(s)\ge \phi_{i+1}(s), \qquad s\in[-T,T],
 \end{gather}
 otherwise the event in \eqref{eq:phii22} is empty.
The condition \eqref{eq:phii24} indeed holds  as soon as the variables $h_i$ satisfy a suitable relation. More precisely, notice that $t_{i+1}=\l^{-\frac23(1-\a)}t_i<t_i$, and using also \eqref{eq:psiia},  one has 
 \begin{gather}\label{eq:phii31}
\l^{i/3}(\phi_i(s) - \phi_{i+1}(s)) \ge \e_i - \l^{-1/3}\e_{i+1} \,,
\end{gather}
for $s\in[-T,-T+t_i]\cup[T-t_i,T]$, and 
\begin{gather}\label{eq:phii32}
\l^{i/3}(\phi_i(s) - \phi_{i+1}(s)) = h_i - \l^{-1/3}h_{i+1} + \Psi(s\l^{2i/3})-\l^{-1/3}\Psi(s\l^{2{(i+1)}/3})\,,
\end{gather}
for $s\in[-T+t_i,T-t_i]$. By definition of $\e_i$ we see that there exists $c_1=c_1(\l)>0$ such that 
\begin{gather}\label{eq:phii33}
 \e_i - \l^{-1/3}\e_{i+1}\ge c_1 \e_i .
 \end{gather} 
Notice also that if $C_1$ is a large enough constant then  
\begin{gather}\label{eq:phii34}
\l^{-1/3}\Psi(x\l^{2/3})\le C_1+ \l^{-1/4}\Psi(x)\,,\qquad x\in\bbR.
 \end{gather} 
 Therefore,
 \begin{gather}\label{eq:phii35}
\l^{i/3}(\phi_i(s) - \phi_{i+1}(s)) \ge  h_i - \l^{-1/3}h_{i+1} - C_1+ c_2\Psi(s\l^{2i/3}),
\end{gather}
for $s\in[-T+t_i,T-t_i]$, where $c_2=1-\l^{-1/4}>0$. In particular, if 
 \begin{gather}\label{eq:phii36}
 h_i \ge C_1+ \l^{-1/3}h_{i+1}
 \end{gather}
  we obtain $\phi_i\ge \phi_{i+1}$ as desired; see Figure \ref{fig:evzi}.

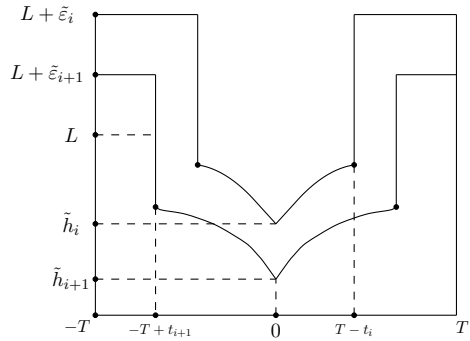
\begin{figure}[h]
\center
\begin{tikzpicture}[scale=0.8]
\draw [black] (0,0) -- (0,5);
\draw [black] (1.7,5) -- (0,5);
\draw [black] (1.7,2.5) -- (1.7,5);
\draw [black] (6,0) -- (6,5);
\draw [black] (6-1.7,5) -- (6,5);
\draw [black] (6-1.7,2.5) -- (6-1.7,5);
\draw [black] (1,4) -- (0,4);
\draw [black] (1,1.8) -- (1,4);
\draw [black] (5,4) -- (6,4);
\draw [black] (5,1.8) -- (5,4);
\draw (0,0) -- (6,0);
\draw [dashed,black]  (0,.6) -- (3,.6) ;
\draw [dashed,black]  (0,1.52) -- (3,1.52) ;
\draw [dashed,black]  (3,0) -- (3,.6) ;
\draw [dashed,black]  (0,3) -- (1,3) ;
\draw [dashed,black]  (1,1.8) -- (1,0) ;
\draw [dashed,black]  (6-1.7,2.5) -- (6-1.7,0) ;

 \node[shape=circle, draw=black, fill = black, scale = .17]  at (0,.6) {}; 
   \node[scale = .7]  at (-.4,.6) {$\tilde h_{i+1}$};
    \node[shape=circle, draw=black, fill = black, scale = .17]  at (0,1.52) {}; 
   \node[scale = .7]  at (-.4,1.52) {$\tilde h_{i}$};

   \node[shape=circle, draw=black, fill = black, scale = .17]  at (3,0) {}; 
   \node[scale = .7]  at (3,-.25) {$0$};
  \node[shape=circle, draw=black, fill = black, scale = .17]  at (0,3) {}; 
  \node[scale = .7]  at (-.4,3) {$L$};
 \node[shape=circle, draw=black, fill = black, scale = .17]  at (0,4) {}; 
  \node[scale = .7]  at (-.8,4) {$L+\tilde\e_{i+1}$};
   \node[shape=circle, draw=black, fill = black, scale = .17]  at (0,5) {}; 
  \node[scale = .7]  at (-.8,5) {$L+\tilde\e_{i}$};

  \node[scale = .6]  at (-.3,-.2) {$-T$};
   \node[scale = .6]  at (6.1,-.2) {$T$};

  \node[shape=circle, draw=black, fill = black, scale = .17]  at (0,0) {};
  \node[shape=circle, draw=black, fill = black, scale = .17]  at (1.7,2.5) {};
  \node[shape=circle, draw=black, fill = black, scale = .17]  at (6-1.7,2.5) {}; 
\node[shape=circle, draw=black, fill = black, scale = .17]  at (1,0) {}; 
\node[scale = .5]  at (1.08,-1/4) {$-T+t_{i+1}$};
\node[shape=circle, draw=black, fill = black, scale = .17]  at (6-1.7,0) {}; 
\node[scale = .5]  at (4.3,-1/4) {$T-t_{i}$};
\node[shape=circle, draw=black, fill = black, scale = .17]  at (1,1.8) {};
\node[shape=circle, draw=black, fill = black, scale = .17]  at (5,1.8) {};

\draw [black] plot [smooth] coordinates { (1,1.8) (1.1,1.75) (1.5,1.68) (2,1.5) (2.5,1.2) (2.7,1) (2.8,.88) (3,.6)};
\draw [black] plot [smooth] coordinates { (5,1.8) (6-1.1,1.75) (6-1.5,1.68) (6-2,1.5) (6-2.5,1.2) (6-2.7,1) (6-2.8,.88) (6-3,.6)};

\draw [black] plot [smooth] coordinates { (1.7,2.5) (1.8,2.48) (1.9,2.45) (2,2.41) (2.1,2.36)(2.2,2.3)(2.3,2.23)(2.4,2.15)(2.5,2.06)(2.6,1.96) (2.7,1.85)(2.8,1.74)(2.9,1.63) (3,1.52)};
\draw [black] plot [smooth] coordinates { (6-1.7,2.5) (6-1.8,2.48) (6-1.9,2.45) (6-2,2.41) (6-2.1,2.36)(6-2.2,2.3)(6-2.3,2.23)(6-2.4,2.15)(6-2.5,2.06)(6-2.6,1.96) (6-2.7,1.85)(6-2.8,1.74)(6-2.9,1.63) (6-3,1.52)};
\end{tikzpicture}
\caption{A sketch of the two shapes $\phi_{i+1}\prec \phi_i$. }
\label{fig:evzi}
\end{figure}

To prove \eqref{eq:phii22} we first apply the scaling as in Lemma \ref{lem:scaling}. It follows that 
the probability in  \eqref{eq:phii22} can be evaluated as 
\begin{gather}\label{eq:phii42}
 \bbP\left(Z_{i+1} \prec  \phi^{h_{i}}_{i}\right)= \bbP\left(W_{i+1} \prec  \psi^{h_{i}}_{i}\right)\,,
\end{gather}
where now $W_{i+1}$, $i=0,\dots,n-1$, denotes the single line on $[-T_i,T_i]$, with area tilt $1$, with boundary data 
\[
\l^{i/3}\phi^{h_{i+1}}_{i+1}(-T)=\l^{i/3}\phi^{h_{i+1}}_{i+1}(T)=L_i+\l^{i/3}\tilde\e_{i+1}=L_i+\hat\e_i,
\]
where we define $\hat\e_{i}:=\l^{i/3}\tilde\e_{i+1}=\l^{-1/3}\e_{i+1}$, 
and with floor given by 
\begin{gather}\label{eq:phii43}
\xi_i(s):=\l^{i/3}\phi^{h_{i+1}}_{i+1}(s\l^{-2i/3})=\l^{-1/3}\psi_{i+1}^{h_{i+1}}(s\l^{2/3})\,,\quad  s\in[-T_i,T_i].
\end{gather}
Note that $\xi_i(0)=\l^{-1/3}h_{i+1}$. We refer to Figure \ref{fig:wiz} for a drwaing of the event in \eqref{eq:phii42}.
\begin{figure}[h]
\center
\begin{tikzpicture}[scale=0.8]
\draw [black] (0,0) -- (0,5);
\draw [black] (1.7,5) -- (0,5);
\draw [black] (1.7,2.5) -- (1.7,5);
\draw [black] (6,0) -- (6,5);
\draw [black] (6-1.7,5) -- (6,5);
\draw [black] (6-1.7,2.5) -- (6-1.7,5);
\draw [black] (1,4) -- (0,4);
\draw [black] (1,1.8) -- (1,4);
\draw [black] (5,4) -- (6,4);
\draw [black] (5,1.8) -- (5,4);
\draw (0,0) -- (6,0);
\draw [dashed,black]  (0,1.8) -- (1,1.8) ;
\draw [dashed,black]  (3,0) -- (3,.6) ;
\draw [dashed,black]  (0,4.5) -- (1.7,4.5) ;
\draw [dashed,black]  (1.7,0) -- (1.7,2.5) ;
\draw [dashed,black]  (1,1.8) -- (1,0) ;

 \node[shape=circle, draw=black, fill = black, scale = .17]  at (1.7,2.5) {}; 

   \node[shape=circle, draw=black, fill = black, scale = .17]  at (3,0) {}; 
   \node[scale = .7]  at (3,-.25) {$0$};
 \node[shape=circle, draw=black, fill = black, scale = .17]  at (0,4) {}; 
  \node[scale = .7]  at (-.8,4) {$L_i+\hat\e_{i}$};
   \node[shape=circle, draw=black, fill = black, scale = .17]  at (0,5) {}; 
  \node[scale = .7]  at (-.8,5) {$L_i+\e_{i}$};

  \node[scale = .6]  at (-.3,-.2) {$-T_i$};
   \node[scale = .6]  at (6.1,-.2) {$T_i$};

  \node[shape=circle, draw=black, fill = black, scale = .17]  at (0,0) {};
  \node[shape=circle, draw=black, fill = black, scale = .17]  at (1.7,2.5) {};
  \node[shape=circle, draw=black, fill = black, scale = .17]  at (6-1.7,2.5) {}; 
\node[shape=circle, draw=black, fill = black, scale = .17]  at (1,0) {}; 
\node[scale = .55]  at (.8,-1/4) {$-\hat T_i$};
\node[shape=circle, draw=black, fill = black, scale = .17]  at (1.7,0) {}; 
\node[scale = .55]  at (1.8,-.3) {$- T_i+T_i^\a$};
\node[shape=circle, draw=black, fill = black, scale = .17]  at (1,1.8) {};
\node[shape=circle, draw=black, fill = black, scale = .17]  at (0,1.8) {};
  \node[scale = .7]  at (-.4,1.8) {$ v_i$};
  \node[shape=circle, draw=black, fill = black, scale = .17]  at (0,4.5) {};
  \node[scale = .7]  at (-.4,4.5) {$ w$};
\node[shape=circle, draw=black, fill = black, scale = .17]  at (5,1.8) {};

\draw [black] plot [smooth] coordinates { (1,1.8) (1.1,1.75) (1.5,1.68) (2,1.5) (2.5,1.2) (2.7,1) (2.8,.88) (3,.6)};
\draw [black] plot [smooth] coordinates { (5,1.8) (6-1.1,1.75) (6-1.5,1.68) (6-2,1.5) (6-2.5,1.2) (6-2.7,1) (6-2.8,.88) (6-3,.6)};

\draw [black] plot [smooth] coordinates { (1.7,2.5) (1.8,2.48) (1.9,2.45) (2,2.41) (2.1,2.36)(2.2,2.3)(2.3,2.23)(2.4,2.15)(2.5,2.06)(2.6,1.96) (2.7,1.85)(2.8,1.74)(2.9,1.63) (3,1.52)};
\draw [black] plot [smooth] coordinates { (6-1.7,2.5) (6-1.8,2.48) (6-1.9,2.45) (6-2,2.41) (6-2.1,2.36)(6-2.2,2.3)(6-2.3,2.23)(6-2.4,2.15)(6-2.5,2.06)(6-2.6,1.96) (6-2.7,1.85)(6-2.8,1.74)(6-2.9,1.63) (6-3,1.52)};

\draw[smooth,thick,teal,rounded corners=0.5mm] (0,4)--(0.1,4+0.05)--(0.3,4+0.2)--(0.4,4+0.3)--(0.5,4+0.27)--(0.55,3+1.23)--(0.65,3+1.1)--(0.75,3+1.05)--(0.8,3+1.15)--(.9,4.19)--(1.1,4.05)--(1.3,3.8)--(1.4,3.2)--(1.5,2.4)--(1.6,2)--(1.7,1.9)--(1.8,1.85)--(2.2,1.7)--(2.66,1.37)--(2.96,1.27)--(3.1,1.47)--(3.36,1.47)--(3.68,1.42)--(3.96,1.59)--(4.1,1.8)--(4.2,1.94)--(4.36,1.74)--(4.54,2.8)--(4.64,2.5)--(4.84,3.2)--(4.9,4.1)--(5.1,4.15)--(5.32,4.22)--(5.45,4.17)--(5.56,4.2)--(5.63,4.24)--(5.8,4.1)--(5.85,4.2)--(5.96,4.07)--(6,4);
\end{tikzpicture}
\caption{Sketch of a realization of the path $W_i$ satisfying the event $W_i\prec \psi^{h_i}_i$ in \eqref{eq:phii42}. The path is conditioned to stay above the floor $\xi_i$ from \eqref{eq:phii43}.}
\label{fig:wiz}
\end{figure}
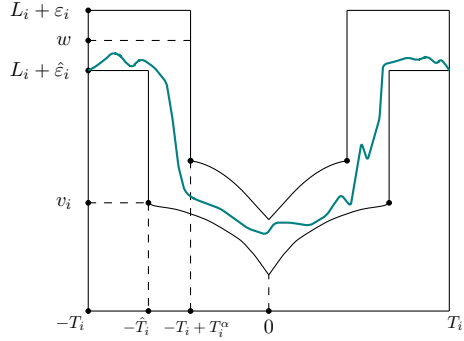

Note that from \eqref{eq:phii31} we know that $\e_i-\hat\e_i\ge c_1\e_i$. Define $w: = L_i + \hat \e_i + \frac12\,c_1\e_i$, where the constant $c_1$ is the same as in  \eqref{eq:phii31}. Let us also define 
\[
\hat T_i: = \l^{2{i}/3}(T-t_{i+1}) = T_i - T_i^\a\l^{2(1-\a)/3} ,
\]
and note that $\hat T_i > T_i-T_i^\a$; see Figure \ref{fig:wiz}. We first show that
\begin{gather}\label{eq:phii45}
\bbP\left(W_{i+1}(s) \le w \,,\;\forall s\in[-T_i, T_i]\right)\ge1- Ce^{-(T\l^i)^c}\,.
\end{gather}
This can be achieved by considering a global floor at height $L_i + \hat \e_i$ and using the bound in Lemma \ref{fsmax10} on the probability that the maximum of a Ferrari-Spohn diffusion exceeds the height $\frac12\,c_1\e_i$ in an interval of size $2T_i$.

Thus, by monotonicity we can now restrict to a path in $[-\hat T_i,\hat T_i]$ with boundary data at $w$, a ceiling at $w$ and a floor at $\xi_i(s)$, $s\in[-\hat T_i,\hat T_i]$. Call $\tilde W_i$ the associated path. Set $v_i=\xi_i(\hat T_i)$ as in Figure \ref{fig:wiz} and write 
\[
k_i=\max\{\tilde W_i(-T_i+T_i^\a),\tilde W_i(T_i-T_i^\a)\} - v_i.
\] 
We will show that 
\begin{gather}\label{eq:phii46}
\bbE\left[k_i\right]\le C\,,
\end{gather}
for some constant $C$. 
Note that we are considering a single line with area tilt 1, with boundary at $w$ and ceiling at $w$ and we ask for it to come down to $v_i+k_i$ within a time 
\[
\D:= T_i-T_i^\a - \hat T_i = T_i^\a(\l^{2(1-\a)/3} -1)\ge c_2T_i^\a ,
\]
for some constant $c_2>0$. We may add a floor at $v_i$ and prove the estimate \eqref{eq:phii46} using the argument from Lemma \ref{lem:comingdown}, 
applied in this case with $T$ replaced by $\hat T_i$ and with $T^\a$ replaced by $\D$ (and with a floor shifted upwards by $v_i$). 
We obtain that with probability at least $1 -Ce^{-cT_i^c}$ the height goes down from the boundary reaching below height $v_i+K$ within the intervals $[-\hat T_i,-T_i + T_i^\a]$ and
$  [\hat T_i,T_i - T_i^\a]$, for an absolute constant $K$ (which plays the role of the constant called $L$ in Lemma \ref{lem:comingdown}). Let us call $E$ this favorable event. On the complement of $E$ we may estimate $k_i$ by using the ceiling $w$ so that $k_i\le w\le L_i+ \e_i\le 2L_i$ so that 
\[
\bbE\left[k_i;E^c\right]\le 2CL_i e^{-cT_i^c}\le C,
\] 
for some absolute constant $C$ provided $T$ is large enough depending on $L$.
On the event $E$ one can use boundary conditions $v_i+K$ on a stopping domain 
which includes $[-T_i+T_i^\a,T_i-T_i^\a]$, and we can further impose a floor at $v_i+K$. We then obtain the estimate $\bbE[k_i;E]\le C$ by simply applying the one-point estimate for the single line with Ferrari-Spohn distribution. This proves \eqref{eq:phii46}.

Finally, once \eqref{eq:phii46} is available, we need to show that for a suitable choice of the random variables $h_i$ one has the desired domination within the region $[-T_i+T_i^\a,T_i-T_i^\a]$. This will be achieved by means of the curved maxima with concave floor established in \cite{CIW18}. By monotonicity and the definition of the random variables $k_i$, see also Figure \ref{fig:wiz}, 
we can consider a single line with left and right boundary conditions at height $v_i+k_i$ and with floor at $\hat k_i + \xi_i(s)$, for  $s\in[-T_i+T_i^\a,T_i-T_i^\a]$, where we define
\[
\hat k_i:= k_i + v_i - \xi_i(-T_i+T_i^\a).
\]
 In order to apply the result from \cite{CIW18} we shift it vertically by $-\hat k_i-\xi_i(0)$ and consider the path $\widehat W_{i+1}$ on $[-T_i+T_i^\a,T_i-T_i^\a]$, with area tilt $1$,  with floor at $\hat \xi_i(s):=\xi_i(s)-\xi_i(0)$, with left and right boundary conditions at height $\hat\xi_i(T_i-T_i^\a)$. Then we consider the curved maximum
\begin{gather}\label{eq:phii48}
\G_i:=\max_{s\in[-T_i+T_i^\a,T_i-T_i^\a]} \left[\widehat  W_{i+1} - D\log(1+|s|)
\right]_+\,,
\end{gather}
where $[\cdot]_+$ denotes the positive part. 
From the estimate established in \cite[Eq.\ (3.8)]{CIW18} it follows that 
\begin{gather}\label{eq:phii46a}
\bbE[\G_i]\le C\,,
\end{gather}
if the constant $D$ is large enough. As already noted after Theorem \ref{th:logtight} the bound in \cite{CIW18} is derived for curved maxima with respect to any concave power law function $\Psi(s)=|s|^\e$, $\e\in(0,1/2)$ but it can be extended, with the same proof, to the case of $\Psi(s)=D\log(1+|s|)$ if $D$ is large enough.  In conclusion, by setting
\begin{gather}\label{eq:phii49}
h_i = \xi_i(0)+\hat k_i + \G_i= \l^{-1/3} h_{i+1} + \hat k_i + \G_i
\,,
\end{gather}
we have obtained that the line $W_{i+1}$ from \eqref{eq:phii42}
satisfies  
\[
W_{i+1}(s)\le h_i+D\log(1+|s|) = \psi_i^{h_i}(s)\,,\qquad  s\in[-T_i,T_i],
\]
 where $\psi_i^{h_i}$ is defined in \eqref{eq:psii}. Recursively, starting at $h_n:=0$, \eqref{eq:phii49} defines the sequence $h_i$, and one obtains the desired bound $\sup_i \bbE[h_i]<\infty$, provided the same holds for $\hat k_i,\G_i$.  Noting that $\hat k_i\le 1+ k_i$ and using the bounds \eqref{eq:phii46} and \eqref{eq:phii46a}, combined with \eqref{eq:phii42}, this proves \eqref{eq:phii22}.

 \subsection{Proof of Theorem \ref{th:uniqueness}} 
 To finish the proof of Theorem \ref{th:uniqueness} we first need the counterpart result of Lemma \ref{reversecoupling}. We use the same notation, that is, we will use $\uY_{\infty,t}$ to denote the infinite LE 
with zero boundary conditions on $[-t/2,t/2]$ and note that  its infinite volume limit has law $\mu^0$.
Moreover, we write $\uZ$ for the  uniformly tight LE in the statement of Theorem \ref{th:uniqueness}. By monotonicity, the latter stochastically dominates the former. However, as in Lemma \ref{reversecoupling}, the next result shows that with high probability a reversing coupling can be constructed such that the top $i$ lines  are close to each other, for any fixed $i$, provided $t$ is large enough. 

\begin{lemma}\label{lem:cnamle}
There exists $\d>0$ such that given any $i$ and $S\ge 0$, for all large enough $t$, there exists a coupling of $ \uY_{\infty,t}$  and $\uZ$, and an event $E_t$ such that on $E_t$ one has 
\[
Z^j(s)-\uY^j_{\infty,t}(s) 
\le  \varphi(t)^2\,,\qquad j=1,\dots,i\,,\quad s\in[-S,S],
\]
and such that $\bbP(E_t^c)\le \varphi(t)^2$, where $\varphi(t):=\exp(-c\log(t)^{\d})$ for some absolute constant $c>0$. 

%
\end{lemma}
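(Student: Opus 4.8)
\textbf{Proof proposal for Lemma \ref{lem:cnamle}.}
The plan is to mimic the proof of Lemma \ref{reversecoupling} almost verbatim, the only difference being that the infinite ensemble $\uX$ with law $\mu^0$ (whose confinement bounds came from Remark \ref{freetozero} and Corollary \ref{cor:stretched}) is now replaced by the generic uniformly tight, asymptotically pinned to zero, $\l$-tilted LE $\uZ$ with law $\nu$. By Theorem \ref{ergodicle} we know that $\nu$ is uniformly confined, so $\uZ$ satisfies exactly the same first moment estimates \eqref{eq:tight2z} and \eqref{eq:tightob} as $\mu^0$ did via Corollary \ref{cor:logtight}. Moreover, by Lemma \ref{univtail1} (applicable since $\nu$ is UC), we also have the stretched exponential one point tail bound \eqref{eq:stretchedexpa10} for $\uZ$, which is the substitute for Corollary \ref{cor:stretched}. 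These two inputs are precisely what was used in Lemma \ref{reversecoupling}, so the same construction goes through.

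In detail, I would fix $t$ large and set $k = \lfloor(\tfrac1A\log t)^{1/a}\rfloor$ with constants $a,A$ chosen exactly as in \eqref{eq:tandk}. First, using Markov's inequality together with the UC bound \eqref{eq:tightob} for $\uZ$ (playing the role of Corollary \ref{cor:logtight}), one shows that with probability at least $1-C\l^{-k/7}$ the $(k+1)$-th line $Z^{k+1}$ stays below $\l^{-k/6}$ throughout $[-t/2,t/2]$; call this event $F_t$. Next, take an independent sample of $\uY_{k,t}$, the $k$-line zero boundary ensemble on $[-t/2,t/2]$, and define, exactly as in \eqref{eq:coupp2}--\eqref{eq:coupp3}, the stopping times $\t_\ell,\t_r$ (the first times, scanning inward from the endpoints along the mesh $s_j$, at which all $k$ lines of $\uY_{k,t}$ simultaneously exceed $v\l^{-(i-1)/3}$ with $v=k^b$), the event $B_t=\{\t_\ell<-t/4,\t_r>t/4\}$, and the event $G_t$ that the top $k$ lines of $\uZ$ are below $v\l^{-(i-1)/3}$ at $\t_\ell$ and $\t_r$. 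On $E_t:=F_t\cap G_t\cap B_t$ we resample $\{(\uZ^i,\uY^i_{k,t})\}_{i\le k}$ on the stopping domain $[\t_\ell,\t_r]$ under the monotone coupling, which forces $Z^i(s)\le Y^i_{k,t}(s)+\l^{-k/6}$ on $[-t/4,t/4]$ for $i=1,\dots,k$; by monotonicity $Y^j_{k,t}\le Y^j_{\infty,t}$, and choosing $c$ so that $\l^{-k/6}\le\varphi(t)^2$ gives the claimed inequality for $s\in[-S,S]\subset[-t/4,t/4]$. The probability estimate $\bbP(E_t^c)\le\bbP(F_t^c)+\bbP(G_t^c)+\bbP(B_t^c)$ is bounded term by term: $\bbP(F_t^c)\le C\l^{-k/7}$ from the previous step; $\bbP(G_t^c)\le 2k\exp(-ck^{3b/2})$ using the stretched exponential tail \eqref{eq:stretchedexpa10} for $\uZ$ together with the independence of $(\t_\ell,\t_r)$ from $\uZ$ and a union bound; and $\bbP(B_t^c)\le 2\exp(-c\,e^{k^a})$ via pinning at the mesh points together with the lower bound $p_k(v)\ge e^{-Ck^{1+2b}}$ from Lemma \ref{lem:pkvu}, exactly as in \eqref{eq:coup4}--\eqref{eq:coupass4}. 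With $b=2/3$, $a=7/3$, $\d=3/7$ these combine to $\bbP(E_t^c)\le\varphi(t)^2$ with $\varphi(t)=\exp(-c(\log t)^{3/7})$.

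The one genuinely new ingredient compared with Lemma \ref{reversecoupling} is that we must invoke Theorem \ref{ergodicle} and Lemma \ref{univtail1} to know that $\nu$ has the required confinement and tail behavior; everything else is a line by line repetition. Since the proof of Theorem \ref{ergodicle} is itself independent of Lemma \ref{lem:cnamle}, there is no circularity. The main obstacle is therefore not in this lemma at all but upstream, in Theorem \ref{ergodicle} (the implication $UT\Rightarrow UC$), which has already been established; granting that, the argument here is routine. Finally, Theorem \ref{th:uniqueness} follows from Lemma \ref{lem:cnamle} in the same way that the correlation decay bound followed from Lemma \ref{reversecoupling}: combining the reverse domination on compact windows with the a priori stochastic domination $\uZ\succeq\uY_{\infty,t}$ and the convergence $\uY_{\infty,t}\to\mu^0$ as $t\to\infty$ forces all finite dimensional marginals of $\nu$ to coincide with those of $\mu^0$, hence $\nu=\mu^0$.
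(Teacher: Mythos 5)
Your proposal takes the same route as the paper: the paper's proof is literally one sentence noting that the argument is Lemma \ref{reversecoupling} verbatim, with Corollary \ref{cor:logtight} replaced by \eqref{eq:tightob} and Corollary \ref{cor:stretched} replaced by \eqref{eq:stretchedexpa10}, and you reproduce this substitution correctly and identify the only genuinely new input as Theorem \ref{ergodicle} together with Lemma \ref{univtail1}. One small inaccuracy: you carry over the exponent $3/2$ in the bound $\bbP(G_t^c)\le 2k\exp(-ck^{3b/2})$ and the final choice $b=2/3$, $a=7/3$, $\d=3/7$, but \eqref{eq:stretchedexpa10} only supplies an unspecified tail exponent $\a>0$, so the correct bound reads $\bbP(G_t^c)\le 2k\exp(-ck^{\a b})$, one should take $b$ large enough that $\a b>1$, and the resulting $\d=1/(1+2b)$ is some positive constant depending on $\a$ that is in general not $3/7$ --- this is precisely why the paper states the lemma with an unspecified $\d>0$ and explicitly flags the discrepancy in the paragraph just before the proof. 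Since the lemma only asserts existence of some $\d>0$, your argument is sound; only the arithmetic in the last line needs the obvious adjustment.
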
 
Note that the definition of $\varphi(t)$ has been altered a bit compared to Lemma \ref{reversecoupling} where $\delta$ was taken to be $3/7$. This is because we will be relying on \eqref{eq:stretchedexpa10} which is a consequence of Theorem \ref{ergodicle} and Lemma \ref{univtail1} which does not exactly specify the exponent in the tail estimate, in contrast with Theorem \ref{th:stretched}, where the tail behavior is explicit. Nonetheless, this will not affect the outcome.

\begin{proof}[Proof of Lemma \ref{lem:cnamle}]The proof is verbatim the argument in the proof of Lemma \ref{reversecoupling} once two modifications are made. To deduce \eqref{eq:ft2}, the input from Corollary \ref{cor:logtight}  needs to be replaced by the counterpart input \eqref{eq:tightob}. Similarly \eqref{eq:stretchedexpa10} replaces Corollary \ref{cor:stretched} to deliver \eqref{eq:couppe4}. 
\end{proof}

We now have all the ingredients to complete the proof of Theorem \ref{th:uniqueness}.
Let $\uX$ denote the paths with law $\mu^0$. 
As already mentioned, under the monotone coupling of $\uZ$ and $\uX$, with probability one, for 
any $i\ge 1$, 
\begin{equation}\label{moncouple}
Z^i \succ X^i.
\end{equation}
On the other hand, by Lemma \ref{lem:cnamle} and the monotonicity $\uY_{\infty,t}\prec \uX$, for all fixed $i\in\bbN$ and $S>0$,  for any $\varepsilon> 0,$ there exists another coupling such that with probability at least $1-\varepsilon$, 
\begin{equation}\label{revcouple}
Z^j(s)\le X^j(s)+ 
  \varepsilon\,,\qquad j=1,\dots,i\,,\quad s\in[-S,S].
\end{equation}
The proof is essentially done at this point barring a few measure theoretic details which we seek to provide now. Borrowing notation from the proof of Corollary \ref{cor:DLZ}, fix $m\in\bbN$ and let $\calS=(s_1,\dots,s_m)\in [-t,t]^m$, $\calI=(i_1,\dots,i_m)\in\{1,\dots,i\}^m$, and let $\calT=(t_1,\dots,t_m)\in \bbR_+^m$ and consider the event
$$E=E(\calS,\calI,\calT)=\{\uY\in\Omega:\; Y^{i_j}(s_j) > t_j\,,\; j=1,\dots,m\}.
$$
It suffices to show that for each $m\in\bbN$, and for each choice of $\calS,\calI,\calT$,
\begin{align}\label{eq:fdconv1} 
 \mu^0(E)=\bbP(\uX\in E)=\bbP(\uZ\in E)=\nu(E).
 \end{align}
 Since the event $E$ is increasing, by \eqref{moncouple} it follows that
\[
\bbP(\uX\in E)\le \bbP(\uZ\in E).
\]
Inequality \eqref{revcouple} on the other hand implies that for any $\varepsilon>0$
\[
\bbP(\uX\in E_\varepsilon)\ge \bbP(\uZ\in E)-\e.
\]
where  $$E_{\varepsilon}=\{\uY\in\Omega:\; Y^{i_j}(s_j) > t_j-\varepsilon\,,\; j=1,\dots,m\}.
$$
Choosing $t_j$ such that $E$ is a continuity set for $\uX$ (note that this might not hold for at most a countable collection of $t_j$'s, having fixed the $s_j$'s), we get 
\[
\bbP(\uX\in E)= \bbP(\uZ\in E).
\]
Since such continuity sets generate the entire sigma algebra of finite dimensional distributions, we are done.
\qed
  
  \bigskip
  
We finish with the proof of Corollary \ref{freeunique}.

\begin{proof} By Theorem \ref{tightLE12},  the family of measures $\mu^f_{n,T}$ as $n$ and $T$ increase to $\infty$ is tight. Further, by Corollary \ref{cor:logtight}, and a similar reasoning, say, using Fatou's lemma as in Remark \ref{freetozero}, it follows that any limit point as $n,T \to \infty,$ must be UC. Hence it is UT and satisfies \eqref{eq:asymppin}.  Thus by Theorem \ref{th:uniqueness} any limit point must agree with $\mu^0$. This finishes the proof. 
\end{proof}

\bibliographystyle{plain}

\bibliography{bib_tightness}

\end{document}